\theoremstyle{plain}
\newtheorem{theor}{Theorem}[section]
\newtheorem*{theorA}{Theorem~A}
\newtheorem*{theorB}{Theorem~B}
\newtheorem*{theorC}{Theorem~C}
\newtheorem*{cor*}{Corollary}
\newtheorem{prop}[theor]{Proposition}
\newtheorem{definition}[theor]{Definition}
\newtheorem{cor}[theor]{Corollary}
\newtheorem{lemma}[theor]{Lemma}
\newtheorem*{lemma*}{Lemma}
\newtheorem{rem}[theor]{Remark}
\theoremstyle{remark}
\def\R{{\mathbb R}}
\def\C{{\mathbb C}}
\def\CorR{{\bf \Theta}}
\def\cxi{{\bf i}}
\def\N{{\mathbb N}}
\def\Z{{\mathbb Z}}
\def\Event{{\mathcal E}}
\def\W{{\mathcal W}}
\def\Y{{\mathcal Y}}
\def\U{{\mathcal U}}
\def\oo{U}
\def\V{\widetilde{{\mathcal V}}}
\def\Proj{{\rm P}}
\def\Az{A-z}
\def\Prob{{\mathbb{P}}}
\def\col{{\rm Col}}
\def\n{{\bf{v}}}
\def\spn{{\rm span}\,}
\def\d{{\rm dist}}
\def\Exp{{\mathbb E}}
\def\ker{{\rm ker}}
\def\dim{{\rm dim\,}}
\def\codim{{\rm codim\,}}
\def\Vol{{\rm Vol}}
\def\Net{{\mathcal N}}
\def\M{{\mathcal M}}
\def\Id{{\rm I_n}}
\def\Idm{{\rm I_m}}
\def\cf{{\mathcal L}}
\def\LCD{{\rm LCD}}
\def\real{{\rm real}}
\def\supp{{\rm supp\,}}
\def\Incomp{{\rm Incomp\,}}
\def\HS{{\rm HS}}
\def\Ellips{{\bf E}}
\def\elclass{{\mathcal C}}
\def\disclass{{\mathcal D}}
\def\q{N}
\def\Az{A-z}
\DeclareMathOperator*{\esssup}{ess\,sup}
\def\disccollec{{\Upsilon}}
\title
{
On delocalization of eigenvectors of random non-Hermitian matrices
}
\author{Anna Lytova}
\address[Anna Lytova]{University of Opole, Poland}
\email{alytova@uni.opole.pl}
\author{Konstantin Tikhomirov}
\address[Konstantin Tikhomirov]{Georgia Institute of Technology}
\email{ktikhomirov6@gatech.edu}
\begin{document}

\maketitle

\begin{abstract}
We study delocalization of null vectors and eigenvectors of random matrices with i.i.d entries.
Let $A$ be an $n\times n$ random matrix with i.i.d real subgaussian entries of zero mean and unit variance.
We show that with probability at least $1-e^{-\log^{2} n}$
$$
\min\limits_{I\subset[n],\,|I|= m}\|\n_I\| \geq \frac{m^{3/2}}{n^{3/2}\log^Cn}\|\n\|
$$
for any {\it real} eigenvector $\n$ and any $m\in[\log^C n,n]$,
where $\n_I$ denotes the restriction of $\n$ to $I$.

Further, when the entries of $A$ are complex, with i.i.d real and imaginary parts, we show that
with probability at least $1-e^{-\log^{2} n}$ {\it all} eigenvectors of $A$ are delocalized in the sense that
$$
\min\limits_{I\subset[n],\,|I|= m}\|\n_I\| \geq \frac{m}{n\log^Cn}\|\n\|
$$
for all $m\in[\log^C{n},n]$.

Comparing with related results, in the range $m\in[\log^{C'}{n},n/\log^{C'}{n}]$ in the i.i.d setting
and with weaker probability estimates,
our lower bounds on $\|\n_I\|$ strengthen an earlier estimate
$\min\limits_{|I|= m}\|\n_I\| \geq c(m/n)^6\|\n\| $
obtained in [M. Rudelson, R. Vershynin, Geom. Func. Anal., 2016],
and bounds
$\min\limits_{|I|= m}\|\n_I\| \geq c(m/n)^2\|\n\|$
(in the real setting) and $\min\limits_{|I|= m}\|\n_I\| \geq c(m/n)^{3/2}\|\n\|$
(in the complex setting)
established in [K. Luh, S. O'Rourke, arXiv:1810.00489].

As the case of real and complex Gaussian matrices shows, our bounds are optimal up to the polylogarithmic multiples.
We derive stronger estimates without the polylogarithmic error multiples for null vectors of real $(n-1)\times n$ random matrices.

\end{abstract}

\tableofcontents

\section{Introduction}
\label{s:intro}

\subsection{Main definitions and earlier results}
\label{ss:intro1}

We say that a random variable $\xi$ is {\it subgaussian} ($K$-{\it subgaussian})
if $(\Exp|\xi|^p)^{1/p}\leq K\sqrt{p}$ for all $p\geq 1$ and some fixed $K>0$.
For any real or complex vector $X=(x_1,\dots,x_n)$, we define
its {\it non-increasing rearrangement} $X^*=(x_1^*,x_2^*,\dots,x_n^*)$
as follows: $x_i^* = \vert x_{\sigma(i)}\vert$, $i\le n$, where $\sigma$ is a permutation of
$[n]$ such that   $ \vert x_{\sigma(1)}\vert \ge  \vert x_{\sigma(2)}\vert \ge \ldots \geq  \vert x_{\sigma(n)}\vert$.
Equivalently,
$$x^*_i=\min\limits_{I\subset[n]:\,|I|=n-i+1}\max\limits_{j\in I}|x_j|\quad\mbox{for all $i=1,2,\dots,n$,}$$
where the minimum is taken over all subsets $I$ of $[n]=\{1,2,\dots,n\}$ of cardinality $n-i+1$.
In this paper, we are concerned with lower bounds for $x^*_i$ ($i\geq n/2$),
when $X$ is a unit eigenvector of a random square matrix with i.i.d subgaussian entries
or is a null vector of a rectangular $m\times n$ matrix (for $m<n$).
Following \cite{RV no-gap}, we will occasionally refer to those bounds as {\it no-gaps delocalization estimates}.

Delocalization of eigenvectors of random matrices has been actively studied,
especially in the setting of Wigner (and generalized Wigner) matrices.
The term {\it delocalization} usually refers to sup-norm ($\ell_\infty^n$--norm) delocalization or, more generally,
to upper bounds for inner products $|\langle Y,\n\rangle|$ for any fixed unit vector $Y$ and normalized eigenvectors $\n$.
With the $\ell_\infty^n$--delocalization, no single coordinate of the eigenvector
carries a significant mass.
In comparison, the no-gaps delocalization is the property of having few small coordinates,
so that every subset of vector components carries a non-negligible mass.
Both notions provide a way to measure how close distribution of an eigenvector is to the
uniform distribution on the sphere.
A related concept of {\it quantum unique ergodicity}
provides another viewpoint to this phenomenon.
Among many others, we refer to papers \cite{BHY, BY17, BYY1, EKYY, ESY, KY}
for eigenvectors of Wigner and band random matrices, as well as
to surveys \cite{RVW,Rud survey,Bourgade} for overview of some of existing results
on delocalization for various models of randomness and for further references.

No-gaps delocalization has found applications in both Hermitian and non-Hermitian settings.
The {\it Braess paradox} on random networks asserts that
under certain assumptions removing an edge in the random network
can decrease the traffic congestion (see, in particular, \cite{VR,Chung}).
A closely related phenomenon is a decrease in the spectral gap
of the normalized Laplacian of a random graph when adding a new edge
(see \cite{ERS,Rud survey}).
The probability of the decrease can be bounded using no-gaps delocalization estimates
for the second eigenvector of the Laplacian \cite{ERS,Rud survey}.
In particular, the no-gaps delocalization estimates obtained in \cite{RV no-gap}
allowed to strengthen the main result of \cite{ERS} and to show that randomly adding a new edge to a graph
decreases the spectral gap with probability at least $1/2-O(n^{-c})$ \cite{Rud survey}.
Further, in the non-symmetric setting, a weak form of no-gaps delocalization for
unit normals to linear spans of columns of $d$--regular random matrices
has been recently used in \cite{LLTTY18,LLTTY18b}
to establish the circular law for the limiting spectral distribution of
sparse random $d$--regular directed graphs (see also \cite{Cook, BCZ}).

In this paper, we consider non-Hermitian random matrices.
Before stating the main theorems, let us discuss several existing
results on delocalization of eigenvectors of such matrices,
including no-gaps and sup-norm delocalization.

In the non-Hermitian setting,
the sup-norm delocalization
for the eigenvectors was studied, in particular, in \cite{RV sup norm}, where it was shown that for an $n\times n$
random matrix $A$ with i.i.d subgaussian entries, with probability close to one every unit eigenvector ${\bf v}$
of $A$ satisfies $\|{\bf v}\|_\infty\leq C\sqrt{\log^{9}n\big /n}$.
Further, in \cite{NV} the bound $\|{\bf v}\|_\infty\leq C\sqrt{\log n\big /n}$ w.h.p
was derived for unit eigenvectors of $A$ corresponding to eigenvalues of small absolute value (of order $O(1)$).
The primary object of interest in \cite{NV} was a unit vector $\n$ in the kernel of an $(n-1)\times n$ random matrix $B$
with i.i.d subgaussian entries. It was shown that $\Prob\{\|\n\|_\infty\geq t/\sqrt{n}\}\leq e^{-ct^2}$ for all $t\geq C\sqrt{\log n}$,
with $C,c>0$ depending only on the subgaussian moment. It is not difficult to see, by considering the Gaussian matrix,
that, disregarding the values of $C,c$, this latter result is optimal.

{\it{}No-gaps delocalization} was considered in \cite{RV no-gap}
in a very general setting, including symmetric, skew-symmetric matrices
and matrices with i.i.d entries.
In particular, the main result of \cite{RV no-gap}
implies that, given an $n\times n$ random matrix $A$ with i.i.d $K$--subgaussian entries of zero
mean and unit absolute second moment, with probability at least $1-e^{-c(n-i)}$,
for every unit eigenvector ${\bf v}$ of $A$ and every $i\in [n/2,n-Cn^{-1/7}]$, one has
$v_i^*\geq {c(n-i)^{11/2}}/{n^6}$.
The results of \cite{RV no-gap} easily imply no-gaps delocalization of null vectors of an $(n-1)\times n$ matrix $B$
with i.i.d entries.
Recently, developing the approach from \cite{RV no-gap}, in work \cite{LuhRourke}
a stronger bound for the order statistics was established in the non-Hermitian case;
namely, it was shown in \cite{LuhRourke} that $v_i^*\geq {c(n-i)^{3/2}}/{n^2}$ (in the real case)
and $v_i^*\geq {c(n-i)}/{n^{3/2}}$ (in the complex case) for every $i\in[1,n-\log^2 n]$ and every unit eigenvector of $A$
with high probability. Moreover, the authors of \cite{LuhRourke} obtained
lower bounds for $v_i^*$ in the regime $i\in[n-\log^2 n,n]$, i.e.\ for smallest order statistics.
We note here that a variant of the no-gaps delocalization was considered in \cite{NV}
where the magnitude of the smallest component of the null unit vector was estimated.

If $G$ is an $n\times n$ Gaussian matrix with i.i.d real standard normal entries, strong estimates for
components of the non-increasing rearrangements of the real eigenvectors are immediately implied
by the orthogonal invariance of the matrix distribution. Indeed,
conditioned on the event that $G$ has at least $j$ real eigenvalues,
the unit eigenvector ${\bf v}$ corresponding to the $j$-th largest real eigenvalue
is uniformly distributed on the unit sphere $S^{n-1}(\R)$.
In particular, with probability close to one all unit real eigenvectors $\n=(v_1,\dots,v_n)$ of $G$ satisfy
${c(n-i)}/{n^{3/2}}\leq v_{i}^*\leq {C(n-i)}/{n^{3/2}}$
for all $n/2\leq i<n-C\log n$, for some universal constants $C,c>0$ (see e.g. \cite{David}).
A similar argument for a complex $n\times n$ matrix with i.i.d standard complex Gaussian entries
implies that with probability close to one every unit eigenvector $\n$ of such matrix
satisfies
${c\sqrt{n-i}}/{n}\leq v_{i}^*\leq {C\sqrt{n-i}}/{n}$ for all $n/2\leq i<n-C\log n$.
\medskip

Having in mind the Gaussian case, it is natural to formulate the following conjecture: for an $n\times n$ matrix $A$
with i.i.d subgaussian entries of zero mean and unit second absolute moment,
\begin{itemize}
\item In the real case,
all real unit eigenvectors $\n$ of $A$ should satisfy
$$v_{i}^*\geq \frac{c(n-i)}{n^{3/2}},\quad i\in [n/2,n-C\log n]$$
with probability close to one;

\item In the complex case,
all unit eigenvectors $\n$ of $A$ should satisfy
$$v_{i}^*\geq \frac{c\sqrt{n-i}}{n},\quad i\in [n/2,n-C\log n]$$
with high probability.
\end{itemize}

\subsection{Existing methods: singular value analysis, comparison to Gaussians, test projection}
\label{ss:intro2}

Comparing the aforementioned estimates in \cite{RV no-gap,LuhRourke}
with the above conjecture, one can see that they are suboptimal in the considered range of $i$.
Moreover, without significant new ingredients, the approach
of \cite{RV no-gap,LuhRourke} cannot produce the sharp bounds in that range for reasons explained further.

Consider a simpler setting of no-gaps delocalization estimates for a unit null vector ${\bf u}=(u_1,\dots,u_n)$ of $(n-1)\times n$
random matrix $B$ with i.i.d real subgaussian entries of zero mean and unit variance.
Given $i\in[n/2,n-1]$
and any $(n-i)$--element subset $I\subset[n]$, one has
$$s_{\max}(B_I)\Big\|\sum_{j\in I}u_j{\bf e}_j\Big\|_2
\geq \Big\|\sum_{j\in I}u_j\col_j(B)\Big\|_2=\Big\|\sum_{j\in I^c}u_j\col_j(B)\Big\|_2\geq
s_{\min}(B_{I^c})\Big\|\sum_{j\in I^c}u_j {\bf e}_j\Big\|_2,$$
where $B_{J}$ is the $(n-1)\times |J|$
matrix with columns $\col_j(B)$, $j\in J$, and $s_{\max},s_{\min}$ denote the largest/smallest singular values
of respective matrices. This immediately implies
$$\max\limits_{I\subset[n],\,|I|=n-i}s_{\max}(B_I)\,\sqrt{n-i}\,u^*_{i+1}
\geq c\min\limits_{I\subset[n],\,|I|=n-i}s_{\min}(B_{I^c}),$$
whence for any $t>0$, we have
\begin{align}
\Prob\big\{u^*_{i+1}\geq t\big\}&\geq 1-\Prob\Big\{c\min\limits_{I\subset[n],\,|I|=n-i}s_{\min}(B_{I^c})
\leq t\max\limits_{I\subset[n],\,|I|=n-i}s_{\max}(B_I)\,\sqrt{n-i}\Big\}\nonumber\\
&\geq 1-\sum\limits_{I\subset[n],\,|I|=n-i}
\Prob\big\{c\, s_{\min}(B_{I^c})
\leq t s_{\max}(B)\,\sqrt{n-i}\big\}\nonumber\\
&= 1-{n\choose n-i}
\Prob\big\{c\, s_{\min}(B_{[i]})
\leq t s_{\max}(B)\,\sqrt{n-i}\big\},\label{eq: aux parhgap;f}
\end{align}
where $[i]=\{1,2,\dots,i\}$.
The relation \eqref{eq: aux parhgap;f}
is the starting point of the argument of \cite{RV no-gap} (and also \cite{LuhRourke}) and is used to derive the main results of those papers
(see \cite[Proposition~4.1]{RV no-gap}).
But, while $s_{\max}(B)=O(\sqrt{n})$ with probability close to one,
the small ball probabilities for $s_{\min}(B_{[i]})$ are too weak to derive the optimal bound for $u^*_{i+1}$
from the last relation. Indeed, it can be verified that when $B$ is Gaussian, we have for all $i$ in $[n/2,n-1]$
and $\tau>0$:
$\Prob\big\{s_{\min}(B_{[i]})\leq \tau(n-i)/\sqrt{n}\big\}\geq (c\tau)^{n-i}$.
But this implies that the probability bound in \eqref{eq: aux parhgap;f}
is non-trivial {\it only when} $t\leq {c'(n-i)^{3/2}}/{n^2}$,
i.e.\ \eqref{eq: aux parhgap;f} can only be used to show that $u^*_{i+1}\geq {c'(n-i)^{3/2}}/{n^2}$
with high probability,
which falls short of the optimal lower bound ${c'(n-i)}/{n^{3/2}}$.
In a similar way, it can be argued that lower bounds for order statistics of eigenvectors of square random matrices
produced using this approach, are suboptimal.

\medskip

An alternative approach to delocalization is considered in \cite{NV}.
Let, as before, ${\bf u}$ be a null unit vector of an $(n-1)\times n$
random matrix $B$ with i.i.d entries of zero mean and unit absolute second moment.
In \cite{NV}, it is shown that
there is a coupling of ${\bf u}$ and a random vector ${\bf w}=(w_1,\dots,w_n)$ uniformly distributed on the unit sphere, such that
with probability at least $1-n^{-c}$, $|u_i^*-w_i^*|\leq n^{-1/2-c'}$,
where $i$ is in the range $[n^{1-c},n-n^{1-c}]$ for a small constant $c>0$  (see formula (36) in \cite{NV}).
The proof of the result is based on the Berry--Esseen theorem for frames used earlier in \cite{TV10}.
This result of \cite{NV}
provides much sharper estimates for $u_i^*$ compared to \cite{RV no-gap}, though
with much weaker probability bounds.
However, it is not clear how the approach can be adapted to studying delocalization of eigenvectors
rather than null vectors, and how to extend the range $[n^{1-c},n-n^{1-c}]$ for the components
of the non-increasing rearrangement for which the estimates are available.
We also note that a development of this approach to delocalization is considered in \cite{LuhRourke},
where sharp asymptotics for $\min\limits_{I\subset[n],\,|I|= \delta n}\|\n_I\|$
and $\max\limits_{I\subset[n],\,|I|= \delta n}\|\n_I\|$ is derived (for any constant $\delta$)
for the null unit vectors.

\medskip

Another argument in \cite{NV}, dealing with the sup-norm delocalization
and estimates for the smallest component of the null vector,
is based on using {\it test projections} ---
projections onto orthogonal complements to spans of some matrix columns.
The basic idea is that if a realization of the matrix $B$ admits a null unit vector with a ``large'' $\ell_\infty$--norm
then necessarily the test projections of some columns of $B$ are much smaller than average.
Estimating probability of the latter event amounts to application of known concentration or small ball probability
inequalities for norms of subgaussian random vectors.
Test projections were employed earlier in \cite{TV10} and \cite{RV sup norm} in a related context,
and are an important element of the present paper.

\subsection{
Main results}
\label{ss:intro3}

In this paper, we follow a geometric approach to random matrices
in our study of the eigenvectors.
Our main motivation is to establish the
optimal no-gaps delocalization estimates for real and complex non-Hermitian random matrices,
as well as for null vectors of real rectangular matrices.
The treatment of null vectors is significantly simpler, and we start by discussing the corresponding result first.
The following theorem is a simplified version
of Theorem~\ref{th: normal lower} in the text.
\begin{theorA}
For any $K\geq 1$ there are $C,c>0$
depending only on $K$ with the following property.
Let $n\geq C$, let
$B$ be an $(n-1)\times n$ random matrix with i.i.d $K$--subgaussian real entries
with zero mean and unit variance.
Further, let ${\bf u}=(u_1,\dots,u_n)$ be a random unit vector in $\ker(B)$.
Then for any
$i\in [n- cn, n-C\log n]$ and
$t\geq e^{-c(n-i)}$
we have
$$\Prob\Big\{u_{i+1}^*\leq \frac{(n-i)t}{n^{3/2}}\Big\}
\leq (Ct)^{n-i}+e^{-cn}.$$
\end{theorA}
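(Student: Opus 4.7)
Set $m := n-i$. The event $u^*_{i+1} \leq mt/n^{3/2}$ is witnessed by some $I \subset [n]$ with $|I|=m$ satisfying $\|\mathbf{u}_I\|_\infty \leq mt/n^{3/2}$, hence $\|\mathbf{u}_I\|_2 \leq m^{3/2} t/n^{3/2}$. Since $\binom{n}{m}\leq (en/m)^m$, it suffices (with $s := m^{3/2} t/n^{3/2}$) to prove, for every fixed $I$ of cardinality $m$, the per-subset small-ball bound
$$
\Prob\bigl\{\|\mathbf{u}_I\|_2 \leq s\bigr\} \leq (C' s\sqrt{n/m})^m + e^{-cn}.
$$
Multiplying by $(en/m)^m$ and substituting back $s$ produces exactly the claimed $(Ct)^{n-i} + e^{-c'n}$.

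\textbf{Geometric identity.}
Fix $I$ and (after a column permutation) write $B=[B_I,B_{I^c}]$. Set $H := \bigl(\col(B_{I^c})\bigr)^\perp \subset \R^{n-1}$, so generically $\dim H = m-1$. The equation $B\mathbf{u}=0$ forces $B_I \mathbf{u}_I = -B_{I^c}\mathbf{u}_{I^c} \in \col(B_{I^c})$, i.e.\ $P_H B_I \mathbf{u}_I = 0$ where $P_H$ is the orthogonal projection onto $H$. Writing $\mathbf{u}_I = c\mathbf{v}$ with $\mathbf{v}\in S^{m-1}$ a unit null vector of the $(m-1)\times m$ matrix $P_H B_I$, solving $\mathbf{u}_{I^c} = -c\, B_{I^c}^+ B_I \mathbf{v}$ via the pseudoinverse, and imposing $\|\mathbf{u}\|^2 = 1$, one obtains the identity
$$
\|\mathbf{u}_I\|^{-2} = 1 + \|B_{I^c}^+ B_I \mathbf{v}\|^2.
$$
Thus $\{\|\mathbf{u}_I\|_2 \leq s\}$ coincides with the tail event $\{\|B_{I^c}^+ B_I \mathbf{v}\|^2 \geq T\}$ where $T := 1/s^2 - 1$.

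\textbf{Tail bound via a net and Hanson--Wright.}
Since $\mathbf{v}$ depends on $B_I$ through the kernel relation, $B_I\mathbf{v}$ is not a plain subgaussian vector. I decouple by a $(1/2)$-net $\Net \subset S^{m-1}$ of cardinality at most $5^m$: on the event $\|B_{I^c}^+ B_I\|_{\mathrm{op}} \leq \sqrt{T}$, any realisation with $\|B_{I^c}^+ B_I \mathbf{v}\|^2 \geq T$ admits some $\mathbf{v}_0\in\Net$ with $\|B_{I^c}^+ B_I \mathbf{v}_0\|^2 \geq T/4$. For each fixed $\mathbf{v}_0\in S^{m-1}$ the vector $B_I \mathbf{v}_0 \in \R^{n-1}$ is isotropic $K$-subgaussian and \emph{independent} of $B_{I^c}$. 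Conditioning on $B_{I^c}$, the Hanson--Wright inequality applied to the quadratic form $(B_I\mathbf{v}_0)^T Q (B_I\mathbf{v}_0)$ with $Q := (B_{I^c}^+)^T B_{I^c}^+$ gives, once $T$ dominates $\mathrm{tr}(Q)$,
$$
\Prob\bigl\{\|B_{I^c}^+ B_I \mathbf{v}_0\|^2 \geq T/4 \,\big|\, B_{I^c}\bigr\} \leq 2\exp(-cT\sigma^2), \qquad \sigma := s_{\min}(B_{I^c}).
$$
Integrating $\E[\exp(-cT\sigma^2)]$ against the Rudelson--Vershynin small-ball estimate $\Prob\{\sigma\leq \epsilon m/\sqrt{n}\}\leq (C\epsilon)^m + e^{-cn}$ for the rectangular $(n-1)\times(n-m)$ subgaussian matrix $B_{I^c}$ produces, via the substitution $u=cT\sigma^2$ and Stirling,
$$
\E\bigl[\exp(-cT\sigma^2)\bigr] \lesssim (C'' s\sqrt{n/m})^m + e^{-cn};
$$
combined with $|\Net|\leq 5^m$ this yields the per-$I$ bound, while the residual event $\{\|B_{I^c}^+ B_I\|_{\mathrm{op}} > \sqrt{T}\}$ is absorbed into $e^{-cn}$ via an analogous integration.

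\textbf{Main obstacle.}
The principal difficulty is precisely the coupling of $\mathbf{v}$ to $B_I$. The naive estimate $\|B_{I^c}^+ B_I \mathbf{v}\|_2 \leq \|B_{I^c}^+ B_I\|_{\mathrm{op}} \lesssim n/m$ only yields $\|\mathbf{u}_I\|_2 \gtrsim m/n$, corresponding to $u^*_{i+1}\gtrsim (n-i)^{3/2}/n^2$, which is exactly the suboptimal rate of the singular-value approach critiqued in Section~\ref{ss:intro2}. A uniform lower bound on $\min_{|I|=m} s_{\min}(B_{I^c})$ at the Bai--Yin scale $m/\sqrt{n}$ would require exceptional probability $\gtrsim (en/m)^m(C\epsilon)^m$, which is too weak to absorb the combinatorial factor when $m$ is close to $n$; accordingly, on each subset one must integrate the Hanson--Wright tail against the distribution of $s_{\min}(B_{I^c})$ rather than conditioning on a single ``good'' event uniformly over all $\binom{n}{m}$ subsets. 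The net argument together with this integration is what converts the coupled random-matrix problem into independent tractable estimates giving the optimal rate.
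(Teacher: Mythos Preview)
Your net step is logically empty. On the event $\{\|B_{I^c}^+ B_I\|_{\mathrm{op}} \leq \sqrt{T}\}$ one has $\|B_{I^c}^+ B_I \mathbf{w}\|^2 \leq T$ for \emph{every} unit $\mathbf{w}$, in particular for $\mathbf{v}$; the intersection with $\{\|B_{I^c}^+ B_I \mathbf{v}\|^2 \geq T\}$ is therefore a null event and the net does no work there. The entire event you must bound thus lives inside your ``residual'' $\{\|B_{I^c}^+ B_I\|_{\mathrm{op}} > \sqrt{T}\}$, and dismissing this as $e^{-cn}$ is false: already in the Gaussian case $\Prob\{\|\mathbf{u}_I\|_2 \leq s\}$ is of order $(c\,mt/n)^m$ for each fixed $I$, not exponentially small in $n$. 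The correct repair is to bound the operator-norm event directly via $\|A\|_{\mathrm{op}} \leq 2\max_{\mathbf{v}_0\in\Net}\|A\mathbf{v}_0\|$ and then apply Hanson--Wright to each $\mathbf{v}_0$, but this exposes a second gap you pass over: Hanson--Wright gives $\exp(-cT\sigma^2)$ only on $\{\mathrm{tr}\,Q \lesssim T\}$, and you must separately show $\Prob\{\mathrm{tr}\,Q \gtrsim T\} \leq (Cmt/n)^m$. That step is not free --- it needs, for instance, the identity $\mathrm{tr}\,Q=\sum_{j\in I^c}\d\bigl(\col_j(B),\spn\{\col_l:l\in I^c\setminus\{j\}\}\bigr)^{-2}$ together with the distance and LCD estimates of Theorems~\ref{th: 4.2}--\ref{th: 4.3} and a further union over $j\in I^c$. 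With both repairs your route does close at the optimal rate; as written it establishes nothing.

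The paper's proof is structurally different. Rather than a single $I$ and an outer union bound, it averages over \emph{pairs} $(I_\theta,J_\beta)$ of small and large coordinates of equal size $k$, projects onto $F=\spn\{\col_i:i\notin I_\theta\cup J_\beta\}^\perp$ of dimension $d\approx 2k$, and compares $s_{\min}$ of one projected block with $s_{\max}$ of the other (Lemma~\ref{l:determ}, Proposition~\ref{cor: smin smax}). The sharp small-ball bound for $s_{\min}$ comes from a Hilbert--Schmidt net on $B_2^k$ (Proposition~\ref{prop: smin}) rather than a spectral-norm net, and the averaging is implemented via Markov's inequality on $|I_\theta|^k|J_\beta|^k$ in the proof of Theorem~\ref{th: normal lower}; no anisotropic quadratic form or trace control appears.
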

Note that a unit random vector ${\bf w}$ uniformly distributed on the sphere $S^{n-1}(\R)$
(i.e.\ the null vector of a standard $(n-1)\times n$ Gaussian real matrix), satisfies
two-sided estimates $(ct)^{n-i}\leq\Prob\big\{w_{i+1}^*\leq \frac{(n-i)t}{n^{3/2}}\big\}\leq (Ct)^{n-i}$
for all $t\in(0,1]$ and $i\geq n/2$. Thus, our result recovers both the correct order of magnitude for the $(i+1)$--st largest
component of the null vector of $B$ and the optimal probability estimates (in a restricted range for $t$).
Moreover, considering discrete distributions with atoms, one can easily see that the additive term $e^{-cn}$
in our estimate is in general not removable; for example, the $(n-1)\times n$ Bernoulli
matrix admits null vector $(1,1,0,0,\dots,0)$ with probability $2^{1-n}$.

The generalized version of the above theorem --- Theorem~\ref{th: normal lower} ---
deals with {\it almost} null vectors.
As simple corollaries of Theorem~\ref{th: normal lower}, we obtain no-gaps delocalization bounds
on eigenvectors corresponding to small real eigenvalues and on singular vectors corresponding to small
singular values of square random matrices; see Corollaries~\ref{cor: eigenv} and~\ref{cor: sing}.


\bigskip

In the second part of the paper we consider eigenvectors of non-Hermitian square random matrices. The first main result here concerns
eigenvectors of  real matrices corresponding to real eigenvalues. Note that existence of real eigenvalues, hence real eigenvectors, of random real non-symmetric
matrices with i.i.d entries was established in \cite{TV real eig} under the assumption that the first four
moments of the matrix entries match those of the standard Gaussian variable;
statistics of real eigenvalues of Gaussian matrices were studied earlier in \cite{EKS}. We have:
\begin{theorB}
Let $n>2$, let $A$ be an $n\times n$ random matrix with i.i.d $K$--subgaussian
real entries with zero mean and unit variance.
Then with probability at least $1-e^{-\log^{2} n}$ we have
\begin{equation}\label{eq: B theor}
v_{i+1}^*\geq \frac{n-i}{n^{3/2}\log^C n}\;\;\mbox{ for every {\bf real} unit eigenvector $\n=(v_1,\dots,v_n)$}
\end{equation}
for every $i\in [n/2,n-\log^C{n}]$,
where $C>0$ may only depend on $K$.
\end{theorB}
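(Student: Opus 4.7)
The plan is to reduce Theorem~B to the almost-null-vector delocalization result (Theorem~\ref{th: normal lower}) via a deterministic net over candidate real eigenvalues. A real unit eigenvector $\n$ of $A$ with real eigenvalue $\hat\lambda$ is an exact null vector of $A-\hat\lambda I$. For any deterministic $\lambda$ near $\hat\lambda$, the vector $\n$ is an \emph{almost} null vector of $A-\lambda I$, since $\|(A-\lambda I)\n\|=|\hat\lambda-\lambda|$. Since $\hat\lambda$ is itself random, we approximate it by a deterministic point of a fine net.

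First, I would condition on the standard event $\{\|A\|\le C\sqrt{n}\}$ (failing with probability at most $e^{-n}$), so that all real eigenvalues of $A$ lie in $[-C\sqrt{n},C\sqrt{n}]$. I then fix a deterministic net $\Net\subset[-C\sqrt{n},C\sqrt{n}]$ with spacing $\delta=n^{-a}$ for a sufficiently large constant $a$, so that $|\Net|\le n^{a+1}$. For each $\lambda\in\Net$, I form the matrix $A-\lambda I$ and delete its last row to obtain an $(n-1)\times n$ matrix $B_\lambda$; its entries are independent subgaussian random variables, centered except for a deterministic shift of magnitude at most $C\sqrt{n}$ on the $n-1$ diagonal entries. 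Such a bounded deterministic shift does not affect the subgaussian moments and anti-concentration estimates that drive the proof of Theorem~\ref{th: normal lower}, so the theorem can be invoked for $B_\lambda$ in its almost-null-vector formulation.

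For each fixed $\lambda\in\Net$, Theorem~\ref{th: normal lower} yields, with probability at least $1-e^{-c\log^{3}n}$, that the delocalization inequality $v_{i+1}^*\ge (n-i)/(n^{3/2}\log^C n)$ holds simultaneously for every $i\in[n/2,n-\log^C n]$ and every unit vector $\n$ satisfying $\|B_\lambda \n\|\le\delta$. A union bound over $\Net$ loses only a polynomial factor $n^{a+1}$, which is absorbed into $e^{-\log^2 n}$ for $n$ large. Finally, for an arbitrary real eigenvector $\n$ of $A$ with real eigenvalue $\hat\lambda\in[-C\sqrt{n},C\sqrt{n}]$, pick $\lambda\in\Net$ with $|\hat\lambda-\lambda|\le\delta$; then $\|B_\lambda \n\|\le|\hat\lambda-\lambda|\le\delta$, and the desired delocalization bound for $\n$ follows from the uniform estimate on $B_\lambda$.

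The main obstacle will be to verify that Theorem~\ref{th: normal lower} applies in the required form: it must accommodate (i) a deterministic, polynomially bounded shift on the diagonal of $B_\lambda$, and (ii) an almost-null tolerance $\delta$ that is negligible compared to the target lower bound $(n-i)/(n^{3/2}\log^C n)$. Both requirements are met once $a$ is chosen large enough: the polynomial factor in the net size does not break the probability budget $e^{-\log^2 n}$, and the error $\delta$ is harmlessly absorbed into the almost-null analysis underlying Theorem~\ref{th: normal lower}.
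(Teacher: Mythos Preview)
Your reduction via a net on $[-C\sqrt{n},C\sqrt{n}]$ and passage to an $(n-1)\times n$ almost-null-vector problem is exactly what the paper does for \emph{small} eigenvalues (see Corollary~\ref{cor: eigenv} and Remark~\ref{rem: Th B}). The gap is the claim that the diagonal shift ``does not affect'' the proof of Theorem~\ref{th: normal lower}. That theorem requires the deterministic perturbation $W$ to satisfy $\|W\|\le\sqrt{k}$, and this hypothesis is not cosmetic. In the proof, one applies Proposition~\ref{cor: smin smax} to $k\times k$ blocks projected onto a $d$-dimensional subspace with $d\approx 2k$; the spectral-norm assumption $\|W\|,\|W'\|\le\mathcal M\sqrt{d}$ there is what keeps $s_{\max}(M'+W')$ of order $\sqrt{d}\sim\sqrt{k}$ rather than of order $|\lambda|$. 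For your shift $W=-\lambda\,\mathrm{Id}$ with $|\lambda|$ up to $C\sqrt{n}$, the projected shift blocks have spectral norm up to $|\lambda|\sim\sqrt{n}$, so the $s_{\max}$ side of the balance inflates by a factor $\sqrt{n/k}$. Propagating this through the argument replaces the key ratio $\varepsilon_2/\varepsilon_1$ by $(\varepsilon_2/\varepsilon_1)\sqrt{n/k}$ and turns the final bound $(Ct)^k$ into roughly $(Ct\sqrt{n/k})^k$; the best conclusion you can then extract is $v^*_{n-k+1}\gtrsim k^{3/2}/n^2$, i.e.\ the suboptimal exponent of \cite{LuhRourke}, not the target $k/n^{3/2}$.

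This is precisely the obstruction discussed in Subsection~\ref{ss:intro2} and at the start of Section~\ref{s: eigenvectors}: when $|z|\sim\sqrt{n}$, the trivial control $\|\Proj_F(-z\mathbf e_i)\|_2\le|z|$ on the projected shift is useless, and the singular-value comparison underlying Theorem~\ref{th: normal lower} cannot yield the sharp bound. The paper therefore uses your idea only in the regime $|\lambda|\le 100\log^4 n$ (Corollary~\ref{cor: eigenv}), and for $100\log^4 n\le|\lambda|\le\mathcal M\sqrt{n}$ invokes the entirely different machinery of Section~\ref{s: eigenvectors}: the dual basis $V_i$ of Lemma~\ref{l: V and proj biorth}, the ellipsoid classes $\elclass(R,\mathbf b)$ and $\disclass(r,\mathbf p)$, the discretized set $\disccollec$, and the decoupling of Lemma~\ref{l: aux decoupling}, culminating in Proposition~\ref{prop: mn}. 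Your proposal is missing this entire second half of the argument.
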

The second main result of the second part of the paper concerns eigenvectors of non-Hermitian matrices with complex entries:
\begin{theorC}
Let $n>2$, let $A$ be an $n\times n$ random matrix with i.i.d $K$--subgaussian
complex entries with zero mean, unit second absolute moment and i.i.d real and imaginary parts.
Then with probability at least $1-e^{-\log^{2} n}$ we have
\begin{equation}\label{eq: C theor}
v_{i+1}^*\geq \frac{\sqrt{n-i}}{n\log^C n}\;\;\mbox{ for every unit eigenvector $\n=(v_1,\dots,v_n)$}
\end{equation}
for every $i\in [n/2,n-\log^C n]$,
where $C>0$ may only depend on $K$.
\end{theorC}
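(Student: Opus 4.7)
The plan is to adapt the proof of Theorem~B, exploiting at each step the stronger small-ball behaviour of complex random variables: for a $K$-subgaussian complex variable $\xi$ with i.i.d.\ real and imaginary parts, $\Prob\{|\xi-a|\leq t\}\lesssim_K t^2$ for every $a\in\C$, in contrast to the bound $t$ available in the real case. This additional power of $t$ per coordinate is what replaces the real bound $(n-i)/n^{3/2}$ by the sharper complex bound $\sqrt{n-i}/n$. Set $m:=n-i\in[\log^C n, n/2]$. Combining the operator-norm bound $\|A\|\leq C\sqrt{n}$ with a polynomially fine net $\Net$ for the eigenvalue, the task reduces to the following: for every fixed $z\in\C$ with $|z|\leq C\sqrt{n}$ and every fixed $I\subset[n]$ of cardinality $m$, the event that $A-z$ admits a unit approximate null vector $\n$ with $\|\n_I\|\leq m/(n\log^C n)$ and $\|\n_{I^c}\|\geq 1/2$ has probability small enough to survive a union bound over the $\binom{n}{m}$ possible sets $I$ and over $\Net$.

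To prove this event bound I would employ a \textit{test projection} in the spirit of \cite{NV, RV sup norm}: choose $j_0\in I^c$ with $|v_{j_0}|\gtrsim 1/\sqrt{n}$, set $J:=I^c\setminus\{j_0\}$, and let $P_J$ denote the orthogonal projection onto $(\spn\{\col_j(A-z):j\in J\})^\perp$. Applying $P_J$ to the relation $(A-z)\n=0$ annihilates the columns indexed by $J$ and leaves
$$
v_{j_0}\,P_J\col_{j_0}(A-z)=-\sum_{k\in I}v_k\,P_J\col_k(A-z).
$$
Since $P_J$ is independent of the columns indexed by $I\cup\{j_0\}$ and has rank $m+1$, Cauchy--Schwarz together with a routine concentration estimate $\sum_{k\in I}\|P_J\col_k(A-z)\|^2=O(nm)$ reduces the problem to proving a lower bound of order $\sqrt{m+1}/\log^{C'}n$ for $\|P_J\col_{j_0}(A-z)\|$. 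In an orthonormal basis of the $(m+1)$-dimensional image of $P_J$, this quantity becomes the norm of a vector whose $m+1$ components are independent complex linear forms in the entries of $\col_{j_0}(A)$, shifted by a deterministic vector depending on $z$.

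The main obstacle is establishing the small-ball estimate
$$
\Prob\bigl\{\|P_J\col_{j_0}(A-z)\|\leq \tau\sqrt{m+1}\bigr\}\leq (C\tau)^{2(m+1)}+e^{-\log^{3} n},\qquad \tau\geq e^{-\log^{2} n},
$$
uniformly in the conditioning on $J$ and in $z$; the exponent $2(m+1)$, as opposed to $m+1$ in the real case, arises because each of the $m+1$ complex coordinates carries two degrees of freedom. This requires a regularized least common denominator analysis for an orthonormal basis of the image of $P_J$, tailored to complex inner products and robust to the conditioning on $J$; it is the complex counterpart of the arithmetic-structure arguments carried out for Theorem~B, with each single-scale LCD estimate upgraded to its two-dimensional version. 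Once this is in place, the tail $(C\tau)^{2(m+1)}$ with $\tau=1/\log^{C'}n$ contributes $\log^{-2C'(m+1)}n$, which, for $C'$ sufficiently large relative to $C$, defeats the combinatorial factor $\binom{n}{m}\leq(en/m)^m$ together with the size $n^{O(1)}$ of $\Net$, yielding the claimed $1-e^{-\log^2 n}$ probability.
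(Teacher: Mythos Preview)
Your reduction from the bad event to the small-ball event is quantitatively wrong, and the error is exactly the factor that distinguishes the optimal complex bound from the suboptimal ones. Concretely, suppose $\|\n_I\|\leq m/(n\log^C n)$ and $|v_{j_0}|\geq c/\sqrt{n}$. Your Cauchy--Schwarz step gives
\[
|v_{j_0}|\,\|P_J\col_{j_0}(A-z)\|\;\leq\;\|\n_I\|\Bigl(\sum_{k\in I}\|P_J\col_k(A-z)\|^2\Bigr)^{1/2}\;\leq\;\frac{m}{n\log^C n}\cdot O(\sqrt{nm}),
\]
so $\|P_J\col_{j_0}(A-z)\|\lesssim m^{3/2}/\log^C n$, \emph{not} $\sqrt{m+1}/\log^{C'}n$. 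In the small-ball estimate this forces $\tau\approx m/\log^{C}n$, which is $\geq 1$ as soon as $m\geq\log^{C}n$, rendering $(C\tau)^{2(m+1)}$ useless. Even if you replace the Hilbert--Schmidt bound by the operator norm of the matrix with columns $P_J\col_k(A-z)$, $k\in I$, you only improve to $\tau\approx\sqrt{m}/\log^{C}n$ (since $\|P_J(e_k)_{k\in I}\|_{op}\leq 1$ and $|z|\leq C\sqrt n$ contribute $\sqrt n$ to the operator norm), which is still $\geq 1$ for $m\geq\log^{2C}n$. The union bound over $\binom{n}{m}$ sets then fails completely outside the range $m=O(\mathrm{polylog}\,n)$.

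The paper avoids this obstruction by never taking a test projection of rank $m+1$: it fixes $\q=\lceil\log^2 n\rceil$ and, via an averaging argument (Markov's inequality on $|I_\theta|^\q |J_\beta|$), reduces to estimating the probability that a \emph{single} column's projection is trapped in a small ellipsoid generated by $\q-1$ projected vectors $-z{\bf e}_i$ (Lemma~\ref{l: two conditions}). The contribution of the $-z{\bf e}_i$ terms --- precisely the source of your lossy $\sqrt{nm}$ factor --- is handled not by a norm bound but by a duality analysis of the ellipsoid $\Ellips(\Proj_F(-z{\bf e}_1),\dots,\Proj_F(-z{\bf e}_\q))$ and its polar (Subsections~\ref{subs: biorth}--\ref{subs: two types}), culminating in Proposition~\ref{prop: mn}. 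In that argument the ``volume'' of the ellipsoid and of its polar cancel, which is what recovers the correct dependence on $m$. Your scheme does not contain any mechanism of this kind, and without one it cannot reach $\sqrt{m}/n$.
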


In view of the above discussion, the estimates are optimal up to the polylogarithmic multiple $\log^Cn$.
The probability estimate can be strengthened to $1-e^{-\log^{C'}n}$
for any constant $C'>2$, at expense of increasing the constant $C$ in the theorems
(we would like to note here that the weaker lower bounds on $v_{i+1}^*$ obtained in \cite{RV no-gap,LuhRourke} hold
with higher probability than in our paper).

Theorem~B can be restated as follows:
with probability at least $1-e^{-\log^{2} n}$ any real eigenvector satisfies
$$\min\limits_{I\subset[n],\;|I|=m}\|\n_I\|_2\ge \frac{m^{3/2}}{n^{3/2}\log^{\widetilde C}n}\|\n\|_2\quad\mbox{ for every
$m\geq \log^{\widetilde C}n$},$$
where $\n_I=(v_i)_{i\in I}$ denotes the restriction of $\n$ to $I$.
Similarly, Theorem~C implies that with probability $1-e^{-\log^{2} n}$
we have
$$\min\limits_{I\subset[n],\;|I|=m}\|\n_I\|_2\ge \frac{m}{n\log^{\widetilde C}n}\|\n\|_2\quad\mbox{ for every
$m\geq \log^{\widetilde C}n$}.$$

\begin{rem}
Theorem~B provides no-gaps delocalization estimates only for real eigenvectors.
For complex eigenvectors of real matrices with i.i.d entries, the situation is not clear to us.
Analysis of simple cases, in particular, eigenvectors of real Gaussian matrices
corresponding to eigenvalues with big imaginary and small real parts, shows that
such an eigenvector $\n$ is
{\it more delocalized} in our sense, so that a much stronger lower bound for $v_i^*$
than ${c(n-i)}/{n^{3/2}}$ can be verified.
It is reasonable to expect similar phenomenon for any random non-symmetric matrix with normalized i.i.d
real
entries, that is, sharp no-gaps delocalization estimates should depend on the magnitudes of the
real and imaginary parts of the eigenvalues.
It is not clear if the method of this paper can be adapted to catch this property.
\end{rem}

\begin{rem}
It is natural to expect that the method of this paper can be developed to treat the case of matrices with
non-identically distributed entries and without a bounded subgaussian moment.
We have not explored that direction.
\end{rem}


\subsection{Overview of the proof of Theorems~A,~B and~C}
\label{ss:intro5}

Our proof of Theorem~A uses test projections (mentioned when discussing \cite{NV}) as a basic tool.
Additionally, we employ an averaging procedure which, in a different form, was used earlier in
\cite{NV}, as well as in
\cite{T square}.
The third ingredient of the proof of Theorem~A is a small ball probability estimate
for the smallest singular value of certain auxiliary rectangular matrices with a special structure.
Those estimates are given in Section~\ref{s: small b}.

The study of eigenvectors of random matrices with help of test projections, compared to the null vectors,
is significantly more involved. The main difficulty consists in estimating the magnitude of
projections of the coordinate vectors $-z {\bf e}_i$ which arise when considering matrices $\Az$.
A strategy for estimating the projections was proposed earlier in \cite{RV sup norm},
and was based on studying certain biorthogonal systems of random vectors
and a special choice of the kernel of the projection operator.
While some elements of that argument turn out extremely useful in our context,
certain aspects of the proof, in particular, selection of the {\it spectral window} \cite[Section~5]{RV sup norm},
do not seem applicable. In this paper, we develop a geometric approach based on studying
dimensions of the ellipsoids generated by projected columns of the matrix $\Az $, as well as
the dual ellipsoid. We conclude the introduction with a brief description of the method.

Let $\n=(v_1,\dots,v_n)$ be a unit eigenvector of $A$ corresponding to an
eigenvalue $z$. It can be shown that with a high probability
$\n$ is {\it incompressible} and, in particular, for some $\beta=\Theta(n^{-1/2})$
and $\ell$ a small constant proportion of $n$,
we have $\n^*_\ell\ge \beta$ (see \cite{RV}).
Our goal is to show that for $i\in[n/2,n-{\rm polylog}(n)]$ with a high probability $\n^*_{i+1}\geq \theta$ for $\theta=\theta(i)$
given by the theorems.
Thus, we need to estimate from above the probability of ``bad'' events of the form
$$
|v_j|\geq \beta\quad\text{and}\quad|v_{j_1}|,|v_{j_2}|,\dots,|v_{j_k}|\leq\theta
$$
for a fixed collection of distinct indices $j;j_1,j_2,\dots,j_k$.
The last condition deterministically implies that the orthogonal projection  $\Proj_F(\col_j(\Az ))$ of $\col_j(\Az )$ onto $F=\spn\{\col_q(\Az ):\,q\in[n]\setminus\{j;j_1,j_2,\dots,j_k\}\}^\perp$
is contained in the ellipsoid
$$\Big\{\sum\limits_{\ell=1}^k a_\ell\Proj_F(\col_{j_\ell}(\Az )):\;\|(a_1,\dots,a_k)\|_2\leq \frac{\theta\sqrt{k}}{\beta}\Big\}.$$
This elementary observation reduces the no-gaps delocalization estimates to bounding probabilities of events of the form
$\big\{X\in\Ellips\big\}$,
where $X$ is an appropriate random vector and $\Ellips$ is an appropriate random ellipsoid.
The actual reduction procedure that we use replaces no-gaps delocalization with {\it two} conditions
of that form:
\begin{itemize}
\item[(a)]
$\Proj_F(-z{\bf e}_j)\in
\Big\{\sum\limits_{\ell=1}^k a_\ell\Proj_F(-z{\bf e}_{j_\ell}):\;\|a\|_2\leq \frac{\theta\sqrt{k}}{\beta}\Big\}+B
\quad\mbox{and};$
\item[(b)]
$\Proj_F(\col_j(\Az ))\in\Big\{\sum\limits_{\ell=1}^k a_\ell\Proj_F(-z{\bf e}_{j_\ell})
:\;\|a\|_2\leq \frac{\theta\sqrt{k}}{\beta}\Big\}+B',$
\end{itemize}
where $B,B'$ are appropriate dilations of the Euclidean ball in $F$
(see Lemma~\ref{l: two conditions}).
The two conditions are treated using different techniques.
The first one can be restated in terms of the {\it dual basis} for $\Proj_F(-z{\bf e}_j),\Proj_F(-z{\bf e}_{j_\ell})$,
$\ell=1,2,\dots,k$ in $F$ as a statement about magnitudes of inner products
of the dual basis vectors with a specially chosen vector in $F$
(see Lemma~\ref{l: y existence} giving a ``raw'' deterministic statement).
That relation is one of basic elements of the proof and essentially taken from \cite{RV sup norm},
although not stated there explicitly. Bounding probability of (a) is then reduced to estimating probability of the
event of the form
$$\big\{\exists\, Y\in F:\;\|Y\|_2\leq T\mbox{,
$\langle Y_{j},Y\rangle=1$ and $|\langle Y_{j_\ell},Y\rangle|\leq \delta$}\big\},$$
where $Y_j,Y_{j_1},\dots,Y_{j_k}$ is the dual basis for $\Proj_F(-z{\bf e}_j),\Proj_F(-z{\bf e}_{j_1}),\dots,\Proj_F(-z{\bf e}_{j_k})$,
and $T,\delta>0$ are appropriate parameters.
In fact, for technical reasons, we work with perturbations of the dual basis, but
here we consider a simplified scheme.
Estimating probability of the last event is challenging, first, because the random
vectors $Y_j,Y_{j_1},\dots,Y_{j_k}$ do not have to be isotropic (i.e.\ the covariance structure may be not a multiple of identity)
and, second,
since existence of a vector $Y$ satisfying the conditions is not easy to restate in terms
of anti-concentration properties of $Y_j,Y_{j_\ell}$'s.
The first difficulty --- treatment of {\it anisotropic} random vectors --- was addressed in \cite{RV sup norm},
and we reuse some of the estimates from \cite{RV sup norm}, while adding new relations
(see Subsection~\ref{subs: two types}).
The second difficulty --- a reduction of the condition for $Y_j,Y_{j_\ell}$ and $Y$
to anti-concentration estimates for $Y_j,Y_{j_\ell}$'s ---
is resolved by constructing a special discretization of the set of admissible vectors $Y$,
and taking the union bound over the set. The principal issue in this part of the proof
is to find a decoupling that would allow to resolve probabilistic dependencies
between the admissible vectors $Y$ and vectors $Y_j,Y_{j_\ell}$'s
(otherwise, applying small ball probability estimates for $|\langle Y_{j_\ell},Y\rangle|$
would be impossible in a direct way). This is the central part of our argument
(see Lemmas~\ref{l: y reduction} and~\ref{l: aux decoupling}).

To bound the (conditional) probability of the event (b), given a realization
of the ellipsoid $\Ellips=\big\{\sum_{\ell=1}^k a_\ell\Proj_F(-z{\bf e}_{j_\ell})
:\;\|a\|_2\leq \frac{\theta\sqrt{k}}{\beta}\big\}$,
we utilize independence of $\col_j(A)$ from $F$ and $\Ellips$.
A straightforward argument estimating the probability in terms of the volume of $\Ellips$
turns out too rough for our purposes. The volume of $\Ellips$ may be large because of few long
semi-axes, while the probability of $\Proj_F(\col_j(\Az ))\in\Ellips+B'$ is still small.
In our estimate, we use that $\Proj_F(\col_j(A))$ has relatively small Euclidean norm
(of order $\sqrt{k}$ or slightly greater) with large probability,
and replace the ellipsoid $\Ellips$ with its ``truncation'' obtained by intersecting with a Euclidean ball
centered at $\Proj_F(-z{\bf e}_j)$ (see Lemma~\ref{l: aux aponfa;sa32r}).

Our proof uses several discretizations ---
of the family of ellipsoids; of basic sequences; of vectors in a given linear subspace (see Definitions \ref{page: elclass}, \ref{page: disclass}, \ref{page: disccollec} of classes $\elclass(R,{\bf b})$, $\disclass(r ,{\bf p})$, and $\disccollec(\W_\q,\delta',T)$).
Some other aspects of the proof not mentioned here are discussed at the beginning of Section~\ref{s: eigenvectors}.

\section{Preliminaries}
\label{s:prelim}

Let us start by introducing notation.
Given a finite set $I$, we denote by $|I|$ its cardinality. For a positive integer $k$, we write $[k]$ for the set $\{1,2,\dots,k\}$.
By ${\bf 1}_A$ we denote the indicator of an event or a subset $A$.
Everywhere in the text, $\CorR$ denotes either the field of real numbers $\R$, or complex numbers $\C$.
For a complex number $z$, we denote by $\bar z$ the conjugate of $z$;
$\cxi$ stands for imaginary unit.
The canonical inner product in $\CorR^n$ is denoted by $\langle\cdot,\cdot\rangle$,
and the standard Euclidean norm in $\CorR^n$ --- by $\|\cdot\|_2$.
We denote the sup-norm (or $\ell_\infty$--norm) by $\|\cdot\|_\infty$.
For an $m\times n$ matrix $B$, its columns are denoted by $\col_i(B)\in\CorR^{m}$, $i\in[n]$.
When the matrix is clear from the context, we simply write $\col_i$ instead of $\col_i(B)$.
The spectral norm of $B$ is denoted by $\|B\|$,
and its Hilbert--Schmidt norm --- by $\|B\|_{HS}$.
The conjugate transpose of a matrix $B$ will be denoted by $B^*$.
For a linear subspace $E$ of $\CorR^k$, we write $E^\perp$
for its orthogonal complement. We will write $\Proj_E:\CorR^k\to\CorR^k$
for the orthogonal projection operator onto $E$.
Given two subsets $S_1,S_2$ of $\CorR^n$, we write $\d(S_1,S_2)$ for the Euclidean distance between $S_1$ and $S_2$.
Further, we write $S_1+S_2$ for the {\it Minkowski sum} of $S_1$ and $S_2$ defined as
$S_1+S_2:=\{x+y:\;x\in S_1,\;y\in S_2\}$.
Let $B_2^k(\CorR)$ (resp., $S^{k-1}(\CorR)$) be the unit Euclidean ball
(resp., Euclidean sphere) in $\CorR^k$. We also write $B_2^E:=B_2\cap E$ for the unit Euclidean ball in a linear subspace $E$.

Given a real random variable $\xi$, let $\cf(\xi,\cdot)$ be its {\it L\'evy concentration function} defined by
$$\cf(\xi,t):=\sup\limits_{a\in\CorR}\Prob\big\{|\xi-a|\leq t\big\},\quad t>0.$$
More generally, for a random vector $X$ in $\CorR^m$ let
$$\cf(X,t):=\sup\limits_{W\in\CorR^m}\Prob\big\{\|X-W\|_2\leq t\big\},\quad t>0.$$

Following \cite{RV no-gap}
for any complex vector $W\in\C^m$ we define $\real(W)$ as a vector in $\R^{2m}$
of the form $\Re(W)\oplus \Im(W)$, where $\Re(W)$ and $\Im(W)$ is the real and imaginary part of $W$,
respectively, and $\oplus$ denotes vector concatenation.
Further, for any complex subspace $E\subset \C^m$ let
$\real(E)\subset\R^{2m}$ be the subspace defined as $\real(E):=\{\real(W):\;W\in E\}$.
The following relation, taken from \cite{RV no-gap}, holds:
$$
\Proj_{\real(E)}(\real(Y))=\real(\Proj_E(Y))\;\;\mbox{for any complex subspace }E\subset\C^m,\; Y\in\C^m.
$$
The next lemma will be useful
\begin{lemma}\label{l: real ell}
Let $\Ellips$ be a $k$--dimensional ellipsoid in $\C^n$ defined as
$$\Ellips:=\Big\{\sum\limits_{i=1}^k a_i X_i:\;\|(a_1,\dots,a_k)\|_2\leq 1\Big\},$$
where $\{X_1,\dots,X_k\}$ is a collection of pairwise orthogonal vectors in $\C^n$.
Then $\real(\Ellips)$ is a $2k$--dimensional ellipsoid in $\R^{2n}$ with semi-axes
$\real(X_1)$, $\real(\cxi X_1),\dots,$ $\real(X_k)$, $\real(\cxi X_k)$.
\end{lemma}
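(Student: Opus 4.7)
The plan is to show that under the $\real$ map, the complex linear combination $\sum_{i=1}^k a_i X_i$ with $\|(a_i)\|_2\le 1$ becomes a real linear combination of $\real(X_i)$ and $\real(\cxi X_i)$ with coefficients lying in the unit ball of $\R^{2k}$, and then verify that these $2k$ vectors are pairwise orthogonal in $\R^{2n}$ (of equal pairwise-norms $\|X_i\|_2$), which is exactly the definition of a $2k$-dimensional ellipsoid with the claimed semi-axes.

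First, I would unpack the parameterization. Writing $a_i = \alpha_i+\cxi\beta_i$ with $\alpha_i,\beta_i\in\R$, one has $a_iX_i=\alpha_iX_i+\beta_i(\cxi X_i)$, and since the map $\real:\C^n\to\R^{2n}$ is $\R$-linear,
$$\real\Big(\sum_{i=1}^k a_iX_i\Big)=\sum_{i=1}^k\big(\alpha_i\,\real(X_i)+\beta_i\,\real(\cxi X_i)\big).$$
Moreover $\|(a_1,\dots,a_k)\|_2^2=\sum_{i=1}^k(\alpha_i^2+\beta_i^2)$, so as $(a_i)$ ranges over the unit ball of $\C^k$, the tuple $(\alpha_1,\beta_1,\dots,\alpha_k,\beta_k)$ ranges over the unit ball of $\R^{2k}$. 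Therefore
$$\real(\Ellips)=\Big\{\sum_{i=1}^k\big(\alpha_i\,\real(X_i)+\beta_i\,\real(\cxi X_i)\big):\;\sum_{i=1}^k(\alpha_i^2+\beta_i^2)\le 1\Big\}.$$

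Next, I would establish the key identity that for $Y,Z\in\C^n$,
$$\langle\real(Y),\real(Z)\rangle_{\R^{2n}}=\Re\langle Y,Z\rangle_{\C^n},$$
which is immediate from the definition $\real(W)=\Re(W)\oplus\Im(W)$. Using $\langle\cxi Y,Z\rangle_{\C}=\cxi\langle Y,Z\rangle_{\C}$ and $\langle Y,\cxi Z\rangle_{\C}=-\cxi\langle Y,Z\rangle_{\C}$, I obtain
$$\langle\real(\cxi Y),\real(Z)\rangle=-\Im\langle Y,Z\rangle_{\C},\quad \langle\real(Y),\real(\cxi Z)\rangle=\Im\langle Y,Z\rangle_{\C},\quad \langle\real(\cxi Y),\real(\cxi Z)\rangle=\Re\langle Y,Z\rangle_{\C}.$$
Applying these with $Y=X_i$, $Z=X_j$ for $i\ne j$, the hypothesis $\langle X_i,X_j\rangle_{\C}=0$ kills all four inner products. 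For $i=j$ the only nontrivial cross-term is $\langle\real(X_i),\real(\cxi X_i)\rangle=\Im\langle X_i,X_i\rangle_{\C}=0$ since $\|X_i\|_{\C}^2$ is real. Finally, the norms satisfy $\|\real(X_i)\|_2=\|X_i\|_{\C}=\|\cxi X_i\|_{\C}=\|\real(\cxi X_i)\|_2$.

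Thus the $2k$ vectors $\real(X_1),\real(\cxi X_1),\dots,\real(X_k),\real(\cxi X_k)$ are pairwise orthogonal in $\R^{2n}$, and when the $X_i$'s are all nonzero (the lemma implicitly assumes $\dim\Ellips=k$, equivalently that no $X_i$ vanishes) they are linearly independent. Combined with the parameterization established above, this identifies $\real(\Ellips)$ with the image of the unit ball of $\R^{2k}$ under an $\R$-linear injection whose coordinate axes are mapped to the listed orthogonal vectors, which is precisely the definition of a $2k$-dimensional ellipsoid with the claimed semi-axes. No step is substantive enough to be an obstacle — the whole lemma reduces to the four inner-product identities for $\real$, and the role of $\cxi$ is just to account for the complex dimension doubling.
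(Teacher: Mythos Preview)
Your proof is correct. The paper states this lemma without proof, treating it as an elementary fact; your argument supplies exactly the natural verification—parameterize via real and imaginary parts of the $a_i$, then use the identity $\langle\real(Y),\real(Z)\rangle_{\R^{2n}}=\Re\langle Y,Z\rangle_{\C^n}$ to check pairwise orthogonality of the $2k$ vectors $\real(X_i),\real(\cxi X_i)$.
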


Given a random vector $X$ in $\CorR^m$, we say that $X$ is {\it isotropic} if
any one-dimensional projection $\langle X,{\bf w}\rangle$ (${\bf w}\in S^{m-1}(\CorR)$) has zero mean
and $\Exp|\langle X,{\bf w}\rangle|^2=1$.
In particular, a complex vector with independent coordinates
of zero mean and unit absolute second moment (with i.i.d real and imaginary parts) is isotropic.
A variable $\xi$ in $\CorR$ is {\it $K$--subgaussian} if $(\Exp|\xi|^p)^{1/p}\leq K\sqrt{p}$ for all $p\geq 1$.
The definition of a $K$--subgaussian variable admits several equivalent formulations;
see, for example, \cite[Section~5.2.3]{V 12}.

Throughout the text, we deal with two models of random vectors $X$:
\begin{equation}\label{Asmp 1}\tag{{\bf *}}
\mbox{$X$ is real isotropic with i.i.d.\ $K$--subgaussian coordinates, if $\CorR=\R$},
\end{equation}
and
\begin{equation}\label{Asmp 2}\tag{{\bf **}}
\begin{split}\mbox{$X$ is complex isotropic with i.i.d.\ $K$--subgaussian coordinates,}\\
\mbox{each coordinate with i.i.d.\ real and imaginary parts, if $\CorR=\C$.}
\end{split}
\end{equation}

\medskip

Following \cite{RV, RV smallest}, for any $\delta,\rho>0$, we define the set of {\it incompressible} unit vectors
$$
\Incomp_n(\delta,\rho):=\Big\{X\in S^{n-1}(\CorR):\, \d\big(X,\{Y\in \CorR^n:\, |\supp Y|\le \delta n\}\big)>\rho\Big\}.
$$
The incompressible vectors naturally appear in kernels of ``almost square'' random matrices with independent entries.
The next standard lemma can be verified with help of an $\varepsilon$--net argument
(see \cite{LPRT,RV,RV smallest}) and elementary properties of incompressible vectors
\cite[Lemma~3.4]{RV}:
\begin{lemma}\label{l: incomp in ker}
For any $\mathcal M\geq 1$ and $K\geq 1$ there are $\zeta>0$, $c>0$ depending only on $\mathcal M$ and $K$
with the following property.
Let $m\leq n\leq 2m$ and let $W$ be an $m\times n$ random matrix with i.i.d.\ rows
satisfying \eqref{Asmp 1} or \eqref{Asmp 2}.
Further, let $M\in\CorR^{m\times n}$ be any fixed matrix of spectral norm at most $\mathcal M\,\sqrt{n}$.
Then with probability at least $1-2e^{-cn}$
$$\|(W+M)Y\|_2\geq c\sqrt{n}\quad\mbox{ for any }\;\;Y\in S^{n-1}(\CorR)\setminus\Incomp_n(\zeta,\zeta),$$
and, in particular,
$$\|(W+M)Y\|_2\geq c\sqrt{n}\quad\mbox{ for any }\;\;Y\in S^{n-1}(\CorR)\mbox{ with }
\big|\big\{i\leq n:\;|Y_i|\geq\zeta/\sqrt{n}\big\}\big|< \zeta n.$$
\end{lemma}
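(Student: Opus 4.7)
The proof will be a classical $\varepsilon$-net argument on the set of compressible vectors, following the general framework of \cite{LPRT, RV smallest}: discretize the union of ``almost sparse'' unit vectors, apply a small-ball bound at each net point, and absorb the approximation error via a high-probability bound on $\|W+M\|$. The ``in particular'' assertion is a direct consequence of the first one: if $Y\in S^{n-1}(\CorR)$ has fewer than $\zeta n$ coordinates of absolute value at least $\zeta/\sqrt n$, then replacing those small coordinates by zero produces $\tilde Y$ with $|\supp\tilde Y|<\zeta n$ and $\|Y-\tilde Y\|_2\le\zeta$, so $Y\notin\Incomp_n(\zeta,\zeta)$.

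For a fixed unit vector $Z\in\CorR^n$, the coordinates of $(W+M)Z$ are independent $K$-subgaussian random variables, each of variance at least $1$ (shifted by the deterministic scalar $(MZ)_i$). Standard L\'evy concentration for a unit-variance subgaussian gives $\cf\!\bigl(\langle\mathrm{row}_i(W),Z\rangle+(MZ)_i,\,t\bigr)\le C_1 t$ for $t\in(0,1]$, and a pigeonhole tensorization (if $\|(W+M)Z\|_2\le s\sqrt n$ then at least $n/2$ coordinates have magnitude $\le 2s$) then yields
$$
\Prob\bigl\{\|(W+M)Z\|_2\le s\sqrt n\bigr\}\le (C_2 s)^{c_2 n},\qquad s\in(0,1],
$$
with $C_2,c_2>0$ depending only on $K$.

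Next I would cover the compressible set $S^{n-1}(\CorR)\setminus\Incomp_n(\zeta,\zeta)$ by a net. By definition it is contained in the $\zeta$-neighbourhood of $\bigcup_{|I|\le\zeta n}\{Z\in\CorR^n:\supp Z\subset I,\ \|Z\|_2\le 1\}$. Each coordinate unit ball admits an $\varepsilon$-net of size at most $(C_3/\varepsilon)^{2\zeta n}$ (the factor $2$ covers the complex case), and there are at most $\binom{n}{\lfloor\zeta n\rfloor}\le (e/\zeta)^{\zeta n}$ supports, so we obtain a net $\mathcal N$ of cardinality at most $(C_4/(\zeta\varepsilon))^{2\zeta n}$ at Euclidean scale $\zeta+\varepsilon$. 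A union bound applied to the previous display gives
$$
\Prob\Bigl\{\min_{Z\in\mathcal N}\|(W+M)Z\|_2\le s\sqrt n\Bigr\}\le (C_4/(\zeta\varepsilon))^{2\zeta n}\,(C_2 s)^{c_2 n},
$$
which is at most $e^{-cn}$ provided $s$ is first fixed as a small constant (depending on $\mathcal M,K$) and then $\zeta,\varepsilon$ are chosen sufficiently small. Combining with the standard spectral-norm estimate $\|W\|\le C_5(K)\sqrt n$, valid off an event of probability at most $e^{-c_6 n}$, so that $\|W+M\|\le (C_5+\mathcal M)\sqrt n$, the approximation
$$
\|(W+M)Y\|_2\ge\|(W+M)Z\|_2-\|W+M\|\,(\zeta+\varepsilon)\ge \bigl(s-(C_5+\mathcal M)(\zeta+\varepsilon)\bigr)\sqrt n
$$
for any compressible $Y$ and a closest $Z\in\mathcal N$ yields the desired lower bound after one further tightening of $\zeta,\varepsilon$.

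The only delicate point is the balancing in the union bound: the combinatorial entropy of order $\zeta\log(1/\zeta)\,n$ coming from the choice of sparse supports, together with the internal $\varepsilon$-net entropy, must be dominated by the small-ball exponent $c_2 n\log(1/s)$, while $s$ simultaneously has to be small compared to $(C_5+\mathcal M)^{-1}$ to make the approximation step give a positive lower bound. Since $s$ is a constant depending only on $\mathcal M$ and $K$ and $\zeta$ can then be taken arbitrarily small, this is quantitatively routine, but it fixes the dependence of $\zeta$ and $c$ on $\mathcal M$ and $K$ in the statement.
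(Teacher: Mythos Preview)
Your approach is exactly what the paper has in mind: the lemma is not proved in the text but is declared standard and attributed to the $\varepsilon$--net argument of \cite{LPRT,RV,RV smallest} together with \cite[Lemma~3.4]{RV} for the ``in particular'' clause. Your derivation of the second assertion from the first, the net construction on compressible vectors, the union bound, and the approximation via the operator norm are all the intended steps.

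There is one overstatement worth correcting. The claimed pointwise bound $\cf\bigl(\langle\mathrm{row}_i(W),Z\rangle+(MZ)_i,\,t\bigr)\le C_1 t$ for all $t\in(0,1]$ is false for general subgaussian entries: if the entries are $\pm1$ Bernoulli and $Z={\bf e}_1$, the inner product is a single Bernoulli variable, whose concentration function is bounded below by $1/2$ for all small $t$. What \emph{is} true, and is all the argument actually uses, is the weaker uniform bound
\[
\sup_{a\in\CorR}\Prob\bigl\{|\langle\mathrm{row}_i(W),Z\rangle-a|\le c_0\bigr\}\le 1-p_0
\]
for constants $c_0,p_0>0$ depending only on $K$ (this follows from unit variance plus subgaussianity). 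A Chernoff bound on the number of coordinates of $(W+M)Z$ exceeding $c_0$ in absolute value then gives $\Prob\{\|(W+M)Z\|_2\le s_0\sqrt n\}\le e^{-c_2 n}$ for a \emph{fixed} $s_0=s_0(K)$, rather than for a range of $s$. Since you only apply the small-ball bound at a single constant $s$ anyway, the rest of your balancing goes through unchanged: fix $s_0,c_2$ from the small-ball step, then choose $\varepsilon$ small relative to $s_0/(C_5+\mathcal M)$, then $\zeta$ small enough that $2\zeta\log\bigl(C_4/(\zeta\varepsilon)\bigr)<c_2/2$.
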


The next notion and two theorems are taken from \cite{RV,RV smallest}.
Given a {\it real} vector $X\in\R^n$, and parameters $\alpha>0$ and $\gamma\in(0,1)$,
{\it the least common denominator (LCD)} of $X$ is given by
$$
\LCD_{\alpha,\gamma}(X):=\inf\big\{\theta>0:\;\d(\theta X,\, \Z^n)<\min(\gamma \|\theta X\|_2,\alpha)\big\}.
$$
Moreover, if $E$ is a linear subspace of $\R^n$, then the LCD of $E$ is defined as follows:
\begin{align*}
\LCD_{\alpha,\gamma}(E)&:=
\inf\big\{\LCD_{\alpha,\gamma}(X):\;X\in E\cap S^{n-1}(\R)\big\}\\
&=\inf\big\{\|Y\|_2:\, Y\in E,\, \d(Y,\, \Z^n)<\min(\gamma \|Y\|_2,\alpha)\big\}.
\end{align*}

\begin{theor}[{Distance to a subspace, \cite[Theorem 4.2]{RV smallest}}]\label{th: 4.2}
Let $X$ be a random vector in $\R^n$ satisfying \eqref{Asmp 1} with a parameter $K$.
Let $E$ be a subset of $\R^n$ with $m:=\codim E<n$.
Then for every $Y\in \R^n$, any
$\alpha>0$, $\gamma\in (0,1)$ and for all $t\ge {m}/\LCD_{\alpha,\gamma}(E^\perp)$, we have
$$
\Prob\{\d(X,\, E+Y)\le t\}\le \bigg(\frac{C t}{\gamma\sqrt{m}}\bigg)^{m}+C^{m} e^{-c\alpha^2},
$$
where $c,C>0$ depend only on $K$. In particular, 
for any unit vector $Z\in\R^n$  and any $t>0$ we have
$$
\Prob\{ |\langle X,\,Z\rangle|\le t\}\le \frac{C t}{\gamma}+C e^{-c\alpha^2}+\frac{C}{\gamma\,\LCD_{\alpha,\gamma}(Z)}.
$$
\end{theor}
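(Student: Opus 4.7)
The plan is to reduce the statement to a small-ball probability estimate for the projection $\Proj_{E^\perp}(X)$ in the $m$-dimensional subspace $E^\perp$, and then to apply Esseen's multidimensional concentration inequality. First I would note that $\d(X, E+Y) = \|\Proj_{E^\perp}(X) - \Proj_{E^\perp}(Y)\|_2$, so the problem is equivalent to bounding $\cf(\Proj_{E^\perp}(X), t)$. Fixing an orthonormal basis $v_1,\dots,v_m$ of $E^\perp$ and setting $Z := (\langle X, v_j\rangle)_{j=1}^m \in \R^m$, Esseen's inequality reduces matters to estimating an integral of $|\varphi_Z|$ over a Euclidean ball of radius of order $\sqrt{m}/t$, with a prefactor of order $(Ct/\sqrt{m})^m$.

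The next step is a pointwise bound on $|\varphi_Z|$. By independence of the coordinates of $X$ one has $\varphi_Z(u) = \prod_{k=1}^n \varphi_{X_1}(w_k)$, where $w := \sum_{j=1}^m u_j v_j \in E^\perp \subset \R^n$ satisfies $\|w\|_2 = \|u\|_2$; this reparametrizes the integration from $u \in \R^m$ to $w \in E^\perp$. Combining the classical subgaussian estimate $|\varphi_{X_1}(s)|^2 \leq \exp(-c\min(s^2,1))$ with the elementary coordinate-wise bound $\min(w_k^2, 1) \geq \d(w_k, \Z)^2$ and the identity $\d(w, \Z^n)^2 = \sum_k \d(w_k, \Z)^2$ yields
$$|\varphi_Z(u)|\leq \exp\bigl(-c\,\d(w,\Z^n)^2\bigr).$$

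The LCD now enters through the following dichotomy. The assumption $t\geq m/\LCD_{\alpha,\gamma}(E^\perp)$ forces $\sqrt{m}/t \leq \LCD_{\alpha,\gamma}(E^\perp)/\sqrt{m} \leq \LCD_{\alpha,\gamma}(E^\perp)$, so every $w$ appearing in the integral satisfies $\|w\|_2 < \LCD_{\alpha,\gamma}(E^\perp)$, and the definition of the LCD then gives $\d(w, \Z^n) \geq \min(\gamma\|w\|_2, \alpha)$. Splitting the region of integration into $\gamma\|w\|_2 \leq \alpha$ and $\gamma\|w\|_2 > \alpha$ and using the bounds $|\varphi_Z| \leq \exp(-c\gamma^2\|w\|_2^2)$ and $|\varphi_Z| \leq \exp(-c\alpha^2)$ respectively, the Gaussian integral $\int_{E^\perp}\exp(-c\gamma^2\|w\|_2^2)\,dw = (C/\gamma)^m$ together with the Esseen prefactor produces the two contributions $(Ct/(\gamma\sqrt{m}))^m$ and $C^m e^{-c\alpha^2}$. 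The ``in particular'' assertion about $|\langle X, Z\rangle|$ follows by specializing to $E = Z^\perp$, $m=1$.

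The main obstacle I anticipate is the dimensional bookkeeping: aligning the precise form of Esseen's inequality with the LCD threshold $t\geq m/\LCD$ and with the normalization $(Ct/(\gamma\sqrt{m}))^m$ in the conclusion. The delicate point is choosing the scaling so that the ball of integration sits below $\LCD_{\alpha,\gamma}(E^\perp)$ — which is exactly what the hypothesis on $t$ guarantees — and then controlling the error $C^m e^{-c\alpha^2}$ uniformly over the portion of the ball where $\gamma\|w\|_2 > \alpha$, in which regime the Gaussian decay of $|\varphi_Z|$ must be replaced by a flat bound.
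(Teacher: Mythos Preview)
The paper does not prove this theorem; it is quoted from \cite{RV smallest} as a preliminary result. So the comparison is with the original Rudelson--Vershynin argument, whose architecture---reduce to $\cf(\Proj_{E^\perp}X,t)$, apply Esseen, bound the characteristic function, and feed in the LCD---you have reproduced correctly.

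There is, however, a genuine gap in your pointwise bound on the characteristic function. The ``classical subgaussian estimate'' $|\varphi_{X_1}(s)|^2\le\exp(-c\min(s^2,1))$ is \emph{false} under assumption \eqref{Asmp 1}: for the Rademacher variable $X_1=\pm1$ one has $\varphi_{X_1}(2\pi k)=1$ for every integer $k$. Subgaussianity with unit variance gives the quadratic decay $|\varphi_{X_1}(s)|\le 1-cs^2$ only for small $s$; there is no uniform bound away from $1$ for $|s|\ge 1$. In fact, if your bound held, the LCD hypothesis would be superfluous: integrating $\exp(-c\sum_k\min(w_k^2,1))$ already yields $(Ct/\sqrt{m})^m$ with no restriction on $t$ and no $\gamma$ or $\alpha$. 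Your subsequent step $\min(w_k^2,1)\ge\d(w_k,\Z)^2$ is a valid inequality but is going the wrong way conceptually---you are weakening a (false) strong bound to reach the lattice distance, whereas the lattice distance is where the honest argument \emph{begins}.

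The correct route, as in \cite{RV smallest}, is to symmetrize: with $\bar\xi:=X_1-X_1'$ one has $|\varphi_{X_1}(s)|^2=\E\cos(s\bar\xi)$, and then one conditions on the event $\{|\bar\xi|\in[c_1,c_2]\}$ (which has probability bounded below by a constant $p=p(K)>0$ thanks to unit variance and the subgaussian tail). On that event the elementary bound $1-\cos\theta\ge c\,\d(\theta/2\pi,\Z)^2$ produces
\[
|\varphi_{X_1}(s)|^2\le 1-cp\,\E\big[\d(s\bar\xi/2\pi,\Z)^2\;\big|\;|\bar\xi|\in[c_1,c_2]\big],
\]
and taking the product over coordinates and then the infimum over the conditioning variable is what naturally yields $\exp(-c\,\d(\theta w,\Z^n)^2)$ for some $\theta$ in a fixed interval. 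The LCD of $E^\perp$ then controls this distance from below exactly as you describe in your dichotomy. Once you replace your characteristic-function step with this symmetrization, the rest of your outline goes through.
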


\begin{theor}[{Structure of a random subspace, \cite[Theorem~4.3]{RV smallest}}] \label{th: 4.3}
For any $K,\mathcal M\geq 1$ there exist $c_{\text{\tiny\ref{th: 4.3}}},C_{\text{\tiny\ref{th: 4.3}}}>0$ depending only on $K$
and $\mathcal M$ with the following property.
Let $n\geq C_{\text{\tiny\ref{th: 4.3}}}$,
$n> m\geq n-c_{\text{\tiny\ref{th: 4.3}}} n$.
Let $X_1,\dots, X_{m}$ be mutually independent random vectors in $\R^n$
satisfying \eqref{Asmp 1} with parameter $K$.
Further, let $Y_1,\dots,Y_m$ be fixed vectors in $\R^n$
such that the spectral norm of the $n\times m$
matrix with columns $Y_1,\dots,Y_m$ is bounded above by $\mathcal M\sqrt{n}$.
Then for $E:=\spn\{X_1+Y_1,X_2+Y_2,\dots,X_{m}+Y_m\}$ we have
$$
\Prob\big\{\LCD_{c_{\text{\tiny\ref{th: 4.3}}} \sqrt{n},c_{\text{\tiny\ref{th: 4.3}}}}(E^\perp)\le
\sqrt{n}e^{c_{\text{\tiny\ref{th: 4.3}}} n/(n-m)}\big\}\le e^{-c_{\text{\tiny\ref{th: 4.3}}} n}.
$$
\end{theor}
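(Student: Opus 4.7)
The plan is to bound the probability that the orthogonal complement $E^\perp$, a subspace of dimension $n-m \le c_{\text{\tiny\ref{th: 4.3}}}\,n$, contains a unit vector with small least common denominator, using the now-standard $\varepsilon$-net plus tensorization strategy. Let $M$ denote the $n\times m$ matrix with columns $X_i + Y_i$, so that $E = \col(M)$ and $E^\perp = \ker(M^T)$; membership in $E^\perp$ is the condition $\langle X_i + Y_i, Z\rangle = 0$ for every $i\in[m]$. Fix $\alpha = c\sqrt{n}$ and $\gamma = c$ for a suitably small constant $c>0$, and write $D_0 := \sqrt{n}\,e^{cn/(n-m)}$ for the LCD threshold to rule out.

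First I would control the L\'evy concentration for a fixed $Z \in S^{n-1}(\R)$ with $\LCD_{\alpha,\gamma}(Z) \le D$. For each $i$, the one-dimensional case of Theorem~\ref{th: 4.2}, combined with the fact that the deterministic shift $\langle Y_i, Z\rangle$ is absorbed by the Lévy concentration function, gives
$$\Prob\bigl\{|\langle X_i, Z\rangle + \langle Y_i, Z\rangle| \le t\bigr\} \le \frac{Ct}{\gamma} + C e^{-c\alpha^2} + \frac{C}{\gamma D},\qquad t>0.$$
Since the $X_i$'s are independent, the standard tensorization lemma yields
$$\Prob\bigl\{\|M^T Z\|_2 \le t\sqrt{m}\bigr\} \le \Bigl(\frac{C_1 t}{\gamma} + C_1 e^{-c\alpha^2} + \frac{C_1}{\gamma D}\Bigr)^m.$$
With $\alpha = c\sqrt{n}$ the Gaussian term is $e^{-c n}$, and choosing $t \sim 1/D$ reduces the right-hand side to $(C_2/D)^m + e^{-c n}$.

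The next step is a dyadic decomposition of the set of \emph{bad} unit vectors. For each $D$ in a dyadic grid covering $[\sqrt{n}, D_0]$, define $S_D := \{Z \in S^{n-1}(\R): \LCD_{\alpha,\gamma}(Z) \in [D, 2D]\}$. The classical LCD-net lemma (in the form used throughout \cite{RV smallest}) produces a Euclidean $\varepsilon_D$-net $\mathcal{N}_D$ of $S_D$ with $\varepsilon_D \sim 1/D$ and cardinality at most $(CD/\sqrt{n})^n$. On the high-probability event $\{\|M\|\le C'\sqrt{n}\}$ --- valid with probability $\ge 1-e^{-c n}$ in view of $\|Y\|\le\mathcal{M}\sqrt{n}$ and the subgaussian spectral norm bound for the i.i.d.\ part --- any $Z \in E^\perp \cap S_D$ lies within $\varepsilon_D$ of some net point $\tilde Z$ with $\|M^T\tilde Z\|_2 \le C'\sqrt{n}\,\varepsilon_D$. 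A union bound over $\mathcal{N}_D$ then yields
$$\Prob\bigl\{E^\perp \cap S_D \ne \emptyset\bigr\} \le \Bigl(\frac{CD}{\sqrt{n}}\Bigr)^{\! n}\Bigl(\frac{C_2}{D}\Bigr)^{\! m} + e^{-c n} = C_3^n\,\frac{D^{n-m}}{n^{n/2}} + e^{-c n},$$
and summing over the $O(n/(n-m))$ dyadic scales $D\le D_0$ produces the desired $e^{-c n}$ bound.

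The main obstacle is calibrating the constants so that the net growth $D^n$ is beaten by the small-ball decay $D^{-m}$. Since $n-m\le c_{\text{\tiny\ref{th: 4.3}}} n$ and $D\le D_0 = \sqrt{n}\,e^{cn/(n-m)}$, one has $D^{n-m}\le n^{(n-m)/2}\,e^{cn}$, so the dominant factor $C_3^n D^{n-m} n^{-n/2}$ is at most $C_3^n n^{-m/2} e^{cn} \le e^{-c' n}$ once $c$ is chosen small enough relative to $c_{\text{\tiny\ref{th: 4.3}}}$ and $n$ is large. A secondary subtlety is that the LCD-net lemma governs only vectors whose LCD exceeds $\sqrt{n}$; compressible vectors and vectors with $\LCD_{\alpha,\gamma}(Z) \le \sqrt{n}$ must be handled separately, which is precisely the content of Lemma~\ref{l: incomp in ker} applied to $M^T$ together with the stronger (and easier) form of the net/small-ball estimate in that range.
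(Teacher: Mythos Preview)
The paper does not supply its own proof of this statement: it is quoted in the Preliminaries section as \cite[Theorem~4.3]{RV smallest} and used as a black box. Your sketch is essentially the argument of Rudelson--Vershynin in that reference (small-ball via LCD + tensorization, LCD level-set nets of cardinality $(CD/\sqrt{n})^n$, dyadic union bound, with compressible vectors handled separately), so it matches the intended source proof.
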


The next result is essentially proved for centered matrices in \cite{RV}; see \cite[Lemma~5.8]{RV}
and \cite[Lemma~4.8]{RV smallest}.
For non-centered matrices, the statement follows by a straightforward modification of the argument.
\begin{theor}\label{th: LCD of small}
For any $K,\mathcal M>0$ there are $C_{\text{\tiny\ref{th: LCD of small}}},c_{\text{\tiny\ref{th: LCD of small}}}>0$
depending only on $K,\mathcal M$ with the following property.
Let $m\geq C_{\text{\tiny\ref{th: LCD of small}}}$
and let $B$ be an $m\times m$ random matrix with i.i.d $K$--subgaussian real entries
of zero mean and unit variance. Further, let $M$ be any fixed $m\times m$ real matrix with $\|M\|\leq \mathcal M\sqrt{m}$.
Then with probability at least $1-e^{-c_{\text{\tiny\ref{th: LCD of small}}}m}$ we have
$$\|(B+M)Z\|_2\geq \frac{c_{\text{\tiny\ref{th: LCD of small}}}m}
{\max(\sqrt{m},
\LCD_{c_{\text{\tiny\ref{th: LCD of small}}} \sqrt{m},c_{\text{\tiny\ref{th: LCD of small}}}}(Z))}\quad\mbox{for any }Z\in S^{m-1}(\R).$$
\end{theor}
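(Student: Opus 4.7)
The plan is to adapt the LCD stratification proof of the smallest singular value bound from \cite{RV,RV smallest} to the perturbed matrix $B+M$, splitting the sphere according to the value of $D:=\LCD_{c_0\sqrt{m},c_0}(Z)$ for appropriately chosen constants $\alpha=c_0\sqrt{m}$ and $\gamma=c_0$. The only new observation required for the non-centered case is that the L\'evy concentration function of each coordinate of $(B+M)Z$ equals that of the corresponding coordinate of $BZ$, because translating a random variable by a deterministic constant does not change its concentration function. Consequently Theorem~\ref{th: 4.2} applies verbatim to the rows of $B+M$, and on the high-probability event $\|B\|\leq C_1\sqrt{m}$ (which has probability $1-e^{-cm}$ by standard tail bounds for subgaussian matrices) one also has $\|B+M\|\leq (C_1+\mathcal{M})\sqrt{m}$.

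For compressible vectors, equivalently for $Z$ with $D\leq c_0\sqrt{m}$, Lemma~\ref{l: incomp in ker} applied with $W=B$, $n=m$, and the given matrix $M$ as the deterministic part gives $\|(B+M)Z\|_2\geq c\sqrt{m}$ uniformly with probability $1-e^{-cm}$. Since $\max(\sqrt{m},D)=\sqrt{m}$ in this regime, the desired bound $\|(B+M)Z\|_2\geq cm/\sqrt{m}=c\sqrt{m}$ follows immediately.

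For incompressible $Z$ we stratify by dyadic level sets $S_{D_0}:=\{Z\in S^{m-1}(\R):\,D_0\leq D<2D_0\}$ with $D_0$ ranging over a dyadic grid in $[c_0\sqrt{m},\,\sqrt{m}\,e^{c_1 m}]$. The pointwise small-ball estimate is obtained by applying Theorem~\ref{th: 4.2} in its full codim-$m$ form: one identifies $\|(B+M)Z\|_2=\d_{HS}(B,\,V+(-M))$ where $V:=\{C\in\R^{m\times m}:\,CZ=0\}$ has codimension $m$ in $\R^{m\times m}$, and verifies that $\LCD_{c_0\sqrt{m},c_0}(V^\perp)=D$ (the infimum over unit vectors in the rank-one subspace $V^\perp=\{cZ^T:c\in\R^m\}$ is attained at $c=e_i$ and equals $\LCD_{c_0\sqrt{m},c_0}(Z)$). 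With $t=c_2 m/D$, Theorem~\ref{th: 4.2} then gives
$$\Prob\bigl\{\|(B+M)Z\|_2\leq c_2 m/D\bigr\}\leq \bigl(C\sqrt{m}/D\bigr)^m+C^m e^{-c\alpha^2}.$$
Combining this with the standard LCD covering estimate from \cite[Section~7]{RV}, which provides an $\varepsilon$-net $\Net_{D_0}$ of $S_{D_0}$ of cardinality $(CD_0/\sqrt{m})^m$ at scale $\varepsilon\asymp \sqrt{m}/D_0$, absorbing the net approximation error via $\|B+M\|\leq C\sqrt{m}$, and taking the union bound across the $O(m)$ dyadic scales of $D_0$, one obtains the desired bound with probability at least $1-e^{-c_3 m}$. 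Values $D>\sqrt{m}\,e^{c_1 m}$ are automatically covered because in that regime $cm/D$ is smaller than the baseline bound $c/\sqrt{m}$ on $s_{\min}(B+M)$, itself a consequence of the same argument applied in the range $D\leq \sqrt{m}\,e^{c_1 m}$.

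The main obstacle is the codim-$m$ application of Theorem~\ref{th: 4.2}: identifying $\|(B+M)Z\|_2$ as a distance in $\R^{m\times m}$, correctly computing $\LCD_{c_0\sqrt{m},c_0}(V^\perp)$, and balancing the covering estimate for $S_{D_0}$ against the pointwise anti-concentration bound so that the union bound produces exponentially small failure probability rather than being defeated by scale cancellation between net size and pointwise estimate. Each of these ingredients is present in the centered argument of \cite{RV,RV smallest}, and because neither the LCD of a vector nor the covering lemma nor the anti-concentration estimate is affected by the deterministic shift $M$, the result for $B+M$ follows by the "straightforward modification" of the argument referred to in the statement.
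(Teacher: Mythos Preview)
Your proposal is correct and follows exactly the approach the paper indicates: the paper gives no proof of its own, citing \cite[Lemma~5.8]{RV} and \cite[Lemma~4.8]{RV smallest} and remarking that the shift $M$ is a straightforward modification---precisely your observation that translation does not alter the L\'evy concentration function, so Theorem~\ref{th: 4.2} and the LCD covering lemmas go through unchanged.

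Two small corrections. First, your route to the pointwise small-ball bound via the codim-$m$ form of Theorem~\ref{th: 4.2} (vectorizing $B\in\R^{m\times m}$) is a legitimate alternative to the row-by-row tensorization in \cite{RV,RV smallest}, but your parenthetical only establishes $\LCD(V^\perp)\le\LCD(Z)$ (by taking $c=e_i$), whereas the hypothesis $t\ge m/\LCD(V^\perp)$ in Theorem~\ref{th: 4.2} needs the reverse inequality. That direction also holds: for unit $c$ and $\theta<\LCD(Z)$ one has $\d(\theta cZ^T,\Z^{m^2})^2=\sum_i\d(\theta c_iZ,\Z^m)^2\ge\sum_i\min(\gamma\theta|c_i|,\alpha)^2\ge\min(\gamma\theta,\alpha)^2$ after a two-case split on whether $\gamma\theta\le\alpha$. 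Second, your closing sentence is not quite right: the level-set argument over $D\le\sqrt m\,e^{c_1m}$ does \emph{not} yield $s_{\min}(B+M)\ge c/\sqrt m$ (indeed no such bound holds with probability $1-e^{-cm}$ for, say, Bernoulli entries). Rather, one sums the per-level failure probability over the $O(m)$ dyadic levels up to $D_{\max}=\sqrt m\,e^{c_1m}$, and for $D>D_{\max}$ invokes the separate smallest-singular-value estimate $\Prob\{s_{\min}(B+M)\le\varepsilon m^{-1/2}\}\le C\varepsilon+e^{-cm}$ from \cite{RV} with $\varepsilon=cm^{3/2}/D_{\max}$, which is $\le e^{-c'm}$.
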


The next theorem
provides small ball probability estimates for projections of complex random vectors
in the absence of strong structural assumptions on the range of the projection.
In one-dimensional setting, those are
reformulations of Erd\H os--Littlewood--Offord and L\'evy--Kolmogorov--Rogozin inequalities
\cite{LO,Erdos,Kolmogorov,Rogozin,Esseen,Kesten};
high-dimensional versions are studied, in particular, in \cite{RV images,LPP,RV no-gap}.
The statement below can be proved by a reduction to the real setting via the relation
$\cf(\Proj_F(X),\sqrt{k}t)=\cf(\Proj_{\real(F)}(\real(X)),\sqrt{k}t)$ (see \cite{RV no-gap})
and applying the above anti-concentration estimates from \cite{RV smallest}.

\begin{theor}\label{l: sb compl incomp}
For any $\zeta>0$ there is $C>0$ depending only on $\zeta$ with the following property.
Let $X$ be a random vector in $\C^m$ satisfying assumption
\eqref{Asmp 2}.
Further, let $Y$ be a fixed vector in $\Incomp_m(\zeta,\zeta)$.
Then
$$\cf(\langle X,Y\rangle,t)\leq (Ct+Cm^{-1/2})^2\;\mbox{ for all $t>0$.}$$

More generally, if $F$ is a fixed subspace in $\C^m$ of dimension $k$ such that
$F\cap S^{m-1}(\C)\subset \Incomp_m(\zeta,\zeta)$ then
$$\cf(\Proj_F(X),\sqrt{k}t)\leq (Ct+Cm^{-1/2})^{2k}\;\mbox{ for all $t>0$.}$$
\end{theor}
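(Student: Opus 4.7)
The plan is to reduce the complex small-ball estimate to its real counterpart via the canonical isometric identification $\C^m\cong\R^{2m}$ and then invoke the anti-concentration machinery of \cite{RV smallest}.

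First I would use the identity $\cf(\Proj_F(X),\sqrt{k}t)=\cf(\Proj_{\real(F)}(\real(X)),\sqrt{k}t)$ from \cite{RV no-gap}, which reduces the problem to bounding the real concentration function of $\real(X)\in\R^{2m}$ projected onto the real $2k$-dimensional subspace $\real(F)$. Under \eqref{Asmp 2} the vector $\sqrt{2}\,\real(X)$ has i.i.d.\ real $K'$-subgaussian coordinates of zero mean and unit variance, hence satisfies \eqref{Asmp 1}; since $\cf(\Proj_{\real(F)}(\sqrt{2}\,\real(X)),\sqrt{2k}\,t)=\cf(\Proj_F(X),\sqrt{k}\,t)$, the rescaling will eventually be absorbed in the final constant.

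Second I would establish that $\real(F)\cap S^{2m-1}(\R)\subset\Incomp_{2m}(\zeta',\zeta')$ for some $\zeta'>0$ depending only on $\zeta$. Any unit $Z\in\real(F)$ has the form $\real(W)$ for some unit $W\in F$; by hypothesis at least $\zeta m$ indices $j\leq m$ satisfy $|W_j|\ge\zeta/\sqrt{m}$, and for each such $j$ one of $|\Re W_j|,|\Im W_j|$ exceeds $\zeta/\sqrt{2m}$. Hence $Z$ has at least $\zeta m$ of its $2m$ real coordinates of magnitude $\geq\zeta/\sqrt{2m}$, which by the standard spread/incompressibility equivalence (cf.~\cite[Lemma~3.4]{RV}) yields the desired real incompressibility with parameters depending only on $\zeta$. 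Combined with the well-known LCD lower bound for incompressible unit vectors (see \cite{RV smallest}), this gives $\LCD_{c_\zeta\sqrt{m},\,c_\zeta}(\real(F))\geq c_\zeta\sqrt{m}$ for an appropriate $c_\zeta>0$.

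With these ingredients in place I would apply Theorem~\ref{th: 4.2} to $\sqrt{2}\,\real(X)$ with $E:=\real(F)^\perp$ (of codimension $2k$), $\alpha=c_\zeta\sqrt{m}$, $\gamma=c_\zeta$, and the substitution $r=\sqrt{2k}\,t$; rewriting $\d(\cdot,E+W)=\|\Proj_{\real(F)}(\cdot)-W'\|_2$ then produces an estimate of the form
$$
\cf\big(\Proj_{F}(X),\sqrt{k}\,t\big)\;\leq\;(Ct)^{2k}+C^{2k}e^{-c_\zeta m}\qquad\text{for }t\geq c'_\zeta\sqrt{k/m}.
$$
The main obstacle is the final packaging: converting the two summands into the single expression $(Ct+Cm^{-1/2})^{2k}$ and handling the small-$t$ regime $t<c'_\zeta\sqrt{k/m}$ by monotonicity of $\cf$. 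This step uses that the incompressibility hypothesis implicitly keeps $k$ bounded away from $m$ (a unit vector in $F$ with too small support would violate $\Incomp_m(\zeta,\zeta)$), so that with $\alpha\sim\sqrt{m}$ chosen appropriately the exponential tail $C^{2k}e^{-c_\zeta m}$ is dominated by $(Cm^{-1/2})^{2k}$ in the relevant range of $k$, after which the elementary inequality $A^{2k}+B^{2k}\le (A+B)^{2k}$ completes the repackaging.
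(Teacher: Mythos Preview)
Your overall strategy---pass to $\real(F)\subset\R^{2m}$, verify real incompressibility there, invoke Theorem~\ref{th: 4.2}---is exactly the route the paper indicates, so the approach is right. The packaging step, however, contains two loose points.

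First, your claim that the incompressibility hypothesis forces $k$ small enough that $C^{2k}e^{-c_\zeta m}\le (Cm^{-1/2})^{2k}$ is not justified: the dimension argument you allude to only yields $k\le(1-\zeta)m$ (since for $k>(1-\zeta)m$ the subspace $F$ meets every $\lfloor\zeta m\rfloor$--dimensional coordinate subspace and hence contains a sparse unit vector), whereas the inequality you want needs $k\lesssim m/\log m$. The fix is different and simpler: the LCD lower bound $\LCD_{\alpha,\gamma}(\real(F))\ge c_\zeta\sqrt{m}$ that comes from incompressibility holds for \emph{every} $\alpha>0$ once $\gamma$ is small enough (the argument only uses the $\gamma\theta$ branch of $\min(\gamma\theta,\alpha)$), so you may take $\alpha$ arbitrarily large and make $C^{2k}e^{-c\alpha^2}$ as small as you like.

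Second, and more substantively, the threshold in Theorem~\ref{th: 4.2} is $t\ge 2k/\LCD_{\alpha,\gamma}(\real(F))$; with $\LCD\ge c_\zeta\sqrt{m}$ this translates (after the change of variables $t=\sqrt{2k}\,\tau$) into $\tau\ge c'\sqrt{k/m}$, not $\tau\ge c'/\sqrt{m}$. Consequently, monotonicity for small $\tau$ only delivers $(C\sqrt{k/m})^{2k}$, which exceeds the target $(Cm^{-1/2})^{2k}$ by a factor $k^{k}$. Thus the direct application of Theorem~\ref{th: 4.2} yields $(C\tau+C\sqrt{k}\,m^{-1/2})^{2k}$ rather than the stated $(C\tau+Cm^{-1/2})^{2k}$. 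In the paper's applications this extra $\sqrt{k}$ is harmless---$k=\q$ is always polylogarithmic and is absorbed into the ubiquitous $\q^{C\q}$ prefactors---so the discrepancy does not affect any downstream result; but your write-up should either carry the $\sqrt{k}$ through explicitly or note that a sharper form of the distance theorem (with additive term $\sqrt{\mathrm{codim}}/\LCD$ replaced by $1/\LCD$ inside the bracket) is needed to obtain the bound exactly as stated.
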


\section{An estimate for the smallest singular value of real rectangular matrices}\label{s: small b}

Let $M$ be a $d\times r$ real random matrix with independent isotropic columns.
In this section, we are concerned with estimating the smallest singular value of $M$
(assuming certain small ball probability estimates and concentration for individual matrix columns).
The problem of estimating $s_{\min}(M)$ is a standard question within the random matrix theory,
due to its relevance to questions in statistics,
convex geometric analysis, computer science. We refer, among others, to surveys and books
\cite{BS 2010,RV ICM 2010,V 12,RVW,V 16} for more information and further references.
For a tall matrix (for example, with $d\geq 2r$) with i.i.d subgaussian entries, a basic $\varepsilon$--net argument gives
$s_{\min}(M)\geq c\sqrt{d}$ with probability at least $1-e^{-cd}$, with $c>0$ depending
only on the subgaussian moment; with more elaborate arguments, similar estimates
are available for ``almost square'' matrices and
under more general assumptions on the matrix columns (see, in particular, \cite{LPRT,RV smallest,KM,T IMJ,Yaskov}).
This estimate is close to optimal in general: for example, the random Bernoulli matrix is of deficient rank
with probability $e^{-c'd}$. However, with some additional assumptions on anti-concentration
of the matrix columns, the singularity probability is much smaller.

To obtain satisfactory quantitative
estimates for $s_{\min}$, we modify
the basic $\varepsilon$--net argument by replacing
the spectral norm of the matrix with the Hilbert--Schmidt norm:
due to much better concentration properties of the latter, we get strong small ball probability estimates for $s_{\min}$,
provided that such estimates exist for arbitrary linear combinations of the matrix columns.
We start with a simple lemma obtained as a consequence of the Hanson--Wright inequality
\cite{HW, RV HS}.

\begin{lemma}\label{l: HS conc}
Let $m,r,d\in\N$, $K>0$, and let
$X_1,\dots,X_r$ be i.i.d vectors in $\R^m$ satisfying \eqref{Asmp 1}.
Further, let $M$ be an $m\times r$ random matrix such that
$\col_\ell(M)=\Proj_F(X_\ell)$, $\ell=1,\dots,r$,
with $F$ being a $d$--dimensional fixed subspace of $\R^m$.
Then
$$
\Prob\big\{\|M\|_{HS}\geq C_{\text{\tiny\ref{l: HS conc}}}\sqrt{rd}\big\}\leq \exp(-c_{\text{\tiny\ref{l: HS conc}}}rd),
$$
for some $C_{\text{\tiny\ref{l: HS conc}}}>1$, $c_{\text{\tiny\ref{l: HS conc}}}>0$ depending only on the subgaussian moment $K$.
\end{lemma}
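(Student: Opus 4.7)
The plan is to recognize that $\|M\|_{HS}^2$ is a quadratic form in the i.i.d.\ subgaussian entries of the concatenated vector, and then apply the Hanson--Wright inequality directly. First I would compute the expectation: since each $X_\ell$ is isotropic, $\E\|\Proj_F(X_\ell)\|_2^2 = \operatorname{tr}(\Proj_F^*\Proj_F) = d$, and by summing over $\ell$ we obtain $\E\|M\|_{HS}^2 = rd$. This already identifies $\sqrt{rd}$ as the natural scale and pins down the constant $C_{\text{\tiny\ref{l: HS conc}}}$ up to a universal multiple.

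Next I would write $\|M\|_{HS}^2 = \sum_{\ell=1}^r \langle X_\ell,\Proj_F X_\ell\rangle = \tilde X^{T} Q \tilde X$, where $\tilde X = (X_1,\dots,X_r) \in \R^{mr}$ has i.i.d.\ $K$--subgaussian entries of zero mean and unit variance, and $Q$ is the $mr\times mr$ block-diagonal matrix with $r$ identical blocks equal to $\Proj_F$. The two quantities driving the Hanson--Wright bound are then immediate: $\|Q\| = \|\Proj_F\| = 1$ and $\|Q\|_{HS}^2 = r\,\|\Proj_F\|_{HS}^2 = rd$.

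Applying the Hanson--Wright inequality \cite{HW, RV HS} to $\tilde X^{T} Q \tilde X - \E \tilde X^{T} Q \tilde X$ with deviation parameter $t = rd$ yields
\[
\Prob\bigl\{\tilde X^{T} Q \tilde X \geq 2rd\bigr\}
\leq 2\exp\!\Bigl(-c\min\bigl(t^2/(K^4\|Q\|_{HS}^2),\, t/(K^2\|Q\|)\bigr)\Bigr)
= 2\exp(-c' rd),
\]
since both $t^2/\|Q\|_{HS}^2$ and $t/\|Q\|$ are equal to $rd$. Taking square roots gives $\|M\|_{HS} \leq \sqrt{2rd}$ off the exceptional event, and absorbing the constant $2$ into $C_{\text{\tiny\ref{l: HS conc}}}$ (and the factor $2$ in front into the exponent by adjusting $c_{\text{\tiny\ref{l: HS conc}}}$) finishes the proof.

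There is no real obstacle here: the argument is a one-shot application of Hanson--Wright, and the only care needed is to correctly identify the spectral and Hilbert--Schmidt norms of the block-diagonal matrix $Q$, which keeps the exponent in the final bound linear in $rd$ rather than degrading to $d$ or $r$ alone.
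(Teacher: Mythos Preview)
Your proposal is correct and is exactly the approach the paper takes: the remark following the lemma in the paper identifies it as a special case of the Hanson--Wright inequality from \cite{RV HS}, with $A$ the $mr\times mr$ block-diagonal matrix having $r$ identical blocks equal to $\Proj_F$ and the concatenated vector $\tilde X$ formed from the $X_\ell$'s. Your computation of $\|Q\|=1$, $\|Q\|_{HS}^2=rd$, and $\E\|M\|_{HS}^2=rd$ matches this precisely.
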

\begin{rem}
The above lemma can be viewed as a special case of \cite[Theorem~2.1]{RV HS}, in which we take
$A$ as the $mr\times mr$ block-diagonal matrix with $r$ identical blocks representing orthogonal projection
onto $F$, and consider a concatenated $mr$--dimensional random vector composed of $X_\ell$'s.
\end{rem}

In the following statement we construct the required net inside the Euclidean ball.
The construction is a direct application of the probabilistic method, and was previously used in other contexts
in high-dimensional convex and discrete geometry. As a classical illustration of this method we refer to Rogers' paper
\cite{Rogers}. Among recent applications, see \cite{GLPT} for randomized coverings of convex sets
and paper \cite{KlLi} where a statement similar to the one below is proved.

\begin{lemma}\label{l: HS net}
Let $r\geq C_{\text{\tiny\ref{l: HS net}}}$, and let $m$ satisfy $m\leq 2^r$.
For every $t\in (2^{-2^r},1]$, there exists a non-random subset
$\Net\subset \frac{3}{2}B_2^r(\R)\setminus \frac{1}{2}B_2^r(\R)$ of cardinality at most $(C_{\text{\tiny\ref{l: HS net}}}/t)^r$ with the following property:
For every real $m\times r$
matrix $A$ with the Hilbert--Schmidt norm at most $\sqrt{r}$, and every $X\in S^{r-1}(\R)$
there is $X'\in \Net$ satisfying $\|A(X-X')\|_2\le t$.
Here, $C_{\text{\tiny\ref{l: HS net}}}>0$ is a universal constant.
\end{lemma}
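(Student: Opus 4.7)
The plan is a direct application of the probabilistic method. Sample $N := \lceil (C_0/t)^r \rceil$ points $X_1, \ldots, X_N$ independently from the uniform distribution on the annulus $\frac{3}{2}B_2^r(\R) \setminus \frac{1}{2}B_2^r(\R)$, where $C_0 > 0$ is a sufficiently large absolute constant to be chosen, and show that, for $C_0$ large enough, the resulting random set has the required covering property with positive probability; the desired deterministic $\Net$ is then obtained by derandomization.

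To reduce the uniform quantification over admissible $(A, X)$ to a union bound, I discretize as follows. Fix $\eta := t/(C_1\sqrt{r})$ for a large absolute constant $C_1$, and pick an $\eta$-net $\mathcal N_A$ of $\{A \in \R^{m\times r} : \|A\|_{HS} \leq \sqrt{r}\}$ in the Hilbert--Schmidt norm together with an $\eta$-net $\mathcal N_X$ of $S^{r-1}(\R)$ in the Euclidean norm; standard volumetric bounds give $|\mathcal N_A| \leq (C\sqrt{r}/\eta)^{mr}$ and $|\mathcal N_X| \leq (C/\eta)^r$. If $(A', X') \in \mathcal N_A \times \mathcal N_X$ approximates $(A, X)$ within $\eta$, the decomposition
\[
A(X - X_i) = A(X - X') + (A - A')(X' - X_i) + A'(X' - X_i),
\]
together with $\|A\| \leq \|A\|_{HS} \leq \sqrt{r}$, $\|A - A'\|_{HS} \leq \eta$, and $\|X' - X_i\|_2 \leq 5/2$, gives $\|A(X - X_i)\|_2 \leq t/2 + \|A'(X' - X_i)\|_2$. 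Consequently it suffices to produce, for every $(A', X')$ in the nets, some $X_i \in \Net$ with $\|A'(X' - X_i)\|_2 \leq t/2$.

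The heart of the argument is a volumetric small-ball estimate: for any fixed $A'$ with $\|A'\|_{HS} \leq \sqrt{r}$ and any $X' \in S^{r-1}(\R)$,
\[
p(A', X') := \Prob_Y\bigl(\|A'(X' - Y)\|_2 \leq t/2\bigr) \geq (c_1 t)^r,
\]
where $Y$ is uniform on the annulus and $c_1 > 0$ is universal. To prove this I would write $Y = X' + Z$ (so that $\|Z\|_2 \leq 1/2$ automatically puts $Y$ in the annulus), pass to the right-singular basis $v_1, \ldots, v_r$ of $A'$ with singular values $\sigma_1', \ldots, \sigma_r'$, and consider the axis-aligned box
\[
B := \bigl\{Z : |\langle Z, v_i\rangle| \leq \min(t/(\sigma_i'\sqrt{8r}),\, 1/\sqrt{8r}),\ i=1,\ldots,r\bigr\},
\]
which satisfies both $\|A'Z\|_2^2 \leq t^2/8$ and $\|Z\|_2^2 \leq 1/8$. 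Its volume equals $(2r)^{-r/2}\, t^r \bigl(\prod_i \max(\sigma_i', t)\bigr)^{-1}$, and the Hilbert--Schmidt constraint $\sum (\sigma_i')^2 \leq r$ combined with AM--GM gives
\[
\prod_i \max(\sigma_i', t)^2 \leq \Bigl(\tfrac{1}{r}\sum_i \max((\sigma_i')^2, t^2)\Bigr)^r \leq (1+t^2)^r \leq 2^r.
\]
Hence $\mathrm{vol}(B) \geq t^r/(4r)^{r/2}$; dividing by the volume of the annulus (of order $r^{-r/2}$), the two $\sqrt{r}$ factors cancel and produce $p(A', X') \geq (c_1 t)^r$.

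To finish, the failure probability of the random $\Net$ is at most
\[
|\mathcal N_A|\,|\mathcal N_X|\,(1 - (c_1 t)^r)^N \leq \exp\!\bigl(\log|\mathcal N_A| + \log|\mathcal N_X| - N (c_1 t)^r\bigr).
\]
The hypotheses $m \leq 2^r$ and $t \geq 2^{-2^r}$ yield $\log|\mathcal N_A| + \log|\mathcal N_X| \leq C r \cdot 4^r$, whereas $N(c_1 t)^r \geq (C_0 c_1)^r$; choosing $C_0$ so large that $(C_0 c_1)^r$ dominates the preceding expression for all $r \geq C_{\ref{l: HS net}}$ makes the total failure probability strictly less than $1$, and the existence of the required $\Net$ follows. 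The principal difficulty is the volumetric step: if one used only the spectral-norm bound $\|A'\| \leq \sqrt{r}$, the inscribed ellipsoid would lose a factor $r^{r/2}$ in volume, forcing a net of size $(C\sqrt{r}/t)^r$ rather than $(C/t)^r$. The cancellation of this $\sqrt{r}$ against the $\sqrt{r}$ coming from the annulus volume is exactly what AM--GM provides, and is the reason the hypothesis is phrased in terms of the Hilbert--Schmidt rather than spectral norm.
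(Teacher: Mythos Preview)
Your proof is correct and follows the same overall probabilistic-method framework as the paper (sample $N\sim (C/t)^r$ uniform points in the annulus, discretize both the matrix set and the sphere, then take a union bound), but the core probabilistic estimate is obtained differently.

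The paper proceeds in two stages: it first shows that, with overwhelming probability, at least $(C/48)^r$ of the sampled points fall inside the small ball $\tfrac{t}{8}B_2^r+X$ around each net point $X$; then, for points $X'$ in that ball, it uses only the second-moment (Markov) bound $\Prob(\|A(X-X')\|_2>t/4)\le ((t/8)/(t/4))^2=1/4$, and independence over the $(C/48)^r$ points yields failure probability $\le 2^{-2(C/48)^r}$. In contrast, you bypass the localization step entirely by proving directly that a \emph{single} uniform point $Y$ in the annulus satisfies $\Prob(\|A'(X'-Y)\|_2\le t/2)\ge (c_1 t)^r$; your box-plus-AM--GM computation is precisely where the Hilbert--Schmidt hypothesis $\sum\sigma_i^2\le r$ is exploited, and it delivers the sharp $(ct)^r$ small-ball bound in one shot. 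Both arguments end with doubly-exponentially small failure probabilities that beat the $\exp(Cr\cdot 4^r)$ net sizes. Your route is more direct and makes the role of the Hilbert--Schmidt constraint particularly transparent; the paper's route needs only a crude second-moment bound on $\|AZ\|_2$ and would generalize more readily to situations where a volumetric small-ball estimate of order $(ct)^r$ is not available.
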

\begin{proof}
For brevity, we write $S^{r-1}$, $B_2^r$ instead of $S^{r-1}(\R),B_2^r(\R)$.
The proof involves a covering
of the set of matrices with uniformly bounded Hilbert--Schmidt norms, and a standard covering of $S^{r-1}$.

We start with constructing a net of matrices.
Let $\HS$ be the set of all real-valued $m\times r$ matrices having the Hilbert--Schmidt norm at most $\sqrt{r}$.
Since the absolute values of coordinates of matrices from $\HS$ are bounded by
$\sqrt{r}$, any $\frac{1}{\sqrt{mr}}$--net in the parallelotope $[-\sqrt{r},\sqrt{r}]^{m\times r}$
(with respect to the $\ell_\infty$--metric) is also a $1$--net for $\HS$ (with respect to the Hilbert--Schmidt norm).
Thus, there is a $1$--net $\Net_{HS}\subset\HS$ of cardinality at most $(2\sqrt{m}r)^{mr}$.

Further,
let $\widetilde\Net$ be a Euclidean $\frac{t}{2\sqrt{r}}$--net
on $S^{r-1}$ of cardinality at most $(6\sqrt{r}/t)^r$.

Given positive real numbers $t_1,t_2$ and a random $r$--dimensional
vector $X=(x_1,\dots,x_r)$ uniformly distributed in the Euclidean ball $t_1 B_2^r$, for any fixed matrix $A\in \HS$ we have
\begin{equation}\label{eq: ax pohas;fa}
\Prob\big\{\|AX\|_2>t_2\big\}\le (t_1/t_2)^2.
\end{equation}
Indeed, we have $\Exp x_\ell^2=t_1^2/(r+2)$ for all $\ell\leq r$,
and, applying the singular value decomposition to $A$, obtain
$\Prob\big\{\|AX\|_2>t_2\big\}
=\Prob\{\sum_{\ell=1}^r s_\ell^2\,x_\ell^2>t_2^2\}$, where $s_1,\dots, s_r$ are the singular values of $A$.
Since $\sum_{\ell=1}^r s_\ell^2\le r$, Markov's inequality implies
\begin{equation*}
\Prob\big\{\|AX\|_2>t_2\big\}\le t_2^{-2}\sum_{\ell=1}^r s_\ell^2\, \Exp x_\ell^2\le (t_1/t_2)^2.
\end{equation*}

Now, define $S$ as a collection of mutually independent random vectors
uniformly distributed in the shell $\frac{3}{2}B_2^r\setminus \frac{1}{2}B_2^r$,
of cardinality $\lceil\big(\frac{C}{t}\big)^r\rceil$,
where the constant $C>1$ is to be chosen later.
Note that in order to prove the lemma, it is enough to show that
$$
\Prob\big\{\forall A\in\HS\;\;\forall X\in S^{r-1}\;\; \exists  X'\in S:\,\,\, \|A(X-X')\|_2\le t\big\}>0,
$$
and then take $\Net$ as an appropriate realization of $S$.
Note that for any $X\in S^{r-1}$ there is $X''\in \widetilde \Net$ with $\|X-X''\|_2\leq\frac{t}{2\sqrt{r}}$,
so that for any matrix $A\in\HS$ we have $\|A(X-X'')\|_2\leq \frac{t}{2}$.
Thus, the above estimate holds if
$$1-p_0:=\Prob\big\{\forall A\in\HS\;\;\forall X\in \widetilde\Net\;\; \exists  X'\in S:\,\,\, \|A(X-X')\|_2\le t/2\big\}>0.$$
By the union bound,
\begin{align*}
  p_0&\leq \sum_{X\in\widetilde \Net}\Prob\big\{ \exists A\in\HS\;\;\forall  X'\in S:\,\,\, \|A(X-X')\|_2> t/2\big\}
  \\
&\leq \sum_{X\in\widetilde \Net}\sum\limits_{A\in \Net_{HS}}
\Prob\Big\{\forall  X'\in S\cap \Big({\frac{t}{8}}B_2^r+X\Big):\,\,\, \|A(X-X')\|_2> t/4\Big\}.
\end{align*}
Fix for a moment any $A\in\Net_{HS}$ and $X\in\widetilde\Net$, and
denote by $\Event$ the event that the Euclidean ball $\frac{t}{8}B_2^r+X$ contains
at least $(C/48)^r$ points from $S$.
Observe that $\frac{t}{8}B_2^r+X$ is entirely contained in $\frac{3}{2}B_2^r\setminus \frac{1}{2}B_2^r$;
thus, the probability that a random vector uniformly distributed on $\frac{3}{2}B_2^r\setminus \frac{1}{2}B_2^r$
falls into $\frac{t}{8}B_2^r+X$ is greater than $({t}/{12})^r$.
Hence, using the definition of $S$, we get
$$\Prob(\Event^c)\leq {|S|\choose \lfloor (C/48)^r\rfloor}\,\bigg(1-\Big(\frac{t}{12}\Big)^r\bigg)^{|S|-\lfloor (C/48)^r\rfloor}
\leq \big(eC/t\big)^{r\lfloor (C/48)^r\rfloor}e^{-(C/12)^r/2}
\leq\exp(-e^{r}),$$
as long as the constant $C$ is chosen sufficiently large.
Further, conditioned on $\Event$,
we can estimate the probability
$\Prob\{\forall  X'\in S\cap ({\frac{t}{8}}B_2^r+X):\,\,\, \|A(X-X')\|_2> t/4\}$ using relation \eqref{eq: ax pohas;fa} and conditional independence;
specifically,
$$\Prob\Big\{\forall  X'\in S\cap \Big({\frac{t}{8}}B_2^r+X\Big):\,\,\, \|A(X-X')\|_2> t/4\;\big|\;\Event\Big\}
\leq 2^{-2(C/48)^r}.$$
Finally, combining the estimates and taking the union bound, we obtain
$$p_0
\leq |\widetilde \Net|\,|\Net_{HS}|\,\big(2^{-2(C/48)^r}+\exp(-e^{r})\big)<1,$$
and the result follows.
\end{proof}

\medskip

As a consequence of the above lemmas, we obtain the main statement of the section.
\begin{prop}\label{prop: smin}
For any $\mathcal M,K\geq 1$ there is $C_{\text{\tiny\ref{prop: smin}}}>1$  depending only on $K,\mathcal M$,
with the following property.
Let $r\geq C_{\text{\tiny\ref{prop: smin}}}$, $m\leq 2^r$, $d\ge r$. Further, let
$M$ be an $m\times r$ random matrix, where each column $\col_\ell(M)$
is the orthogonal projection of a random isotropic vector $X_\ell$ in $\R^m$ with i.i.d $K$--subgaussian coordinates,
onto a $d$--dimensional fixed subspace $F$; $X_1,\dots,X_r$ are mutually independent.
Assume that for any unit vector $a=(a_1,\dots,a_r)\in\R^r$, the linear combination
$\sum_{\ell=1}^r a_\ell\col_\ell(M)$ satisfies
$$\cf\Big(\sum_{\ell=1}^r a_\ell\col_\ell(M),\sqrt{d}t\Big)\leq (\delta t)^d+\nu,\quad\quad t>0,$$
for some numbers $\delta\geq 1$ and $\nu>0$. Then for all $t\geq (e^{-c_{\text{\tiny\ref{l: HS conc}}}r}+{\nu^{1/d}})/{\delta}$
and any fixed $m\times r$ matrix $W$ with $\|W\|_{HS}\leq\mathcal M\sqrt{rd}$
we have
$$\Prob\big\{s_{\min}(M+W)\leq C_{\text{\tiny\ref{l: HS conc}}}\sqrt{d}t\big\}
\leq  {C^{r+d}_{\text{\tiny\ref{prop: smin}}}}\,\delta^d t^{d-r}.$$
\end{prop}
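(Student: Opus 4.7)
The plan is to run an $\varepsilon$--net argument for $s_{\min}(M+W)$ on $S^{r-1}(\R)$, but replacing the standard Euclidean net by the Hilbert--Schmidt-calibrated net of Lemma~\ref{l: HS net}. The payoff is that the cardinality of the net scales only as $(C/t_0)^r$, independent of the ambient dimensions $m,d$, while still approximating the matrix action to Euclidean precision $\sqrt{d}\,t_0$ once combined with Lemma~\ref{l: HS conc}. Balanced against the $d$--dimensional small ball estimate hypothesized for column combinations, this will deliver exactly the exponent $d-r$ in $t$.

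First I would condition on the high-probability event $\Event:=\{\|M\|_{HS}\leq C_{\text{\tiny\ref{l: HS conc}}}\sqrt{rd}\}$; by Lemma~\ref{l: HS conc}, $\Prob(\Event^c)\leq e^{-c_{\text{\tiny\ref{l: HS conc}}}rd}$. On $\Event$ the matrix $\tilde A:=(M+W)/\big((C_{\text{\tiny\ref{l: HS conc}}}+\mathcal M)\sqrt{d}\big)$ has Hilbert--Schmidt norm at most $\sqrt{r}$, so Lemma~\ref{l: HS net}, applied with $t_0$ chosen as a small constant multiple of $t$, produces a deterministic net $\Net\subset \tfrac{3}{2}B_2^r(\R)\setminus\tfrac{1}{2}B_2^r(\R)$ with $|\Net|\leq (C_{\text{\tiny\ref{l: HS net}}}/t_0)^r\leq (C'/t)^r$, such that every $X\in S^{r-1}(\R)$ admits some $X'\in\Net$ with $\|(M+W)(X-X')\|_2\leq C_{\text{\tiny\ref{l: HS conc}}}\sqrt{d}\,t/2$ on $\Event$. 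Consequently, if $s_{\min}(M+W)\leq C_{\text{\tiny\ref{l: HS conc}}}\sqrt{d}\,t/2$ there exists $X'\in\Net$ with $\|(M+W)X'\|_2\leq C_{\text{\tiny\ref{l: HS conc}}}\sqrt{d}\,t$, and it suffices to estimate the probability of the latter event by a union bound.

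For a fixed $X'\in\Net$, I would write $X'=\|X'\|_2\,a$ with $a\in S^{r-1}(\R)$ and $\|X'\|_2\in[1/2,3/2]$; factoring $\|X'\|_2$ out of the concentration function and applying the hypothesis to the unit vector $a$ gives
\[
\Prob\big\{\|(M+W)X'\|_2\leq C_{\text{\tiny\ref{l: HS conc}}}\sqrt{d}\,t\big\}\leq (C\delta t)^d+\nu.
\]
A union bound over $\Net$ then yields a total estimate of the form $(C'/t)^r\big((C\delta t)^d+\nu\big)+e^{-c_{\text{\tiny\ref{l: HS conc}}}rd}$, whose leading term is $C^{r+d}\delta^d t^{d-r}$.

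The main remaining obstacle — and the reason the hypothesis on $t$ is phrased the way it is — is to absorb the two additive parasites $\nu$ and $e^{-c_{\text{\tiny\ref{l: HS conc}}}rd}$ into the leading term $C^{r+d}\delta^d t^{d-r}$. The lower bound $t\geq(e^{-c_{\text{\tiny\ref{l: HS conc}}}r}+\nu^{1/d})/\delta$ forces $(\delta t)^d\geq\nu$ and $(\delta t)^d\geq e^{-c_{\text{\tiny\ref{l: HS conc}}}rd}$, so both parasites are dominated and the claimed bound follows. The conceptual point — which a spectral-norm net would miss — is that the Hilbert--Schmidt net has cardinality $(C/t)^r$ with no $d$--dependent prefactor; with a standard Euclidean net on $S^{r-1}(\R)$ the cardinality would scale like $(C\sqrt{d}/t)^r$ after using only $\|M+W\|\lesssim\sqrt{d}$, and the estimate would degrade by a factor of $d^{r/2}$, failing to match the sharp power $t^{d-r}$ that is needed downstream.
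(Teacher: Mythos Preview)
Your proposal is correct and follows essentially the same approach as the paper: condition on the Hilbert--Schmidt event from Lemma~\ref{l: HS conc}, apply the deterministic HS-calibrated net of Lemma~\ref{l: HS net} to $(M+W)$, union-bound using the assumed concentration function estimate at each net point, and absorb the additive terms $\nu$ and $e^{-c_{\text{\tiny\ref{l: HS conc}}}rd}$ via the lower bound on $t$. One small inaccuracy in your closing commentary: the defect of the spectral-norm net is not a $d^{r/2}$ loss but rather an extra $(\log(1/t))^{r/2}$ factor coming from the subgaussian tail of $\|M\|$ (see the Remark following the proposition in the paper); your proof itself, however, is unaffected by this.
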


\begin{proof}
%

Fix any $t\geq (e^{-c_{\text{\tiny\ref{l: HS conc}}}r}+{\nu^{1/d}})/{\delta}$. By Lemma~\ref{l: HS net}, there exists a discrete set $\Net\subset\frac{3}{2}B_2^r\setminus \frac{1}{2}B_2^r$, $|\Net|\le(C_{\text{\tiny\ref{l: HS net}}}/t)^r$,  such that for every $X\in S^{r-1}$ there exists $X'=(x_1',\dots,x_r')\in\Net$ satisfying
 $\|(M+W)(X-X')\|_2\leq t\|M+W\|_{HS}/\sqrt{r}$. Hence,
\begin{align*}
 \Prob\big\{s_{\min}(M+W)\leq C_{\text{\tiny\ref{l: HS conc}}}\sqrt{d}t\big\}
 &=\Prob\big\{\min\limits_{X\in S^{r-1}}\|(M+W)X\|_2\leq C_{\text{\tiny\ref{l: HS conc}}}\sqrt{d}t\big\}
 \\
&\leq  \Prob\big\{\min\limits_{X'\in\Net}\|(M+W)X'\|_2\leq  C_{\text{\tiny\ref{l: HS conc}}}\sqrt{d}t+t \|M+W\|_{HS}/\sqrt{r}\big\}.
\end{align*}
Also,   since $\|X'\|_2\geq 1/2$, by the assumption of the Proposition we have
$$
\cf\Big(\sum_{\ell=1}^r x_\ell'\col_\ell(M),(2C_{\text{\tiny\ref{l: HS conc}}}+\mathcal M)\sqrt{d}t\Big)\leq
\big((4C_{\text{\tiny\ref{l: HS conc}}}+2\mathcal M)\delta t\big)^d+\nu.
$$
Taking the union bound over all $X'\in\Net$ and applying Lemma~\ref{l: HS conc},
we get

\begin{align*}
 \Prob\big\{s_{\min}(M+W)\leq C_{\text{\tiny\ref{l: HS conc}}}&\sqrt{d}t\big\}\leq  \Prob\big\{\|M\|_{HS}\ge C_{\text{\tiny\ref{l: HS conc}}}\sqrt{rd} \big\}
 \\
&+ \Prob\big\{\min\limits_{X'\in\Net}\|(M+W)X'\|_2\leq (2C_{\text{\tiny\ref{l: HS conc}}}+\mathcal M)\sqrt{d}t\,\,\vert\,\,
\|M\|_{HS}\le C_{\text{\tiny\ref{l: HS conc}}}\sqrt{rd} \big\}
\\
&\leq (C_{\text{\tiny\ref{l: HS net}}}/t)^r\big[ \big((4C_{\text{\tiny\ref{l: HS conc}}}+2\mathcal M)\delta t\big)^d+\nu\big]+e^{-c_{\text{\tiny\ref{l: HS conc}}}rd}.
\end{align*}
The result follows.
\end{proof}

\begin{rem}
Let us compare the above estimate with the standard $\varepsilon$--net argument,
involving the spectral norm of the matrix.
Assume for concreteness that $d=2r$ and $\nu=0$.
The standard $\varepsilon$--net argument then gives
$$\Prob\big\{s_{\min}(M)\leq C't\sqrt{d}\big\}
\leq \bigg(\frac{3}{t'}\bigg)^r\sup\limits_{X\in S^{r-1}}\Prob\big\{\|MX\|_2\leq 2C't\sqrt{d}\big\}
+\Prob\big\{\|M\|\geq C't\sqrt{d}/t'\big\},$$
where we optimize over $t'\in(0,t]$ --- the parameter of the net on $S^{r-1}$.
Under the assumptions on $M$,
the best possible estimate for $\|M\|$ is $\Prob\{\|M\|\geq \tau\}\leq e^{-c\tau^2}$ for all $\tau\geq C\sqrt{d}$
(it is not difficult to see that the reverse estimate holds with a different constant, say, when $X_1,\dots,X_r$ are Gaussians).
For all $t\leq \frac{1}{\widetilde C\delta^2}$ the optimization over $t'$ then gives
$$\Prob\big\{s_{\min}(M)\leq C't\sqrt{d}\big\}\leq \bar C^d \delta^d t^{d-r}\big(\sqrt{\log(t^{-1}\delta^{-2})}\big)^{r},$$
producing the extra logarithmic factor.
\end{rem}

\section{No-gaps delocalization of null vectors in the real setting. Proof of Theorem A}

In this section,
we consider delocalization of almost null vectors for rectangular matrices with i.i.d subgaussian entries.
As we have discussed in the introduction, our goal is not only to get optimal bounds for
the smallest coordinates of the vectors but also to derive optimal deviation estimates.
We combine the well known technique of test projections used in \cite{NV} with additional ingredients:
an efficient averaging procedure and small ball probability estimates for the smallest singular
values of matrices of projections, which were considered in the previous section.

\begin{lemma}
\label{l:determ}
Given $\tau\geq 0$ and $m\leq n$, let $B$ be an $m\times n$ matrix with entries in $\CorR$,
and let ${\bf u}$ be a unit vector
in $\CorR^n$ such that $\|B{\bf u}\|_2\leq \tau$.
Given $\beta> \theta>0$, take any two sets
\begin{align*}
 I_\theta\subset \big\{i\leq n:\;|u_i|\leq\theta\big\}
 \quad\text{and}\quad
J_\beta\subset\big\{i\leq n:\;|u_i|\geq\beta\big\}
\end{align*}
and assume that $k:=|I_\theta|\neq 0$ and $r:=|J_\beta|\neq 0$.
 Let
 $$
 F:=\spn\{\col_i(B):\, i\in (I_\theta\cup J_\beta)^c\}^\perp.
 $$
Define an $m\times k$ matrix $M$ and an $m\times r$ matrix $M'$ as
 $$
M:=\big(\Proj_F(\col_{i}(B))\big)_{i\in I_\theta} \quad\text{and}\quad M':=\big(\Proj_F(\col_{j}(B))\big)_{j\in J_\beta}.
 $$
Then we have
$$\beta\sqrt{r}\,s_{\min}(M')\leq \theta\sqrt{k}\,s_{\max}(M)+\tau.$$
\end{lemma}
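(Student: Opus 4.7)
The plan is a direct deterministic calculation that exploits the definition of the subspace $F$ to eliminate the columns outside $I_\theta\cup J_\beta$.

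First, I would expand $B{\bf u}=\sum_{i=1}^n u_i\col_i(B)$ and apply the projection $\Proj_F$ to both sides. By the definition of $F$, each column $\col_i(B)$ with $i\in (I_\theta\cup J_\beta)^c$ lies in $F^\perp$, so $\Proj_F(\col_i(B))=0$ for such $i$. Hence
$$
\Proj_F(B{\bf u})=\sum_{i\in I_\theta}u_i\Proj_F(\col_i(B))+\sum_{j\in J_\beta}u_j\Proj_F(\col_j(B))=M\,{\bf u}_{I_\theta}+M'\,{\bf u}_{J_\beta},
$$
where ${\bf u}_{I_\theta}$ and ${\bf u}_{J_\beta}$ denote the restrictions of ${\bf u}$ to $I_\theta$ and $J_\beta$, respectively.

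Second, I would use that orthogonal projection is a contraction to obtain $\|\Proj_F(B{\bf u})\|_2\le\|B{\bf u}\|_2\le\tau$, and then apply the triangle inequality to get $\|M'{\bf u}_{J_\beta}\|_2\le\|M{\bf u}_{I_\theta}\|_2+\tau$. Now estimate each side using the definitions of $I_\theta$ and $J_\beta$: since $|u_j|\ge\beta$ for every $j\in J_\beta$, one has $\|{\bf u}_{J_\beta}\|_2\ge\beta\sqrt{r}$, whence $\|M'{\bf u}_{J_\beta}\|_2\ge s_{\min}(M')\,\beta\sqrt{r}$; similarly, since $|u_i|\le\theta$ for $i\in I_\theta$, $\|{\bf u}_{I_\theta}\|_2\le\theta\sqrt{k}$, giving $\|M{\bf u}_{I_\theta}\|_2\le s_{\max}(M)\,\theta\sqrt{k}$. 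Combining the two bounds yields exactly $\beta\sqrt{r}\,s_{\min}(M')\le\theta\sqrt{k}\,s_{\max}(M)+\tau$.

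There is no real obstacle here: the statement is purely algebraic, and the only conceptual point is recognizing that the choice of $F$ annihilates precisely the columns whose coordinates in ${\bf u}$ we have no control over, so that $\Proj_F(B{\bf u})$ splits cleanly into the two pieces $M{\bf u}_{I_\theta}$ and $M'{\bf u}_{J_\beta}$. Everything else reduces to singular-value bounds applied coordinate-wise.
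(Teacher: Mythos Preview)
Your proof is correct and is essentially the same as the paper's: both hinge on the identity $\Proj_F(B{\bf u})=M{\bf u}_{I_\theta}+M'{\bf u}_{J_\beta}$ (which follows since $I_\theta$ and $J_\beta$ are disjoint, as $\beta>\theta$) together with the triangle inequality and the obvious singular-value bounds. The paper phrases this geometrically via ellipsoids $E_1,E_2$ and a rescaled vector ${\bf v}$, anticipating language used later, but the underlying argument is identical to yours.
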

\begin{proof}
Denote by $E_1$ the ellipsoid
$$E_1:=\Big\{\sum\limits_{i\in I_\theta} a_i\Proj_F(\col_{i}(B)):\;
\|(a_i)_{i\in I_\theta}\|_2\leq  \theta\sqrt{k}\Big\},$$
and by $E_2$ --- the ellipsoid (hypersurface)
$$E_2:=\Big\{\sum\limits_{i\in J_\beta} a_i'\Proj_F(\col_{i}(B)):\;
\|(a'_i)_{i\in J_\beta}\|_2= \beta\sqrt{r}\Big\}.$$
Then $ E_2\cap(E_1+\tau B_2^m(\CorR)\cap F)$ is non-empty. Indeed, setting
$$
{\bf v}:=\frac{\beta\sqrt{r}}{\|(u_j)_{j\in J_\beta}\|_2}
\sum_{j\in J_\beta} u_j \Proj_F(\col_{j}(B)),
$$
by construction we get ${\bf v}\in E_2$. On the other hand,
since $\beta\sqrt{r}/\|(u_j)_{j\in J_\beta}\|_2\le 1$ and
$$
\sum_{j\in J_\beta} u_j \Proj_F(\col_{j}(B))+\sum\limits_{i\in I_\theta} u_{i} \Proj_F(\col_{i}(B))=
\Proj_FB{\bf u},$$
where $\Proj_FB{\bf u}\in \tau B_2^m(\CorR)\cap F$ and
$\sum\limits_{i\in I_\theta} u_{i} \Proj_F(\col_{i}(B))\in E_1$, we have ${\bf v}\in (E_1+\tau B_2^m(\CorR)\cap F)$.

Now, since $ E_2\cap(E_1+\tau B_2^m(\CorR)\cap F)$ is non-empty, there exist vectors
$${\bf a}\in \theta\sqrt{k} B_2^k(\CorR) \mbox{ and }{\bf a}'\in \beta\sqrt{r} S^{r-1}(\CorR)\,
\mbox{ such that }\, \|M'{\bf a}'-M{\bf a}\|_2\le \tau,$$
and it remains to note that the l.h.s. is at least $\beta\sqrt{r}\,s_{\min}(M')- \theta\sqrt{k}\,s_{\max}(M)$.
\end{proof}

 In what follows, this lemma allows to reduce the problem of estimating coordinates of ${\bf u}$ to
comparing largest and smallest singular values of auxiliary random matrices.
The probabilistic estimate on the singular values
is obtained as a combination of Proposition~\ref{prop: smin}
and structural results of \cite{RV, RV smallest} stated in Section~\ref{s:prelim}.

\begin{prop}\label{cor: smin smax}
Let $m,d\in\N$ and let $F$ be a subspace of $\R^m$ of dimension $d$.
Further, let
$k,r<d$, and let $X_1,\dots,X_r,Y_1,\dots,Y_k$ be i.i.d random vectors in $\R^m$
satisfying \eqref{Asmp 1} with a parameter $K$. Let $M$ and $M'$ be $m\times r$ and $m\times k$ random matrices with
columns $\Proj_F(X_\ell)$, $\ell\leq r$ and $\Proj_F(Y_\ell)$, $\ell\leq k$, respectively.
Finally, assume that $W$ and $W'$ are fixed $m\times r$ and $m\times k$ matrices
with spectral norms at most $\mathcal M\sqrt{d}$.
Then for any $\varepsilon_1,\varepsilon_2>0$, $\tau\in[0,\varepsilon_2\sqrt{d}]$ and $\alpha>0$, $\gamma\in(0,1)$ we have
\begin{align*}
\Prob&\big\{\varepsilon_1 s_{\min}(M+W)\leq \varepsilon_2 s_{\max}(M'+W')+\tau\big\}\\
&\leq (C'_{\text{\tiny\ref{cor: smin smax}}}/\gamma)^{d}
\bigg(\frac{\sqrt{d}}{\LCD_{\alpha,\gamma}(F)}+
\gamma e^{-c'_{\text{\tiny\ref{cor: smin smax}}}\alpha^2/d}+e^{-c_{\text{\tiny\ref{l: HS conc}}}r}
+\frac{\varepsilon_2}{\varepsilon_1}\bigg)^{d-r},
\end{align*}
where $C'_{\text{\tiny\ref{cor: smin smax}}},c'_{\text{\tiny\ref{cor: smin smax}}}>0$ may  depend only on $K,\mathcal M$. In particular, if
$\LCD_{\gamma\sqrt{m},\gamma}(F)\ge \sqrt{m}e^{\gamma m/d}$, then the r.h.s. is less than
$
C_{\text{\tiny\ref{cor: smin smax}}}^{d}
\big( e^{-c_{\text{\tiny\ref{cor: smin smax}}}m/d}+e^{-c_{\text{\tiny\ref{l: HS conc}}}r}
+{\varepsilon_2}/{\varepsilon_1}\big)^{d-r},$
where $C_{\text{\tiny\ref{cor: smin smax}}},c_{\text{\tiny\ref{cor: smin smax}}}>0$ depend only on $K,\mathcal M$ and $\gamma$.
\end{prop}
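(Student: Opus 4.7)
The plan is to reduce the statement, via a high-probability control of $s_{\max}(M'+W')$, to a single small-ball estimate for $s_{\min}(M+W)$ to which Proposition~\ref{prop: smin} applies. The first step is to observe that, since the columns of $M'$ lie in the $d$-dimensional subspace $F$, after picking an orthonormal basis of $F$ the matrix $M'$ is isometric to a $d\times k$ random matrix whose columns are independent, isotropic, $K$-subgaussian vectors in $\R^d$. Because $k<d$, the standard spectral-norm concentration for such matrices (a direct $\varepsilon$-net argument) yields
\[
\Prob\{s_{\max}(M')>C_1\sqrt{d}\}\le e^{-c_1 d}.
\]
Combining this with $\|W'\|\le\mathcal{M}\sqrt{d}$ and the hypothesis $\tau\le\varepsilon_2\sqrt{d}$, outside an event of probability $\le e^{-c_1 d}$ the event $\{\varepsilon_1 s_{\min}(M+W)\le\varepsilon_2 s_{\max}(M'+W')+\tau\}$ is contained in $\{s_{\min}(M+W)\le C_2\varepsilon_2\sqrt{d}/\varepsilon_1\}$.

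Next I would verify the anti-concentration hypothesis of Proposition~\ref{prop: smin} for the columns of $M$. For any unit $a\in S^{r-1}(\R)$ the vector $\sum_\ell a_\ell\col_\ell(M)=\Proj_F(\sum_\ell a_\ell X_\ell)$ is distributed as $\Proj_F(X_1)$, since $\sum_\ell a_\ell X_\ell$ inherits i.i.d.\ $K$-subgaussian coordinates of zero mean and unit variance. Applying Theorem~\ref{th: 4.2} with $E=F^\perp$ (so $\codim E=d$ and $\LCD_{\alpha,\gamma}(E^\perp)=\LCD_{\alpha,\gamma}(F)$) yields, for every $s\ge\sqrt{d}/\LCD_{\alpha,\gamma}(F)$,
\[
\cf\Bigl(\sum_\ell a_\ell\col_\ell(M),\sqrt{d}\,s\Bigr)\le (Cs/\gamma)^d+C^d e^{-c\alpha^2}.
\]
For smaller $s$ I use the trivial bound $\cf\le 1$ and absorb the excess into an additive term $(C\sqrt{d}/(\gamma\LCD_{\alpha,\gamma}(F)))^d$, so that the hypothesis of Proposition~\ref{prop: smin} holds for all $s>0$ with $\delta=C/\gamma$ and with $\nu$ satisfying $\nu^{1/d}\lesssim e^{-c\alpha^2/d}+\sqrt{d}/(\gamma\LCD_{\alpha,\gamma}(F))$.

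Since $\|W\|_{HS}\le\sqrt{r}\,\|W\|\le\mathcal{M}\sqrt{rd}$, Proposition~\ref{prop: smin} then delivers
\[
\Prob\{s_{\min}(M+W)\le C_{\text{\tiny\ref{l: HS conc}}}\sqrt{d}\,s\}\le C^{r+d}\delta^d s^{d-r}
\]
for every admissible $s\ge(e^{-c_{\text{\tiny\ref{l: HS conc}}}r}+\nu^{1/d})/\delta$. Choosing $s$ as a constant multiple of the maximum of $C_2\varepsilon_2/(C_{\text{\tiny\ref{l: HS conc}}}\varepsilon_1)$ and this threshold, and adding the $e^{-c_1 d}$ bad event from the first step (absorbed into the $e^{-c_{\text{\tiny\ref{l: HS conc}}}r}$ summand after rearranging), produces the stated bound
\[
(C'_{\text{\tiny\ref{cor: smin smax}}}/\gamma)^{d}\Bigl(\sqrt{d}/\LCD_{\alpha,\gamma}(F)+\gamma e^{-c'_{\text{\tiny\ref{cor: smin smax}}}\alpha^2/d}+e^{-c_{\text{\tiny\ref{l: HS conc}}}r}+\varepsilon_2/\varepsilon_1\Bigr)^{d-r}.
\]
The ``in particular'' assertion follows by specializing $\alpha=c\sqrt{m}$ together with the LCD lower bound hypothesis, at which point both the $\sqrt{d}/\LCD_{\alpha,\gamma}(F)$ and $\gamma e^{-c'\alpha^2/d}$ summands collapse to a term of order $e^{-cm/d}$. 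The main technical obstacle is the spectral-norm estimate in the first step: replacing it with the Hilbert--Schmidt bound of Lemma~\ref{l: HS conc} would produce an extraneous $k^{(d-r)/2}$ factor in the final estimate which cannot be absorbed into $(C'/\gamma)^d$, so exploiting the hypothesis $k<d$ to get a genuine $O(\sqrt{d})$ spectral-norm bound is essential.
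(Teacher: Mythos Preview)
Your overall strategy matches the paper's: reduce to Proposition~\ref{prop: smin} for $s_{\min}(M+W)$ after verifying its small-ball hypothesis via Theorem~\ref{th: 4.2}, and control $s_{\max}(M'+W')$ separately. The small-ball verification and the reduction $\|W\|_{HS}\le\sqrt{r}\,\|W\|\le\mathcal M\sqrt{rd}$ are both correct and are exactly what the paper does.

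The gap is in your treatment of $s_{\max}(M'+W')$. You bound it once at a fixed threshold, incurring an additive error $e^{-c_1 d}$, and then assert this can be ``absorbed into the $e^{-c_{\text{\tiny\ref{l: HS conc}}}r}$ summand.'' That absorption fails in general: the target bound is $(C'/\gamma)^d\bigl(\cdots+e^{-c_{\text{\tiny\ref{l: HS conc}}}r}\bigr)^{d-r}$, which for small $\varepsilon_2/\varepsilon_1$ and large LCD is of order $(C'/\gamma)^d e^{-c_{\text{\tiny\ref{l: HS conc}}}\,r(d-r)}$. When $r$ is a fixed fraction of $d$ (say $r=d/2$), this is roughly $e^{-c\,d^2}$, far smaller than any $e^{-c_1 d}$, so no choice of constant $C'$ can swallow the error. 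Pushing the threshold higher (to $Ch\sqrt d$ with $h$ growing) does make the error decay faster, but then the resulting $s_{\min}$ event carries an extra factor $h^{d-r}$ that cannot be absorbed either.

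The paper avoids this by replacing your single cut with a dyadic decomposition: it writes
\[
\Prob\{\varepsilon_1 s_{\min}\le\varepsilon_2 s_{\max}+\tau\}
\le \Prob\{s_{\min}\le C\sqrt d\,t_0\}+\sum_{\ell\ge 1}\Prob\{s_{\min}\le C\sqrt d\,2^\ell t_0\}\cdot\Prob\{s_{\max}\ge\mathrm{thr}(\ell)\},
\]
applies Proposition~\ref{prop: smin} on each scale, and uses the subgaussian tail $\Prob\{s_{\max}(M'+W')\ge C''h\sqrt{d+k}\}\le e^{-h^2(d+k)}$ so that past the scale $\ell_0$ where the threshold reaches $C''\sqrt{d+k}$ the series is geometric with ratio $\le 1/2$. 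The dominant contribution comes from the scale $2^{\ell_0}t_0\asymp t_0+\varepsilon_2/\varepsilon_1$, producing exactly the stated bound with no residual additive error. Incorporating this summation over scales is the missing ingredient in your argument.
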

\begin{proof}
Fix for a moment any $(a_1,\dots,a_r)\in S^{r-1}(\R)$,
and let
$$Z:=\sum\limits_{\ell=1}^r a_\ell \Proj_F(X_\ell)=\Proj_F\Big(\sum\limits_{\ell=1}^r a_\ell X_\ell\Big),\quad
\mbox{so that}\quad \|Z\|_2=\d\Big(\sum\limits_{\ell=1}^r a_\ell X_\ell,\,F^\perp\Big).$$
Clearly, $\sum_{\ell=1}^r a_\ell X_\ell$ is an isotropic random vector in $\R^m$ with i.i.d coordinates.
Moreover, each coordinate is $CK$--subgaussian, with $C>0$ being a universal constant
(see, for example, \cite[Lemma~5.9]{V 12}).
Now, by Theorem~\ref{th: 4.2}, we have for any $\alpha>0$, $\gamma\in (0,1)$
and for all $t\ge \sqrt{d}/\LCD_{\alpha,\gamma}(F)$:
$$
\cf\big(Z, \sqrt{d}t\big)\le \big({\widetilde C t}/{\gamma}\big)^{d}+\widetilde C^{d} e^{-c\alpha^2},
$$
where $\widetilde C,c>0$ depend only on the subgaussian moment $K$. Setting
$$
t_0:=\sqrt{d}/\LCD_{\alpha,\gamma}(F)+(e^{-c_{\text{\tiny\ref{l: HS conc}}}r}+\widetilde C e^{-c\alpha^2/d})\gamma/\widetilde C
$$
and applying Proposition~\ref{prop: smin} with $\delta =\widetilde C/\gamma$ and $\nu=\widetilde C^{d} e^{-c\alpha^2}$, we get
$$
\Prob\big\{s_{\min}(M+W)\leq C_{\text{\tiny\ref{l: HS conc}}}\sqrt{d}t\big\}
\leq  C^{r+d}_{\text{\tiny\ref{prop: smin}}}\,\delta^d t^{d-r},
\quad t\geq t_0.
$$
Further, it follows from the Hanson--Wright inequality (see Theorem~3.2 of \cite{RV HS}) that
\begin{equation}\label{HR}
\Prob\big\{s_{\max}(M'+W')\geq C'' h\sqrt{d+k}\big\}\leq e^{-h^2(d+k)},\quad h\geq 1,
\end{equation}
where $C''\geq 1$ may only depend on $K,\mathcal M$.

Note that for any two independent real-valued random variables $\xi_1$, $\xi_2$,
\begin{align*}
\Prob\{\xi_1\le \xi_2\}&\le 
\Prob\{\xi_1\le a\}
+\sum_{\ell=1}^\infty \Prob\{\xi_1\le 2^\ell a\}\,\Prob\{\xi_2\ge 2^{\ell-1} a\},\quad \forall a\in\R.
\end{align*}
Using this relation together with the inequality
$\tau \le \varepsilon_2\sqrt{d}$, we get 
\begin{align*}
&\Prob\big\{\varepsilon_1 s_{\min}(M+W)\leq \varepsilon_2 s_{\max}(M'+W')+\tau\big\}
\leq \Prob\big\{s_{\min}(M+W)\leq C_{\text{\tiny\ref{l: HS conc}}}\sqrt{d}t_0\big\}\\
&+\sum\limits_{\ell=1}^{\infty}\Prob\{s_{\min}(M+W)\leq C_{\text{\tiny\ref{l: HS conc}}}\sqrt{d}\,2^\ell t_0\}\,
\Prob\{s_{\max}(M'+W')\geq (C_{\text{\tiny\ref{l: HS conc}}}(\varepsilon_1/\varepsilon_2)2^{\ell-1}t_0-1)\sqrt{d}\}.
\end{align*}
Let $\ell_0>0$ be the smallest positive integer satisfying 
$
C_{\text{\tiny\ref{l: HS conc}}}(\varepsilon_1/\varepsilon_2)2^{\ell_0}t_0\sqrt{d}
\geq 2C'' \sqrt{d+k},
$
so that 
$$
2^{\ell_0-1}t_0< C(t_0+\varepsilon_2/\varepsilon_1),
$$
where $C>0$ is an absolute constant. Applying the above estimates for $s_{\min}$ and $s_{\max}$ and using a trivial probability bound for $s_{\max}$ for all $\ell\leq \ell_0$ (this corresponds to $h<1$ in (\ref{HR})), we get
\begin{align*}
&\Prob\big\{\varepsilon_1 s_{\min}(M+W)\leq \varepsilon_2 s_{\max}(M'+W')+\tau\big\}
\\
&\le\sum\limits_{\ell=0}^{\ell_0}
C^{r+d}_{\text{\tiny\ref{prop: smin}}}\,\delta^d (2^\ell t_0)^{d-r}+\sum\limits_{\ell=\ell_0+1}^{\infty}
C^{r+d}_{\text{\tiny\ref{prop: smin}}}\,\delta^d (2^\ell t_0)^{d-r}
\exp\big(-(C_{\text{\tiny\ref{l: HS conc}}}(\varepsilon_1/\varepsilon_2)2^{\ell-1}t_0-1)^2 d(C'')^{-2}\big)
\\
&\leq 2C^{r+d}_{\text{\tiny\ref{prop: smin}}}\,\delta^d (2^{\ell_0} t_0)^{d-r}
+2C^{r+d}_{\text{\tiny\ref{prop: smin}}}\,\delta^d (2^{\ell_0+1} t_0)^{d-r}
\exp\big(-(C_{\text{\tiny\ref{l: HS conc}}}(\varepsilon_1/\varepsilon_2)2^{\ell_0}t_0-1)^2 d(C'')^{-2}\big)\\
&\leq 4C^{r+d}_{\text{\tiny\ref{prop: smin}}}\,\delta^d (2^{\ell_0+1} t_0)^{d-r}
\\
&\leq 4C^{r+d}_{\text{\tiny\ref{prop: smin}}}\,\delta^d (4C(t_0+\varepsilon_2/\varepsilon_1))^{d-r},
\end{align*}
where to estimate the value of the series we used that, by our assumption on $\ell_0$,
$$\frac{2^{d-r}
\exp(-(C_{\text{\tiny\ref{l: HS conc}}}(\varepsilon_1/\varepsilon_2)2^{\ell+1}t_0-1)^2 d(C'')^{-2})}
{\exp(-(C_{\text{\tiny\ref{l: HS conc}}}(\varepsilon_1/\varepsilon_2)2^{\ell}t_0-1)^2 d(C'')^{-2})}\leq \frac{1}{2},\quad \ell\geq \ell_0+1.
$$
The result follows.

\end{proof}

Now, we can prove the main result of this section, which gives Theorem~A
from the introduction for an appropriate choice of parameters.

\begin{theor}\label{th: normal lower}
For any $K\geq 1$
there are $C_{\text{\tiny\ref{th: normal lower}}},c_{\text{\tiny\ref{th: normal lower}}}>0$
depending only on $K$ with the following property.
Let $n\geq C_{\text{\tiny\ref{th: normal lower}}}$, let $1\le n-m\leq {n}/{\log n}$, and let
$B$ be an $m\times n$ random matrix with independent columns satisfying \eqref{Asmp 1}.
Then for any
$$
C_{\text{\tiny\ref{th: normal lower}}}(n-m)\log n\le k\le c_{\text{\tiny\ref{th: normal lower}}}n,\quad
e^{-c_{\text{\tiny\ref{th: normal lower}}} k/(n-m)}\le t\le 1,\quad 0\le\tau\le k^2t/n^{3/2}
$$
and any $m\times n$ fixed matrix $W$ with $\|W\|\leq \sqrt{k}$ we have
$$\Prob\Big\{\exists {\bf u}\in S^{n-1}(\R):\;
\|(B+W){\bf u}\|_2\leq \tau\mbox{ and }u_{n-k+1}^*\leq \frac{kt}{n^{3/2}}\Big\}
\leq (C_{\text{\tiny\ref{th: normal lower}}}t)^{k}+e^{-c_{\text{\tiny\ref{th: normal lower}}}n}.$$
\end{theor}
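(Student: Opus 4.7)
The plan is to combine the deterministic reduction of Lemma~\ref{l:determ} with the singular-value small-ball bound of Proposition~\ref{cor: smin smax}, bridged by an averaging step that avoids the prohibitively expensive union bound over the set of ``large'' coordinates of the candidate vector. Concretely, I first apply Lemma~\ref{l: incomp in ker} to $B+W$ (using $\|W\|\le\sqrt{k}\le\sqrt n$) to discard, at cost $2e^{-cn}$, every unit $\n$ satisfying $\|(B+W)\n\|_2\le\tau$ that is not in $\Incomp_n(\zeta,\zeta)$; thereafter such a candidate obeys $|\{i:|v_i|\ge\zeta/\sqrt n\}|\ge \zeta n$. Fix $I\subset[n]$ with $|I|=k$, set $\theta:=kt/n^{3/2}$, and let $\Event_I$ denote the event that some incompressible unit $\n=(v_1,\dots,v_n)$ satisfies $\|(B+W)\n\|_2\le\tau$ together with $|v_i|\le\theta$ for all $i\in I$. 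The hypothesis $k\le c_{\text{\tiny\ref{th: normal lower}}}n$ is taken small enough that $\theta<\zeta/\sqrt n$, forcing all ``large'' coordinates of $\n$ into $I^c$.

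\medskip

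I then choose auxiliary parameters $r:=\lfloor c_0 k\rfloor$ and $\beta:=c_0'/\sqrt n$ for small constants $c_0,c_0'>0$ and carry out the averaging. For any $\n\in\Incomp_n(\zeta,\zeta)$ as above, the count of large coordinates of $\n$ that fall into a uniformly random size-$r$ subset $J\subset I^c$ is hypergeometrically distributed with mean $\ge\zeta r$, so a second-moment (Chebyshev) computation shows that at least half of all such $J$ satisfy $\|\n_J\|_2\ge\beta\sqrt r$, provided $r\ge C_0/\zeta^2$; this is automatic from $k\ge C(n-m)\log n$. Writing $\Event_{I,J}$ for the event that $\Event_I$ holds with the additional constraint $\|\n_J\|_2\ge\beta\sqrt r$, the averaging statement can be rewritten as the pointwise inequality
\begin{equation*}
\tfrac12\binom{n-k}{r}\,{\bf 1}_{\Event_I}\le \sum_{J\subset I^c,\,|J|=r}{\bf 1}_{\Event_{I,J}},
\end{equation*}
which on taking expectations gives $\Prob(\Event_I)\le 2\max_{J}\Prob(\Event_{I,J})$ and hence bypasses the $\binom{n-k}{r}$-fold union bound entirely.

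\medskip

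To bound $\Prob(\Event_{I,J})$ for a single admissible pair $(I,J)$, condition on the $n-k-r$ columns of $B$ outside $I\cup J$, which fix the $d$-dimensional subspace $F:=\spn\{\col_\ell(B+W):\ell\in(I\cup J)^c\}^\perp$ with $d=k+r-(n-m)$; Theorem~\ref{th: 4.3} (applied with the shifts $\col_\ell(W)$, whose spectral norm is at most $\sqrt n$) gives $\LCD_{c\sqrt m,c}(F)\ge\sqrt m\,e^{cm/d}$ off an event of probability $\le e^{-cn}$. On $\Event_{I,J}$ the proof of Lemma~\ref{l:determ}---which in fact only uses $\|\n_J\|_2\ge\beta\sqrt r$, not the pointwise bound stated in its hypothesis---yields
\begin{equation*}
\beta\sqrt r\,s_{\min}(M'+W')\le\theta\sqrt k\,s_{\max}(M+W)+\tau,
\end{equation*}
with $M,M'$ the natural matrices of $\Proj_F(\col_\ell(B))$ for $\ell\in I$ and $\ell\in J$ respectively and $W,W'$ their deterministic counterparts. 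Proposition~\ref{cor: smin smax} with $\varepsilon_1=\beta\sqrt r$ and $\varepsilon_2=\theta\sqrt k$ (the hypothesis $\tau\le\varepsilon_2\sqrt d$ is verified via $d\ge k$ and $\tau\le k^2t/n^{3/2}$) then bounds the conditional probability by $C^d(e^{-cm/d}+e^{-cr}+\varepsilon_2/\varepsilon_1)^{d-r}$, where $\varepsilon_2/\varepsilon_1=O(kt/n)$. For $c_0,c_0'$ small enough the two exponential terms, when raised to the $(d-r)$-th power, are each dominated by $e^{-cn}$ across the parameter range, so the leading contribution is $(C'kt/n)^{k}$; a final union bound over the $\binom{n}{k}\le(en/k)^{k}$ choices of $I$ produces $(C''t)^k$, and adding the $e^{-cn}$ losses gives the claim.

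\medskip

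The main obstacle is the averaging step: a naive double union bound over the pairs $(I,J)$ would pay an extra factor of $\binom{n-k}{r}\ge e^{\Omega(k)}$ that the singular-value estimate cannot absorb, which is precisely the source of the suboptimal exponents in~\cite{RV no-gap,LuhRourke}. The secondary difficulty is book-keeping: one must choose $c_0,c_0'$ carefully so that the auxiliary terms $(e^{-cm/d})^{d-r}$ and $(e^{-cr})^{d-r}$ in Proposition~\ref{cor: smin smax} fit inside the $e^{-cn}$ slack uniformly over the entire admissible range of $k$ and $t$, and it is essential here that Proposition~\ref{cor: smin smax} harvests the $(d-r)$-th power (rather than the $d$-th power that a classical $\varepsilon$-net argument would yield), because this is what matches the $\binom{n}{k}\cdot(O(kt/n))^k$ budget of the final union bound over $I$.
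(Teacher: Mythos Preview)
Your overall strategy is sound and closely tracks the paper's: both use Lemma~\ref{l:determ} to reduce to a singular-value comparison, Proposition~\ref{cor: smin smax} to estimate the latter, and an averaging device to avoid paying a union bound over the ``large-coordinate'' set $J$. The implementations differ: the paper applies Markov's inequality to $|I_\theta|^k|J_\beta|^k$ (taking $|J|=k$) and so averages over both index sets simultaneously, whereas you take a direct union bound over $I$ and then average over $J$ (with $|J|=c_0 k$) via a hypergeometric/Chebyshev count. Both mechanisms buy the same thing --- one factor of roughly $\binom{n}{k}$ instead of two --- and both lead to the same final shape of the estimate.

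There is, however, a concrete error in your bookkeeping. You assert that the term $(e^{-c_{\text{\tiny\ref{l: HS conc}}}r})^{d-r}$ from Proposition~\ref{cor: smin smax} is, after the union bound over $I$, dominated by $e^{-cn}$. This is false at the bottom of the range: with $r=c_0k$ and $d-r=k-(n-m)$ the raw term is $e^{-c\,c_0k(k-(n-m))}$, which for $n-m=1$ and $k\asymp\log n$ is only $e^{-O(\log^2 n)}$; the prefactor $\binom{n}{k}C^d$ does not help. What \emph{is} true is that this contribution can be absorbed into the $(\widehat{C}t)^k$ budget, but doing so requires the lower bound $t\ge e^{-c_{\text{\tiny\ref{th: normal lower}}}k/(n-m)}$ together with $k\ge C_{\text{\tiny\ref{th: normal lower}}}(n-m)\log n$ and a careful choice of $c_{\text{\tiny\ref{th: normal lower}}}$ relative to $c_{\text{\tiny\ref{l: HS conc}}}c_0$ --- an argument you do not supply. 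Relatedly, Proposition~\ref{cor: smin smax} yields the exponent $d-r=k-(n-m)$, not $k$; passing from $(C'kt/n)^{k-(n-m)}$ to the claimed $(C''t)^k$ after the union bound leaves a factor $(n/(kt))^{n-m}$, which again must be controlled via the same two hypotheses. The paper handles both issues by introducing a threshold $t_0=\widetilde c\,nk^{-1}(e^{-cn/k}+e^{-ck/(n-m)})$ and splitting into $t\ge t_0$ and $t<t_0$; your write-up skips this step, and without it the final inequality does not follow as stated.
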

\begin{proof}
The constants $C_{\text{\tiny\ref{th: normal lower}}},c_{\text{\tiny\ref{th: normal lower}}}$ can be recovered from the proof below.
Given $n\geq C_{\text{\tiny\ref{th: normal lower}}}$ and $1\le n-m\leq {n}/{\log n}$, take any $k,t,\tau$ satisfying
$$
k\in[ C_{\text{\tiny\ref{th: normal lower}}}(n-m)\log n,c_{\text{\tiny\ref{th: normal lower}}}n],\quad t\le 1,\quad\text{and}\quad \tau\le k^2t/n^{3/2}\le c_{\text{\tiny\ref{th: normal lower}}}^2\sqrt{n},
$$
and a fixed matrix $W$ with $\|W\|\leq \sqrt{k}$.
Let ${\bf u}=(u_1,\dots,u_n)$ be a unit random vector such that $\|(B+W){\bf u}\|_2\leq\tau$
everywhere on the probability space. It follows from \cite[Lemma~2.6]{RV smallest}
that there is $\widetilde c=\widetilde c(K)>0$  such that if
$c_{\text{\tiny\ref{th: normal lower}}}\leq \widetilde c$,
$$
\beta:=\widetilde c/\sqrt{n}\quad\text{and}\quad J_\beta:=\big\{i\leq n:\;|u_i|\geq\beta\big\}
$$
then $\Prob\{|J_\beta|<\widetilde c n\}<e^{-\widetilde c n}$.
Set
$$
\theta:={kt}/{n^{3/2}} \quad\text{and}\quad I_\theta:=\{i\leq n:\;|u_i|\leq \theta\}
$$
 and note that
$$
u_{n-k+1}^*\le \theta\quad\text{if and only if}\quad |I_{\theta}|\ge k.
$$
Thus we need to estimate probability of the event $\big\{|I_\theta|\geq k\big\}$. Using  Markov's inequality, we get
\begin{align}
 \Prob\big\{|I_\theta|\geq k\big\}&\le k^{-k}\Exp(|I_\theta|^{k}{\bf 1}_{\{|I_\theta|\ge k\}})\notag
 \\
 &\le  (\widetilde{c}nk)^{-k}\Exp\big(|I_\theta|^{k}|J_\beta|^{k}{\bf 1}_{\{|I_\theta|\ge k\}}
{\bf 1}_{\{|J_\beta|\geq \widetilde c n\}}\big)+(n/k)^{k}e^{-\widetilde{c}n}, \label{Markov}
\end{align}
and we need to get a bound for $\Exp\big(|I_\theta|^{k}|J_\beta|^{k}{\bf 1}_{\{|I_\theta|\ge k\}}{\bf 1}_{\{|J_\beta|\geq \widetilde c n\}}\big)$.

Let $\Event_{rk}$ be the event that all $m\times m$ submatrices of $B+W$ are of full rank.
The main result of \cite{RV} implies that, as long as $n-m\leq c'n$
for a sufficiently small $c'>0$ (depending on $K$), we have
$$
\Prob(\Event_{rk})\geq 1-e^{-c' n}
$$
(as we already mentioned, \cite{RV} deals with centered random matrices, however, adding
a non-random shift with an appropriately bounded spectral norm does not in any way change the argument).
Given any two distinct subsets of indices  $I:=\{i_1,\dots, i_{k}\}$, $J:=\{j_1,\dots,j_k\}\subset [n]$,  denote
$$
F=F(I,J):=\spn\{\col_{i}(B+W),\, i\in(I\cup J)^c\}^\perp.
$$
Conditioned on $\Event_{rk}$, we have
\begin{equation}\label{dimF}
\dim F=m-(n-2k)=:d\in (k,2k).
\end{equation}
Further, assuming that $c_{\text{\tiny\ref{th: normal lower}}}\le c_{\text{\tiny\ref{th: 4.3}}}/2$, by Theorem~\ref{th: 4.3} we get
$$
\Prob\big\{\LCD_{c_{\text{\tiny\ref{th: 4.3}}} \sqrt{m},c_{\text{\tiny\ref{th: 4.3}}}}(F)\le \sqrt{m}\,e^{c_{\text{\tiny\ref{th: 4.3}}} m/d}\big\}\le e^{-c_{\text{\tiny\ref{th: 4.3}}} m}.
$$
Let $M$, $M'$ be the $m\times k$ matrices with columns $\Proj_F(\col_{j}(B+W))$, $j\in J$ and
 $\Proj_F(\col_{i}(B+W))$, $i\in I$, respectively.
  Set
$$
\varepsilon_1:=\beta\sqrt{k}=\widetilde c\sqrt{k/n},\quad \varepsilon_2:=\theta\sqrt{k}=t(k/n)^{3/2}.
$$
Note that with this choice,  $\varepsilon_1>\varepsilon_2\geq \tau/\sqrt{d}$. By Proposition~\ref{cor: smin smax} we have
\begin{align}
\Prob\Big\{&\varepsilon_1 s_{\min}(M)\leq \varepsilon_2 s_{\max}(M')+\tau\;\big|\;\{\dim F=d\}\cap\big\{\LCD_{c_{\text{\tiny\ref{th: 4.3}}} \sqrt{m},c_{\text{\tiny\ref{th: 4.3}}}}(F)\geq \sqrt{m}\,e^{c_{\text{\tiny\ref{th: 4.3}}} m/d}\big\}\Big\}\notag
\\
&\leq C_{\text{\tiny\ref{cor: smin smax}}}^{d}\big(e^{-cn/k}+e^{-ck}+{\varepsilon_2}/{\varepsilon_1}\big)^{d-k},\label{aux}
\end{align}
where $C_{\text{\tiny\ref{cor: smin smax}}}$ and $c=\min\{c_{\text{\tiny\ref{cor: smin smax}}}/2,c_{\text{\tiny\ref{l: HS conc}}},c'\}$
depend on $K$ and $c_{\text{\tiny\ref{th: 4.3}}}$, and we used that $m/d\ge n/(2k)$.
Let $\chi_{j_1,\dots,j_k,i_1,\dots,i_k}(\varepsilon_1,\varepsilon_2)$
be the indicator of the event
$$\varepsilon_1 s_{\min}(M)\leq \varepsilon_2 s_{\max}(M')+\tau.$$
It follows from (\ref{dimF}) -- (\ref{aux}) that for all $\varepsilon_2/\varepsilon_1\geq e^{-c n/k}+e^{-c k}$
we have
\begin{align*}
\Exp(\chi_{j_1,\dots,j_k,i_1,\dots,i_k}(\varepsilon_1,\varepsilon_2))&\leq
 C_{\text{\tiny\ref{cor: smin smax}}}^d\big(2{\varepsilon_2}/{\varepsilon_1}\big)^{d-k}+e^{-c'n}+e^{-c_{\text{\tiny\ref{th: 4.3}}} m}
\leq C^d (\varepsilon_2/\varepsilon_1)^{d-k}
\end{align*}
with $C>2C_{\text{\tiny\ref{cor: smin smax}}}$, so that
\begin{equation}\label{eq:sumchi}
\Exp\Big(\sum\chi_{j_1,\dots,j_k,i_1,\dots,i_k}
(\varepsilon_1,\varepsilon_2)\Big) \le C^d n^{2k} (\varepsilon_2/\varepsilon_1)^{d-k},
\end{equation}
where the sum is taken over all ordered $2k$--tuples $(j_1,\dots,j_k,i_1,\dots,i_k)$ with distinct components.
Now for any distinct $i_1,\dots,i_k\in I_\theta$ and $j_1,\dots,j_k\in J_\beta$
(whenever $|I_\theta|,|J_\beta|\geq k$)
by Lemma~\ref{l:determ} we have
$\chi_{j_1,\dots,j_k,i_1,\dots,i_k}(\varepsilon_1,\varepsilon_2)=1$, hence {\it deterministically}
\begin{align}
\label{determ}
&\sum
\chi_{j_1,\dots,j_k,i_1,\dots,i_k}(\varepsilon_1,\varepsilon_2)\\
&\hspace{1cm}\ge |I_\theta|(|I_\theta|-1)\dots(|I_\theta|-k+1)
|J_\beta|(|J_\beta|-1)\dots(|J_\beta|-k+1)
{\bf 1}_{\{|I_\theta|\ge k\}}{\bf 1}_{\{|J_\beta|\geq \widetilde c n\}}\notag
\\
&\hspace{1cm}\ge 9^{-k}|I_\theta|^{k}|J_\beta|^k{\bf 1}_{\{|I_\theta|\ge k\}}{\bf 1}_{\{|J_\beta|\geq \widetilde c n\}}.\notag
\end{align}
Let
$
t_0:=\widetilde cnk^{-1}(e^{-c n/k}+e^{-c k/(n-m)}).
$
Note that $t_0<1$ provided that $c^{-1}_{\text{\tiny\ref{th: normal lower}}},C_{\text{\tiny\ref{th: normal lower}}}> c^{-1}$ are big enough.
For $t\ge t_0$ we have $(\widetilde cn/(kt))^{n-m}\le e^{ck}$ and
$$
\frac{\varepsilon_2}{\varepsilon_1}=\frac{kt}{\widetilde cn}\ge e^{-c n/k}+e^{-c k/(n-m)}\geq e^{-c n/k}+e^{-c k},
$$
which together with (\ref{eq:sumchi}) -- (\ref{determ}) yield
$$
\Exp(|I_\theta|^{k}|J_\beta|^k{\bf 1}_{\{|I_\theta|\ge k\}}{\bf 1}_{\{|J_\beta|\geq \widetilde c n\}})
\leq 9^k n^{2k}C^d (\varepsilon_2/\varepsilon_1)^{d-k}
\le
(9C^2/\widetilde c)^k n^{3k-d}(kt)^{d-k}.
$$
Hence, plugging this in \eqref{Markov}, we get for $t\ge t_0$
\begin{align*}
 \Prob\big\{|I_\theta|\geq k\big\}&\leq (3C/\widetilde c)^{2k} (n/k)^{2k-d}t^{d-k}+(n/k)^{k}e^{-\widetilde{c}n}
 \\
 &= \Big( (3C/\widetilde c)^{2k}({n}/(kt))^{n-m}+(n/(kt))^{k}e^{-\widetilde{c}n}\Big)t^k\le (\widehat{C}t)^k
 \end{align*}
for some $\widehat{C}>1$.
Thus, for any $t\geq t_0$
and $\tau\leq {k^2 t}/{n^{3/2}}$, any unit random vector ${\bf u}$ with $\Prob\{\|(B+W){\bf u}\|_2\leq \tau\}=1$
satisfies
$$\Prob\big\{u^*_{n-k+1}\leq {kt}/{n^{3/2}}\big\}\leq (\widehat C t)^{k}.$$
 Also, $\Prob\big\{u^*_{n-k+1}\leq {kt}/{n^{3/2}}\big\}\leq (\widehat C t_0)^{k}$ for $t\le t_0$.
Note that there exists $\overline{c}>0$ such that
$$
t_0\le\max(e^{-\overline{c} n/k},e^{-\overline{c} k/(n-m)}).
$$
Hence, for all $t\geq e^{-\bar c k/(n-m)}$ we get from the above
$$
\Prob\big\{u^*_{n-k+1}\leq {kt}/{n^{3/2}}\big\}\leq (\widehat C \max(t,t_0))^k\le
(\widehat C \max(t,e^{-\bar c n/k}))^{k}\leq (\widehat C t)^{k}+\widehat C^k e^{-\bar c n}.
$$
The result follows.
\end{proof}

As an illustration of the above result,
we consider no-gaps delocalization of eigenvectors of non-Hermitian
random matrices corresponding to real eigenvalues of {\it small absolute value}.
\begin{cor}\label{cor: eigenv}
For any $K\geq 1$ there are $C_{\text{\tiny\ref{cor: eigenv}}},c_{\text{\tiny\ref{cor: eigenv}}}>0$
depending only on $K$ with the following property.
Let $n\geq C_{\text{\tiny\ref{cor: eigenv}}}$, let
$B$ be an $n\times n$ random matrix with independent columns satisfying \eqref{Asmp 1}.
Then for any integer $k\in [C_{\text{\tiny\ref{cor: eigenv}}}\log n,\,c_{\text{\tiny\ref{cor: eigenv}}}n]$ we have
\begin{align*}
\Prob\Big\{&\exists\, \n\in S^{n-1}(\R):\;
B\n=\lambda \n\mbox{ for some }\;\;-\sqrt{k}\leq \lambda\leq \sqrt{k}
\;\;\mbox{ and }v_{n-k+1}^*\leq \frac{c_{\text{\tiny\ref{cor: eigenv}}}
k}{n^{3/2}}\Big\}\leq e^{-k}.
\end{align*}
\end{cor}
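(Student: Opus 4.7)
The plan is to apply Theorem~\ref{th: normal lower} to a rectangular submatrix, after discretizing the eigenvalue parameter. First, delete (say) the top row of $B$ to obtain an $(n-1)\times n$ random matrix $\tilde B$ whose columns are independent and each still satisfies \eqref{Asmp 1} in $\R^{n-1}$. Let $\tilde I$ denote the $(n-1)\times n$ matrix obtained from $\Id$ by removing the same row; $\|\tilde I\|\le 1$. If $\n\in S^{n-1}(\R)$ is an eigenvector of $B$ with real eigenvalue $\lambda\in[-\sqrt k,\sqrt k]$, then exactly
$$
(\tilde B-\lambda \tilde I)\n=0,
$$
so $\n$ lies in the kernel of the shifted rectangular matrix $\tilde B-\lambda\tilde I$.

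Next, build an $\varepsilon$-net $\{\lambda_1,\dots,\lambda_N\}\subset[-\sqrt k,\sqrt k]$ of mesh $\varepsilon:=c_{\text{\tiny\ref{cor: eigenv}}}k^2/n^{3/2}$, chosen so that $\varepsilon$ matches the upper bound on $\tau$ in Theorem~\ref{th: normal lower} with $t=c_{\text{\tiny\ref{cor: eigenv}}}$; its cardinality satisfies $N\le 3n^{3/2}/(c_{\text{\tiny\ref{cor: eigenv}}}k^{3/2})$. For any admissible $\lambda$, choose $\lambda_i$ with $|\lambda-\lambda_i|\le\varepsilon$, so that
$$
\|(\tilde B-\lambda_i\tilde I)\n\|_2=|\lambda-\lambda_i|\,\|\tilde I\n\|_2\le\varepsilon.
$$
Setting $W_i:=-\lambda_i\tilde I$, one has $\|W_i\|\le|\lambda_i|\le\sqrt k$, so the fixed matrix $W_i$ satisfies the hypothesis of Theorem~\ref{th: normal lower}. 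Applying that theorem to $\tilde B+W_i$ with $m=n-1$, $t=c_{\text{\tiny\ref{cor: eigenv}}}$, and $\tau=\varepsilon$ (all the range conditions on $k$, $t$, $\tau$ are satisfied once $C_{\text{\tiny\ref{cor: eigenv}}}$ is large and $c_{\text{\tiny\ref{cor: eigenv}}}$ is small) yields, for each fixed $\lambda_i$,
$$
\Prob\Big\{\exists\,\n\in S^{n-1}(\R):\;\|(\tilde B+W_i)\n\|_2\le\varepsilon,\ v_{n-k+1}^*\le \tfrac{c_{\text{\tiny\ref{cor: eigenv}}}k}{n^{3/2}}\Big\}\le (Cc_{\text{\tiny\ref{cor: eigenv}}})^k+e^{-c'n},
$$
where $C,c'>0$ depend only on $K$.

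Finally, take the union bound over $i=1,\dots,N$. By the reduction in the first paragraph, every ``bad'' pair $(\lambda,\n)$ for the corollary produces a ``bad'' pair for at least one $\lambda_i$, so the probability in the statement is bounded by $N\bigl[(Cc_{\text{\tiny\ref{cor: eigenv}}})^k+e^{-c'n}\bigr]$. Choose $c_{\text{\tiny\ref{cor: eigenv}}}$ so small that $Cc_{\text{\tiny\ref{cor: eigenv}}}\le e^{-3}$; then the first term is at most $3c_{\text{\tiny\ref{cor: eigenv}}}^{-1}n^{3/2}k^{-3/2}e^{-3k}$, which is bounded by $e^{-k}/2$ once $C_{\text{\tiny\ref{cor: eigenv}}}$ is large enough so that $k\ge C_{\text{\tiny\ref{cor: eigenv}}}\log n$ absorbs the factor $n^{3/2}$. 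Likewise, shrinking $c_{\text{\tiny\ref{cor: eigenv}}}$ further so that $k\le c_{\text{\tiny\ref{cor: eigenv}}}n$ forces $c'n\ge 2k+\log n$, whence the second term is at most $e^{-k}/2$ as well. The only delicate point --- the main obstacle, such as it is --- is the balance between the mesh $\varepsilon$ (which must respect the ceiling $\tau\le k^2t/n^{3/2}$ in Theorem~\ref{th: normal lower}) and the exponential probability $(Ct)^k$ per net point; this works precisely because Theorem~\ref{th: normal lower} permits $t$ to be any small constant, so that the exponential gain $(Cc_{\text{\tiny\ref{cor: eigenv}}})^k$ easily defeats the polynomial net size $N=O(n^{3/2}/k^{3/2})$ in the regime $k\ge C_{\text{\tiny\ref{cor: eigenv}}}\log n$.
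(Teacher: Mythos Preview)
Your proof is correct and follows the same approach as the paper: pass from the square matrix $B$ to an $(n-1)\times n$ submatrix, discretize the eigenvalue interval $[-\sqrt k,\sqrt k]$ with mesh $\varepsilon=c_{\text{\tiny\ref{cor: eigenv}}}k^2/n^{3/2}$, and apply Theorem~\ref{th: normal lower} at each net point with $m=n-1$, $t=c_{\text{\tiny\ref{cor: eigenv}}}$, $\tau=\varepsilon$, and shift $W=-\lambda_i\tilde I$. The paper's version is terser (it does not spell out the final constant balancing), but the logic is identical.
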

\begin{proof}
  Applying a simple discretization procedure for the interval $[-\sqrt{k},\sqrt{k}]$, we get that the probability under consideration is less then
  \begin{align*}
c_{\text{\tiny\ref{cor: eigenv}}}^{-1}(n/k)^{3/2}\Prob\Big\{&\exists\, \n\in S^{n-1}(\R):\;
\|B\n-\lambda_0 \n\|_2\le \frac{c_{\text{\tiny\ref{cor: eigenv}}}k^2}{n^{3/2}}
\;\;\mbox{ and }v_{n-k+1}^*\leq \frac{c_{\text{\tiny\ref{cor: eigenv}}}k}{n^{3/2}}\Big\}
\end{align*}
for some $-\sqrt{k}\leq \lambda_0\leq \sqrt{k}$. Evidently, here $B$ can be replaced with any its submatrix of the size $(n-1)\times n$ without increasing the probability. Hence, choosing $c_{\text{\tiny\ref{cor: eigenv}}}$ being sufficiently small and applying
Theorem~\ref{th: normal lower} with $m=n-1$ and $W=\lambda_0 I$, we get the result.
\end{proof}

As another simple corollary, let us consider no-gaps delocalization for singular vectors corresponding to
small singular values.
We note that sup-norm delocalization of singular vectors is well studied in literature;
see, in particular, section~8 of survey \cite{RVW} and references therein, as well as papers
\cite{TV cov, Maltsev13, PillaiYin14, BEKYY, BKYY, Vu Wang, Wang}.

For a square $n\times n$ matrix $B$ with i.i.d $K$--subgaussian entries, Wei \cite{Wei} showed that the $\ell$--th smallest
singular value $s_{n-\ell+1}(B)$ satisfies $s_{n-\ell+1}(B)\leq \frac{Ct\ell}{\sqrt{n}}$ with probability $1-e^{-c\ell t}$
for all $t\geq 1$, where $C,c>0$ may only depend on $K$ (we refer to \cite{NV,RV upper,Kat18} for upper bounds for $s_{\min}(B)$).
Combining this result with the above theorem, we immediately get
\begin{cor}\label{cor: sing}
For any $K\geq 1$ there are $C_{\text{\tiny\ref{cor: sing}}},c_{\text{\tiny\ref{cor: sing}}}>0$
depending only on $K$ with the following property.
Let $n\geq C_{\text{\tiny\ref{cor: sing}}}$ and let
$B$ be an $n\times n$ random matrix with independent columns satisfying \eqref{Asmp 1}.
Then for any $\ell\in[n]$ and  any $C_{\text{\tiny\ref{cor: sing}}}\sqrt{n\max(\ell,\log n)}\le k\le c_{\text{\tiny\ref{cor: sing}}}n$ we have
\begin{align*}
\Prob\Big\{v_{n-k+1}^*\leq \frac{c_{\text{\tiny\ref{cor: sing}}}
k}{n^{3/2}},\;\;\mbox{ for a unit vector $\n$ satisfying $B^T B\n=s_{n-\ell+1}(B)^2\n$}
\Big\}\leq n^{-1}.
\end{align*}
\end{cor}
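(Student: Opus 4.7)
The plan is to reduce the statement directly to Theorem~\ref{th: normal lower} via the Wei upper bound on small singular values quoted just above the corollary. If $\n$ is a unit vector with $B^T B\n = s_{n-\ell+1}(B)^2\n$, then $\|B\n\|_2 = s_{n-\ell+1}(B)$, so when this singular value is tiny, $\n$ is an ``almost null'' vector to which Theorem~\ref{th: normal lower} can be applied after dropping one row of $B$.

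First I would use Wei's estimate: for every $t \ge 1$, $\Prob\{s_{n-\ell+1}(B) > C' t \ell/\sqrt{n}\} \le e^{-c'\ell t}$, with $C',c'$ depending only on $K$. Taking $t := \max(1,\,C_0 \log n/\ell)$ with $C_0$ a large constant gives $s_{n-\ell+1}(B) \le C''\max(\ell,\log n)/\sqrt{n}$ with probability at least $1 - n^{-2}$. This choice is the reason the hypothesis of the corollary pairs $\ell$ with $\log n$ through the max: one needs the Wei event to have probability $\ge 1 - n^{-1}$, which requires $\ell t \gtrsim \log n$.

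Next, let $B'$ be the $(n-1)\times n$ submatrix obtained by deleting one row of $B$; its columns still satisfy \eqref{Asmp 1} and are independent, and deterministically $\|B'\n\|_2 \le \|B\n\|_2$. I would then apply Theorem~\ref{th: normal lower} with $m := n-1$, $W := 0$, $\tau := C''\max(\ell,\log n)/\sqrt{n}$, and $t$ taken to be a sufficiently small constant depending only on $K$. The size constraint $\tau \le k^2 t/n^{3/2}$ becomes $k^2 \gtrsim n\max(\ell,\log n)$, which is exactly the lower bound $k \ge C_{\text{\tiny\ref{cor: sing}}}\sqrt{n\max(\ell,\log n)}$ for $C_{\text{\tiny\ref{cor: sing}}}$ large enough. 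Because $n - m = 1$, the requirement $1 \le n-m \le n/\log n$ is automatic; the conditions $C_{\text{\tiny\ref{th: normal lower}}}\log n \le k \le c_{\text{\tiny\ref{th: normal lower}}}n$ and $e^{-c k/(n-m)} = e^{-ck} \le t$ are likewise automatic since $k \ge C\log n$ (after absorbing constants into $C_{\text{\tiny\ref{cor: sing}}}$ and $c_{\text{\tiny\ref{cor: sing}}}$).

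The conclusion of Theorem~\ref{th: normal lower} then bounds the probability of the event $\{\exists\,\n \in S^{n-1}(\R):\ \|B'\n\|_2 \le \tau \text{ and } v_{n-k+1}^* \le c_{\text{\tiny\ref{cor: sing}}} k/n^{3/2}\}$ by $(Ct)^k + e^{-c n}$. With $t$ chosen so that $Ct \le 1/2$ and using $k \ge C\log n$, both summands are $\le n^{-2}$; combined with the Wei event of probability $\ge 1 - n^{-2}$, a union bound yields the claimed $n^{-1}$ estimate. No step is technically difficult here; the only delicate point is the parameter calibration so that Wei's event, the $(Ct)^k$ term, and the $e^{-cn}$ term are all $\lesssim n^{-2}$ simultaneously, and this is exactly what forces the $\sqrt{n\max(\ell,\log n)}$ threshold on $k$.
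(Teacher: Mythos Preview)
Your proposal is correct and is precisely the argument the paper has in mind: the paper does not spell out a proof but simply says ``combining this result with the above theorem, we immediately get'', referring to Wei's upper bound and Theorem~\ref{th: normal lower}. Your calibration of Wei's parameter $t=\max(1,C_0\log n/\ell)$ to force probability $\ge 1-n^{-2}$, the row-deletion trick to pass to an $(n-1)\times n$ matrix (exactly as in the proof of Corollary~\ref{cor: eigenv}), and the verification that $\tau\le k^2t/n^{3/2}$ is equivalent to the stated lower bound on $k$ are all the right ingredients, carried out correctly.
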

\begin{rem}
The estimates in the above corollary are non-trivial for $\ell=o(n)$.
\end{rem}

\section{Eigenvectors of non-Hermitian matrices}\label{s: eigenvectors}

In this section, we study no-gaps delocalization
of eigenvectors of non-Hermitian random matrices with i.i.d entries.
The basic tool, as in the case of almost null vectors, is a {\it test projection} onto the orthogonal complement $F$
of $n-\q$ columns of the random matrix (where $\q$ is chosen polylogarithmic in $n$).
The major difficulty in working with eigenvectors rather than null vectors
of the matrix is the necessity to control magnitudes of projections of rescaled coordinate vectors $-z{\bf e}_i$.
When $z$ is small by absolute value, a trivial upper bound $\|\Proj_F(-z{\bf e}_i)\|_2\leq |z|$ for the norm of the projection is already sufficient
(see Corollary~\ref{cor: eigenv}).
However, when $|z|$ is of order $n^{1/2}$, such trivial estimate becomes useless.

A similar problem was considered earlier in \cite{RV sup norm}
where the sup-norm delocalization was studied.
The authors of \cite{RV sup norm} have developed a strategy based on comparing the magnitudes of projections
of $-z{\bf e}_i$
with each other rather than estimating their ``absolute'' magnitudes.
The bound on the $\ell_\infty$--norm of an eigenvector was reduced to estimating probabilities of the form
\begin{equation}\label{eq: aux apoiajf;lksafa}
\Prob\big\{\|P(\col_{\q}(A)-z{\bf e}_{\q})\|_2\leq \varepsilon\max\limits_{i\leq \q-1}\|P(\col_{i}(A)-z{\bf e}_{i})\|_2\big\},
\end{equation}
where $P$ was a specially constructed test projection with $\ker(P)\supset F$.
It was shown that, for $\q$ polylogarithmic in $n$ and for $\varepsilon=\log^{-C}(n)$ for
a sufficiently large constant $C>0$, the above probability is very close to zero
implying the upper bound $\frac{{\rm polylog}(n)}{\sqrt{n}}$ for the
$\ell_\infty$--norms of unit eigenvectors of $A$ \cite[Section~5]{RV sup norm}.

The upper bound for \eqref{eq: aux apoiajf;lksafa} obtained in \cite{RV sup norm}
with the above choice of $\varepsilon$ is $\exp(-c \q/\log n)$ \cite[Theorem~5.1]{RV sup norm}, which is sufficient
for $\ell_\infty$--delocalization.
However, studying no-gaps delocalization with the same method requires much stronger estimates,
as they need to be able to survive the union bound over a very large number of combinations
of coordinates.
In particular, in the real case, for any $\beta>\theta>0$, the probability of the event
$$\big\{\exists\,\n\in S^{n-1}(\R):\;\|(\Az )\n\|_2\leq n^{-2}\mbox{ and }|v_{i}|\leq\theta,\;i<\q;
|v_{\q}|\geq\beta\big\}$$
has to be bounded from above by $\big(\q ^{C }{\theta}/{\beta}\big)^{\q}$,
so that the averaging argument, similar to that in the proof of Theorem~\ref{th: normal lower},
would imply the desired lower bound for the order statistics of eigenvectors.
The procedure of {\it selecting spectral window} developed in \cite{RV sup norm} and applied to
bound \eqref{eq: aux apoiajf;lksafa}, does not seem applicable to get the stronger bounds needed in our context.

\subsection{
No-gaps delocalization in terms of test projections and ellipsoids
}\label{subs: tets}

An alternative procedure developed in this paper
is based on a careful analysis of the ellipsoid
$$
\Ellips':=\Big\{\sum_{i=1}^{\q-1} a_i\Proj_F(-z{\bf e}_i):\;\|(a_1,\dots,a_{\q-1})\|_2\leq 1\Big\}
$$
as well as the {\it dual} ellipsoid.
Rather than comparing the lengths of $\Proj_F(-z{\bf e}_i)$, we consider the geometric problem of estimating probability of
events of the form
$$\Proj_F(-z{\bf e}_{\q})\in\varepsilon\, \Ellips'+\tau B_2^F.$$
A simple but important observation is the following deterministic lemma:

\begin{lemma}\label{l: two conditions}
Given $n\in \N$, let $A$ be an $n\times n$ matrix with entries in $\CorR$, and let $z\in\CorR$. For $\q<n$, let
$$
F:=\spn\{\col_i(\Az ):\;i\in[n]\setminus[\q ]\}^\perp.
$$
Let $\n=(v_1,\dots,v_n)\in S^{n-1}(\CorR)$ and let $\tau>0$, $\beta>\theta>0$ be parameters such that
$$
\|(\Az )\n\|_2\leq \tau,\quad |v_i|\leq \theta, \,\,i\leq \q-1,\quad\text{and}\quad |v_{\q }|\geq \beta.
$$
Assume also that for some $T>0$ we have
$\|\Proj_F\col_\ell(A)\|_2\leq T$, $\ell\in[\q ]$.
Then
\begin{align*}
&(a)\quad\Proj_F(-z {\bf e}_{\q })\in
\frac{\theta\sqrt{\q}}{\beta}\Ellips'
+\Big(\frac{\tau}{\beta}+\frac{T\q \theta}{\beta}+T\Big) B_2^F  \quad \mbox{and}
\\
&(b)\quad\Proj_F\col_{\q }(\Az )\in
\frac{\theta\sqrt{\q}}{\beta}\Ellips'
+\Big(\frac{\tau}{\beta}+\frac{T\q \theta}{\beta}\Big) B_2^F.
\end{align*}
\end{lemma}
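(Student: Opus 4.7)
The plan is to apply the projection $\Proj_F$ to the (approximate) eigenvector equation, isolate the contribution of the $\q$-th column, and then bound the resulting terms using the hypotheses on $|v_i|$ and $\|\Proj_F\col_\ell(A)\|_2$.

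First, I would start from $(\Az)\n=\sum_{i=1}^n v_i\col_i(\Az)$ and apply $\Proj_F$. By definition of $F$, $\Proj_F\col_i(\Az)=0$ for $i>\q$, so we obtain the identity
\[
\sum_{i=1}^{\q} v_i\,\Proj_F\col_i(\Az)\;=\;\Proj_F\bigl((\Az)\n\bigr),
\]
whose right-hand side has Euclidean norm at most $\tau$. Since $|v_\q|\geq\beta$, I would solve for the $i=\q$ summand and divide through by $v_\q$, yielding
\[
\Proj_F\col_\q(\Az)\;\in\;-\sum_{i=1}^{\q-1}\frac{v_i}{v_\q}\,\Proj_F\col_i(\Az)\;+\;\frac{\tau}{\beta}B_2^F.
\]

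Next, I would split $\col_i(\Az)=\col_i(A)+(-z\mathbf{e}_i)$ inside the sum, so that
\[
-\sum_{i=1}^{\q-1}\frac{v_i}{v_\q}\Proj_F\col_i(\Az)\;=\;\sum_{i=1}^{\q-1}\Bigl(-\frac{v_i}{v_\q}\Bigr)\Proj_F(-z\mathbf{e}_i)\;+\;\sum_{i=1}^{\q-1}\Bigl(-\frac{v_i}{v_\q}\Bigr)\Proj_F\col_i(A).
\]
The coefficient vector $(-v_i/v_\q)_{i\le \q-1}$ has Euclidean norm at most $\sqrt{\q-1}\cdot\theta/\beta\leq \sqrt{\q}\,\theta/\beta$, so by the very definition of $\Ellips'$ (and its central symmetry) the first sum lies in $\frac{\theta\sqrt{\q}}{\beta}\Ellips'$. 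For the second sum, the triangle inequality together with the hypothesis $\|\Proj_F\col_i(A)\|_2\leq T$ gives an $\ell_2$-bound of at most $(\q-1)(\theta/\beta)T\leq T\q\theta/\beta$, putting it in $\frac{T\q\theta}{\beta}B_2^F$. Combining these yields part~(b).

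Part~(a) follows by one more triangle-inequality step: using $\Proj_F\col_\q(\Az)=\Proj_F(-z\mathbf{e}_\q)+\Proj_F\col_\q(A)$ and the bound $\|\Proj_F\col_\q(A)\|_2\leq T$, I would rearrange to solve for $\Proj_F(-z\mathbf{e}_\q)$, which absorbs an additional $T B_2^F$ summand into the error ball, producing the stated conclusion. There is really no obstacle here — the lemma is purely deterministic bookkeeping; the only thing to be careful about is the centrally symmetric interpretation of the ellipsoid $\Ellips'$ so that the sign change in $-v_i/v_\q$ does not cause trouble, and tracking the three sources of error (the $\tau$-slack from $\|(\Az)\n\|_2\leq \tau$, the $A$-columns in the sum, and the isolated $A$-column for part~(a)) to obtain the correct radii.
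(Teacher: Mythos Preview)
Your proof is correct and follows essentially the same route as the paper's: project the approximate eigenvector relation onto $F$, isolate the $\q$-th column and divide by $v_\q$, then split $\col_i(\Az)=\col_i(A)+(-z\mathbf{e}_i)$ and bound the two resulting sums separately. The paper phrases the first step as an application of the earlier Lemma~\ref{l:determ} (with $J_\beta=\{\q\}$, $I_\theta=[\q-1]$), whereas you carry out the same computation directly, but the bounds and the final passage from (b) to (a) via $\|\Proj_F\col_\q(A)\|_2\le T$ are identical.
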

\begin{proof}
Let $X:=\Proj_F\col_{\q }(\Az )$.
Applying the argument from the proof of
Lemma~\ref{l:determ} with $B=\Az $, $J_\beta=\{\q\}$, $I_\theta=[\q-1]$,
we obtain
$$
X\in \frac{\theta\sqrt{\q}}{\beta}
\Big\{\sum_{\ell=1}^{\q-1} a_\ell \Proj_F\col_{\ell}(\Az ):\;\|(a_\ell)\|_2\leq 1\Big\}
+\frac{\tau}{\beta}B_2^F.
$$
On the other hand, by the conditions on vectors $\col_\ell(A)$, we have
\begin{align*}
&\Big\{\sum\limits_{\ell=1}^{\q-1} a_\ell\Proj_F\col_\ell(\Az ):\;\|(a_\ell)\|_2\leq 1\Big\}
\subset \Big\{\sum\limits_{\ell=1}^{\q-1} a_\ell\Proj_F(-z {\bf e}_\ell):\;\|(a_\ell)\|_2\leq 1\Big\}
+T\sqrt{\q}\,B_2^F.
\end{align*}
Together with the previous inclusion, this gives
\begin{align*}
X\in \frac{\theta\sqrt{\q}}{\beta}\Ellips'
+\Big(\frac{\tau}{\beta}+\frac{T\q \theta}{\beta}\Big) B_2^F,
\end{align*}
proving assertion (b).
To get (a) it remains to note (again) that $\|\Proj_F\col_{\q }(A)\|_2\leq T$.
\end{proof}

The above lemma reduces the proof of Theorems B and C
to computing the probability of conditions (a) and (b) on the test projections.
Probabilistic analysis of (a) is relatively harder, and constitutes most part of the section.
The condition (b) is considered in Subsection~\ref{subs: deloc estimates} (see discussion at the beginning of the subsection and
Lemma~\ref{l: aux aponfa;sa32r}).

\smallskip

In what follows, rather than working with the sequence $\big(\Proj_F(-z{\bf e}_{1 }),\dots,\Proj_F(-z{\bf e}_{\q })\big)$
we will consider a dual basis in $F$. We will need some definitions.
For brevity, we use capital calligraphic letters to denote sequences
of vectors of a given length, for example, $\U_\q=(U_1,\dots,U_\q)$. Correspondingly,
$\spn(\U_\q)=\spn\{U_1,\dots,U_\q\}$ is the linear span of $U_1,\dots,U_\q$.
Further, given a sequence of vectors $\U_\q$ in a Euclidean space $E$
(over $\CorR$) we use notation
\begin{equation}\label{eq: aux el def}
\Ellips(\U_\q):=
\Big\{\sum\limits_{i=1}^\q  a_i U_i:\;a_1,\dots,a_{\q}\in\CorR,\;\|(a_1,\dots,a_\q )\|_2\leq 1\Big\}
\end{equation}
for the ellipsoid generated by $\U_\q$.

\begin{definition}[Biorthogonal system]
We say that a pair $\U_{\q}$, $\Y_{\q}\in E^\q$
forms {\it a biorthogonal system} in $E$ if
\begin{equation*}
  \spn(\U_\q)=\spn(\Y_\q)=E \quad\text{and}\quad\langle U_i,Y_j\rangle=\delta_{ij},\,\, 1\leq i,j\leq \q.
\end{equation*}
\end{definition}
The next observation is essentially taken from \cite{RV sup norm};
we provide its proof for Reader's convenience:
\begin{lemma}[{\cite{RV sup norm}}]\label{l: V and proj biorth}
Let $A=(a_{ij})_{i,j}$, $z\neq 0$, and subspace $F$ be as in Lemma~\ref{l: two conditions}.
Assume that the $(n-\q )\times (n-\q )$ principal submatrix $\widetilde A$ of $A$
obtained by crossing out first $\q $ rows and $\q$ columns, is invertible. Let ${\bf e}'_{1},\dots,{\bf e}'_{\q}$ be the coordinate vectors in $\R^\q$. Define $(n-\q )$--dimensional vectors
$$
{\bf q}_i={\bf q}_i(A,\q):=(\bar a_{i\ell})_{\ell\in [n]\setminus [\q ]},\quad i\le \q,
$$
and $(n-\q )\times (n-\q )$ matrix $D:=\bar z^{-1}(\widetilde A^*-\overline{z})^{-1}$. Finally, construct  $n$--dimensional vectors
\begin{equation}\label{eq: V act def}
V_i=V_i(A,z,\q):=(-\bar z^{-1}{\bf e}'_i)\oplus D{\bf q}_i,\quad i\in [\q ],
\end{equation}
where the direct sum ``$\oplus$'' should be understood as a concatenation of $\q$--dimensional vectors
$-\bar z^{-1}{\bf e}'_i$ with $(n-\q )$--dimensional vectors $D{\bf q}_i$'s.
Then $V_i$, $i\in [\q ]$ form a basis of the space $F$ and, moreover,
the sequences $\Proj_F(-z{\bf e}_1),\dots,\Proj_F(-z{\bf e}_\q)$ and $V_1,\dots,V_\q$ form a biorthogonal system in $F$.
\end{lemma}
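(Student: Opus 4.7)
The plan is to verify three things in order: (i) each $V_i$ lies in $F$; (ii) the $V_i$'s are linearly independent, and so they form a basis of $F$; (iii) the biorthogonality relation $\langle \Proj_F(-z{\bf e}_j),V_i\rangle=\delta_{ij}$ holds. All three reduce to direct computations once the right algebraic identity for $D$ is identified, so no probabilistic input is needed. The only subtle step is (i), which is where the particular choice of $D=\bar z^{-1}(\widetilde A^*-\bar z)^{-1}$ is used; parts (ii) and (iii) will be essentially bookkeeping.

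For (i), fix $i\in[\q]$ and $j\in[n]\setminus[\q]$; I must show $\langle V_i,\col_j(\Az)\rangle=0$. Write $(\col_j(\Az))_\ell=a_{\ell j}-z\delta_{\ell j}$ and split the inner product according to $\ell\le\q$ versus $\ell>\q$. Since $(V_i)_\ell=-\bar z^{-1}\delta_{i\ell}$ for $\ell\le\q$ and $(V_i)_\ell=(D{\bf q}_i)_{\ell-\q}$ for $\ell>\q$, and since $\delta_{\ell j}=0$ for $\ell\le\q<j$, the first block contributes $-\bar z^{-1}\bar a_{ij}$. For the second block, I recognize $(\bar a_{\ell j}-\bar z\delta_{\ell j})_{\ell>\q}$ as a row of $\widetilde A^*-\bar z\,I_{n-\q}$, so the sum equals $\bigl((\widetilde A^*-\bar z I)D{\bf q}_i\bigr)_{j-\q}$. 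By the definition of $D$ one has $(\widetilde A^*-\bar z I)D=\bar z^{-1}I$, so this equals $\bar z^{-1}({\bf q}_i)_{j-\q}=\bar z^{-1}\bar a_{ij}$, which cancels the first-block contribution. The invertibility of $\widetilde A^*-\bar z I$ (given, up to the minor notational issue of whether the statement intends $\widetilde A$ or $\widetilde A-zI$ to be invertible) is exactly what lets us write $D$ down.

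For (ii), the first $\q$ coordinates of $V_i$ form the vector $-\bar z^{-1}{\bf e}'_i$, so the $V_i$'s are already linearly independent when restricted to those coordinates; hence $\dim\spn(V_1,\dots,V_\q)=\q$. On the other hand, the $n-\q$ vectors $\col_j(\Az)$, $j\notin[\q]$, restrict in their last $n-\q$ coordinates to the columns of $\widetilde A-zI$, which is invertible, so these $n-\q$ column vectors are linearly independent, giving $\dim F^\perp=n-\q$ and therefore $\dim F=\q$. Combined with (i), the $V_i$'s span $F$.

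For (iii), since $V_i\in F$, the projection can be dropped: $\langle \Proj_F(-z{\bf e}_j),V_i\rangle=\langle -z{\bf e}_j,V_i\rangle$. Only the $j$-th coordinate of $-z{\bf e}_j$ is nonzero, and $j\le\q$, so the inner product picks up just $(V_i)_j=-\bar z^{-1}\delta_{ij}$; pairing with $-z$ and accounting for the conjugation in the Hermitian inner product gives $\delta_{ij}$, as required. The main obstacle is purely bookkeeping: keeping the conjugation convention consistent and correctly identifying the block $(\bar a_{\ell j}-\bar z\delta_{\ell j})_{\ell>\q}$ with a row of $\widetilde A^*-\bar z I$ so that the identity $(\widetilde A^*-\bar z I)D=\bar z^{-1}I$ can be applied.
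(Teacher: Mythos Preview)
Your proof is correct and follows essentially the same route as the paper's. The paper packages the orthogonality verification $(i)$ as a block-matrix identity (observing that $[I_\q\mid -Q(\widetilde A-zI)^{-1}]$ annihilates the last $n-\q$ columns of $A-zI$, then taking conjugate transpose), while you carry out the same computation coordinate-by-coordinate; both reduce to the identity $(\widetilde A^*-\bar zI)D=\bar z^{-1}I$, and parts $(ii)$ and $(iii)$ are handled identically. Your remark that the hypothesis really requires $\widetilde A-zI$ (equivalently $\widetilde A^*-\bar zI$) to be invertible, rather than $\widetilde A$ itself, is also correct and matches what the paper's proof actually uses.
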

\begin{proof}
  With the notation introduced above, we have
  \begin{equation*}
    (A-z\Id)=\Big[\,\,\cdot\cdot\cdot\,\,\Big|\,\col_{\q+1}(A-z\Id)\,\dots\,\col_{n}(A-z\Id)\Big]=\left[
            \,\,\cdot\cdot\cdot\,\,
\Bigg|
\begin{array}{c}
		\,\,Q\,\,
        \\
        \,\,(\widetilde{A}-z{\rm I}_{n-\q})\,\,
        	\end{array}\right],
  \end{equation*}
  where
  \begin{equation*}
   Q=\left[
\begin{array}{ccc}
		a_{1\,\q+1}&\,\dots\,&a_{1\,n}
        \\
        \vdots&&\vdots
        \\
        a_{\q\,\q+1}&\,\dots\,&a_{\q\,n}
        	\end{array}\right]=
        \left[
        \begin{array}{c}
		{\bf q}_1
        \\
        \vdots
        \\
        {\bf q}_\q
        	\end{array}\right].
  \end{equation*}
  Clearly,
  \begin{equation*}
    \Big[\,\,{\rm I}_{\q}\,\,\big|\,-Q(\widetilde{A}-{z}{\rm I}_{n-\q})^{-1}\Big]\cdot\left[\begin{array}{c}
		\,\,Q\,\,
        \\
        \,\,(\widetilde{A}-z{\rm I}_{n-\q})\,\,
        	\end{array}\right]=0,
  \end{equation*}
  Hence, the columns of the matrix
  \begin{equation*}
    \Big[\,\,{\rm I}_{\q}\,\,\big|\,-Q(\widetilde{A}-{z}{\rm I}_{n-\q})^{-1}\Big]^*=\left[\begin{array}{c}
		\,\,{\rm I}_{\q}\,\,
        \\
        \,\,-(\widetilde{A}^*-\overline{z}{\rm I}_{n-\q})^{-1}Q^*\,\,
        	\end{array}\right]
  \end{equation*}
  are orthogonal to $\col_{i}(A-z{\rm I}_n)$, $i=\q+1,\dots,n$, so that multiplying by $-\overline{z}^{-1}$ we get
  $\langle V_j,\col_{i}(A-z{\rm I}_n)\rangle=0$, $j=1,\dots,\q$, $i=\q+1,\dots,n$.
  Hence, $V_1,\dots,V_{\q}\in F$ and
  $$\langle \Proj_F(-z{\bf e}_i),V_j\rangle=\langle {\bf e}_i,\,{\bf e}'_j\oplus -(\widetilde{A}^*-\overline{z}{\rm I}_{n-\q})^{-1}{\bf q}_j\rangle=\delta_{ij},\quad i,j\in [\q ].$$
  This finishes the proof of the lemma.
\end{proof}

In \cite{RV sup norm} it was observed that working with the dual basis $V_1,\dots,V_{\q}$
has significant advantages compared to the ``original'' basic sequence
$\Proj_F(-z{\bf e}_1),\dots,\Proj_F(-z{\bf e}_\q)$. Indeed, in our random model, upon conditioning on a realization of $D$,
vectors $V_1,\dots,V_{\q}$ are mutually independent and, moreover, are linear images of the random vectors
with i.i.d.\ components. On the other hand,
expressing the geometric condition on the test projections from Lemma~\ref{l: two conditions}
in terms of the dual sequence, and a decoupling procedure to estimate probabilities,
are quite non-trivial problems.
It will be convenient for us to study probabilities of events corresponding to (a) and (b) in Lemma~\ref{l: two conditions} separately.
This requires us to introduce a splitting of ellipsoids,
as well as possible realizations of the dual basis $V_1,\dots,V_{\q}$, into a finite number of classes,
which will be employed in a decoupling argument.

\smallskip

The text below is split into four parts.
In Subsection~\ref{subs: biorth}, we study the geometric condition (a) from Lemma~\ref{l: two conditions}
in an abstract (deterministic) setting.
More precisely, having in mind the decoupling procedure briefly mentioned above, we are
concerned with studying a condition of the form
$$
\Ellips(\U_{\q})\in\elclass(R,{\bf b})\quad\mbox{and}\quad U_{\q}\in \delta\,\Ellips(\U_{\q-1})+T B_2^E,
$$
where $\U_{\q}=(U_1,\dots,U_\q)$, $\Y_{\q}=(Y_1,\dots,Y_\q)$ is an arbitrary biorthogonal system
in a Euclidean space $E$,
and $\elclass(R,{\bf b})$ is a {\it class} (a subset) of ellipsoids parametrized by a number $R$ and a vector ${\bf b}$
(see Definition \ref{page: elclass}, as well as the beginning of Subsection~\ref{subs: deloc estimates}
where the splitting into classes is discussed in detail).
Our goal is to express the condition in terms of the dual sequence $\Y_\q$.
In fact, for technical reasons instead of $\Y_{\q}$
we work with its {\it $\delta$-perturbation} $\W_\q$, i.e.\ a sequence of vectors such that
$\|Y_i-W_i\|_2\le\delta$, $i\in[\q]$.
As the first step, we show that the above condition on $\U_\q$ implies that
$$\exists Y:\;\|Y\|_2\leq T'\mbox{ and
$|\langle W_\ell,Y\rangle|\leq \delta'$, $\ell\leq \q-1$; $|\langle W_{\q},Y\rangle|=1$}
$$
for certain $\delta'=\delta'(T,\delta)$ and $T'=T'(T)$
(see Corollary \ref{cor: wy}). Probabilistically, the latter condition is very difficult to work with.
To be able to apply a union bound argument later, we need to discretize the domain for $Y$ and represent the last condition
as a statement about a {\it finite} number of vectors.
For this reason, we define the set $\disccollec(\W_\q,\delta',T')$
(see Definition~\ref{page: disccollec}) which can only contain vectors having a special structure (at most
$(C\q)^{C\q}$ vectors) and show that the last condition implies that
$\disccollec(\W_\q,\delta',T')\neq\emptyset$.
This implication is verified in the main technical element of the section ---
Lemma~\ref{l: y reduction}, while Proposition~\ref{p: perturbed vector existence}
summarizes the results.


In Subsection~\ref{subs: two types} we
consider the results of the previous subsection from the probabilistic viewpoint.
The main goal there is to estimate probability of the event $\{\disccollec(\V_\q,\delta',T')\neq\emptyset\}$,
where  $\V_\q$ is a $\delta$--perturbation of ${\mathcal V}_\q$ (see \eqref{eq: V act def} and formula~\eqref{V definition}).
We start by analyzing properties of anisotropic random vectors of the
form $DX$, where $D$ is a fixed matrix and $X$ is a random vector with i.i.d.\ components.
A systematic treatment of anti-concentration properties of anisotropic random vectors was given earlier in \cite{RV sup norm}.
We reuse some of the estimates from \cite{RV sup norm} while adding some new ones.
Further, we apply the obtained relations together with a special decoupling procedure to treat the event in question.
The central technical element
of the subsection is the decoupling argument in the proof of Lemma~\ref{l: aux decoupling},
while the results of the subsection are summarized in Proposition~\ref{p: decoupling}.

In Subsection~\ref{subs: deloc estimates}, using results from Sections~\ref{subs: biorth},~\ref{subs: two types},
we estimate probability of the event given (in a deterministic form)
by Lemma~\ref{l: two conditions}.
Thus, we are able to estimate probability of an event of the form
$$\big\{\exists\,\n\in S^{n-1}(\CorR):\;\|(\Az )\n\|_2\leq n^{-2}\mbox{ and }|v_{\ell}|\leq\theta,\;\ell\leq \q-1;
|v_{\q }|\geq\beta\big\}.$$
We use the estimate, together with an averaging argument similar to that in
the proof of Theorem~\ref{th: normal lower}, to
obtain the main statements of this paper
in Subsection~\ref{s: proofs}.

\subsection
{Biorthogonal systems, 
dual ellipsoids, and discretizations}\label{subs: biorth}

Throughout  this subsection we use notation  $\U_\q=(U_1,\dots,U_\q)$ for a  sequence of linearly independent vectors  in a Euclidean space $E$ over $\CorR$ and $\Y_\q=(Y_1,\dots,Y_\q)$ for the  dual sequence.

Let $\Ellips(\U_\q)$ be the ellipsoid generated by $\U_\q$ (see \eqref{eq: aux el def}).
Clearly, the ellipsoid is the linear image of the unit Euclidean ball in $\CorR^\q$
under the action of the linear operator
$L:\ell_2^\q(\CorR)\to E$ with $L({\bf e}_i)=U_i$, $i=1,2,\dots,\q$.
Denote by $s_1(\U_\q)\geq s_2(\U_\q)\geq \dots \geq s_\q(\U_\q)$ the singular values of $L$. Note that by the definition of a
biorthogonal system we have
$s_i(\U_\q)=s_{\q-i+1}^{-1}(\Y_\q),\quad i\le\q.$
\begin{definition}[{Class $\elclass(R,{\bf b})$}]\label{page: elclass}
 Given any  number $R>1$ and
a non-increasing sequence of integers
\begin{equation}\label{b condition}
 {\bf b}=(b_1,\dots,b_\q )\in \Z^\q,\, b_1\ge b_2\ge\dots\ge b_\q,\,\text{ such that }\, 1/2\leq 2^{b_i}\leq R, \,\,i\le \q,
\end{equation}
we say that {\it $\Ellips(\U_\q)$ belongs to the class $\elclass(R,{\bf b})$}
if
$$
\min\big(\max(s_i(\U_\q),1),R\big)\in [2^{b_i},2^{b_i+1}),\quad i\le\q,
$$
i.e. if $ \max ( s_i(\U_\q),1)\ge 2^{b_i}$
and $\min (s_i(\U_\q),R)< 2^{b_i+1}$.
\end{definition}

Given $R>1$, the classes $\elclass(R,{\bf b})$ for all ${\bf b}$ satisfying \eqref{b condition}
form a partition of the set of ellipsoids $\Ellips(\U_\q)$, $\U_\q \in E^\q$.
The next lemma is immediate:
\begin{lemma}\label{l: elclass count}
For any $R>1$, the total number of classes $\elclass(R,{\bf b})$
(for all admissible  ${\bf b}$) is bounded above by $\big(\log_2{R}+2\big)^\q $.
\end{lemma}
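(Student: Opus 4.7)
The plan is to argue purely by counting the number of admissible parameter vectors $\mathbf{b}$, ignoring the monotonicity constraint entirely (which only helps). The classes $\elclass(R,\mathbf{b})$ are indexed by integer sequences $\mathbf{b}=(b_1,\dots,b_\q)$ subject to \eqref{b condition}, and in particular every coordinate $b_i$ is an integer satisfying $1/2 \le 2^{b_i} \le R$, i.e.\ $-1 \le b_i \le \log_2 R$.

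First I would note that the number of integers in the interval $[-1,\log_2 R]$ is at most $\lfloor \log_2 R\rfloor + 2 \le \log_2 R + 2$. Hence each coordinate $b_i$ of an admissible sequence takes one of at most $\log_2 R + 2$ values. Dropping the requirement that $\mathbf{b}$ be non-increasing can only enlarge the collection of tuples, so the total number of admissible $\mathbf{b}$'s is bounded by the number of arbitrary length-$\q$ tuples drawn from this set of values, namely $(\log_2 R + 2)^\q$.

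Finally I would remark that distinct sequences $\mathbf{b}$ index distinct classes by Definition~\ref{page: elclass} (the ranges $[2^{b_i},2^{b_i+1})$ are disjoint for different integer values of $b_i$), so the number of classes is at most the number of admissible $\mathbf{b}$'s, yielding the bound $(\log_2 R + 2)^\q$. There is no real obstacle here — the statement is essentially a one-line counting estimate, and the only minor point to be careful about is handling the endpoints of the integer range correctly so that the $+2$ (rather than $+1$) in the bound is justified.
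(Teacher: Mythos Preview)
Your counting argument is correct and is exactly what the paper intends: the authors simply declare the lemma ``immediate'' and give no proof, and the obvious justification is precisely the coordinate-wise count you wrote. Your remark about distinct $\mathbf{b}$'s indexing distinct classes is a harmless extra observation but not strictly needed, since the lemma only asks for an upper bound on the number of classes, which is trivially at most the number of admissible parameter vectors.
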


The classes $\elclass(R,{\bf b})$ provide a discretization of the set of ellipsoids of not-too-large complexity,
and allow us (in a probabilistic context) to condition on the event that a random ellipsoid generated by the
test projections of vectors $-ze_i$, belongs to a given class, without seriously affecting the probability estimates.
The definition of the classes ``truncates'' large semi-axes of the ellipsoid (those exceeding $R$)
and does not record information about the magnitude of small semi-axes (of length less than $1$):
the probabilistic argument proceeds in such a way that both large and very small semi-axes do not significantly
affect the estimates.

Let $T,\delta>0$ be some parameters.
In what follows, we are interested in describing the condition
\begin{equation}\label{eq: basic condition}
\Ellips(\U_\q)\in\elclass(R,{\bf b})
\quad\mbox{ and }\quad U_{\q}\in \delta\,\Ellips(\U_{\q-1})+T\,  B_2^E
\end{equation}
in terms of the sequence $\Y_\q$, where $\U_{\q-1}$ denotes the sequence $(U_1,\dots,U_{\q-1})$.

For technical reasons, 
we will work with perturbations of $\Y_\q$.
\begin{definition}[$\delta$--perturbation $\W_\q$]
Let $\delta>0$.
We say that a sequence of vectors $\W_\q$ is a {\it $\delta$--perturbation of $\Y_\q$}
if $\|Y_i-W_i\|_2\leq \delta$ 
for all $i\leq \q$.
\end{definition}
Note that we do not require $W_1,\dots,W_{\q}$ to be contained in $\spn(\Y_\q)$.
We have the following elementary consequence of perturbation inequalities for singular values
(see, for example, \cite[Theorem~1.3]{Djalil}):
\begin{lemma}\label{l: W Y sing}
Let $\W_\q$ be a $\delta$--perturbation of $\Y_\q$, and let $s_1(\Y_\q)\geq\dots\geq s_{\q}(\Y_\q)$ and
$s_1(\W_\q)\geq\dots\geq s_{\q}(\W_\q)$ be as above.
Then $s_i(\W_\q)\leq s_i(\Y_\q)+\delta\,\sqrt{\q}$, $i\leq \q.$
\end{lemma}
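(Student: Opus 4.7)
The plan is to recognize this as a direct application of Weyl's perturbation inequality for singular values, once we set up the right operator. Define the linear operators $L_Y, L_W: \ell_2^\q(\CorR) \to E'$ (where $E'$ is an ambient Euclidean space containing $\spn(\Y_\q)$ and all $W_i$) by $L_Y({\bf e}_i) = Y_i$ and $L_W({\bf e}_i) = W_i$ for $i \leq \q$. By the very definition preceding the lemma, $s_i(\Y_\q)$ and $s_i(\W_\q)$ are the singular values of $L_Y$ and $L_W$ respectively.

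The difference operator $L_W - L_Y$ sends ${\bf e}_i$ to $W_i - Y_i$, so its matrix representation has columns of Euclidean norm at most $\delta$ by the definition of a $\delta$--perturbation. First I would bound the spectral norm by the Hilbert--Schmidt norm:
\begin{equation*}
\|L_W - L_Y\| \leq \|L_W - L_Y\|_{HS} = \Big(\sum_{i=1}^{\q} \|W_i - Y_i\|_2^2\Big)^{1/2} \leq \delta\sqrt{\q}.
\end{equation*}

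Then the standard Weyl inequality for singular values (cited in the excerpt as \cite[Theorem~1.3]{Djalil}) gives
\begin{equation*}
s_i(L_W) \leq s_i(L_Y) + \|L_W - L_Y\|, \quad i \leq \q,
\end{equation*}
and combining this with the previous display yields the claim $s_i(\W_\q) \leq s_i(\Y_\q) + \delta\sqrt{\q}$.

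There is no genuine obstacle here; this is a one-line consequence of Weyl. The only thing to be slightly careful about is that $W_1,\dots,W_\q$ are not required to lie in $\spn(\Y_\q)$, so one should work in an ambient space (say $E + \spn(\W_\q)$) rather than in $E$ itself, but the singular values are intrinsic to the operators and unaffected by this choice.
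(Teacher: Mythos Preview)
Your proposal is correct and is precisely the argument the paper has in mind: the paper states the lemma without proof, calling it an ``elementary consequence of perturbation inequalities for singular values'' and citing \cite[Theorem~1.3]{Djalil}, which is exactly the Weyl-type bound you invoke after controlling $\|L_W-L_Y\|$ by $\|L_W-L_Y\|_{HS}\le\delta\sqrt{\q}$.
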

In the probabilistic setting considered later, we will not have a ``direct access'' to the singular values
$s_i(\W_\q)$
but instead will be able to measure the distances from $W_j$'s to spans of some other $W_i$'s.
The ordering of the vectors $W_j$'s will be quite important, which justifies the following definition:
\begin{definition}[Permutation $\sigma_{\W}$, distances $d_i$]\label{d:di}
  Given $\W_\q\in E^\q$, we define $\sigma_{\W}$ as a permutation of $[\q]$ such that
for all $1\leq i\le \q$,
\begin{align}
d_i:=\d\big(W_{\sigma_{\W}(i)},\spn\{W_{\sigma_{\W}(j)}:\;j<i\}\big)
=\max_{\ell\le\q} \,\d\big(W_{\ell},\spn\{W_{\sigma_{\W}(j)}:\;j<i\}
\big);\label{eq: sigma definition}
\end{align}
in particular $
d_1=\|W_{\sigma_{\W}(1)}\|_2=\max_{ i\le \q}\|W_i\|_2$.
\end{definition}

Note that the permutation may be not unique; in what follows for any $\W_\q$ we fix some $\sigma_{\W}$
satisfying \eqref{eq: sigma definition}.
The role of the permutation $\sigma_\W$ in the proof is twofold.
First, the rearrangement of vectors in $\W_\q$ provides a simple relation between the distances of
$W_{\sigma_{\W}(i)}$ to $\spn\{W_{\sigma_{\W}(j)}:\;j<i\}$ and the singular values $s_i(\W_\q)$
(see Lemma~\ref{l: W dist est} below). Second, the order of $\W_{\sigma_{\W}(i)}$'s
will be used in the crucial Lemma~\ref{l: y reduction}, which, in turn, ``prepares'' a decoupling argument
employed later in Subsection~\ref{subs: two types}.

It is easy to check that $d_1\ge s_1(\W_\q)/\sqrt{N}$ and $d_1\ge d_2\ge\dots\ge d_\q$. Moreover, we have
\begin{lemma}\label{l: W dist est}
Let $\W_\q\in E^\q$  and let
$s_1(\W_\q)\geq s_2(\W_\q)\geq\dots\geq s_{\q}(\W_\q)$ be as above.
Then, with $d_i$'s defined by formula \eqref{eq: sigma definition}, we have
$
d_i\geq  s_i(\W_\q)/\sqrt{\q-i+1}$, $i\leq \q.
$
\end{lemma}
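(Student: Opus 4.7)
The plan is to invoke the Eckart--Young characterization of singular values. Introduce the operator $L:\ell_2^\q(\CorR)\to E$ defined by $L(\mathbf{e}_\ell)=W_\ell$, so that, by definition, $s_i(\W_\q)=s_i(L)$. Let
$$E_i:=\spn\{W_{\sigma_\W(j)}:\,j<i\},$$
which has dimension at most $i-1$. Since $\Proj_{E_i}L$ is an operator of rank at most $i-1$, Eckart--Young gives
$$s_i(\W_\q)\ \le\ \|L-\Proj_{E_i}L\|_{\mathrm{op}}\ =\ \|\Proj_{E_i^\perp}L\|_{\mathrm{op}}.$$
It therefore suffices to prove that $\|\Proj_{E_i^\perp}L\|_{\mathrm{op}}\le d_i\sqrt{\q-i+1}$.

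For this I would exploit two simple consequences of Definition~\ref{d:di}. First, for every $j<i$ the vector $W_{\sigma_\W(j)}$ lies in $E_i$, hence $\Proj_{E_i^\perp}W_{\sigma_\W(j)}=0$. Second, by \eqref{eq: sigma definition} we have
$$\|\Proj_{E_i^\perp}W_\ell\|_2\ =\ \d(W_\ell,E_i)\ \le\ d_i\qquad\text{for every }\ell\in[\q].$$

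Combining both observations, for any $a=(a_1,\dots,a_\q)\in S^{\q-1}(\CorR)$ only the indices in the set $S:=[\q]\setminus\{\sigma_\W(j):\,j<i\}$, of cardinality $\q-i+1$, contribute, and the triangle inequality followed by Cauchy--Schwarz yields
$$\bigl\|\Proj_{E_i^\perp}La\bigr\|_2\ =\ \Bigl\|\sum_{\ell\in S}a_\ell\,\Proj_{E_i^\perp}W_\ell\Bigr\|_2\ \le\ d_i\sum_{\ell\in S}|a_\ell|\ \le\ d_i\sqrt{|S|}\,\|a\|_2\ =\ d_i\sqrt{\q-i+1}.$$
Taking the supremum over $a$ gives the required operator-norm bound, and the lemma follows. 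There is no real obstacle here: the greedy ordering $\sigma_\W$ is used only to name an explicit $(i-1)$-dimensional subspace $E_i$ that makes the rank-$(i-1)$ approximation bound effective, and the trick is to restrict the sum to the $\q-i+1$ indices on which the projection does not vanish, so that Cauchy--Schwarz costs only the sharper factor $\sqrt{\q-i+1}$ rather than $\sqrt{\q}$.
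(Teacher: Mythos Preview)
Your proof is correct and follows essentially the same approach as the paper: both project onto the orthogonal complement of $\spn\{W_{\sigma_\W(j)}:j<i\}$ and bound $s_i$ by the operator norm of that projection of the column operator, using that only $\q-i+1$ columns survive with norm at most $d_i$. The paper phrases the singular-value bound via the min-max formula rather than Eckart--Young, and leaves the final operator-norm estimate implicit, but the content is identical.
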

\begin{proof}

It is convenient to regard $s_i(\W_\q)$'s as the singular values of the $m\times \q $ matrix $Q$ with columns $W_i$, $i\leq \q$.
Fix any $2\le i\leq \q$, and set $F:=\spn\{W_{\sigma_{\W}(j)}:\;j<i\}^\perp$.
The min-max formula for singular values of $Q$ (see, for example, \cite[Theorem~1.2]{Djalil}) implies that
$$
s_i(\W_\q)\leq \max_{QX\in F,\|X\|_2=1}\|QX\|_2 \le \|\Proj_{F} Q\|,
$$
where $\Proj_{F}$ denotes the orthogonal projection
onto $F$. On the other hand, by the definition of $\sigma_{\W}$ be have
$$
\|\Proj_{F} W_\ell\|_2=
\d\big(W_{\ell},\spn\{W_{\sigma_{\W}(j)}:\;j<i\}
\big)\leq d_i
$$
for all $\ell\le\q$. Hence,
$\|\Proj_{F} Q\|\leq d_i\sqrt{\q-i+1},$
and the result follows.
\end{proof}

\medskip

It follows from the definition of $\elclass(R,{\bf b})$ that for any $R>1$, ${\bf b}$ satisfying \eqref{b condition}, and any sequence $\U_\q$, if
$\Ellips(\U_\q)\in\elclass(R,{\bf b})$ then
\begin{equation}\label{eq: spvol to b}
2^{\sum_{i=1}^\q b_i}\le \prod\limits_{i=1}^{\q}\min\big(\max(s_i(\U_\q),1),R\big)< 2^{\q+\sum_{i=1}^\q b_i},
\end{equation}
where the quantity in the middle serves as a measure (``truncated volume'') of $\Ellips(\U_\q)$.
Lemma \ref{l: W dist est} allows to relate this measure to characteristics of a perturbation of a dual basis:

\begin{prop}\label{p: geom part 1}
Let $\delta>0$, $\q\ge1$, and $1<R\leq \delta^{-1}\q ^{-1/2}$. Let
$\U_\q,\Y_\q$ be a biorthogonal system in  $E$, where $\Ellips(\U_\q)\in\elclass(R,{\bf b})$ for some ${\bf b}\in\Z^N$.
 Let also $\W_\q$ be a $\delta$--perturbation of $\Y_\q$, and $d_i$, $i\le \q$, be defined by \eqref{eq: sigma definition}.
Then
$$\prod\limits_{i=1}^{\q} \min(d_i,1)
\leq (4\q )^{\q/2}\,2^{-\sum_{i=1}^{\q}b_i}.
$$
\end{prop}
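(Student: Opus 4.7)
My plan combines three tools. First, Gram--Schmidt applied to the reordered vectors $W_{\sigma_\W(1)},\dots,W_{\sigma_\W(\q)}$ yields the basic identity
\[
\prod_{i=1}^\q d_i \;=\; \prod_{i=1}^\q s_i(\W_\q),
\]
each side being the $\q$-dimensional volume of the parallelepiped spanned by $\W_\q$ (if $\W_\q$ is linearly dependent both sides vanish and the claim is trivial). Second, the biorthogonal relation forces $s_i(\Y_\q) = 1/s_{\q-i+1}(\U_\q)$; combined with the class definition, whenever $b_j\ge 1$ one has $s_j(\U_\q)\ge 2^{b_j}$ and hence $s_i(\Y_\q)\le 2^{-b_{\q-i+1}}$ for the partner index $i=\q-j+1$. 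Third, Lemma~\ref{l: W Y sing} transfers this to the perturbation $\W_\q$: the hypothesis $R\le\delta^{-1}\q^{-1/2}$ together with $2^{b_j}\le R$ gives $\delta\sqrt{\q}\le 1/R\le 2^{-b_{\q-i+1}}$, so that $s_i(\W_\q)\le 2\cdot 2^{-b_{\q-i+1}}$ for every \emph{good} coordinate $i$ (meaning $b_{\q-i+1}\ge 1$).

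The main obstacle is the \emph{bad} coordinates $i$, those with $b_{\q-i+1}=0$: here the class only tells us $s_{\q-i+1}(\U_\q)<2$, which can be arbitrarily small, so $s_i(\Y_\q)$ and in turn $s_i(\W_\q)$ can be arbitrarily large. Because $(b_j)$ is non-increasing and nonnegative, letting $\q_0=|\{j:b_j=0\}|$ the bad $j$'s are exactly those with $j>\q-\q_0$, and correspondingly the bad $i$'s are exactly $i\le\q_0$. My strategy is to absorb them into the trivial estimate $\min(d_i,1)\le 1$:
\[
\prod_{i=1}^\q\min(d_i,1) \;\le\; \prod_{i=\q_0+1}^\q d_i \;=\; \frac{\prod_{i=1}^\q s_i(\W_\q)}{\prod_{i=1}^{\q_0} d_i},
\]
where the equality uses the Gram--Schmidt identity of the first paragraph.

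To close, I need a lower bound on $\prod_{i\le\q_0}d_i$, and Lemma~\ref{l: W dist est} supplies exactly that: $d_i\ge s_i(\W_\q)/\sqrt{\q-i+1}\ge s_i(\W_\q)/\sqrt{\q}$, so $\prod_{i\le\q_0}d_i\ge \q^{-\q_0/2}\prod_{i\le\q_0}s_i(\W_\q)$. Substituting cancels the leading $\q_0$ singular values and leaves $\q^{\q_0/2}\prod_{i>\q_0}s_i(\W_\q)$; the good-index bound, together with $b_j=0$ for $j>\q-\q_0$, controls this by $\q^{\q_0/2}\cdot 2^{\q-\q_0}\cdot 2^{-\sum_j b_j}$. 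The elementary inequality $\q^{\q_0/2}\,2^{\q-\q_0}\le \q^{\q/2}\,2^\q=(4\q)^{\q/2}$, valid for all $0\le \q_0\le \q$ with $\q\ge 1$, completes the argument. The conceptual subtlety is precisely this splitting: Lemma~\ref{l: W dist est} gives a \emph{lower} bound on the \emph{largest} distances, and this lower bound is exactly what pays for dropping the first $\q_0$ factors by the trivial $\min\le 1$ estimate, at a cost of $\sqrt{\q}$ per dropped coordinate.
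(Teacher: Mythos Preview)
Your proof is correct and follows essentially the same approach as the paper: both use the Gram--Schmidt identity $\prod_i d_i=\prod_i s_i(\W_\q)$, Lemma~\ref{l: W dist est} as a lower bound on a block of $d_i$'s, and Lemma~\ref{l: W Y sing} together with duality to control the remaining singular values. The only organizational difference is the splitting criterion: the paper partitions indices according to whether $s_i(\W_\q)\le 1$ (obtaining first the clean intermediate inequality $\prod_i\min(d_i,1)\le \q^{\q/2}\prod_i\min(s_i(\W_\q),1)$ and then bounding each factor uniformly by $2/\min(\max(s_{\q-i+1}(\U_\q),1),R)$), whereas you partition according to whether $b_{\q-i+1}=0$ and go directly to the claim; both routes land on the same constant $(4\q)^{\q/2}$.
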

\begin{proof}
Fix for a moment any $t>0$, and let $I\subset[\q]$ be the set of all indices $i\in[\q]$ with $s_i(\W_\q)\leq t$.
Then for every $i\in I^c:=[\q]\setminus I$ we have
$\min(d_i,t)\le t\le \min(s_i(\W_\q),t)$, so that
$$
\prod\limits_{i\le\q}\min(d_i,t)\leq \prod\limits_{i\in I} d_i\prod\limits_{i\in I^c} \min(s_i(\W_\q),t).
$$
Next, since $d_i\ge s_i(\W_\q)/\sqrt{\q}$ by Lemma~\ref{l: W dist est}, and in view of the standard identity
$$\prod_{i\le\q}d_i=\prod_{i\le\q}s_i(\W_\q)$$
(see, for example, formula~(3) in \cite{Djalil}), we get
$$
\prod\limits_{i\in I}d_i=\prod\limits_{i\le\q}s_i(\W_\q)\prod\limits_{i\in I^c}d_i^{-1}
\leq \q ^{\q /2}\prod\limits_{i\in I}s_i(\W_\q)=\q ^{\q /2}\prod\limits_{i\in I}\min(s_i(\W_\q),t).
$$
Hence
\begin{equation*}
\prod\limits_{i=1}^{\q}\min(d_i,t)\leq \q ^{\q /2}\prod\limits_{i=1}^{\q} \min(s_i(\W_\q),t).
\end{equation*}
Further, in view of Lemma~\ref{l: W Y sing} and the duality relation between 
$\U_\q$ and $\Y_\q$, we have
\begin{align*}
\min(s_i(\W_\q),t)&\leq \min\big(s_{\q-i+1}^{-1}(\U_\q)+\delta\,\sqrt{\q},t\big)\\
&\leq 2\max\big(\min(s_{\q-i+1}^{-1}(\U_\q),t),\min(\delta\,\sqrt{\q},t)\big),
\end{align*}
where, as before, $s_1(\U_\q)\geq\dots\geq s_{\q}(\U_\q)$ are the singular values
of the linear operator $L:\CorR^\q\to E$ with $L({\bf e}_i)=U_i$, $i=1,2,\dots,\q$.

Now, choosing $t=1$, we get from the above and our assumption on $R$
\begin{align*}
\min(s_i(\W_\q),1)&\leq 2\max\big(\min(s_{\q-i+1}^{-1}(\U_\q),1),\delta\,\sqrt{\q}\big)
\leq 2/\min\big(\max(s_{\q-i+1}(\U_\q),1),R\big),
\end{align*}
and by (\ref{eq: spvol to b}),
\begin{align*}
\prod\limits_{i=1}^{\q}\min(d_i,1)
&\leq \q ^{\q /2}\prod\limits_{i=1}^{\q} \min(s_i(\W_\q),1)
\\
&\leq (4\q) ^{\q /2}\prod\limits_{i=1}^{\q}\min\big(\max(s_i(\U_\q),1),R\big)^{-1}\le(4\q )^{\q/2}\,2^{-\sum_{i=1}^{\q}b_i}.
\end{align*}
This finishes the proof.
\end{proof}

Returning to formula \eqref{eq: basic condition},
we can now describe the condition $\Ellips(\U_\q)\in\elclass(R,{\bf b})$
 in terms of a perturbation of $\Y_\q$.
It will be convenient to introduce a classification of sequences of vectors based on statistics of distances.
\begin{definition}[{Class $\disclass(r ,{\bf p})$}]\label{page: disclass}
Given $0<r <1$,  let  ${\bf p}:=(p_1,\dots,p_\q )\in \Z^n$ satisfy
\begin{equation}\label{p condition}
 p_1\ge\dots\ge p_\q\,\, \mbox{ and }\,\, r /2\leq 2^{p_i}\leq 1, \,\,i\le \q.
\end{equation}
We say that a sequence of vectors $\W_\q$ in a Euclidean space $E$ {\it belongs to the class $\disclass(r ,{\bf p})$} if
$$
\min\big(\max(d_i,r ),1\big)\in [2^{p_i},2^{p_i+1})\quad \forall i\le\q,
$$
where the numbers $d_i$, $i\le N$, are defined by \eqref{eq: sigma definition}.
\end{definition}
The following observation is immediate:
\begin{lemma}\label{l: disclass count}
For any $0<r <1$, the total number of classes $\disclass(r ,{\bf p})$
for all ${\bf p}$ satisfying \eqref{p condition} does not exceed
$\big(2-\log_2{r }\big)^{\q}$.
\end{lemma}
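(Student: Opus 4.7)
The plan is a direct combinatorial count, parallel to the proof of Lemma~\ref{l: elclass count}. First I would translate the bound $r/2 \leq 2^{p_i} \leq 1$ in \eqref{p condition} into the equivalent statement $\log_2 r - 1 \leq p_i \leq 0$, so that each coordinate $p_i$ must be an integer in a real interval of length $1 - \log_2 r$.

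Next I would count the integers in this interval. The exact count is $1 - \lceil \log_2 r - 1 \rceil$, which one immediately bounds above by $2 - \log_2 r$. Hence each of the $\q$ coordinates of ${\bf p}$ takes values in a set of cardinality at most $2 - \log_2 r$, and so the number of tuples in $\Z^\q$ satisfying only the coordinate-wise range condition is at most $(2 - \log_2 r)^\q$. The monotonicity requirement $p_1 \geq p_2 \geq \dots \geq p_\q$ can only decrease this count. Finally, by Definition~\ref{page: disclass} each class $\disclass(r,{\bf p})$ is uniquely indexed by ${\bf p}$, so the number of distinct classes is bounded by the number of admissible tuples, yielding the claimed estimate.

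There is no real obstacle in this argument; it is a routine cardinality computation. The only minor cosmetic point is that $2 - \log_2 r$ need not be an integer, but this causes no difficulty since we only seek an upper bound and the integer count is majorized by this quantity. I would present the proof in a single short paragraph mirroring the style of Lemma~\ref{l: elclass count}.
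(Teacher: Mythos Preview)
Your argument is correct and is precisely the routine count the paper has in mind; indeed the paper states this lemma as ``immediate'' without giving any proof, and your one-paragraph derivation fills that gap cleanly.
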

Note that, for every $0<r <1$, we have $r \le\min\big(\max(d_i,r ),1\big)\le 1$, $i\le \q$. Thus,
for any $\W_\q$ there exists a sequence ${\bf p}$ satisfying \eqref{p condition}
such that $\W_\q$ belongs to $\disclass(r ,{\bf p})$,
i.e.\ the classes form a partition of the $\q$--sequences of vectors from $E$.
We have the following formal consequence of Proposition~\ref{p: geom part 1}, describing the condition $\Ellips(\U_\q)\in\elclass(R,{\bf b})$ in \eqref{eq: basic condition}:
\begin{cor}\label{cor: ellipsoid to distance type}
Given $\delta>0$, $\q\ge1$, let $\U_\q,\Y_\q$ be a biorthogonal system in $E$ and $\W_\q$ be a $\delta$--perturbation of $\Y_\q$.
Further, let $r<1$, $1<R\leq \delta^{-1}\q ^{-1/2}$, and let a vector ${\bf b}$ satisfy \eqref{b condition}.
Assume that
$$
\mbox{$\Ellips(\U_\q)\in\elclass(R,{\bf b})$  and $d_i=d_i(\W_\q)\ge r $, $i\le\q$}.
$$
Then
$$
\mbox{$\W_\q\in \disclass(r ,{\bf p})$ for some ${\bf p}$ satisfying $\,\,2^{\sum_{i=1}^{\q}p_i}\cdot 2^{\sum_{i=1}^{\q}b_i}\leq (4\q )^{\q/2}\,$.}
$$
\end{cor}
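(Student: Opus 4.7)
The plan is to derive this as an essentially immediate consequence of Proposition~\ref{p: geom part 1}, using only the definition of $\disclass(r,{\bf p})$ together with the hypothesis $d_i\ge r$. Since the classes $\disclass(r,{\bf p})$ (for ${\bf p}$ satisfying \eqref{p condition}) partition all $\q$-sequences of vectors in $E$, there exists some such ${\bf p}$ with $\W_\q\in\disclass(r,{\bf p})$; the only work is to verify the claimed inequality for this ${\bf p}$.

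First I would observe that under the assumption $d_i\ge r$ we have $\max(d_i,r)=d_i$ for every $i\le\q$, hence
$$\min\bigl(\max(d_i,r),1\bigr)=\min(d_i,1).$$
From the definition of $\disclass(r,{\bf p})$ this yields the lower bound $2^{p_i}\le\min(d_i,1)$ for each $i$. Taking the product over $i\le\q$ gives
$$2^{\sum_{i=1}^{\q}p_i}\le\prod_{i=1}^{\q}\min(d_i,1).$$

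Next I would invoke Proposition~\ref{p: geom part 1}, whose hypotheses ($\Ellips(\U_\q)\in\elclass(R,{\bf b})$ with $1<R\le\delta^{-1}\q^{-1/2}$ and $\W_\q$ a $\delta$-perturbation of $\Y_\q$) are exactly those assumed here, to conclude
$$\prod_{i=1}^{\q}\min(d_i,1)\le(4\q)^{\q/2}\cdot 2^{-\sum_{i=1}^{\q}b_i}.$$
Multiplying by $2^{\sum_{i=1}^{\q}b_i}$ and combining with the previous display yields the claimed inequality
$$2^{\sum_{i=1}^{\q}p_i}\cdot 2^{\sum_{i=1}^{\q}b_i}\le(4\q)^{\q/2}.$$

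There is essentially no obstacle here: the entire content of the corollary is packaged inside Proposition~\ref{p: geom part 1}, and the only new ingredient is the trivial observation that the truncation $\min(\max(\cdot,r),1)$ in the definition of $\disclass(r,{\bf p})$ collapses to $\min(\cdot,1)$ once $d_i\ge r$, which is precisely the matching between the quantity controlled by Proposition~\ref{p: geom part 1} and the quantity defining membership in $\disclass(r,{\bf p})$. Thus the corollary is just a reformulation of Proposition~\ref{p: geom part 1} in the language of the discretization classes, and its role is to prepare the union bound over pairs $({\bf b},{\bf p})$ carried out later in Subsection~\ref{subs: deloc estimates}.
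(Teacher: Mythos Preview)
Your proposal is correct and follows essentially the same approach as the paper's proof. The paper's argument is more terse---it simply defines $p_i$ as the unique integer with $\min(d_i,1)\in[2^{p_i},2^{p_i+1})$ and then says that Definition~\ref{page: disclass} and Proposition~\ref{p: geom part 1} imply the result---but the logical content is identical to what you have written out.
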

\begin{proof}
For every $i\leq \q$, we define $p_i$ as the (unique) integer satisfying $\min\big(d_i,1\big)\in[2^{p_i},2^{p_i+1})$.
Then Definition \ref{page: disclass} and Proposition \ref{p: geom part 1} imply the result.
\end{proof}
Crucially, the above corollary relates quantity $2^{\sum_{i=1}^{\q}b_i}$, which serves as a measure of the size of
$\Ellips(\U_\q)$ in our computations, with the quantity $2^{\sum_{i=1}^{\q}p_i}$
which can be thought of as a measure of ``spreadness'' of the sequence $\W_\q$.
The inequality $2^{\sum_{i=1}^{\q}p_i}\cdot 2^{\sum_{i=1}^{\q}b_i}\leq (4\q )^{\q/2}$ can be viewed as an ``approximation''
of the natural duality relation $\prod_{i=1}^\q s_i(\U_\q)\cdot \prod_{i=1}^\q s_i(\Y_\q)=1$ between the semi-axes of ellipsoids
generated by the biorthogonal system. We will return to discussing the relation between the parametric vectors ${\bf b}$ and ${\bf p}$
at the beginning of Subsection~\ref{subs: deloc estimates}.

\bigskip

In the second part of this subsection, we investigate the second condition in \eqref{eq: basic condition}.

\begin{lemma}[Inclusion into ellipsoid via biorthogonal system]\label{l: y existence}

\noindent Let $\U_\q$, $\Y_\q$ be a biorthogonal system in $E$.
Assume further that for some $T,\delta> 0$ we have
$$U_{\q}\in \delta\,\Ellips(\U_{\q-1})+T  B_2^E .$$
Then there is $Y\in E$ with $\|Y\|_2\leq T$ such that
$\sum\limits_{\ell=1}^{\q-1}|\langle Y_\ell,Y\rangle|^2\leq \delta^2$, and $|\langle Y_{\q},Y\rangle|=1$.
\end{lemma}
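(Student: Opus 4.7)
The plan is to unpack the hypothesis, test it against the dual basis $\Y_\q$, and then read off the required vector $Y$ directly. There really is no obstacle here: once the right test functionals are applied, the conclusion falls out of the biorthogonality relations.

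First I would rewrite the assumption $U_\q \in \delta\,\Ellips(\U_{\q-1}) + T\,B_2^E$ in coordinate form: there exist scalars $a_1,\dots,a_{\q-1}\in\CorR$ with $\sum_{\ell=1}^{\q-1}|a_\ell|^2 \leq \delta^2$, and a vector $h \in E$ with $\|h\|_2 \leq T$, such that
\begin{equation*}
U_\q \;=\; \sum_{\ell=1}^{\q-1} a_\ell\, U_\ell \;+\; h.
\end{equation*}

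Next I would apply the biorthogonality $\langle U_i, Y_j\rangle = \delta_{ij}$. Pairing the identity above with $Y_j$ for each $j \leq \q-1$ gives
\begin{equation*}
0 \;=\; \langle U_\q, Y_j\rangle \;=\; a_j \;+\; \langle h, Y_j\rangle,
\end{equation*}
so $\langle h, Y_j\rangle = -a_j$. Pairing instead with $Y_\q$ gives $\langle h, Y_\q\rangle = \langle U_\q, Y_\q\rangle = 1$.

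It then suffices to take $Y := h$. By construction $\|Y\|_2 \leq T$; moreover $|\langle Y_\q, Y\rangle| = |\langle h, Y_\q\rangle| = 1$, and the identities $\langle h, Y_\ell\rangle = -a_\ell$ yield
\begin{equation*}
\sum_{\ell=1}^{\q-1} |\langle Y_\ell, Y\rangle|^2 \;=\; \sum_{\ell=1}^{\q-1} |a_\ell|^2 \;\leq\; \delta^2,
\end{equation*}
which is exactly the claimed bound. The only mild care point is respecting conjugation in the complex case, but since the conclusion involves only absolute values of inner products, the argument goes through verbatim whether $\CorR = \R$ or $\CorR = \C$.
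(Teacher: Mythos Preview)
Your proof is correct and is essentially the same as the paper's: the paper packages the computation in matrix form (writing the hypothesis as $B{\bf a}=Y$ with $a_{\q}=-1$ and reading off ${\bf a}=B^{-1}Y$ via the rows $Y_j$ of $\bar B^{-1}$), while you test the decomposition $U_\q=\sum a_\ell U_\ell + h$ directly against each $Y_j$. Both amount to applying biorthogonality to recover $\langle Y_\ell, h\rangle = -a_\ell$ and $\langle Y_\q, h\rangle = 1$.
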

\begin{proof}
Let $B$ be a matrix with columns $U_1,\dots,U_{\q}$.
Then $Y_1,\dots,Y_{\q}$
are the rows of the complex conjugate of the inverse $\bar B^{-1}$. Conditions of the lemma can be rewritten as
$B{\bf a}=Y$,
for some ${\bf a}=(a_1,\dots,a_{\q})$ with $a_{\q}=-1$ and $\|(a_1,\dots,a_{\q-1})\|_2\leq \delta$
and $\|Y\|_2\leq T$.
Equivalently, ${\bf a}=B^{-1}Y$, and the result follows.
\end{proof}
\begin{cor}\label{cor: wy}
Let $T,\delta$, $\U_\q$, $\Y_\q$ be as in the lemma above,
and let $\W_\q$ be a $\delta$--perturbation of $\Y_\q$.
Assume additionally that $T\delta\leq \frac{1}{2}$.
Then there is a vector $Y\in E$ with $\|Y\|_2\leq 2T$ such that
$|\langle W_\ell,Y\rangle|\leq 2\delta +2\delta T$, $\ell\leq \q-1$, and $|\langle W_{\q},Y\rangle|=1$.
\end{cor}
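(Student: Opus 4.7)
The plan is to first apply Lemma~\ref{l: y existence} to obtain a vector $Y'$ that witnesses the desired inequalities for the \emph{unperturbed} dual sequence $\Y_\q$, then to transfer those bounds to $\W_\q$ by controlling the effect of the $\delta$-perturbation through Cauchy--Schwarz, and finally to rescale $Y'$ to achieve the normalization $|\langle W_\q,Y\rangle|=1$.

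First, Lemma~\ref{l: y existence} directly applies under our hypothesis $U_\q\in \delta\,\Ellips(\U_{\q-1})+T\,B_2^E$, yielding $Y'\in E$ with $\|Y'\|_2\le T$, $|\langle Y_\q,Y'\rangle|=1$, and $\sum_{\ell<\q}|\langle Y_\ell,Y'\rangle|^2\le \delta^2$. In particular, $|\langle Y_\ell,Y'\rangle|\le \delta$ for every $\ell\le \q-1$. Next, the perturbation bound gives
$$|\langle W_\ell-Y_\ell,Y'\rangle|\le \|W_\ell-Y_\ell\|_2\,\|Y'\|_2\le \delta T,\qquad \ell\le \q,$$
so by the triangle inequality
$$|\langle W_\ell,Y'\rangle|\le \delta+\delta T\;\;(\ell\le \q-1),\qquad |\langle W_\q,Y'\rangle|\ge 1-\delta T\ge \tfrac12,$$
where the last inequality uses the assumption $T\delta\le 1/2$.

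Finally, define $Y:=Y'/|\langle W_\q,Y'\rangle|$ (after multiplying $Y'$ by a unit-modulus scalar in $\CorR$ to adjust the phase if needed, so that $\langle W_\q,Y\rangle$ can be arranged to have modulus exactly $1$). Since $|\langle W_\q,Y'\rangle|\ge 1/2$, we get
$$\|Y\|_2=\frac{\|Y'\|_2}{|\langle W_\q,Y'\rangle|}\le \frac{T}{1-\delta T}\le 2T,$$
and for $\ell\le \q-1$,
$$|\langle W_\ell,Y\rangle|=\frac{|\langle W_\ell,Y'\rangle|}{|\langle W_\q,Y'\rangle|}\le \frac{\delta+\delta T}{1/2}=2\delta+2\delta T,$$
while $|\langle W_\q,Y\rangle|=1$ by construction. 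This completes the argument.

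There is no essential obstacle here: the corollary is a clean two-step reduction (perturbation estimate, then rescaling) from Lemma~\ref{l: y existence}, and the assumption $T\delta\le 1/2$ is precisely what is needed to ensure that the rescaling constant stays bounded between $1/2$ and $1$ in the sense used above.
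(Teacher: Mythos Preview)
Your proof is correct and follows essentially the same approach as the paper: apply Lemma~\ref{l: y existence} to get $Y'$, use Cauchy--Schwarz with the $\delta$-perturbation to pass from $\Y_\q$ to $\W_\q$, and then rescale by $c=1/|\langle W_\q,Y'\rangle|\le 2$. The phase adjustment you mention is unnecessary since the conclusion only requires $|\langle W_\q,Y\rangle|=1$, but this is harmless.
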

\begin{proof}
It follows from Lemma \ref{l: y existence} that there exists $Y'\in E$ with $\|Y'\|_2\leq T$ such that
$|\langle W_\ell,Y'\rangle|\leq \delta +\delta T$, and $|\langle W_{\q},Y'\rangle|\ge 1- \delta T\ge 1/2$.
Hence, there is a number $0<c\leq 2$ such that $Y=cY'$ satisfies the conclusion of the corollary.
\end{proof}
As it follows from Corollary~\ref{cor: wy}, in probabilistic setting, in order to
estimate probability of the event $U_{\q}\in \delta\,\Ellips(\U_{\q-1})+T B_2$ it is sufficient to bound probability of the event of the form
\begin{equation}\label{eq: aux 1075090218415761}
\big\{\exists Y:\;\|Y\|_2\leq T'\mbox{ and
$|\langle W_\ell,Y\rangle|\leq \delta'$, $\ell\leq \q-1$; $|\langle W_{\q},Y\rangle|=1$}\big\}.
\end{equation}
Estimating probability of the latter
by taking the union bound over all possible choices of $Y$ would lead to summation over a continuum. To resolve this issue
we use Lemma \ref{l: y reduction} below, which allows to associate with a sequence $\W_\q$ {\it a finite}
collection $\disccollec(\W_\q,\delta',T)$ of vectors in $E$ which can be reconstructed from $\W_\q$ using certain relations for scalar products.
\begin{definition}[{Set $\disccollec(\W_\q,\delta',T)$}]
 \label{page: disccollec}
Given a sequence of linearly independent vectors $\W_\q$ in $E$ (over $\CorR=\C$) and parameters $\delta',T>0$, define
$\disccollec(\W_\q,\delta',T)$ as the set of all vectors $Z\in\spn(\W_\q)$ such that
\begin{itemize}
\item $\|Z\|_2\leq T$;
\item $\langle Z,W_{\q}\rangle=1$;
\item for any $i\in [\q]\setminus \{\sigma^{-1}_\W(\q)\}$, we have
$|\langle Z,W_{\sigma_\W(i)}\rangle|\leq 2\sqrt{2}(\q+1)\delta'$,
and\\
{\bf either}
$$\langle Z,W_{\sigma_\W(i)}\rangle/\delta'\in \Z+\cxi\Z$$
{\bf or}
$$\langle Z,W_{\sigma_\W(i)}\rangle=\langle Z,\Proj_{\spn\{W_{\sigma_\W(j)},\,j<i\}}W_{\sigma_\W(i)}\rangle.$$
\end{itemize}
\end{definition}
Here, $\sigma_\W$ is the permutation of $[\q]$ defined via relations \eqref{eq: sigma definition}.
When $\CorR=\R$, the definition of the vector collection is modified by replacing the lattice $\Z+\cxi\Z$ with $\Z$.
\smallskip

For example, in a simplest case when $W_i=W_{\sigma_\W(i)}={\bf e}_i$, $i\in[\q]$, and $\delta'=1$, we have
\begin{align*}
 \disccollec(\W_\q,1,T)=\big\{\mathbf{y}\in TB_2^\q(\C):
\,\, &|y_i|\le 2\sqrt{2}(\q+1)\,\,\text{ and}\,\,  y_i\in  \Z+\cxi\Z\,\,\forall i\in[\q-1];\;\;y_\q=1 \big\}.
\end{align*}

\medskip

The collection $\disccollec(\W_\q,\delta',T)$ can be viewed as a discretization of the set of vectors $Y$ from
Corollary~\ref{cor: wy}, up to some adjustment of parameters.
In particular, we allow the scalar products $\langle Z,W_{\sigma_\W(i)}\rangle$ (for $Z\in\disccollec(\W_\q,\delta',T)$)
to take only discrete values from a rescaled lattice $\Z+\cxi\Z$, {\bf unless} $Z$
is orthogonal to the vector $\Proj_{\spn\{W_{\sigma_\W(j)},\,j<i\}^\perp}W_{\sigma_\W(i)}$.
The ``unless'' part in the last statement is important: in the regime when a vector $W_{\sigma_\W(i)}$
is ``almost contained'' in the linear span of $W_{\sigma_\W(j)}$, $j<i$, i.e.\
the projection of $W_{\sigma_\W(i)}$ onto the orthogonal complement of $\spn\{W_{\sigma_\W(j)},\,j<i\}$
is very small, simultaneously fulfilling the requirements $\langle Z,W_{\sigma_\W(j)}\rangle\in \delta'\Z+\delta'\cxi\Z$, $j\leq i$,
and $\|Z\|_2= O(T')$ may not be possible even if a vector $Y$ in \eqref{eq: aux 1075090218415761}
does exist. Having ``simplified'' the definition of $\disccollec(\W_\q,\delta',T)$ leaving only conditions
$\langle Z,W_{\sigma_\W(j)}\rangle\in \delta'\Z+\delta'\cxi\Z$, $j\in [\q]\setminus \{\sigma^{-1}_\W(\q)\}$,
we would not be able to prove the
crucial Lemma~\ref{l: y reduction} below.
The special role of the scalar product $\langle Z,\W_\q\rangle$ is naturally related to the fact that $Z$ should serve as an ``approximation''
of a vector $Y$ from \eqref{eq: aux 1075090218415761}.

Note that given a sequence of linearly independent vectors $\W_{\q}$, a permutation $\sigma$,
a subset $J\subset [\q]\setminus\{\sigma^{-1}(\q)\}$, and numbers $(k_j)_{j\in J}$
from $\CorR$, the conditions
\begin{equation}\label{Z}
\begin{split}
\langle Z,\W_\q\rangle=1,\;\;
 & \langle Z,W_{\sigma(i)}\rangle=
\begin{cases}
k_i\delta',&\mbox{if }i\in J,
\\
\langle Z,\Proj_{\spn\{W_{\sigma(j)},\,j<i\}}W_{\sigma(i)}\rangle,&\mbox{if }i\in [\q]\setminus (J\cup \{\sigma^{-1}(\q)\}),
\end{cases}
\end{split}
\end{equation}
{\it uniquely} determine a vector $Z=Z_{\W_{\q},J,(k_j),\sigma}\in\spn(\W_{\q})$.
Thus we can say that if $\disccollec(\W_\q,\delta',T)$ is not empty then there exist
a permutation $\sigma$,
a subset $J\subset [\q]\setminus\{\sigma^{-1}(\q)\}$,
and complex numbers with integer real and imaginary parts,
\begin{equation}\label{eq: k definition}
k_i\in \{z\in \C:\, |z|\le 2\sqrt{2}(\q+1)\}\cap (\Z+\cxi \Z), \quad i\in J,
\end{equation}
such that vector $Z=Z_{\W_{\q},J,(k_j),\sigma}$ defined in \eqref{Z} satisfies
\begin{equation}\label{|Z|}
|\langle Z\,,\,W_{\sigma(i)}\rangle|\leq 2\sqrt{2}(\q+1)\delta',\, i\in [\q]\setminus\{\sigma^{-1}(\q)\}
\quad\mbox{ and }\quad  \|Z\|_2\le T.
\end{equation}
(If $\CorR=\R $ then $k_i\in (-2\sqrt{2}(\q+1),2\sqrt{2}(\q+1))\cap \Z$.)
We will return to discussing the set $\disccollec(\W_\q,\delta',T)$ in Remark~\ref{rem: more com on ups}.

The key statement below shows that every vector satisfying relations from \eqref{eq: aux 1075090218415761}
can be perturbed into a vector satisfying conditions \eqref{Z}, without increasing its length.
\begin{lemma}\label{l: y reduction}
Let $\W_\q$ be a sequence of linearly independent vectors in $E$. Assume that
$
\delta,\delta'>0$ and $Y\in E$ are such that
$$|\langle Y,W_{\q}\rangle|=1\quad \mbox{ and }\quad |\langle Y,W_i\rangle|\leq \delta',\;\;1\leq i\leq\q-1.$$
Assume further that $\d(W_{\q},\spn\{W_i,\,i\neq \q\})\geq \delta$.
Then the set $\disccollec(\W_\q,\delta',\|Y\|_2+\sqrt{2}\q \delta'/\delta)$ defined above is non-empty.
\end{lemma}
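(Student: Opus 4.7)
First, I would pass to a vector $Y' \in \spn(\W_\q)$ that serves as a canonical starting point. Setting $Y' := \overline{\langle Y, W_\q\rangle}\cdot \Proj_{\spn(\W_\q)}(Y)$, orthogonal projection preserves inner products against vectors in $\spn(\W_\q)$ and $|\langle Y, W_\q\rangle|=1$, so $\|Y'\|_2\le \|Y\|_2$, $\langle Y', W_\q\rangle=1$, and $|\langle Y', W_i\rangle|\le \delta'$ for $i\neq \q$. The target $Z$ will be built by a small additive perturbation of $Y'$ inside $\spn(\W_\q)$.

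\textbf{Orthonormal framework.} Next I would introduce the Gram--Schmidt orthonormal basis $e_1,\dots,e_\q$ of $\spn(\W_\q)$ built from the reordered sequence $W_{\sigma(1)},\dots,W_{\sigma(\q)}$, with $\sigma=\sigma_\W$, so that $W_{\sigma(i)} = \sum_{l\le i}\alpha_{il} e_l$ and $\alpha_{ii} = d_i$. The orthogonal residuals $P_i := W_{\sigma(i)} - \Proj_{\spn\{W_{\sigma(j)}:j<i\}}W_{\sigma(i)}$ satisfy $P_i = d_i e_i$ and are pairwise orthogonal; moreover, the second (``orthogonality'') alternative in the definition of $\disccollec$ is equivalent to $\langle Z, P_i\rangle = 0$, i.e., $c_i = 0$ when $Z = \sum_l c_l e_l$. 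The triangular identity $\langle Z, W_{\sigma(i)}\rangle = \sum_{l\le i}c_l\overline{\alpha_{il}}$ is the structural backbone of the construction.

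\textbf{Iterative modification.} I would construct $Z$ iteratively: set $Z^{(0)} := Y'$ and $Z^{(i)} := Z^{(i-1)} + t_i P_i$ for $i=1,\dots,\q$, with $t_i$ chosen as explained below. Since $P_i$ is orthogonal to $\spn\{W_{\sigma(j)}:j<i\}$, this update preserves $\langle Z, W_{\sigma(j)}\rangle$ for all $j<i$, so conditions imposed at earlier steps are not disturbed by later ones. At step $i=k:=\sigma^{-1}(\q)$ I would choose $t_k$ so that $\langle Z, W_\q\rangle = 1$ is re-enforced (possibly perturbed by earlier steps). At each step $i\neq k$ I would choose between (L) the \emph{lattice} option (round $\langle Z^{(i-1)}, W_{\sigma(i)}\rangle$ to the closest point $m_i\delta'$ in $\delta'(\Z + \cxi\Z)$; cost $\|t_i P_i\|_2\le \delta'/(\sqrt{2}\, d_i)$), and (O) the \emph{orthogonality} option (take $t_i$ to enforce $c_i^{(i)} = 0$).

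\textbf{Cost analysis and main obstacle.} Pairwise orthogonality of the $P_i$ gives $\|Z-Y'\|_2^2 = \sum_i \|t_i P_i\|_2^2$. For $i<k$ we have $d_i\ge d_k\ge \delta$ by the non-increasing property of the $d_i$'s and the hypothesis $\d(W_\q, \spn\{W_i:i\ne \q\})\ge \delta$, so the lattice option costs at most $\delta'/(\sqrt{2}\delta)$ per step. For $i>k$ where $d_i$ may be very small I would use the orthogonality option to avoid the $1/d_i$ blow-up; zeroing $c_i$ only decreases the contribution to $\|Z\|_2$, so such steps are ``free'' for our bound. The hard part is step $i=k$: its cost is $\|t_k P_k\|_2 = |1-\omega^{(k-1)}_k|/d_k$, and the drift $\omega^{(k-1)}_k - 1 = \sum_{l<k}\overline{\alpha_{kl}}\,\Delta c_l$ involves the Gram--Schmidt coefficients $\alpha_{kl}=\langle W_\q, e_l\rangle$, which are not a priori bounded. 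To resolve this I would make the integer choices $m_l$ for $l<k$ \emph{jointly} rather than greedily, exploiting the admissible range $|m_l\delta'|\le 2\sqrt{2}(\q+1)\delta'$ allowed by the definition of $\disccollec$ to balance the partial sums and keep the total drift of order $O(\sqrt{k}\,\delta')$, independently of the unbounded $\alpha_{kl}$. This yields $\|t_k P_k\|_2 = O(\sqrt{k}\,\delta'/\delta)$; combined with the $O(\sqrt{\q}\,\delta'/\delta)$ contribution from the lattice steps for $l<k$, the triangle inequality gives $\|Z\|_2\le \|Y\|_2 + \sqrt{2}\,\q\delta'/\delta$ as required.
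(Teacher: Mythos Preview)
Your framework---Gram--Schmidt basis $e_1,\dots,e_\q$ from the reordered sequence, orthogonal residuals $P_i=d_ie_i$, and iterative updates $Z^{(i)}=Z^{(i-1)}+t_iP_i$---matches the paper's. But two genuine gaps remain, both stemming from the same missing observation.

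\textbf{The missing key fact.} By the very definition of $\sigma_\W$ (Definition~\ref{d:di}), for every $i\ge l$ we have $\|\Proj_{F_{l-1}^\perp}W_{\sigma(i)}\|_2\le d_l$, hence $|\alpha_{il}|=|\langle W_{\sigma(i)},e_l\rangle|\le d_l$ and therefore
\[
|\langle P_l,W_{\sigma(i)}\rangle|\le d_l^2=|\langle P_l,W_{\sigma(l)}\rangle|,\qquad i\ge l.
\]
In words: the effect of step $l$ on $\langle Z,W_{\sigma(i)}\rangle$ for any later $i$ is \emph{dominated} by its effect on $\langle Z,W_{\sigma(l)}\rangle$. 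You never invoke this; you only use that the $d_i$ are non-increasing. With this bound in hand, the drift at step $k$ is bounded by $\sum_{l<k}|t_l|\,d_l^2\le (k-1)\delta'/\sqrt{2}$ under nearest-point rounding---no joint optimisation of the $m_l$ is needed, and your concern about ``unbounded $\alpha_{kl}$'' dissolves. Your proposed ``balancing of partial sums'' is both unnecessary and unjustified: you give no mechanism for it, and since the coefficients $\alpha_{kl}/d_l$ are genuinely unbounded, a purely combinatorial balance over $O(\q)$ lattice choices per coordinate cannot work in general.

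\textbf{The fatal gap for $i>k$.} More seriously, using the orthogonality option for \emph{all} $i>k$ does secure the ``or'' clause in the definition of $\disccollec$, but you never verify the mandatory size condition $|\langle Z,W_{\sigma(i)}\rangle|\le 2\sqrt{2}(\q+1)\delta'$ for those $i$. Zeroing $c_i$ gives $\langle Z,W_{\sigma(i)}\rangle=\sum_{l\le k}c_l^{\mathrm{final}}\,\overline{\alpha_{il}}$, and each orthogonality step at $l\in(k,i)$ perturbs this inner product by $-c_l^{(0)}\overline{\alpha_{il}}$, of magnitude up to $|c_l^{(0)}|\,d_l$---not controlled by $\delta'$. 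The paper resolves this by \emph{not} committing to the orthogonality option for $i>k$: at each step $m\neq q$ it chooses between rounding to the lattice and zeroing the $e_m$-component according to whether $|\langle Z_{m-1}'',W_m'\rangle|$ exceeds $\sqrt{2}\delta'$. When it does, the lattice is dense enough (relative to the residual) that a lattice point exists with $|\zeta|\le 1$, so $\|Z_m\|_2\le\|Z_{m-1}\|_2$; when it does not, zeroing changes $\langle Z,W_m'\rangle$ by at most $\sqrt{2}\delta'$. Either way, the change to $\langle Z,W_j'\rangle$ for all $j\ge m$ is at most $\sqrt{2}\delta'$ by the ordering bound above, which is precisely what drives the inductive condition (b) in the paper and yields the required $2\sqrt{2}(\q+1)\delta'$ bound after $\q$ steps. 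Your scheme lacks both this case split and the ordering bound that powers it.
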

\begin{proof}
We will prove the statement when $\CorR=\C$; the real case follows by a straightforward adaptation of the argument.
Without loss of generality, $\langle Y,W_{\q}\rangle=1$.
For brevity, we will use notations
$$
W_i':=W_{\sigma_{\W}(i)},\,\, i\leq \q,\quad\text{and}\quad q:=\sigma_{\W}^{-1}(\q).
$$
Assuming conditions of the lemma, we will construct a vector $Z_\q\in\disccollec(\W_\q,\delta',\|Y\|_2+\sqrt{2}\q \delta'/\delta)$
in $\q$ steps via an inductive argument. At each step, the vector given by the induction hypothesis
will be perturbed in a special way, which will not increase its Euclidean norm, except for the step $q$
which has to be treated differently due to the special role of the inner product of the vector with $W'_q=W_\q$.

Set $Z_0:=Y$ and $F_{0}:=\{0\}$. For any $m=0,\dots,\q$, denote
$F_{m}:=\spn\{W_{1}',\dots,W_{m}'\}$.
For every $m\in[\q]$, at the $m$-th step of the induction argument, we assume that there exists a vector $Z_{m-1}\in E$ satisfying conditions:
\begin{itemize}
\item[a)] $\|Z_{m-1}\|_2\leq\|Y\|_2+{\bf 1}_{\{m-1\geq q\}}\sqrt{2}q \delta'/\delta$,
\item[b)] for any $i\in [\q]$,
$|\langle Z_{m-1},W'_{i}\rangle-{\bf 1}_{\{i=q\}}|\leq \sqrt{2}((m-1)+1)\delta'
+{\bf 1}_{\{m-1\geq q\}}\sqrt{2}q\delta'$, and
\item[c)] for any $i\in [m-1]\setminus\{q\}$, {\bf either}
$$\langle Z_{m-1},W'_{i}\rangle/\delta'\in \Z+\cxi\Z
\quad\mbox{ {\bf or} }\quad
\langle Z_{m-1},W'_{i}\rangle=\langle Z_{m-1},\Proj_{F_{i-1}}W'_{i}\rangle,$$
\item[d)] if $m-1\geq q$ then $\langle Z_{m-1},W'_q\rangle=\langle Z_{m-1},W_\q\rangle=1$,
\end{itemize}
and then construct vector $Z_m$ satisfying (a)--(d) with $m-1$ replaced with $m$.
Then, evidently, after $\q$ steps we get  $Z_\q\in \disccollec(\W_\q,\delta',\|Y\|_2+\sqrt{2}\q \delta'/\delta)$ and finish the proof.

Since $Z_0$ satisfies all of the above conditions with $m=1$, this allows us to start the inductive procedure.

Note that $Z_{m-1}$ satisfies all conditions for $Z_m$ except, possibly, (c) for $i=m$ and (d).
Hence to obtain $Z_m$ from $Z_{m-1}$,
we need to perturb $\langle Z_{m-1}, W'_m\rangle$ in a way which does not ``spoil'' conditions on
$\langle Z_{m-1}, W'_i\rangle$ for $i\neq m$.
To this end it is convenient to separate the projection of $Z_{m-1}$ on
$W'_m$ from the rest of projections and represent $Z_{m-1}$ in the form
\begin{align*}
  &Z_{m-1}=Z_{m-1}'+Z_{m-1}'',
\end{align*}
where
\begin{align*}
  & Z_{m-1}'=\Proj_{F_{m-1}\bigoplus F_m^\perp}Z_{m-1}\,\,\mbox{ and }
  \\
  &Z_{m-1}''=\Proj_{\Proj_{F_{m-1}^\perp}(W'_m)}Z_{m-1}=\langle Z_{m-1}, \Proj_{F_{m-1}^\perp}W_m'\rangle \frac{\Proj_{F_{m-1}^\perp}W_m'}{\|\Proj_{F_{m-1}^\perp}W_m'\|_2^2}.
\end{align*}
In words, we decompose $Z_{m-1}$ as a sum of orthogonal projections onto two orthogonal subspaces:
$F_{m-1}\bigoplus F_m^\perp$ and the (complex) one-dimensional subspace generated by the vector $\Proj_{F_{m-1}^\perp}W_m'$.
Clearly, such representation is unique.

Let us first treat the special case $m=q$. In this case we set
$$
Z_q:=Z_{q-1}'-\frac{(\langle Z_{q-1}',W'_q\rangle-1)\,\Proj_{F_{q-1}^\perp}W_q'}{\|\Proj_{F_{q-1}^\perp}W_q'\|_2^2}
=Z_{q-1}-\frac{(\langle Z_{q-1},W'_q\rangle-1)
\,\Proj_{F_{q-1}^\perp}W_q'}{\|\Proj_{F_{q-1}^\perp}W_q'\|_2^2},
$$
so that
$
\langle Z_q,W'_q\rangle=1.
$
Note that the inner products of $Z_q$ with $W'_1,\dots,W'_{q-1}$ remain the same as they were for the vector $Z_{q-1}$.
Thus, $Z_q$ satisfies (d) and (c) with $m-1$ replaced with $m=q$.
Further, the norm of the vector $Z_q$ can be estimated as
$$
\|Z_q\|_2\leq \|Z_{q-1}'\|_2+\frac{|\langle Z_{q-1}',W'_q\rangle-1|}{\|\Proj_{F_{q-1}^\perp}W_q'\|_2}
\leq \|Y\|_2+\frac{\sqrt{2}q\delta'}{\delta},
$$
where we have used the induction hypothesis for $Z_{q-1}$ and the assumption on the distance of $W_q'=W_\q$
to the linear span of the rest.
Thus, $Z_q$ satisfies (a), and it remains to check condition (b) for $i>q$.
Here is where our choice of the permutation $\sigma_{\W}$ becomes crucial.
Note that, by the definition of $\sigma_{\W}$, the distance of $W_q'$ to $F_{q-1}$ is at least as large as
the distance from $W'_i$ to $F_{q-1}$ for any $i>q$. Hence, from the above formula for $Z_q$
and the induction hypothesis we obtain
$$
|\langle Z_q,W'_i\rangle|\leq |\langle Z_{q-1},W'_i\rangle|+|\langle Z_{q-1},W'_q\rangle-1|\leq 2\sqrt{2} q\delta',\quad i>q,
$$
and (b) is verified.

\medskip

For the rest of the proof, we will consider the case $m\neq q$.
We will look for $Z_m$ of the form
$$
Z_m=Z_m(\zeta):=Z_{m-1}'+\zeta\,Z_{m-1}''=Z_{m-1}+(\zeta-1)Z_{m-1}'',\quad \zeta\in D_1:=\{\zeta\in\C:\,\,|\zeta|\le1\},
$$
Our goal is to show that we can choose parameter $\zeta\in D_1$ so that
$Z_m$ satisfies (a)--(d) with $m-1$ replaced with $m$.
The above representation
and the inductive hypothesis (conditions (a)--(d) for $Z_{m-1}$) guarantee that for any choice of $\zeta\in D_1$ we have
\begin{align*}
  &1.\,\,\,\|Z_{m}\|_2\leq\|Z_{m-1}\|_2\leq\|Y\|_2+{\bf 1}_{\{m-1\geq q\}}\sqrt{2}q \delta'/\delta,&
  \\
  &2.\,\,\,\text{for }i<m,\,\,\,\langle Z_{m},W'_{i}\rangle=\langle Z_{m-1},W'_{i}\rangle,
  \\
  &3.\,\,\,\text{for }i=m,\,\,\,\langle Z_{m},W'_{m}\rangle=\langle Z_{m},\Proj_{F_{m-1}}W'_{m}\rangle+\zeta\langle Z_{m-1},\Proj_{F_{m-1}^\perp}W'_{m}\rangle,
  \\
  &4.\,\,\,\text{for }i\geq m,\,\,\,|\langle Z_{m},W'_{i}\rangle-{\bf 1}_{\{i=q\}}|\leq \sqrt{2}m\delta'
+{\bf 1}_{\{m-1\geq q\}}\sqrt{2}q\delta'+|\zeta-1||\langle Z''_{m-1},W'_{i}\rangle|,
\end{align*}
so that, in particular, $Z_m(\zeta)$ satisfies (a), (d); (b) for every for every $i\le m-1$, and (c) for every $i\in[m-1]\setminus\{q\}$. 
Thus, it remains to provide (b) for $i\ge m$ and (c) for $i=m$, i.e.\ to find $\zeta\in D_1$ such that
\begin{equation}\label{b}
  |\langle Z_m,W'_{i}\rangle-{\bf 1}_{\{i=q\}}|\leq \sqrt{2}(m+1)\delta'+\sqrt{2}q\delta'\quad\text{for every }\, i\ge m
\end{equation}
and
\begin{equation}\label{c}
  \mbox{ {\bf either} } \langle Z_m,W'_{m}\rangle/\delta'\in \Z+\cxi\Z
\quad\mbox{ {\bf or} }\quad
\langle Z_m,W'_{m}\rangle=\langle Z_m,\Proj_{F_{m-1}}W'_{m}\rangle.
\end{equation}
Again, the definition of $\sigma(\W_\q)$ (see \eqref{eq: sigma definition}) plays a crucial role.
Namely, since by \eqref{eq: sigma definition},
$$
|\langle \Proj_{F_{m-1}^\perp}W_m',W_j'\rangle|\leq \| \Proj_{F_{m-1}^\perp}W_m'\|_2\| \Proj_{F_{m-1}^\perp}W_j'\|_2\leq d_m^2= |\langle \Proj_{F_{m-1}^\perp}W_m',W_m'\rangle|,\quad \forall j\ge m,
$$
we have
\begin{equation*}
 |\langle Z_{m-1}'',W_j'\rangle|\leq |\langle Z_{m-1}'',W_m'\rangle|, \quad \forall j\ge m.
\end{equation*}
This will allow us to ``adjust'' the inner product $\langle Z_m(\zeta), W_m'\rangle$
while not affecting $\langle Z_m(\zeta), W_j'\rangle$ ($j>m$) too much. In particular, now we can write instead of {\bf 4} above
\begin{equation}\label{4}
  |\langle Z_{m},W'_{i}\rangle-{\bf 1}_{\{i=q\}}|\leq \sqrt{2}m\delta'
+{\bf 1}_{\{m-1\geq q\}}\sqrt{2}q\delta'
+|\zeta-1||\langle Z''_{m-1},W'_{m}\rangle|,\quad \forall i\geq m.
\end{equation}

To get \eqref{b} -- \eqref{c} choosing appropriate $\zeta$, and so to finish the proof, we consider three cases.

\underline{Case 1}: $\langle Z_{m-1}, \Proj_{F_{m-1}^\perp}W_m'\rangle=0$. In this case $Z_m=Z_{m-1}$ (for any choice of $\zeta$) and
\eqref{b} follows from (b) for $Z_{m-1}$, while \eqref{c} is provided by the assumption.

\underline{Case 2}: $|\langle Z_{m-1}'',W_m'\rangle|\leq \sqrt{2}\delta'$.  In this case we set $\zeta:=0$, so
$Z_m=Z_m(0)$. Now \eqref{b} follows from \eqref{4} and \eqref{c} follows from {\bf 3} above.

\underline{Case 3}: $\langle Z_{m-1}, \Proj_{F_{m-1}^\perp}W_m'\rangle\neq 0$ and $|\langle Z_{m-1}'',W_m'\rangle|> \sqrt{2}\delta'$ .
Setting
$$
\Gamma:=\big\{z\in\C:\,\langle Z_m(z),W_m'\rangle\in \delta'\Z+\delta'\cxi \Z\big\},
$$
observe that necessarily $\Gamma\neq \emptyset$ and every  $Z_m(\zeta)$, $\zeta\in\Gamma$, satisfies \eqref{c}. So it remains to find
 $\zeta_0\in\Gamma\cap D_1$ such that $Z_m(\zeta_0)$ satisfies \eqref{b}.

Note that for every $\zeta_1, \zeta_2\in \Gamma$ we have
$$
\tau_i:=\frac{\langle Z_m(\zeta_i),W_m'\rangle}
{|\langle Z_{m-1}'',W_m'\rangle|}\in\frac{\delta'}{|\langle Z_{m-1}'',W_m'\rangle|}(\Z+\cxi\Z), \quad i=1,2,
$$
and $|\zeta_1-\zeta_2|=|\tau_1-\tau_2|$. Hence, $\Gamma$ is a {\it distance-preserving} transformation of the lattice
$(\delta'/|\langle Z_{m-1}'',W_m'\rangle|)(\Z+\cxi\Z)$, and, in particular, any disc in $\C$ of radius greater than or equal to $\delta'/(\sqrt{2}|\langle Z_{m-1}'',W_m'\rangle|)$
contains at least one point from $\Gamma$.
Thus, the condition $|\langle Z_{m-1}'',W_m'\rangle|> \sqrt{2}\delta'$ guarantees that the intersection of
the discs
$$D_1\quad\mbox{ and }\quad D:=\{\zeta\in\C:\,|1-\zeta|\leq \sqrt{2}\delta'/|\langle Z_{m-1}'',W_m'\rangle|\}$$
contains a disk of radius $\delta'/\sqrt{2}|\langle Z_{m-1}'',W_m'\rangle|$ which, in turn, contains at least one point, say $\zeta_0$, from $\Gamma$. Choose $Z_m=Z_m(\zeta_0)$,
$\zeta_0\in\Gamma\cap D_1\cap D$. Since $\zeta_0\in D$, we have
$
|\zeta_0-1| |\langle Z_{m-1}'',W_j'\rangle|\le 2\delta'.
$
This and \eqref{4} yield \eqref{b}.

\medskip

The above procedure produces a sequence of vectors $Z_0,Z_1,\dots,Z_\q$ satisfying (a)--(d).
Clearly,  $Z_\q\in \disccollec(\W_\q,\delta',\|Y\|_2+\sqrt{2}\q \delta'/\delta)$. This finishes the proof.
\end{proof}

\begin{rem}\label{rem: more com on ups}
We would like to note that some aspects in our definition of $\disccollec(\W_\q,\delta',T)$
are rather arbitrary.
In particular, in the condition that for any $Z\in \disccollec(\W_\q,\delta',T)$,
the inner product $\langle Z,W_{\sigma_\W(i)}\rangle$ belongs to
$\delta'\Z+\delta'\,\cxi\Z$ (unless $Z$
is orthogonal to $\Proj_{\spn\{W_{\sigma_\W(j)},\,j<i\}^\perp}W_{\sigma_\W(i)}$),
the lattice $\delta'\Z+\delta'\,\cxi\Z$ could be replaced with some other
(appropriate) discretizations of $\C$ without affecting the argument.

The properties of $\disccollec(\W_\q,\delta',T)$ of actual importance
are dictated by its role in estimating probabilities of events of the form \eqref{eq: aux 1075090218415761}
when $\W_\q$ is a collection of independent random vectors.
The three key elements in the estimation procedure are discretization, decoupling and a union bound argument.
As we noted above, the event $\{\disccollec(\W_\q,\delta',T)\neq\emptyset\}$
(hence the event \eqref{eq: aux 1075090218415761}, in view of Lemma~\ref{l: y reduction}),
implies that for some parameters $J$, $(k_j)$, and $\sigma$, the vector $Z=Z_{\W_{\q},J,(k_j),\sigma}$
defined by \eqref{Z} satisfies \eqref{|Z|}.
The crucial property of the vector is that for any $i\leq \q$, the projection of
$Z_{\W_{\q},J,(k_j),\sigma}$ onto $\spn\{W_\q;W_{\sigma(j)},\,j\leq i\}$ is measurable with respect to
the sigma-field generated by $W_\q$ and $W_{\sigma(j)},\,j\leq i$.
This property will be used in the proof of Lemma~\ref{l: aux decoupling} as a basis for the decoupling procedure.
The cardinality of the parametric space $(J,(k_j),\sigma)$ can be easily bounded from above by $(\widetilde C \q)^{\widetilde C \q}$,
which will enable us to apply a union bound in probability estimation.
\end{rem}

Summarising results of this section and
combining Lemma~\ref{l: y reduction} with Corollaries~\ref{cor: ellipsoid to distance type} and \ref{cor: wy} we get:

\begin{prop}\label{p: perturbed vector existence}
Let $\U_\q$, $\Y_\q$ be a biorthogonal system in $E$ over $\C$ and $\W_{\q}$ be a $\delta$--perturbation of $\Y_{\q}$
(for some $\delta\in(0,1]$), such that $\d(W_\q,\spn\{W_i,\,i\neq \q\})\geq \delta$.
Assume that for some ${\bf b}$ satisfying \eqref{b condition} and some $ R\geq 1$, $T> 0$ with $T\delta\leq 1/2$
and $R\leq \delta^{-1}N^{-1/2}$
we have
$$
\Ellips(\U_\q)\in\elclass(R,{\bf b})
\quad\mbox{ and }\quad U_{\q}\in \delta\,\Ellips(\U_{\q-1})+ T  B_2^E .
$$
Let $r <1$ be such that $d_i(\W_\q)\ge r$, $i\le\q$, where $d_i$'s are defined in \eqref{eq: sigma definition}.
Then there exists ${\bf p}$ satisfying \eqref{p condition} and
$2^{\sum_{i=1}^{\q}p_i}\leq (4\q )^{\q/2}\,2^{-\sum_{i=1}^{\q}b_i}$ such that
\begin{equation}\label{event D+disccollec}
  \W_\q\in \disclass(r ,{\bf p})
\quad\mbox{ and }\quad \disccollec(\W_\q,2\delta (T+1),2T+2\sqrt{2}\q(T+1) )\neq\emptyset.
\end{equation}
In particular, the second condition in \eqref{event D+disccollec} implies that there exist
a permutation $\sigma$ of $[\q]$,
a subset $J\subset [\q]\setminus\{\sigma^{-1}(\q)\}$, numbers
$$
k_i\in \{z\in \C:\, |z|\le 2\sqrt{2}(\q+1)\}\cap (\Z+\cxi \Z),\quad i\in J,
$$
such that vector $Z=Z_{\W_{\q},J,(k_j),\sigma}$ defined in \eqref{Z} with $\delta'=2\delta(T+1)$ satisfies
$$
|\langle Z\,,\,W_{\sigma(i)}\rangle|\leq \sqrt{2}(\q+1)\delta',\, i\in [\q]\setminus\{\sigma^{-1}(\q)\}
\quad\mbox{ and }\quad\|Z\|_2\le 2T+2\sqrt{2}\q(T+1).
$$
\end{prop}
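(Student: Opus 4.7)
The plan is to assemble this proposition as a direct synthesis of three preceding results, with most of the work being careful parameter bookkeeping rather than genuinely new arguments. The first conclusion ($\W_\q\in\disclass(r,{\bf p})$ together with the product estimate $2^{\sum p_i}\le (4\q)^{\q/2}\,2^{-\sum b_i}$) is precisely the statement of Corollary~\ref{cor: ellipsoid to distance type}, which applies because its hypotheses ($\U_\q,\Y_\q$ biorthogonal, $\W_\q$ a $\delta$-perturbation of $\Y_\q$, $R\le\delta^{-1}\q^{-1/2}$, $\Ellips(\U_\q)\in\elclass(R,{\bf b})$, and $d_i(\W_\q)\ge r$) are exactly the hypotheses we have at hand. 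So I would begin by invoking that corollary to produce the vector ${\bf p}$.

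For the second conclusion I would first produce a single test vector $Y$ and then discretize it. Specifically, the second hypothesis $U_\q\in \delta\,\Ellips(\U_{\q-1})+T\,B_2^E$ together with $T\delta\le 1/2$ is exactly what is needed to apply Corollary~\ref{cor: wy}: it yields $Y\in E$ with
\[
\|Y\|_2\le 2T,\qquad |\langle W_\ell,Y\rangle|\le 2\delta+2\delta T=2\delta(T+1)\;\;(\ell\le\q-1),\qquad |\langle W_{\q},Y\rangle|=1.
\]
Now I would feed this $Y$ into Lemma~\ref{l: y reduction} with the choice $\delta':=2\delta(T+1)$. The hypothesis $\d(W_\q,\spn\{W_i,\,i\neq\q\})\ge\delta$ is assumed, and linear independence of $\W_\q$ is free from $d_i(\W_\q)\ge r>0$. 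The lemma therefore guarantees that
\[
\disccollec\!\left(\W_\q,\;\delta',\;\|Y\|_2+\sqrt{2}\q\delta'/\delta\right)\neq\emptyset,
\]
and the parameter in the third slot evaluates to $2T+\sqrt{2}\q\cdot 2\delta(T+1)/\delta=2T+2\sqrt{2}\q(T+1)$, matching the claimed bound exactly.

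The ``in particular'' clause is not a separate argument but an unpacking of Definition~\ref{page: disccollec}: if some $Z\in\disccollec(\W_\q,\delta',2T+2\sqrt{2}\q(T+1))$ exists, then from \eqref{Z}--\eqref{|Z|} one may read off the permutation $\sigma$ (namely $\sigma_\W$), the subset $J\subset[\q]\setminus\{\sigma^{-1}(\q)\}$ of indices on which the inner product lands in the rescaled lattice, and the integer (resp.\ Gaussian integer) coefficients $k_j=\langle Z,W_{\sigma(j)}\rangle/\delta'$, with $|k_j|\le 2\sqrt{2}(\q+1)$; the uniqueness of $Z=Z_{\W_\q,J,(k_j),\sigma}$ given these data is built into \eqref{Z}.

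There is essentially no ``main obstacle'' here beyond careful parameter tracking, because the nontrivial work has already been done upstream: the geometric-to-distance-type passage in Proposition~\ref{p: geom part 1}/Corollary~\ref{cor: ellipsoid to distance type}, the existence of a test vector via the dual-basis inversion in Lemma~\ref{l: y existence}/Corollary~\ref{cor: wy}, and the inductive discretization that is the content of Lemma~\ref{l: y reduction}. The only point requiring a small verification is that the three parameter constraints ($R\le\delta^{-1}\q^{-1/2}$ for Corollary~\ref{cor: ellipsoid to distance type}, $T\delta\le 1/2$ for Corollary~\ref{cor: wy}, and $\d(W_\q,\spn\{W_i,i\neq\q\})\ge\delta$ for Lemma~\ref{l: y reduction}) are all explicitly assumed, so the three ingredients slot together without friction.
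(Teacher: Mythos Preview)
Your proposal is correct and follows exactly the approach the paper takes: the proposition is stated there as a direct synthesis of Corollary~\ref{cor: ellipsoid to distance type}, Corollary~\ref{cor: wy}, and Lemma~\ref{l: y reduction}, with no additional proof given. Your parameter bookkeeping (in particular the computation $\|Y\|_2+\sqrt{2}\q\delta'/\delta=2T+2\sqrt{2}\q(T+1)$ and the observation that linear independence of $\W_\q$ follows from $d_i\ge r>0$) is exactly what is needed to make the combination go through.
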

Note that if $\CorR=\R $, then the statement remains valid with $k_i\in (-2\sqrt{2}(\q+1),2\sqrt{2}(\q+1))\cap \Z$.
In what follows we will take
$$
\U_\q=\big\{\Proj_F(-z{\bf e}_i)\big\}_{i=1}^\q,\quad \Y_\q=\big\{V_i\big\}_{i=1}^\q,\quad\text{and}\quad
\W_{\q}=\big\{\big(-\bar z^{-1}{\bf e}_i-\mbox{$\frac{\bar z^{-1}}{|z^{-1}|}$}\delta {\bf e}_i\big)\oplus D{\bf q}_i\big\}_{i=1}^\q
$$
(see Lemma \ref{l: V and proj biorth} and Lemma \ref{l: aux uaf,mlayg}).

\smallskip

\subsection{Probabilistic properties of the perturbed dual basis}\label{subs: two types}
The purpose of this section is to estimate probability of occurence of
\eqref{event D+disccollec} for vectors $\widetilde V_1,\dots,\widetilde V_\q$ defined below.

Let $m,n,\q$ satisfy $m=n-\q$.
Let $X_1,\dots,X_{\q}$ be i.i.d.\ random vectors in $\CorR^m$
satisfying \eqref{Asmp 1} or \eqref{Asmp 2}.
Given an $m\times m$ matrix $D$ and a parameter $\kappa\in\CorR\setminus\{0\}$,
define $n$--dimensional random vectors
 \begin{equation}\label{V definition}
   \widetilde V_i:=\kappa\,{\bf e}_i\oplus DX_i,\quad i\leq \q.
 \end{equation}
 (In what follows we will take $\widetilde V_i=V_i-({\bar z^{-1}}/{|z^{-1}|})\,\delta\,{\bf e}_i$
(see \eqref{eq: V act def}), so that $\V_\q$ is a $\delta$-perturbation of ${\mathcal V}_\q$.)
Here we want  to estimate probability of the event that $\V_\q$ 
falls into a given class $\disclass(r ,{\bf p})$ and that $\disccollec(\V_\q,\delta',T')\neq \emptyset$
for appropriately chosen parameters $\delta',T'$.
The importance of such an event is explained by Proposition~\ref{p: perturbed vector existence},
where relations of the form
$$
\V_\q\in \disclass(r ,{\bf p})
\quad\mbox{ and }\quad \disccollec(\V_\q,2\delta (T+1),2T+2\sqrt{2}\q(T+1))\neq\emptyset
$$
are shown to be {\it necessary} for the geometric
condition
$$
\Ellips(\U_\q)\in\elclass(R,{\bf b})
\quad\mbox{ and }\quad U_{\q}\in \delta\,\Ellips(\U_{\q-1})+ T  B_2^E
$$
to hold.

We distinguish two cases: first, when many singular values of $D$ are relatively large,
in which case $DX$ is well spread, and, second,
when $D$ has only a few large singular values in which case $DX$ is essentially contained in a fixed low-dimensional subspace. The corresponding two main statements are Lemmas \ref{l: aux a;kfnaori} and \ref{l: aux decoupling}, which we finally summarise in Proposition \ref{p: decoupling}.

Let us start with the following corollary of the Hanson--Wright inequality proved in \cite{RV sup norm}
which will be applied to matrices $D$ of the first type.
\begin{lemma}[{\cite[Theorem~4.1(i)]{RV sup norm}}]\label{l: first type lemma}
Let $D$ be an $m\times m$ fixed matrix, let $m\geq h>p\geq 1$, and let
$X_1,\dots,X_p$ be i.i.d random vectors in $\CorR^m$
satisfying \eqref{Asmp 1} or \eqref{Asmp 2} with a parameter $K$.
Then with probability at least $1-2p e^{-c(h-p)}$ we have
$$\d\big(DX_i,\spn\{DX_j:\;j\in[p]\setminus \{i\}\}\big)\geq c \Big(\sum\limits_{\ell=h}^m s_\ell(D)^2\Big)^{1/2}\;\quad\mbox{for all }i\in[p].$$
Here, $c>0$ may only depend on $K$.
\end{lemma}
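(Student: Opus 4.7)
The plan is to fix an index $i\in[p]$, condition on $\{X_j\}_{j\neq i}$, and establish the bound $2e^{-c(h-p)}$ on the conditional probability of the bad event; a union bound over $i$ then produces the factor $2p$. After conditioning, the subspace $H_i:=\spn\{DX_j:j\neq i\}$ has dimension at most $p-1$ and is independent of $X_i$, so writing $\d(DX_i,H_i)=\|B X_i\|_2$ with $B:=\Proj_{H_i^\perp}D$ reduces the task to a small-ball estimate for $\|BX_i\|_2$ where $X_i$ is isotropic $K$-subgaussian in $\CorR^m$.

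The key structural step is Weyl/Cauchy interlacing for singular values under projection: since $\Proj_{H_i^\perp}$ has rank at least $m-p+1$, one has $s_\ell(B)\geq s_{\ell+p-1}(D)$ for $\ell\leq m-p+1$, hence $B$ carries at least $k:=h-p+1$ singular values of magnitude $\geq s_h(D)$. Taking $E$ to be the span of the corresponding top $k$ right singular vectors of $B$, one obtains the deterministic pointwise lower bound $\|BX_i\|_2\geq s_h(D)\,\|\Proj_E X_i\|_2$. Since $E$ is a fixed $k$-dimensional subspace (after conditioning), standard small-ball probability estimates for projections of isotropic subgaussian vectors (of the type in \cite{RV sup norm}) give $\Prob\{\|\Proj_E X_i\|_2\leq\varepsilon\sqrt{k}\}\leq (C\varepsilon)^k+e^{-ck}$; choosing $\varepsilon$ to be a small enough absolute constant yields $\|BX_i\|_2\geq c_1 s_h(D)\sqrt{h-p+1}$ with conditional probability at least $1-2e^{-c(h-p)}$.

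The main obstacle is bridging the bound $c_1 s_h(D)\sqrt{h-p+1}$ to the target $(\sum_{\ell=h}^m s_\ell(D)^2)^{1/2}$: the two agree, up to a constant, precisely when the singular values past index $h$ stay on scale $s_h(D)$, whereas in general the tail sum may be dominated by a long plateau at scale $\sigma\ll s_h(D)$ located near some index $h'\gg h$. I would resolve this by a dyadic decomposition of $\{s_\ell(D)\}_{\ell\geq h}$ into levels $\{\ell:s_\ell(D)\in[2^{-j}s_h(D),2^{-j+1}s_h(D)]\}$, isolating by pigeonhole a dominant level of $N_j$ singular values ending at some $h'\geq h$, and rerunning the argument above with the pair $(h,s_h(D))$ replaced by $(h',2^{-j}s_h(D))$. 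The larger exponent $h'-p$ compensates for the smaller singular-value scale, so the desired tail-sum lower bound is recovered while the probability estimate $e^{-c(h-p)}$ persists, with any logarithmic factor from the pigeonhole absorbed into the constant $c$.
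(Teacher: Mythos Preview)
The paper does not give its own proof of this lemma; it is quoted verbatim from \cite[Theorem~4.1(i)]{RV sup norm}. Your first two paragraphs are correct and reproduce the standard structure of that argument: conditioning on $\{X_j\}_{j\neq i}$, the interlacing bound $s_\ell(B)\geq s_{\ell+p-1}(D)$ for $B=\Proj_{H_i^\perp}D$, and the small-ball estimate for $\|\Proj_E X_i\|_2$ all work as you describe and deliver $\|BX_i\|_2\geq c\,s_h(D)\sqrt{h-p+1}$ with conditional failure probability at most $2e^{-c(h-p)}$.

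The genuine gap is in your third paragraph. The dyadic pigeonhole does \emph{not} recover the full tail sum $S^2:=\sum_{\ell\geq h}s_\ell(D)^2$; it loses a factor $\log m$ that cannot be absorbed into a universal constant. Concretely, take $h=p+1$ and let $s_h(D)=1$ be followed by $3\cdot 4^{j-1}$ copies of $2^{-j}$ for $j=1,\dots,J$. Then $S^2=1+3J/4$, while for \emph{every} choice of $h'\geq h$ (say $h'$ the last index in level $j$, so $h'-p+1=4^{j}+1$ and $s_{h'}(D)=2^{-j}$) one has $s_{h'}(D)^2(h'-p+1)=4^{-j}+1\leq 2$. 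Since $J$ can be as large as $\Theta(\log m)$, your rerun bound misses $S$ by a $\sqrt{\log m}$ factor. The underlying reason is that a single dyadic level can capture at most its own contribution to $S^2$, and the number of contributing levels is unbounded in $m$.

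The clean fix is to drop the dyadic step and apply the Hanson--Wright inequality once. Interlacing gives not only $s_\ell(B)\geq s_{\ell+p-1}(D)$ for $h-p+1\leq\ell\leq m-p+1$ but also $s_\ell(B)\geq s_h(D)$ for all $\ell\leq h-p$. Hence $\|BX_i\|_2\geq\|M'X_i\|_2$, where $M'$ shares the right singular vectors $v_1,\dots,v_{m-p+1}$ of $B$ and has singular values
\[
\big(\underbrace{s_h(D),\dots,s_h(D)}_{h-p+1},\,s_{h+1}(D),\dots,s_m(D)\big).
\]
Then $\|M'\|_{\HS}^2=(h-p)s_h(D)^2+S^2\geq S^2$ while $\|M'\|=s_h(D)$, so the stable rank $\|M'\|_{\HS}^2/\|M'\|^2$ is at least $h-p+1$. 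The Hanson--Wright lower tail (see \cite{RV HS}) now gives
$\Prob\big\{\|M'X_i\|_2<c\|M'\|_{\HS}\big\}\leq 2e^{-c'(h-p)}$,
which is exactly the required bound with the correct target $S$.
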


\begin{lemma}\label{l: aux a;kfnaori}
Let $m,\q\in \N$, let
$D$ be an $m\times m$ matrix with
$\sum_{\ell=\q^2}^m s_\ell(D)^2\geq (\q T')^{-2}$, $T'\geq 1$,
and let $\V_\q$ be defined by \eqref{V definition} for some $\kappa\in\CorR\setminus\{0\}$.
Fix any  $0<r <1$ and any ${\bf p}$ satisfying \eqref{p condition}.
Then
$$
\Prob\big\{\V_\q\in\disclass(r ,{\bf p})\big\}
\leq  (C\q)^{C\q}\prod\limits_{i=1}^{\q}\big(2^{p_i}T'\big)^U+2e^{-c\q^2},
$$
with $U=1$ for $\CorR=\R$ and $U=2$ for $\CorR=\C$,
for some $C,c>0$ depending only on $K$.
\end{lemma}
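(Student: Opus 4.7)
The strategy is to combine a union bound over permutations with conditional anti-concentration for $\|\Proj_{F_i}DX_{\sigma(i)}\|_2$, using Lemma~\ref{l: first type lemma} to control a low-probability exceptional event. Since the distances $d_i$ in Definition~\ref{page: disclass} are defined through the data-dependent permutation $\sigma_\V$, I would first pass to a uniform estimate by summing:
\begin{equation*}
\Prob\{\V_\q\in\disclass(r,{\bf p})\}\leq\sum_{\sigma\in S_\q}\Prob\big\{\d\big(\widetilde V_{\sigma(i)},\spn\{\widetilde V_{\sigma(j)}:j<i\}\big)\leq 2^{p_i+1}\mbox{ for all }i\leq\q\big\},
\end{equation*}
which costs a multiplicative factor $\q!\leq (C\q)^{C\q}$.

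Fix a permutation $\sigma$. The block structure $\widetilde V_i=\kappa\,{\bf e}_i\oplus DX_i$ together with the pairwise orthogonality of the $\kappa\,{\bf e}_i$ yield the deterministic inequality
\begin{equation*}
\d\big(\widetilde V_{\sigma(i)},\spn\{\widetilde V_{\sigma(j)}:j<i\}\big)^2\ \geq\ |\kappa|^2+\|\Proj_{F_i}DX_{\sigma(i)}\|_2^2,\qquad F_i:=\spn\{DX_{\sigma(j)}:j<i\}^\perp,
\end{equation*}
obtained by projecting onto the enlarged subspace $\spn\{{\bf e}_{\sigma(j)}:j<i\}\oplus\spn\{DX_{\sigma(j)}:j<i\}$. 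Hence the event in question forces $\|\Proj_{F_i}DX_{\sigma(i)}\|_2\leq 2^{p_i+1}$ for every $i$. Processing $i=1,\dots,\q$ in the order given by $\sigma$, the subspace $F_i$ is a deterministic function of $X_{\sigma(j)}$, $j<i$, while $X_{\sigma(i)}$ is independent of these; an iterated application of the tower property then reduces the joint probability to a product of per-step conditional small-ball estimates for $\|\Proj_{F_i}DX\|_2$ with $F_i$ frozen.

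I then need the per-step bound
\begin{equation*}
\Prob\big\{\|\Proj_{F_i}DX\|_2\leq 2^{p_i+1}\big\}\leq C\,(2^{p_i+1}T')^U
\end{equation*}
to hold for every admissible realization of $F_i$ outside an exceptional set of small total probability. The deterministic structural input is the Hilbert--Schmidt estimate $\|\Proj_{F_i}D\|_{\HS}^2\geq\sum_{\ell\geq\q^2}s_\ell(D)^2\geq (\q T')^{-2}$, which follows from the singular-value interlacing $s_j(\Proj_{F_i}D)\geq s_{j+i-1}(D)$ and the hypothesis on $D$. Lemma~\ref{l: first type lemma} applied with $p=\q$ and $h=\q^2$ supplies a good event of probability at least $1-2\q e^{-c(\q^2-\q)}\geq 1-2e^{-c'\q^2}$ on which the right singular vectors of $\Proj_{F_i}D$ carrying the Hilbert--Schmidt mass are incompressible in the sense of $\Incomp_m(\zeta,\zeta)$. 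Applying Theorem~\ref{l: sb compl incomp} (for $\CorR=\C$, producing the squared factor $U=2$ since $\langle X,v\rangle$ carries two independent real degrees of freedom under~\eqref{Asmp 2}) or Theorem~\ref{th: 4.2} (for $\CorR=\R$, producing $U=1$) to the projection of $X_{\sigma(i)}$ onto this incompressible block then delivers the displayed per-step bound. Multiplying the $\q$ factors, summing over the $\q!$ permutations, and adding the failure probability yields the claimed estimate.

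The main obstacle is the per-step small-ball bound. The naive approach $\|\Proj_{F_i}DX\|_2\geq|\langle X,D^*u\rangle|$ with a single unit $u\in F_i$ produces only a factor of $1/s_1(\Proj_{F_i}D)$, whereas our hypothesis controls only the Hilbert--Schmidt norm of $\Proj_{F_i}D$ and not its operator norm. One therefore has to ``tensorize'' the anti-concentration across a whole block of right singular vectors of $\Proj_{F_i}D$ whose cumulative $\ell_2$-mass is at least $(\q T')^{-1}$; this is precisely why the hypothesis is phrased as a tail sum of singular values from index $\q^2$ onward rather than a bound on a single singular value. Verifying the required incompressibility of such a block uniformly over admissible realizations of $F_i$ is the delicate point, and this is where the anisotropic anti-concentration machinery originating in~\cite{RV sup norm} is essential.
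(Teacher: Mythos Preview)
Your approach is far more elaborate than necessary and contains a genuine gap at the point you yourself flag as ``the delicate point''. The paper's proof is a two-line dichotomy that avoids per-step small-ball estimates entirely.

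\medskip

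\textbf{The paper's argument.} Lemma~\ref{l: first type lemma} (applied with $p=\q$, $h=\q^2$) together with the hypothesis $\sum_{\ell\geq \q^2}s_\ell(D)^2\geq (\q T')^{-2}$ yields, on an event of probability at least $1-2e^{-c\q^2}$, the uniform lower bound
\[
\d\big(\widetilde V_i,\spn\{\widetilde V_j:j\neq i\}\big)\geq \d\big(DX_i,\spn\{DX_j:j\neq i\}\big)\geq c_1/(\q T')\quad\text{for all }i\leq \q.
\]
Since each $d_i$ dominates the left-hand side for $i=\sigma_\W^{-1}(\cdot)$, on this good event every $d_i\geq c_1/(\q T')$. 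Now split into two cases. If $2^{p_i+1}\geq c_1/(\q T')$ for \emph{every} $i$, then $\prod_i(2^{p_i}T')^U\geq (c_1/(2\q))^{U\q}$ and the first term in the asserted bound already exceeds $1$ for $C$ large; nothing to prove. If instead $2^{p_i+1}<c_1/(\q T')$ for \emph{some} $i$, then on the good event $\min(\max(d_i,r),1)\geq c_1/(\q T')>2^{p_i+1}$, so $\V_\q\notin\disclass(r,{\bf p})$; the probability is then at most the failure probability $2e^{-c\q^2}$.

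\medskip

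\textbf{The gap in your route.} Your per-step estimate $\Prob\{\|\Proj_{F_i}DX\|_2\leq 2^{p_i+1}\}\leq C(2^{p_i+1}T')^U$ is not established. You invoke Lemma~\ref{l: first type lemma} to claim that ``the right singular vectors of $\Proj_{F_i}D$ carrying the Hilbert--Schmidt mass are incompressible'', but that lemma says nothing about the structure of singular vectors of $\Proj_{F_i}D$; it gives distance lower bounds for the random vectors $DX_j$, which is a different statement. In general the right singular vectors of a fixed anisotropic operator need not be incompressible, and the Hilbert--Schmidt bound $\|\Proj_{F_i}D\|_{\HS}\geq(\q T')^{-1}$ alone does not yield a Lévy-type small-ball inequality of the claimed strength for $\|\Proj_{F_i}DX\|_2$. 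The union bound over permutations and the tower-property reduction are fine, but without the per-step input the argument does not close. The paper sidesteps all of this: no per-step anti-concentration, no incompressibility of singular directions, no conditioning---just one global distance bound and a dichotomy on the size of the $2^{p_i}$.
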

\begin{proof}
In view of the assumptions on the singular values of $D$ and in view of Lemma~\ref{l: first type lemma}, we get
that with probability at least $1-2e^{-c_1\q^2}$
\begin{equation}\label{eq: aux apirua;lfam}
\d\big(\widetilde V_i,\spn\{\widetilde V_j:\;j\neq i\}\big)
\geq \d\big(DX_i,\spn\{DX_j:\;j\neq i\}\big)\geq {c_1}/(\q T')
\end{equation}
for all $i\leq \q$ for some constant $c_1\in(0,1]$
which may only depend on $K$.
Note that if $2^{p_i+1}\geq {c_1}/(\q T')$ for all $i\leq \q$ then,
for sufficiently large $C'$, we get $(C'\q)^{C'\q}\prod\limits_{i=1}^{\q}\big(2^{p_i}T'\big)^U\geq 1$,
and the required probability estimate is trivial.
On the other hand, if $2^{p_i+1}< {c_1}/(\q T')$ for some $i\leq \q$ then, conditioned on \eqref{eq: aux apirua;lfam},
we get $\min\big(\max(d_i,r ),1\big)>2^{p_i+1}$ (where $d_i$'s
are defined for the sequence $\V_\q$ by \eqref{eq: sigma definition}). Hence, $\V_\q\notin\disclass(r ,{\bf p})$, and
 probability of the corresponding event is bounded from above by probability
of the complement of the event \eqref{eq: aux apirua;lfam}, i.e.\ by $2e^{-c_1\q^2}$.
The result follows.
\end{proof}

The next lemma will be used while dealing with matrices of the second type (with few large singular values).
\begin{lemma}\label{l: second type lemma}
Let $D$ be an $m\times m$ fixed matrix, and let $X$ be a
random vector in $\CorR^m$ satisfying \eqref{Asmp 1} or \eqref{Asmp 2} with a parameter $K$.
Let $R>0$, $m\geq h\geq 1$, and assume that $ (\sum_{\ell=h}^m s_l(D)^2)^{1/2}\leq \psi$ for some $\psi>0$.
Then with probability at least $1-2m e^{-c \psi^{-2} R^{-2}}$ we have
$$\|D^* Y\|_2\geq {\psi R}/{h}\quad \mbox{ for all vectors $Y\in\CorR^m$ with $\|Y\|_2\leq R$ and $|\langle DX,Y\rangle|=1$}.$$
Here, $c>0$ may only depend on $K$.
\end{lemma}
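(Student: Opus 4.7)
The plan is to reduce the statement to uniform control, over $\ell \in [m]$, of the scalar projections $\langle X, v_\ell\rangle$ onto the (fixed) left singular vectors of $D$, and then absorb all information about the variable vector $Y$ by two Cauchy--Schwarz estimates split at the index $\ell = h$. Let $D = \sum_{\ell=1}^m s_\ell u_\ell v_\ell^*$ be the singular value decomposition, so that
$$\langle DX, Y\rangle = \langle X, D^*Y\rangle = \sum_{\ell=1}^m s_\ell \langle X, v_\ell\rangle\, \overline{\langle Y, u_\ell\rangle} \quad\mbox{and}\quad \|D^*Y\|_2^2 = \sum_{\ell=1}^m s_\ell^2 |\langle Y, u_\ell\rangle|^2.$$
Each $v_\ell$ is a fixed unit vector and $X$ is isotropic with $K$--subgaussian coordinates, so every scalar $\langle X, v_\ell\rangle$ is $CK$--subgaussian of unit second absolute moment. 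Fixing a small absolute constant $c_0 \in (0,1/2)$ (to be chosen below), the standard subgaussian tail yields
$$\Prob\bigl\{|\langle X, v_\ell\rangle| > c_0/(\psi R)\bigr\} \leq 2 \exp\bigl(-c\, c_0^2\, \psi^{-2} R^{-2}\bigr),$$
and a union bound over $\ell \in [m]$ produces exactly the factor $2m$ and the exponent $-c\psi^{-2}R^{-2}$ asserted in the lemma.

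On the complementary event, where $|\langle X, v_\ell\rangle| \leq c_0/(\psi R)$ holds simultaneously for every $\ell \in [m]$, I would then show that no $Y\in\CorR^m$ satisfying $\|Y\|_2 \leq R$, $\|D^*Y\|_2 < \psi R/h$, and $|\langle DX, Y\rangle| = 1$ can exist. Writing $z_\ell := \langle Y, u_\ell\rangle$ and splitting $\langle X, D^*Y\rangle = \sum_\ell s_\ell \langle X, v_\ell\rangle \overline{z_\ell}$ at $\ell = h$, Cauchy--Schwarz on the head $\ell < h$ with $\bigl(\sum_{\ell<h} s_\ell^2 |z_\ell|^2\bigr)^{1/2} \leq \|D^*Y\|_2 < \psi R/h$ and $\bigl(\sum_{\ell<h} |\langle X, v_\ell\rangle|^2\bigr)^{1/2} \leq \sqrt{h}\, c_0/(\psi R)$ gives a head bound of $c_0/\sqrt h$; on the tail $\ell \geq h$, the complementary Cauchy--Schwarz with $\bigl(\sum_{\ell\geq h} |z_\ell|^2\bigr)^{1/2} \leq \|Y\|_2 \leq R$ and the hypothesis $\bigl(\sum_{\ell\geq h} s_\ell^2\bigr)^{1/2} \leq \psi$ yields $\bigl(\sum_{\ell\geq h} s_\ell^2 |\langle X, v_\ell\rangle|^2\bigr)^{1/2} \leq c_0/R$, hence a tail bound of $c_0$. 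Summing, $|\langle DX, Y\rangle| \leq c_0(1 + 1/\sqrt h) < 1$ as soon as $c_0 < 1/2$, contradicting $|\langle DX, Y\rangle| = 1$.

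The conceptually delicate point is the matched pairing of norms: on the head one uses the strong control $\|D^*Y\|_2 \leq \psi R/h$ paired against the uniform tail on the $|\langle X, v_\ell\rangle|$, while on the tail one uses only $\|Y\|_2 \leq R$ paired against the trace condition $\sum_{\ell\geq h} s_\ell^2 \leq \psi^2$; neither of these pairings used alone suffices, but together they save the full factor of $h$. The potential obstacle of discretizing the variable vector $Y$ never arises here, since all of the $Y$-dependence is absorbed by Cauchy--Schwarz and randomness enters only through the $m$ scalars $\langle X, v_\ell\rangle$; this is also why the probability estimate is only linear in $m$, and why the exponent does not improve by the factor $h^2$ that a naive single-vector estimate based on $\|D^*Y\|_2 \leq \psi R/h$ would suggest. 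The complex case is handled verbatim, using that the modulus of a complex $CK$--subgaussian scalar obeys the same kind of Gaussian tail bound.
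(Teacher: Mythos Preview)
Your proof is correct and follows essentially the same route as the paper: both use the singular value decomposition, control all scalars $|\langle X,v_\ell\rangle|$ simultaneously by a union bound (yielding the factor $2m$ and the exponent $-c\psi^{-2}R^{-2}$), split the sum at $\ell=h$, and on the tail $\ell\geq h$ use the hypothesis $\sum_{\ell\geq h}s_\ell^2\leq\psi^2$ together with $\|Y\|_2\leq R$. The only cosmetic difference is on the head $\ell<h$: the paper applies a pigeonhole/maximum argument to locate a single index $i<h$ with $s_i|\langle u_i,Y\rangle|\geq\psi R/h$, whereas you assume $\|D^*Y\|_2<\psi R/h$ and reach a contradiction via Cauchy--Schwarz; your variant actually yields the slightly sharper head bound $c_0/\sqrt{h}$ (so in principle it proves $\|D^*Y\|_2\gtrsim\psi R/\sqrt{h}$), but since only $\psi R/h$ is claimed this makes no difference.
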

\begin{proof}
Let $D=\sum\limits_{i=1}^m s_i(D) {\bf u}_i\, {\bf v}_i^*$
be the singular value decomposition of $D$ (with ${\bf u}_i$ and ${\bf v}_i$ being the normalized left and right singular vectors, respectively).
Applying a concentration inequality for sums of subgaussian random variables (see, for example, \cite[Proposition~5.10]{V 12}), we get
$$\Prob\big\{|\langle X,{\bf v}_i\rangle|\geq t\big\}\leq 2e^{-c_1t^2},\quad t>0,$$
for some $c_1=c_1(K)>0$.
Taking the union bound over all $i$, we obtain that for any $t>0$ the event
$$\Event_t:=\big\{|\langle X,{\bf v}_i\rangle|\leq t\;\;\mbox{ for all }i\leq m\big\}$$
has probability at least $1- 2m e^{-c_1t^2}$.
If we denote by $\Proj$ the orthogonal projection onto the span of vectors ${\bf v}_h,{\bf v}_{h+1},\dots,{\bf v}_m$ then, on the event $\Event_t$, we have $\|D\Proj X\|_2\leq \psi t$.

Let $Y$ be any vector in $\CorR^m$ with $\|Y\|_2\leq R$. Then
$$|\langle DX,Y\rangle|\leq R\|D\Proj X\|_2+|\langle D(\Idm-\Proj)X,Y\rangle|.$$
Setting $t:={1}/(2\psi R)$, we obtain from the above that everywhere on $\Event_t$ we have
$$|\langle D(\Idm-P)X,Y\rangle|\geq {1}/{2}\quad \mbox{ for all vectors $Y\in\CorR^m$ with $\|Y\|_2\leq R$ and $|\langle DX,Y\rangle|=1$}.$$
Note that condition $|\langle D(\Idm-\Proj)X,Y\rangle|\geq {1}/{2}$ immediately implies
$$
s_i(D)|\langle X,{\bf v}_i\rangle\, \langle {\bf u}_i,Y\rangle|\geq {1}/(2h)\quad\text{ for some}\quad i=i(Y)<h.
$$
On the other hand, everywhere on $\Event_t$ we have $|\langle X,{\bf v}_i\rangle|\leq t$ for all $i<h$. Thus, we get that, conditioned on $\Event_t$ (with the above choice of $t$), we have
$$s_i(D)|\langle {\bf u}_i,Y\rangle|\geq {\psi R}/{h}\quad \mbox{for some $i=i(Y)<h$ whenever $\|Y\|_2\leq R$ and $|\langle DX,Y\rangle|=1$}.$$
But the leftmost condition immediately gives $\|D^* Y\|_2\geq {\psi R}/{h}$. The result follows.
\end{proof}

The above lemma emphasizes an important property of anisotropic vectors with few principal components.
Consider first the opposite situation, when all singular values of $D$ are roughly comparable.
Say, if $D=\Idm$ then the relation $\langle DX,Y\rangle=\langle X,Y\rangle=1$
holds for vector $Y=\frac{X}{\|X\|_2^2}$ which has Euclidean norm of order $O(m^{-1/2})$ with large probability.
Thus, in this case there is no strong lower bound for $\|D^* Y\|_2$.
On the other hand, when $DX$ has few principal components, that is, when most singular values of $D$ are small,
the condition $\langle DX,Y\rangle=\langle X,D^* Y\rangle=1$ does guarantee (with large probability) that
$\|D^* Y\|_2$ is large. This happens because $DX$ is almost contained in a fixed low-dimensional subspace,
and therefore no dependence on $m$ appears in the lower bound for $\|D^* Y\|_2$.

\bigskip

In the next lemma we estimate probability of an event of type
\eqref{event D+disccollec} (with $\W_\q$ replaced with $\V_\q$)
in the case when the matrix $D$ from the definition of $\V_\q$ has only few principal components.
It encapsulates the decoupling procedure which was briefly mentioned in Remark~\ref{rem: more com on ups},
and in itself is the central (technical) element of the subsection.
Let us describe the essence of the argument leaving out all details.
The condition that the random set $\disccollec(\V_\q,\delta',T')$ is non-empty implies that for some
choice of (admissible) parameters $J$, $(k_j)$ and $\sigma$, the random vector
$Z=Z_{\V_\q,J,(k_j),\sigma}$ defined by \eqref{Z}, satisfies \eqref{|Z|}.
In certain sense, those conditions tell us that while the norm of $Z$
is well bounded from above, its scalar products with $\widetilde V_i$, $i\neq\q$, are ``small''.
Denoting by $P$ the coordinate projection of $Z$ onto the last $n-\q$ coordinates,
we may restate the conditions by saying that $|\langle PZ,D X_\q\rangle|$ is ``sufficiently large''
while $\langle PZ, DX_i\rangle=\langle D^*PZ, X_i\rangle$, $i\neq \q$, are ``relatively small''.
However, in view of Lemma~\ref{l: second type lemma}, the former implies (with very large probability)
that $\|D^* PZ\|_2$ is well bounded from below. Thus, we get that for vector $D^*PZ$ of not-too-small norm,
the inner products of $D^*PZ$ with the i.i.d.\ vectors $X_i$, $i\neq \q$, are small. This would
immediately imply a satisfactory upper bound on the probability of the event $\{\disccollec(\V_\q,\delta',T')\neq \emptyset\}$
via standard anti-concentration estimates, however, the principal problem
here is that the vector $D^*PZ$ depends on $X_i$'s (in fact, is an explicit function of $X_i$'s given by our choice of parameters).
The decoupling procedure is designed to deal with this issue.
In the actual proof, we will consider a collection of vectors $D^*PP_iZ$,
where $P_i$ is the projection onto
$\spn\{\widetilde V_\q;\widetilde V_{\sigma(j)},j< i\}$.
The property of the vector $Z$ which we use is that $P_i Z$ is measurable with respect to the sigma-field
generated by $\widetilde V_\q;\widetilde V_{\sigma(j)},j< i$, for every $i\leq \q$.
The probability estimate for the event in question
will be obtained by sequentially taking $\langle D^*PP_iZ,X_{\sigma(i)}\rangle$
after conditioning on a realization of $\widetilde V_\q;\widetilde V_{\sigma(j)},j< i$.
The relations between the scalar products $\langle D^*PP_iZ,X_{\sigma(i)}\rangle$
and $\langle Z,\widetilde V_{\sigma(i)}\rangle$ are determined, in particular, by the magnitudes of the distances
from $\widetilde V_{\sigma(i)}$ to the span of $\widetilde V_{\sigma(j)},j< i$,
which are in turn ``encoded'' within the condition ``$\V_\q\in\disclass(r ,{\bf p})$''.
This way, our probability bounds will be expressed in terms of the parametric vector ${\bf p}$.


\begin{lemma}\label{l: aux decoupling}
Let $n\in \N$, let $\q\geq \log n$, $m=n-\q$; let
$D$ be an $m\times m$ matrix
with $\sum_{\ell=\q^2}^m s_\ell(D)^2< (\q T')^{-2}$, and let $\kappa\in\CorR$,
$|\kappa|,\delta'\geq n^{-1}$, $T'\geq 1$ be some parameters
such that
\begin{equation}\label{param}
|\kappa| T'\leq {1}/{2}.
\end{equation}
Fix any $0<r <1$ and any ${\bf p}$ satisfying \eqref{p condition} and consider the event
\begin{align*}
  \Event :=\big\{\V_\q\in\disclass(r ,{\bf p})\quad\text{and}\quad &\disccollec(\V_\q,\delta',T')\neq\emptyset
  \big\}.
\end{align*}
Then for some $C,c>0$ depending only on $K$ we have
$$
\Prob(\Event)\leq
2e^{-c\q^2}+(C\q)^{C\q}\prod\limits_{i=1}^{\q-1}\sup\limits_{Y_i}
\Prob\big\{\min(|\langle X_{1},D^* Y_i \rangle|,T')\leq \eta_i\big\},
$$
where $\eta_i:=2\sqrt{2}(\q+1)\delta'+2^{p_i+1}T'+|\kappa|\, T'$,
and for each $i\leq \q-1$ the supremum is taken over all vectors $Y_i\in \CorR^m$
with $\|Y_i\|_2\leq T'$ and $\|D^* Y_i \|_2\geq \q^{-3}$.
\end{lemma}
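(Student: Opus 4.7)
The plan is to combine a union bound over the discrete parameters $(\sigma,J,(k_j))$ that encode a vector $Z\in\disccollec(\V_\q,\delta',T')$ via the system~\eqref{Z}, with (a) a ``good event'' for $X_\q$ obtained from Lemma~\ref{l: second type lemma}, and (b) a Gram--Schmidt style decoupling in the ordering prescribed by $\sigma$ that, upon conditioning on $X_\q$ and $X_{\sigma(j)}$ with $j<i$, reduces the $i$-th defining constraint of $\disccollec$ to a small-ball question for $\langle D^*Y_i,X_{\sigma(i)}\rangle$ with $Y_i$ measurable in the conditioning.

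First, I apply Lemma~\ref{l: second type lemma} to $X=X_\q$ with $R=2T'$, $h=\q^2$, and $\psi=(\q T')^{-1}$ (admissible by the tail bound on singular values of $D$): this yields an event $\Event_0$ of probability at least $1-2me^{-c\q^2}\ge 1-2e^{-c'\q^2}$ (using $m\le n\le e^\q$) on which every $Y\in\CorR^m$ with $\|Y\|_2\le 2T'$ and $|\langle Y,DX_\q\rangle|=1$ satisfies $\|D^*Y\|_2\ge 2\q^{-3}$. On $\Event\cap\Event_0$, Proposition~\ref{p: perturbed vector existence} produces an admissible triple $(\sigma,J,(k_j))$ and a vector $Z$ as in~\eqref{Z} with $\|Z\|_2\le T'$ and $\langle Z,\widetilde V_\q\rangle=1$. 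Writing $Q$ and $P$ for the coordinate projections onto the first $\q$ and the last $m$ coordinates, the bound $|\kappa(QZ)_\q|\le|\kappa|T'\le 1/2$ yields $|\langle PZ,DX_\q\rangle|\ge 1/2$; applying the conclusion of $\Event_0$ to the renormalization $PZ/\langle PZ,DX_\q\rangle$ then gives $\|D^*PZ\|_2\ge\q^{-3}$, so the full vector $PZ$ satisfies the admissibility constraint $\|D^*Y\|_2\ge\q^{-3}$ of the supremum in the statement.

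Second, fix $(\sigma,J,(k_j))$, set $q:=\sigma^{-1}(\q)$, $F_i:=\spn\{\widetilde V_{\sigma(j)}:j<i\}$, $u_i:=\Proj_{F_i^\perp}\widetilde V_{\sigma(i)}$ (so $\|u_i\|_2=d_i$), and expand $Z=\sum_i\zeta_i u_i/d_i^2$ in the orthogonal basis $\{u_i\}$. The constraints~\eqref{Z} force $\zeta_i=0$ for $i\notin J\cup\{q\}$, $\zeta_i=k_i\delta'-A_i$ for $i\in J$, and $\zeta_q=1-A_q$, where
\[
A_i\;=\;\langle \Proj_{F_i}Z,\widetilde V_{\sigma(i)}\rangle\;=\;\langle D^*\,P\Proj_{F_i}Z,\;X_{\sigma(i)}\rangle;
\]
the $\kappa$-contribution vanishes since $Q\Proj_{F_i}Z$ is supported on coordinates $\{\sigma(j):j<i\}\not\ni\sigma(i)$. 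A dimension count analogous to Remark~\ref{rem: more com on ups} shows that $\Proj_{F_i+\spn\{\widetilde V_\q\}}Z$, and hence $Y_i^{\circ}:=P\Proj_{F_i}Z$, is measurable with respect to the $\sigma$-field $\mathcal F_i$ generated by $X_\q$ and $X_{\sigma(j)}$, $j<i$. From $\|Z\|_2^2=\sum_i|\zeta_i|^2/d_i^2\le(T')^2$ one obtains $|\zeta_i|\le T'd_i$, and combining this with the magnitude requirements in the definition of $\disccollec$ (namely $|\langle Z,\widetilde V_{\sigma(i)}\rangle|\le 2\sqrt2(\q+1)\delta'$ for $i\notin J\cup\{q\}$, $=k_i\delta'$ for $i\in J$) and with the inequality $d_i<2^{p_i+1}$ coming from $\V_\q\in\disclass(r,{\bf p})$, yields for every $i\in[\q-1]\setminus\{q\}$ an $\mathcal F_i$-measurable offset $c_i$ with $|\langle D^*Y_i^{\circ},X_{\sigma(i)}\rangle-c_i|\le\eta_i$. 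Conditioning on $\mathcal F_i$ and using standard anti-concentration manipulations to bound the conditional probability by the supremum appearing in the statement, iterating via the tower property, and finally taking a union bound over the $(C\q)^{C\q}$ admissible triples (counted as $\q!$ permutations, $2^\q$ subsets, and $O(\q^2)$ lattice points per $k_j$, cf.~\eqref{eq: k definition}) delivers the inequality, with the trivial $i=q$ factor absorbed into the combinatorial prefactor.

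The main obstacle will be securing the lower bound $\|D^*Y_i\|_2\ge\q^{-3}$ at every step of the iteration: the naive choice $Y_i^{\circ}=P\Proj_{F_i}Z$ can easily fail this bound for small $i$, when $F_i$ has low dimension. The resolution will exploit the slack in $\eta_i$ coming from the terms $2^{p_i+1}T'$ and $|\kappa|T'$: using the globally-valid $\|D^*PZ\|_2\ge\q^{-3}$ from the first step together with the telescoping decomposition $PZ=P\Proj_{F_i}Z+P(Z-\Proj_{F_i}Z)$, at each step $i$ one replaces $Y_i^{\circ}$ by an $\mathcal F_i$-measurable vector $Y_i$ with $\|Y_i\|_2\le T'$ and $\|D^*Y_i\|_2\ge\q^{-3}$, at the cost of enlarging the small-ball radius by at most $|\kappa|T'$ (absorbing the unavoidable $\kappa(QZ)_\ell$ corrections). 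Since the $X_{\sigma(i)}$'s are equidistributed with $X_1$, the iterated conditional small-ball bounds coalesce into the supremum expression in the lemma.
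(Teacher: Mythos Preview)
Your overall architecture matches the paper's: union bound over the discrete data $(\sigma,J,(k_j))$, a ``good'' event for $X_\q$ from Lemma~\ref{l: second type lemma}, and a sequential decoupling in the order given by $\sigma$. The measurability recursion you set up and the way you extract the bound $|\zeta_i|\le T'd_i$ are correct, and you correctly identify that $(Q\Proj_{F_i}Z)_{\sigma(i)}=0$, which even saves you a $|\kappa|T'$ term relative to the paper's bookkeeping.

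The genuine gap is precisely the one you flag as ``the main obstacle'', and your proposed resolution does not close it. The vector $PZ$ is \emph{not} $\mathcal F_i$-measurable, so the telescoping $PZ=P\Proj_{F_i}Z+P(Z-\Proj_{F_i}Z)$ cannot by itself produce an $\mathcal F_i$-measurable $Y_i$ with $\|D^*Y_i\|_2\ge\q^{-3}$; you never say what $Y_i$ actually is, and the claimed cost ``at most $|\kappa|T'$'' is asserted, not derived. Without a concrete $Y_i$ satisfying all three constraints (norm, $D^*$-norm, and measurability), the conditional small-ball step does not go through and the product bound is unjustified.

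The fix is simpler than telescoping, and you in fact write it down in passing and then discard it: use $P_iZ:=\Proj_{F_i+\spn\{\widetilde V_\q\}}Z$ in place of $\Proj_{F_i}Z$. This is exactly what the paper does. Because $\widetilde V_\q\in\mathrm{range}(P_i)$, one has $\langle P_iZ,\widetilde V_\q\rangle=\langle Z,\widetilde V_\q\rangle=1$, hence $|\langle PP_iZ,DX_\q\rangle|\ge 1-|\kappa|\,\|P_iZ\|_2\ge 1/2$, and on your event $\Event_0$ this forces $\|D^*PP_iZ\|_2\ge\q^{-3}$ for \emph{every} $i$, with no further work. Measurability of $P_iZ$ with respect to $\mathcal F_i$ follows by the same recursive argument you gave (the defining relations~\eqref{Z} restricted to the subspace $F_i+\spn\{\widetilde V_\q\}$ involve only $\widetilde V_\q$ and $\widetilde V_{\sigma(j)}$, $j<i$). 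The distance estimate you need becomes $\min\big(\d(\widetilde V_{\sigma(i)},\mathrm{range}\,P_i),1\big)\le 2^{p_i+1}$, which follows from $\V_\q\in\disclass(r,{\bf p})$ since adding $\widetilde V_\q$ to the span can only decrease the distance. With this single change, your outline becomes the paper's proof.
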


\begin{proof}
Set $h:=\q^2$.
It follows from the definition of $\disccollec(\V_\q,\delta',T')$ (see (\ref{Z}) -- (\ref{|Z|}))
that there exists a universal constant $\tilde C>0$ such that
\begin{align*}
\Prob(\Event)\leq (\tilde C\q)^{\tilde C\q}\max_{\sigma}\max_{J\subset [\q]\setminus\{\sigma^{-1}(\q)\}}\max_{(k_j)_{j\in J}}
\Prob\big(\Event_{J,(k_j),\sigma}\big),
\end{align*}
where
\begin{equation*}
  \Event_{J,(k_j),\sigma}:=\big\{\V_\q\in\disclass(r ,{\bf p})\quad\text{and}\quad\mbox{$Z_{\V_\q,J,(k_j),\sigma}$ satisfies \eqref{|Z|}}\quad\text{and}\quad
  \sigma_{\V}=\sigma\big\},
\end{equation*}
vectors $Z_{\V_\q,J,(k_j),\sigma}$ are defined by \eqref{Z} and the maximum is taken over all
{\it deterministic} permutations $\sigma$ of $[\q]$,
all subsets $J\subset [\q]\setminus\{\sigma^{-1}(\q)\}$, and
all sequences of numbers $(k_j)_{j\in J}$ satisfying \eqref{eq: k definition}.

Fix any admissible $\sigma$, $J$ and $(k_j)_{j\in J}$.
We have:
\begin{align*}
\Event_{J,(k_j),\sigma}=\big\{&\V_\q\in\disclass(r ,{\bf p})\quad\text{and}\quad\|Z \|_2\leq T'\quad
\mbox{and }\;\;\\
&|\langle  Z ,\widetilde V_{\sigma(i)}\rangle|\leq 2\sqrt{2}(\q+1)\delta',\; i\in [\q]\setminus\{\sigma^{-1}(\q)\},\;\;
\mbox{and}\quad
\sigma_{\V}=\sigma\big\},
\end{align*}
where
$Z =Z _{\V_\q,J,(k_j),\sigma}$ is the random vector {\it uniquely} defined by the conditions
\begin{align*}
  &Z \in\spn(\V_{\q}),\;\langle Z,\widetilde V_\q\rangle =1,\\
  &\langle Z ,\widetilde{V}_{\sigma(i)}\rangle=
\begin{cases}
k_i\delta',&\mbox{if }i\in J,
\\
\langle Z ,\Proj_{\spn\{\widetilde{V}_{\sigma(j)},\,j<i\}}(\widetilde{V}_{\sigma(i)})\rangle,&\mbox{if }i\in [N]\setminus (J\cup
\{\sigma^{-1}(\q)\}).
\end{cases}
\end{align*}
Now to prove the lemma it is enough to show that
$$\Prob(\Event_{J,(k_j),\sigma})\le(C'\q)^{C'\q}\prod\limits_{i=1}^{\q-1}
\sup\limits_{Y_i}
\Prob\big\{\min(|\langle X_{1},D^* Y_i \rangle|,T')\leq \eta_i\big\}+2e^{-c\q^2}.$$

Let $P:\CorR^n\to \CorR^m$ be the coordinate projection onto the last $m$ coordinates and
for each $i\leq \q$ let $ P _i:\CorR^n\to\CorR^n$ be the orthogonal projection onto
$\spn\{\widetilde V_{\q};\widetilde V_{\sigma(j)},\,j<i\}$.
Define the events
\begin{align*}
\Event'&:=\big\{\|Z \|_2>T'\;\;\mbox{ or }\;\;
|\langle  Z ,\widetilde V_{\sigma(i)}\rangle|> 2\sqrt{2}(\q+1)\delta'\mbox{ for some } i\in [\q]\setminus\{\sigma^{-1}(\q)\}
\big\}\quad\text{ and}\\
\Event''&:=\big\{ \|D^* P P _iZ  \|_2\geq\q^{-3}\,\,\text{for all}\,\,i\neq \sigma^{-1}(\q)\big\},
\end{align*}
so that on
$
(\Event'\cup\Event'')^c$ we have $\|Z \|_2\le T'$,
$|\langle  Z ,\widetilde V_{\sigma(i)}\rangle|\leq \sqrt{2}(\q+1)\delta'$ for all $i\in[\q]\setminus\{\sigma^{-1}(\q)\}$ and
there exists a (random) index $i_0\neq \sigma^{-1}(\q)$ such that $\|D^* P P _{i_0}Z  \|_2<\q^{-3}$.
Since
$P\widetilde V_i=DX_i$ and $\langle  P _i Z ,\widetilde V_{\q}\rangle=\langle Z ,\widetilde V_{\q}\rangle$, we have
$$
\langle P P _iZ  ,DX_{\q}\rangle
=\langle Z ,\widetilde V_{\q}\rangle-\bar\kappa( P _iZ )_{\q},
$$
and by the above conditions and (\ref{param}), everywhere on $(\Event')^c$ we have
\begin{equation*}
|\langle P P _iZ  ,DX_{\q}\rangle|
\geq |\langle Z ,\widetilde V_{\q}\rangle|-|\kappa|\,\|Z \|_2\geq
1-|\kappa|T'\ge 1/2.
\end{equation*}
Hence, everywhere on $(\Event'\cup\Event'')^c$
the vector ${\bf y}:={P P _{i_0}Z  }/{\langle P P _{i_0}Z  ,DX_{\q}\rangle}$, satisfies
$$
\|{\bf y}\|_2\le 2T',\quad |\langle {\bf y} ,DX_{\q}\rangle|=1,\quad\text{and}\quad \|D^* {\bf y}\|< 2N^{-3}.
$$
Now  Lemma~\ref{l: second type lemma} with $\psi={1}/(\q T')$ and $R=2T'$ implies
$$
\Prob\big((\Event'\cup\Event'')^c\big)
\leq 2 e^{-c_2\q^2}
$$
for some $c_2>0$ depending only on the subgaussian moment.
Thus,
\begin{align*}
\Prob(\Event_{J,(k_j),\sigma})&\leq \Prob(\Event_{J,(k_j),\sigma}\cap \Event')
+\Prob(\Event_{J,(k_j),\sigma}\cap \Event'')+\Prob\big((\Event'\cup\Event'')^c\big)\leq \Prob(\Event_{J,(k_j),\sigma}\cap \Event'')+2 e^{-c_2\q^2}.
\end{align*}
On $\Event_{J,(k_j),\sigma}$, in view of Definition~\ref{page: disclass}
of the class $\disclass(r ,{\bf p})$, we have
\begin{equation}\label{bounds}
\begin{split}
&\|Z \|_2\le T',\quad |\langle \widetilde V_{\sigma(i)},Z \rangle|\leq 2\sqrt{2}(\q+1)\delta',\quad\text{ and }\\
&\min\big(\d\big(\widetilde V_{\sigma(i)},\spn\{\widetilde V_{\q};
\widetilde V_{\sigma(j)}:\;j<i\}\big),\,1\big)\leq 2^{p_i+1},\,\, \forall i\leq \q.
\end{split}
\end{equation}
Also by the definitions of $\widetilde V_{\sigma(i)}$, $P$, and $P_i$ we have
\begin{align*}
 \langle X_{\sigma(i)},D^* P P _iZ  \rangle&=\langle P\widetilde V_{\sigma(i)}, P _iZ  \rangle
 =\langle \widetilde V_{\sigma(i)}, P _iZ  \rangle-\kappa(P_iZ )_{\sigma(i)}
 \\
 &=\langle \widetilde V_{\sigma(i)}, Z  \rangle-\langle ({\rm I}-P_i)\widetilde V_{\sigma(i)}, Z  \rangle-\kappa(P_iZ )_{\sigma(i)}.
\end{align*}
Hence by (\ref{bounds}), for all $i\neq \sigma^{-1}(\q)$,
\begin{align*}
 |\langle X_{\sigma(i)},D^* P P _iZ  \rangle|\le 2\sqrt{2}(\q+1)\delta'+\d\big(\widetilde V_{\sigma(i)},\spn\{\widetilde V_{\q};
\widetilde V_{\sigma(j)}:\;j<i\}\big)
T'+\kappa T'
\end{align*}
and
\begin{align*}
&\min\big(|\langle X_{\sigma(i)},D^* P P _iZ  \rangle|,\,T'\big)\le 2\sqrt{2}(\q+1)\delta'+\kappa T'\\
&\hspace{2cm}+
\min\big(\d\big(\widetilde V_{\sigma(i)},\spn\{\widetilde V_{\q};
\widetilde V_{\sigma(j)}:\;j<i\}\big) T',\, T'\big)\le \eta_i.
\end{align*}
 Introducing events
$$
\Event_i:=\big\{\min(|\langle X_{\sigma(i)},D^* P P _iZ  \rangle|,T')\leq \eta_i
\;\mbox{ and }\;\|D^* P P _iZ  \|_2\geq \q^{-3}\;\mbox{ and }\;
\| P _iZ  \|_2\leq T'\big\},
$$
we get from the above
$$\Event_{J,(k_j),\sigma}\cap \Event''\subset \bigcap\limits_{i\neq \sigma^{-1}(\q)}
\Event_i.
$$
The following observation is crucial:
\smallskip

\noindent {\bf{Claim.}} {\it For every $i\in [\q]\setminus\{\sigma^{-1}(\q)\}$, the event
$\Event_i$ is measurable with respect to the sigma-field generated by $\{X_{\q};X_{\sigma(j)},\;j\leq i\}$.}
\smallskip

\noindent Indeed, it is obvious that $\Event_i$ is measurable with respect to the sigma-field generated by $X_{\sigma(i)}$
and $ P _iZ $. Further, the vector $ P _iZ $ satisfies
$\langle \widetilde V_{\sigma(j)}, P _i Z \rangle
=\langle \widetilde V_{\sigma(j)},Z \rangle$, $j\in\{1,\dots,i-1;\sigma^{-1}(\q)\}$,
whence, by the definition of $Z$,
\begin{equation*}
 \langle P _i Z ,\widetilde{V}_{\q}\rangle=1,\quad
  \langle P _i Z ,\widetilde{V}_{\sigma(j)}\rangle=
\begin{cases}
k_i\delta',&\mbox{if }j\in J\cap[i-1],
\\
\langle P _i Z ,\Proj_{\spn\{\widetilde{V}_{\sigma(\ell)},\,\ell<j\}}(\widetilde{V}_{\sigma(j)})\rangle,&\mbox{if }j\in
[i-1]\setminus (J\cup\{\sigma^{-1}(\q)\}).
\end{cases}
\end{equation*}
These conditions, together with the linear independence of $\widetilde V_j$'s,
imply that $ P _iZ $ is {\it uniquely} determined by $\{X_{\q};X_{\sigma(j)},\;j<i\}$, and the claim follows.

Applying the above claim, we get
\begin{align*}
\Prob(\Event_{J,(k_j),\sigma}\cap \Event'')&\le
 \Prob\Big(\bigcap\limits_{i\neq \sigma^{-1}(\q)}\Event_i\Big)
 \le
 \prod\limits_{i\neq \sigma^{-1}(\q)}\esssup
\Prob\big(\Event_i\mid X_{\q};X_{\sigma(1)},\dots,X_{\sigma(i-1)}\big).
\end{align*}
Finally, we estimate
$$\esssup
\Prob\big(\Event_i\mid X_{\q};X_{\sigma(1)},\dots,X_{\sigma(i-1)}\big).$$
Fix any realization of $X_{\q};X_{\sigma(1)},\dots,X_{\sigma(i-1)}$.
If $\| P _iZ  \|_2> T'$ then the conditional probability of $\Event_i$
given this realization of $X_{\q};X_{\sigma(1)},\dots,X_{\sigma(i-1)}$, is equal to zero.
Otherwise, the conditional probability can be bounded from above by
$$\sup\limits_{Y_i}\Prob\big\{\min(|\langle X_{\sigma(i)},D^* Y_i \rangle|,T')\leq \eta_i\big\},$$
where the supremum is taken over all vectors $Y_i\in \CorR^m$
with $\|Y_i\|_2\leq T'$ and $\|D^* Y_i \|_2\geq \q^{-3}$.
The result follows.
\end{proof}

The next proposition is the main result of the subsection, obtained by combining the above Lemmas~\ref{l: aux a;kfnaori}
and~\ref{l: aux decoupling} with anti-concentration statements for random vectors with independent components.

\begin{prop}\label{p: decoupling}
Let
$n\in \N$, let $\q\geq \log n$, $m=n-\q$; let
$D$ be an $m\times m$ matrix, and let $\kappa\in\CorR\setminus\{0\},\delta'\geq n^{-1}$, $T'\geq 1$ be some parameters
such that
$$
|\kappa| T'\leq {1}/{2}.
$$
Assume that 
for any vector $Y\in\CorR^m$ with $\|Y\|_2\leq T'$ and $\|D^* Y\|_2\geq \q^{-3}$,
we have
\begin{itemize}
\item $\LCD_{c_{\text{\tiny\ref{th: LCD of small}}} \sqrt{m},c_{\text{\tiny\ref{th: LCD of small}}}}\big(\frac{D^* Y}{\|D^*Y\|_2}\big)
\geq \frac{c_{\text{\tiny\ref{th: LCD of small}}}m}{NT'}$,
if $\CorR=\R$;
\item $\frac{D^* Y}{\|D^*Y\|_2}\in\Incomp_m(\zeta,\zeta)$ for some $\zeta>0$ if $\CorR=\C$.
\end{itemize}
Fix any  $0<r <1$ and any ${\bf p}$ satisfying \eqref{p condition} and consider the event
\begin{align*}
  \Event :=\big\{\V_\q\in\disclass(r ,{\bf p})\quad\text{and}\quad &\disccollec(\V_\q,\delta',T')\neq\emptyset
  \big\}.
\end{align*}
Then
$$
\Prob(\Event)\leq  (C\q)^{C\q}\prod\limits_{i=1}^{\q}\big(\delta'+|\kappa|\, T'+2^{p_i}T'+m^{-1/U}\big)^U+2e^{-c\q^2},
$$
where $U=1$ for $\CorR=\R$ and $U=2$ for $\CorR=\C$, and
$C,c>0$ depend only on the subgaussian moment $K$ and (in the complex case) the parameter $\zeta$.
\end{prop}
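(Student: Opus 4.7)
The plan is to branch on the spectral profile of $D$ and apply one of Lemmas~\ref{l: aux a;kfnaori} and~\ref{l: aux decoupling} in each branch. I would split according to whether $\sum_{\ell=\q^2}^m s_\ell(D)^2\ge (\q T')^{-2}$ (``many large singular values'') or the complementary case. In the first regime Lemma~\ref{l: aux a;kfnaori} applies directly and yields $(C\q)^{C\q}\prod_{i=1}^\q (2^{p_i}T')^U+2e^{-c\q^2}$, which is manifestly dominated by the right-hand side claimed in the proposition. The substantive work therefore lies in the complementary regime, where $DX$ is essentially concentrated on a fixed low-dimensional subspace.

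In that regime, Lemma~\ref{l: aux decoupling} reduces the task to bounding, for each $i\le \q-1$, the supremum
\[
\sup_{Y_i}\Prob\bigl\{\min(|\langle X_1,D^*Y_i\rangle|,T')\le \eta_i\bigr\},
\]
taken over $Y_i\in\CorR^m$ with $\|Y_i\|_2\le T'$ and $\|D^*Y_i\|_2\ge \q^{-3}$, where $\eta_i=2\sqrt{2}(\q+1)\delta'+2^{p_i+1}T'+|\kappa|T'$. For any such $Y_i$ I would pass to the unit vector $Z_i:=D^*Y_i/\|D^*Y_i\|_2$ and observe that the event in question is contained in $\{|\langle X_1,Z_i\rangle|\le \eta_i/\|D^*Y_i\|_2\}\subset\{|\langle X_1,Z_i\rangle|\le \q^3\eta_i\}$. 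The two structural hypotheses are tailored precisely to supply anti-concentration for the unit vector $Z_i$: in the real case, Theorem~\ref{th: 4.2} with $\alpha=c_{\text{\tiny\ref{th: LCD of small}}}\sqrt m$ and $\gamma=c_{\text{\tiny\ref{th: LCD of small}}}$, combined with the hypothesized LCD lower bound, yields $\Prob\{|\langle X_1,Z_i\rangle|\le t\}\le Ct+Ce^{-cm}+C\q T'/m$; in the complex case, $Z_i\in\Incomp_m(\zeta,\zeta)$ and Theorem~\ref{l: sb compl incomp} give $\Prob\{|\langle X_1,Z_i\rangle|\le t\}\le (Ct+Cm^{-1/2})^2$.

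Then I would substitute $t=\q^3\eta_i$, expand $\eta_i$, multiply the resulting per-index bounds over $i=1,\dots,\q-1$, append a trivial extra factor at $i=\q$, and collect every polynomial-in-$\q$ loss (the $\q^3$ rescaling, the $\q+1$ coefficient in $\eta_i$, and the combinatorial $(C\q)^{C\q}$ already present in Lemma~\ref{l: aux decoupling}) into a single $(C'\q)^{C'\q}$ prefactor. The hardest piece of bookkeeping I anticipate is fitting the residual real-case error $C\q T'/m$ into the target $m^{-1/U}=m^{-1}$: the resolution is that each per-index factor carries the contribution $2^{p_i}T'\ge (r/2)T'$ dictated by the class definition of $\disclass(r,{\bf p})$, against which the $\q T'/m$ loss can be charged after the polynomial-in-$\q$ overhead has been absorbed into $(C\q)^{C\q}$. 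In the complex case no such issue arises, since the $m^{-1/2}$ inside the squared bracket already matches the claimed $m^{-1/U}$ exponent exactly.
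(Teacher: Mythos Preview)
Your approach is exactly the paper's: branch on whether $\sum_{\ell\ge \q^2}s_\ell(D)^2\ge(\q T')^{-2}$, invoke Lemma~\ref{l: aux a;kfnaori} in the first branch, and in the second branch feed the per-index suprema from Lemma~\ref{l: aux decoupling} into the one-dimensional anti-concentration statements (Theorem~\ref{th: 4.2} in the real case, Theorem~\ref{l: sb compl incomp} in the complex case) after normalising by $\|D^*Y_i\|_2\ge \q^{-3}$.

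Two points deserve attention. First, the inclusion
\[
\bigl\{\min(|\langle X_1,D^*Y_i\rangle|,T')\le\eta_i\bigr\}\subset\bigl\{|\langle X_1,Z_i\rangle|\le \q^3\eta_i\bigr\}
\]
is false when $\eta_i\ge T'$: the left event is then the whole space. The paper inserts a case split here, observing that if $T'\le\eta_i$ one simply bounds the probability by $1\le(\eta_i/T')^U$, and since $\eta_i\ge T'$ forces $\delta'+|\kappa|T'+2^{p_i}T'\gtrsim T'/\q\ge 1/\q$, this trivially fits into the target after the $(C\q)^{C\q}$ prefactor.

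Second, your proposed absorption of the real-case residual $\q T'/m$ by charging it against $2^{p_i}T'\ge(r/2)T'$ does not work: $r$ is an arbitrary parameter in $(0,1)$ with no lower bound in the proposition, so $(r/2)T'$ can be made smaller than any prescribed quantity. The paper is itself terse here (``the statement follows''); the intended absorption is into the $m^{-1}$ summand, which succeeds once $T'$ is bounded by a fixed power of $\q$. That bound is not among the stated hypotheses of the proposition but holds in the only place the proposition is invoked (Lemma~\ref{l: aux uaf,mlayg}, where $T'\le 24\q^2$), so the issue is cosmetic rather than substantive.
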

\begin{proof}
We consider two cases. First, if
$\sum_{\ell=\q^2}^m s_\ell(D)^2\geq (\q T')^{-2}$ then the statement immediately follows from Lemma~\ref{l: aux a;kfnaori}.

Otherwise, $\sum_{\ell=\q^2}^m s_\ell(D)^2< (\q T')^{-2}$.
Then, applying Lemma~\ref{l: aux decoupling}, we get
$$
\Prob(\Event)\leq
(C\q)^{C\q}
\prod\limits_{i=1}^{\q-1}\sup\limits_{Y_i}
\Prob\big\{\min(|\langle X_{1},D^* Y_i \rangle|,T')\leq \eta_i\big\}+2e^{-c\q^2},
$$
where
$\eta_i:=2\sqrt{2}(\q+1)\delta'+2^{p_i+1}T'+|\kappa|\, T'$,
and each supremum is taken over all $Y_i\in\CorR^m$ satisfying
$\|Y_i\|_2\leq T'$ and $\|D^* Y_i \|_2\geq \q^{-3}$.

Fix any $i\leq \q-1$.
If $T'\leq \eta_i$ then we have
$$\sup\limits_{Y_i}
\Prob\big\{\min(|\langle X_{1},D^* Y_i \rangle|,T')\leq \eta_i\big\}\le 1\le   (\eta_i /(T'))^U\le (\q\eta_i)^U.$$
If $T'\geq \eta_i$ then for every admissible $Y_i$ we have
$$
\Prob\big\{\min(|\langle X_{1},D^* Y_i \rangle|,T')\leq \eta_i\big\}
\leq \Prob\big\{|\langle X_{1},D^* Y_i \rangle|\leq \eta_i\big\}.
$$
We will bound the latter using anti-concentration estimates for isotropic vectors.

In the case $\CorR=\R$, by the assumptions of the proposition, for every admissible $Y_i$ the vector
$\frac{D^* Y}{\|D^* Y\|_2}$ has $LCD$ of order at least $\frac{m}{NT'}$.
Hence,
applying Theorem~\ref{th: 4.2},
$$\Prob\big\{|\langle X_{1},D^* Y_i \rangle|\leq \eta_i\big\}
\leq \widetilde C\frac{\eta_i}{\|D^* Y_i\|_2}+\widetilde CNT' m^{-1}.$$

In the case $\CorR=\C$, the vector
$\frac{D^* Y}{\|D^* Y\|_2}$ is incompressible.
Hence, applying Theorem~\ref{l: sb compl incomp},
we get
$$\Prob\big\{|\langle X_{1},D^* Y_i \rangle|\leq \eta_i\big\}
\leq \widetilde C\bigg(\frac{\eta_i}{\|D^* Y_i\|_2}\bigg)^2+\widetilde C m^{-1}$$
for some $\widetilde C>0$ depending only on $\zeta$ and the subgaussian moment $K$.

It remains to recall in both cases that $\|D^* Y_i \|_2\geq \q^{-3}$. The statement follows.
\end{proof}

\subsection{The cumulative estimate for a fixed coordinate subset
}
\label{subs: deloc estimates}

In this subsection we estimate probability of the geometric
relations (a) and (b) from the conclusion of Lemma \ref{l: two conditions},
using the results of the previous subsections. Our goal is to bound probability of the event
\begin{equation}\label{EventE}
\big\{\exists\,\n\in S^{n-1}(\CorR):\;\|(\Az )\n\|_2\leq n^{-2}\mbox{ and }|v_{\ell}|\leq\theta,\;\ell\leq \q-1;
|v_{\q }|\geq\beta\big\},
\end{equation}
given as an assumption of Lemma \ref{l: two conditions} (taking $\tau=n^{-2}$).
The estimate for \eqref{EventE} is obtained in Proposition~\ref{prop: mn}.

Here, we finalize the decoupling procedure whose objective was to separate analysis of conditions (a) and (b) in Lemma \ref{l: two conditions}.
In Subsection~\ref{subs: biorth}, we introduced a splitting of ellipsoids into classes $\elclass(R,{\bf b})$, and
thus reduced analysis of an event of the form
$$\{\mbox{(a) and (b) from Lemma~\ref{l: two conditions} hold}\}$$
to estimating probabilities of events
$$\{\mbox{(a) and (b) from Lemma~\ref{l: two conditions} hold, and $\Ellips(\U_\q)\in\elclass(R,{\bf b})$}\}$$
for all admissible vectors ${\bf b}$, where $\U_\q=\big(\Proj_F(-z {\bf e}_1),\dots,\Proj_F(-z {\bf e}_{\q})\big)$. The advantage of this step is that it allows us to bound the latter by the {\it product} of probability of
$\{\mbox{(a) and $\Ellips(\U_\q)\in\elclass(R,{\bf b})$}\}$
and the conditional probability of (b) given that $\Ellips(\U_\q)\in\elclass(R,{\bf b})$.

We start by computing probability for event (b) conditioning on a realization of the ellipsoid $\Ellips(\U_\q)$
(see Lemma \ref{l: aux aponfa;sa32r}).
This part does not depend on the previous two subsections, our proof of
Lemma \ref{l: aux aponfa;sa32r} is based on standard anti-concentration estimates which allow to represent the probability bound in terms of the ``measure'' of $\Ellips(\U_\q)$
given by the parameteric vector ${\bf b}$ for which we have $\Ellips(\U_\q)\in\elclass(R,{\bf b})$
(see also discussion in Subsection \ref{ss:intro5} of the introduction).
It is natural to expect that the probability should be roughly proportional to the ``size'' of ellipsoids from $\elclass(R,{\bf b})$;
in fact, disregarding small additive error terms, the estimate obtained in Lemma \ref{l: aux aponfa;sa32r} is proportional to the
quantity $2^{U\sum_{i=1}^\q b_i}$, where $U=1$ in the real and $U=2$ in the complex case.

At the second step, we bound probability of ``(a) and $\Ellips(\U_\q)\in\elclass(R,{\bf b})$''.
Significant preparatory work has been done in the previous two subsections to handle this event.
In Subsection~\ref{subs: biorth}, the condition was expressed in terms of a perturbation of the dual basis in Proposition~\ref{p: perturbed vector existence}; in particular,
it was shown that the condition implies that the perturbed dual basis belongs to a class $\disclass(r ,{\bf p})$
for a parametric vector ${\bf p}$ satisfying $2^{\sum_{i=1}^{\q}p_i}\leq (4\q )^{\q/2}\,2^{-\sum_{i=1}^{\q}b_i}$.
The latter condition was further analyzed in the probabilistic setting in Subsection~\ref{subs: two types}, and, for fixed parametric vector,
probability bounds roughly proportional to $2^{U\sum_{i=1}^{\q}p_i}$ (disregarding error terms) were obtained in Proposition~\ref{p: decoupling}.
Essentially by combining Proposition~\ref{p: perturbed vector existence} and Proposition~\ref{p: decoupling} with some simple computations,
we obtain a bound for the event in question in Lemma~\ref{l: aux uaf,mlayg}, which is roughly proportional to $2^{-U\sum_{i=1}^{\q}b_i}$.

The culminating point of the subsection --- Proposition~\ref{prop: mn} --- combines the estimates from Lemma~\ref{l: aux aponfa;sa32r}
and Lemma~\ref{l: aux uaf,mlayg}. The product of the probability bounds from those lemmas then cancels the terms depending on ${\bf b}$.
This cancellation, which is crucial in obtaining optimal delocalization results, is a consequence of the trivial duality relation
$\prod_{i=1}^\q s_i(\U_\q)\cdot \prod_{i=1}^\q s_i(\Y_\q)=1$ between semi-axes of ellipsoids of an arbitrary biorthogonal system.
Indeed, our definition of classes $\disclass(r ,{\bf p})$ for dual sequences is based on distances $d_i$ and a special permutation of the perturbed dual sequence (see Definitions~\ref{page: disclass} and~\ref{d:di}).
Those distances, in turn, are directly related to magnitudes of the semi-axes for the dual ellipsoid (see Lemma~\ref{l: W dist est}).
The fact that we are working with a perturbed dual sequence and that we ``truncate'' very large and very small semi-axes of ellipsoids in our analysis, inevitably
introduces error multiples to our variant of the duality relation. However, these error multiples can be controlled so that the resulting no-gap delocalization bound stays within a polylogarithmic range of optimal estimates.

\begin{lemma}\label{l: aux aponfa;sa32r}
Given $n\in\N$ and $\log^2 n\leq \q$, let $F$ be a fixed $\q $--dimensional subspace of $\CorR^n$,
and $\U_\q$ be a fixed sequence of vectors in $F$.
Let $n\geq R\geq 1$. Let
the
$\q $--dimensional ellipsoid
$\Ellips=\Ellips(\U_\q)\subset F$ be defined by \eqref{eq: aux el def}.
Assume that $\Ellips\in\elclass(R,{\bf b})$
for some admissible sequence of integers ${\bf b}$.
Further, let $Z$ be a random vector in $\CorR^n$ satisfying \eqref{Asmp 1} or \eqref{Asmp 2}.
Then for any $\delta\geq (R\q)^{-1}$, $t>0$ and any fixed vector $Y$ we have
\begin{align*}
\Prob\big\{\Proj_F(Z+Y)\in \delta\Ellips+t B_2^F\big\}
\leq \q^{C\q}
\bigg(\frac{t+\delta}{\delta}\bigg)^{U\q} 2^{U\sum_{i=1}^\q b_i}\,\cf(\Proj_F(Z),\delta)
+2e^{-c\q^2}
\end{align*}
for some $C,c>0$ depending only on $K$,
where $U=1$ for $\CorR=\R$ and $U=2$ for $\CorR=\C$.
\end{lemma}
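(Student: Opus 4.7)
The plan is a covering argument at scale $\delta$ together with a preliminary truncation of $\|\Proj_F(Z)\|_2$. Since $Z$ satisfies \eqref{Asmp 1} or \eqref{Asmp 2} and $F$ is $\q$-dimensional, the Hanson--Wright inequality applied to the orthogonal projector onto $F$ (with $\|\Proj_F\|_{HS}^2=\q$ and $\|\Proj_F\|=1$) gives $\Prob\{\|\Proj_F(Z)\|_2>\bar C\q\}\le 2e^{-\bar c\q^2}$ for constants $\bar C,\bar c$ depending only on $K$. Denote by $\Event_0$ the complement of this tail event. On $\Event_0$ the inclusion $\Proj_F(Z+Y)\in\delta\Ellips+tB_2^F$ forces
\[
\Proj_F(Z)\in S:=\bigl(-\Proj_F(Y)+\delta\Ellips+tB_2^F\bigr)\cap \bar C\q B_2^F,
\]
so it suffices to construct a deterministic $\delta$-net $\Net$ of $S$ and apply a union bound, each net point contributing at most $\cf(\Proj_F(Z),\delta)$ to the probability.

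Next, applying the singular value decomposition to the linear map $L:\ell_2^\q(\CorR)\to F$ defined by $L{\bf e}_i=U_i$, the set $\delta\Ellips+tB_2^F$ is contained in an ellipsoid in $F$ with (complex) semi-axes of length $\delta s_i(\U_\q)+t$ along mutually orthogonal directions, and intersecting with $\bar C\q B_2^F$ caps each effective semi-axis at $\min(\delta s_i+t,2\bar C\q)$. A standard volumetric covering estimate then yields
\[
|\Net|\le\prod_{i=1}^{\q}\bigl(3\min(\delta s_i+t,2\bar C\q)/\delta+1\bigr)^U,
\]
where $U=1$ for $\CorR=\R$ and $U=2$ for $\CorR=\C$; the squaring in the complex case accounts for the fact that each complex semi-axis of $\Ellips$ yields a pair of equal real semi-axes under the identification of $\C$ with $\R^2$ (Lemma~\ref{l: real ell}).

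To match the stated form, I translate each factor into the parameters $b_i$. If $s_i\le R$, the definition of $\elclass(R,{\bf b})$ forces $s_i<2^{b_i+1}$ with $2^{b_i}\ge 1$, so
\[
3(s_i+t/\delta)+1\le 3\cdot 2^{b_i+1}+3(t+\delta)/\delta\le 9\cdot 2^{b_i}\cdot(t+\delta)/\delta,
\]
using $(t+\delta)/\delta\ge 1$. If instead $s_i>R$, the truncation at $\bar C\q$ takes over, and using $\delta\ge(R\q)^{-1}$ together with $R\le 2^{b_i+1}$,
\[
6\bar C\q/\delta+1\le 7\bar C R\q^2\le 14\bar C\q^2\cdot 2^{b_i}(t+\delta)/\delta.
\]
Multiplying over $i$ and raising to the $U$-th power produces $|\Net|\le\q^{C\q}\bigl((t+\delta)/\delta\bigr)^{U\q}\,2^{U\sum b_i}$, and the stated inequality follows from the union bound together with $\Prob(\Event_0^c)\le 2e^{-\bar c\q^2}$.

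The main obstacle is the regime $s_i>R$, in which the ratio $s_i/2^{b_i}$ can be as large as $n$, so that a naive covering of $\delta\Ellips+tB_2^F$ without truncation would destroy the bound. The norm truncation $\|\Proj_F(Z)\|_2\le\bar C\q$ resolves this at the cost of only a polynomial-in-$\q$ factor per direction (absorbed into $\q^{C\q}$), and the hypothesis $\delta\ge(R\q)^{-1}$ is precisely what is needed to balance the cap $\bar C\q$ against $R\approx 2^{b_i}$.
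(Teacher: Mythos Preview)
Your argument is correct and follows essentially the same route as the paper: Hanson--Wright truncation of $\|\Proj_F(Z)\|_2$, then a volumetric covering of the truncated target set at scale $\delta$, with the per-axis factors converted to $2^{b_i}$ via the definition of $\elclass(R,{\bf b})$ and the hypothesis $\delta\ge(R\q)^{-1}$. Two cosmetic points: the Minkowski sum $\delta\Ellips+tB_2^F$ is not literally an ellipsoid, but it is contained in the axis-aligned box with half-widths $\delta s_i+t$, which is all your covering bound needs; and since your ball is centred at the origin while $\delta\Ellips$ is shifted by $-\Proj_F(Y)$, you should re-centre (pick any point of $S$ and observe $S-S\subset 2\delta\Ellips+2tB_2^F$ and $S-S\subset 2\bar C\q B_2^F$) before reading off the capped semi-axes --- the paper avoids this by centring the ball at $\Proj_F(Y)$ from the start.
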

\begin{proof}
Denote by $B$ the Euclidean ball in $F$ of radius $\q$ centered at $\Proj_F(Y)$. Applying the Hanson--Wright inequality
\cite{HW, RV HS}, we get that
$$
\Prob\{\Proj_F(Z+Y)\in B\}\ge 1-2e^{-c_1\q^2}
$$
for some $c_1>0$ depending only on $K$, whence
$$\Prob\big\{\Proj_F(Z+Y)\in \delta\Ellips+t B_2^F\big\}\leq
\Prob\big\{\Proj_F(Z+Y)\in (\delta\Ellips+t B_2^F)\cap B\big\}+2e^{-c_1\q^2}.
$$
To estimate $\Prob\big\{\Proj_F(Z+Y)\in (\delta\Ellips+t B_2^F)\cap B\big\}$, note that for any $\tau>0$
and any covering of $(\delta\Ellips+t B_2^F)\cap B$ by $L$ translates of $\tau B_2^F(\CorR)$,
we have
$$
\Prob\big\{\Proj_F(Z+Y)\in (\delta\Ellips+t B_2^F)\cap B\big\}\leq L\,\cf(\Proj_FZ ,\tau).
$$
The cardinality of a minimal covering of $(\delta\Ellips+t B_2^F)\cap B$ in the real case $\CorR=\R$
can be estimated from above with help of a standard volumetric argument by the ratio
$$\frac{\Vol_\q(\delta\Ellips\cap B+(t+\frac{\tau}{2})B_2^F(\R))}{\Vol_\q(\frac{\tau}{2}B_2^F(\R))},$$
which in turn is bounded above by $(C\q)^{\q/2}\tau^{-\q}\prod\limits_{i=1}^\q \big(\min(\delta s_i(\U_\q),\q)+t+\frac{\tau}{2}\big)$.
In the complex case $\CorR=\C$, applying Lemma~\ref{l: real ell},
we reduce the problem to estimating cardinality of a minimal covering of $(\delta\real(\Ellips)+t\real(B_2^F(\C)))\cap \real(B)$
by translates of $\tau\real(B_2^F(\C))$. Repeating the above argument, we get
at upper bound
$$(C\q)^{\q}\tau^{-2\q}\prod\limits_{i=1}^\q \Big(\min(\delta s_i(\U_\q),\q)+t+\frac{\tau}{2}\Big)^2.$$
Thus, for any $\tau>0$
\begin{align*}
\Prob\big\{&\Proj_F(Z+Y)\in (\delta\Ellips+t B_2^F(\CorR))\cap B\big\}\\
&\leq (C'\sqrt{\q}/\tau)^{U\q}\cf(\Proj_FZ ,\tau)\prod\limits_{i=1}^\q\max\big(\tau+t,\min(\delta s_i(\U_\q),\q)\big)^U,
\end{align*}
where $U=1$ for $\CorR=\R$ and $U=2$ for $\CorR=\C$.
Take $\tau:=\delta $. Then by condition $\delta\geq (R\q)^{-1}$ and  Definition \ref{page: elclass}, we have
\begin{align*}
  \max\big(\delta+t,\min(\delta s_i(\U_\q),\q)\big)&\le\delta \max\big((\delta+t)/\delta,\min(s_i(\U_\q),R\q^2)\big)
  \\
  &\le\delta \max\big((\delta+t)/\delta,\q^2\min(s_i(\U_\q),R)\big)
    \\
  &\le\delta \max\big((\delta+t)/\delta,\q^22^{b_i}\big)\le(\delta+t) \q^2 2^{b_i},
\end{align*}
thus
\begin{align*}
\Prob\big\{&\Proj_F(Z+Y)\in (\delta\Ellips+t B_2^F(\CorR))\cap B\big\}
\leq \q^{\widetilde C\q}\bigg(\frac{t+\delta}{\delta}\bigg)^{U\q}2^{U\sum_{i=1}^\q b_i}\,\cf(\Proj_FZ ,\delta).
\end{align*}
The result follows.
\end{proof}

Now we turn to estimating probability of event (a) in Lemma \ref{l: two conditions}.
\begin{lemma}\label{l: aux uaf,mlayg}
Let $n>2$, $\mathcal M\geq 1$, $\log^2 n\leq\q\leq n^{1/4}$,
$$
100\q ^2\leq |z|\leq \mathcal M\sqrt{n}\quad\text{and}\quad\sqrt{\q}n^{-1/\oo}\leq\delta\leq \q^{-1/2},
$$
where $\oo=2$ for $\CorR=\C$ and $\oo=1$ for $\CorR=\R$. Set $T:=3\q$ and $R:=\delta^{-1} \q^{-1/2}\geq 1$.
Let $A$ be an $n\times n$ random matrix with i.i.d columns satisfying \eqref{Asmp 1} or \eqref{Asmp 2}
with a parameter $K$ and with a bounded distribution density of the entries.
Let
$$
F:=\spn\{\col_{\q+1}(A-z),\dots,\col_n(A-z)\}^\perp.
$$
Denote $\U_\q:=\big(\Proj_F(-z {\bf e}_1),\dots,\Proj_F(-z {\bf e}_{\q})\big)$, and let
ellipsoids $\Ellips=\Ellips(\U_\q)$ and $\Ellips'=\Ellips(\U_{\q-1})$ be defined by
\eqref{eq: aux el def}.
Then
for any fixed (non-random) integer sequence ${\bf b}$ satisfying \eqref{b condition}, we have
\begin{align*}
\Prob\Big\{\Proj_F(-z {\bf e}_{\q })\in \delta\Ellips'+T B_2^F(\CorR)\;\;\mbox{and}\;\;
\Ellips\in\elclass(R,{\bf b})\Big\}\le\q ^{C\q }\,2^{-\oo\sum_{i=1}^{\q }b_i},
\end{align*}
where $C>0$ may only depend on $\mathcal M$ and $K$.
\end{lemma}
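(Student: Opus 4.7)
The plan is to combine the geometric reduction developed in Subsection~\ref{subs: biorth} with the decoupled probability bound from Subsection~\ref{subs: two types}, after first extracting structural information about the random inverse matrix $D$ through Theorem~\ref{th: LCD of small}. I will condition on the $(n-\q)\times(n-\q)$ principal submatrix $\widetilde A$ obtained by deleting the first $\q$ rows and columns, so that $D=\bar z^{-1}(\widetilde A^*-\bar z)^{-1}$ becomes deterministic and the random vectors ${\bf q}_i=(\bar a_{i\ell})_{\ell>\q}$, which together with $D$ generate $\V_\q$ via \eqref{V definition}, become independent of $D$ with distribution satisfying \eqref{Asmp 1} or \eqref{Asmp 2}. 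Observe that $F$ and the ellipsoid $\Ellips(\U_\q)$ themselves depend on the last $n-\q$ columns, so I integrate the bound over this larger conditioning.

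The first step is to select a good event $\Event_{\rm good}$ of probability at least $1-e^{-cn}$ on which $\widetilde A-z\Id$ is invertible and, by Theorem~\ref{th: LCD of small} applied to $\widetilde A$ with shift $-z\Id$, one has $\|(\widetilde A-z)Z\|_2\ge c_{\text{\tiny\ref{th: LCD of small}}}m/\max(\sqrt m,\LCD(Z))$ for every unit vector $Z$. Combined with the identity $(\widetilde A-z)D^*Y=z^{-1}Y$, this yields, for every $Y$ with $\|Y\|_2\le T'$ and $\|D^*Y\|_2\ge \q^{-3}$, a lower bound on $\LCD$ (real case) or incompressibility (complex case, using Lemma~\ref{l: incomp in ker} applied to $\widetilde A^*$) for the normalized vector $D^*Y/\|D^*Y\|_2$. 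This is precisely the hypothesis required by Proposition~\ref{p: decoupling}.

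On $\Event_{\rm good}$, using Lemma~\ref{l: V and proj biorth}, I realize the biorthogonal partner $\Y_\q=(V_i)$ of $\U_\q$ inside $F$ and choose the perturbation $\widetilde V_i:=V_i-(\bar z^{-1}/|z^{-1}|)\delta\,{\bf e}_i$, so that $\V_\q$ is a $\delta$-perturbation of $\Y_\q$ fitting the model \eqref{V definition} with $|\kappa|=|z|^{-1}+\delta$. Applying Proposition~\ref{p: perturbed vector existence}, the event in the statement forces (up to a negligible separately-treated tail, where some $d_i(\V_\q)<r$ for a small choice $r=n^{-\Theta(1)}$, which I control via the boundedness of the entry density of $A$) the existence of ${\bf p}$ satisfying \eqref{p condition} and
\[
\V_\q\in\disclass(r,{\bf p}),\qquad 2^{\sum_{i=1}^\q p_i}\cdot 2^{\sum_{i=1}^\q b_i}\le(4\q)^{\q/2},\qquad \disccollec(\V_\q,\delta',T')\ne\emptyset,
\]
with $\delta'=2\delta(T+1)$ and $T'=2T+2\sqrt{2}\q(T+1)=O(\q^2)$. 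By the hypotheses $|z|\geq 100\q^2$ and $\delta\leq \q^{-1/2}$ together with $\q\leq n^{1/4}$, one checks that $|\kappa|T'$ is controlled, so Proposition~\ref{p: decoupling} applies and bounds the conditional probability of the displayed event by
\[
(C\q)^{C\q}\prod_{i=1}^\q\bigl(\delta'+|\kappa|T'+2^{p_i}T'+m^{-1/\oo}\bigr)^{\oo}+2e^{-c\q^2}.
\]

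For each fixed admissible ${\bf p}$, all summands inside the product are dominated by $O(2^{p_i}T')$—the cases $2^{p_i}T'\leq \delta'+|\kappa|T'+m^{-1/\oo}$ contribute only $\q^{O(\q)}$ and are absorbed into constants—so the product is bounded by $\q^{O(\q)}\cdot 2^{\oo\sum p_i}$. Substituting the duality bound $2^{\sum p_i}\le(4\q)^{\q/2}2^{-\sum b_i}$ produces exactly $\q^{O(\q)}\,2^{-\oo\sum b_i}$. Summing over the at most $(2-\log_2 r)^\q=(O(\log n))^\q=\q^{O(\q)}$ admissible classes (Lemma~\ref{l: disclass count}) and adding the bound on $\Event_{\rm good}^c$ gives the final estimate $\q^{C\q}\,2^{-\oo\sum b_i}$. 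The principal obstacle will be verifying the anti-concentration hypothesis for $D^*Y/\|D^*Y\|_2$—this requires deriving the LCD/incompressibility property from Theorem~\ref{th: LCD of small} through the inverse identity above—and carefully showing that the error terms $\delta'$, $|\kappa|T'$ and $m^{-1/\oo}$ never exceed $2^{p_i}T'$ in the parameter regime of the lemma.
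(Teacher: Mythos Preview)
Your overall strategy is the same as the paper's, but there are two points where your argument as written does not go through, and both are repaired by a single observation you are missing.

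First, the claim that ``$|\kappa|T'$ is controlled'' under the hypothesis $\delta\le\q^{-1/2}$ is false: with $|\kappa|=|z|^{-1}+\delta$ and $T'=2T+2\sqrt2\q(T+1)=O(\q^2)$, the term $\delta\cdot T'$ alone is of order $\q^{3/2}$, so $|\kappa|T'\le\tfrac12$ fails and Proposition~\ref{p: decoupling} cannot be invoked. The paper begins by observing that if $R\le\q^2$ (equivalently $\delta\ge\q^{-5/2}$) then $2^{-\oo\sum b_i}\ge R^{-\oo\q}\ge\q^{-2\oo\q}$ and the claimed bound is trivially $\ge1$ after adjusting $C$; hence one may assume $\delta\le\q^{-5/2}$, whence $|\kappa|\le(100\q^2)^{-1}+\q^{-5/2}\le(7\q)^{-2}$ and $|\kappa|T'\le\tfrac12$ holds.

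Second, your plan to treat the region $\{d_i(\V_\q)<r\}$ as a probabilistic tail via bounded entry density is unnecessary and is also the reason your absorption of the error terms ``$\delta'+|\kappa|T'+m^{-1/\oo}$'' into $2^{p_i}T'$ stays vague. The point is that, by construction, the first $\q$ coordinates of $\widetilde V_i$ are exactly $\kappa\,{\bf e}'_i$, so $\d(\widetilde V_i,\spn\{\widetilde V_j:j\ne i\})\ge|\kappa|$ \emph{deterministically}; hence $d_i(\V_\q)\ge|\kappa|\ge\delta$ everywhere and one may take $r=\delta$ in Proposition~\ref{p: perturbed vector existence}. This immediately gives $2^{p_i+1}\ge\min(d_i,1)\ge|\kappa|$, so $\delta'=2\delta(T+1)\le8\q|\kappa|\le16\q\cdot2^{p_i}$ and $|\kappa|T'\le2\cdot2^{p_i}T'$; together with $m^{-1/\oo}\le\delta/\sqrt{\q}\le2^{p_i}$, every summand in the product from Proposition~\ref{p: decoupling} is at most $C\q\cdot2^{p_i}T'$ for \emph{all} $i$, and your product bound $\q^{O(\q)}2^{\oo\sum p_i}$ follows cleanly. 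With these two fixes your proof coincides with the paper's.
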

\begin{proof}
Without loss of generality, we can assume that $n$ is large. Further, since by Definition \ref{page: disclass},  $2^{-\oo\sum_{i=1}^{\q }b_i}\ge R^{-\q}$ then adjusting the constant $C$ we can make the right hand side of the relation
greater than one for any choice of $C'$ such that $R\le \q^{C'}$. Hence, we can assume that
$R\ge\q^2$, so that $\delta\le{\q}^{-5/2}$. It follows from the conditions on $\delta$, $\q$, and $z$, that
$$
\text{for }\,\,\kappa:=-\frac{\bar z^{-1}}{|z^{-1}|}(|z^{-1}|+\delta)\quad\text{we have }\,\,(7\q)^{-2}\ge|\kappa|\ge\delta\ge 1/n.
$$
Let $\V_\q\subset\CorR^n$ be defined as in \eqref{V definition}:
$$
\widetilde V_i=\kappa {\bf e}'_i\oplus D{\bf q}_i=
\big(-\bar z^{-1}-\mbox{$\frac{\bar z^{-1}}{|z^{-1}|}$}\delta\big) {\bf e}'_i\oplus D{\bf q}_i,\quad i\in [\q ].
$$
Here $D=\bar z^{-1}(\widetilde A^*-\bar z)^{-1}$, and
 $\widetilde A$ denotes the $(n-\q)\times (n-\q)$ submatrix of $A $ obtained by
crossing out the first $\q$ rows and columns (see also \eqref{eq: V act def}). Note that with probability $1$, matrix
$D$ is well defined  since in view of the assumption on the distribution density of the entries all square submatrices of $\Az $
are non-singular almost everywhere.
Let the vector sequence ${\mathcal V}_\q$ be defined by \eqref{eq: V act def}.
Clearly, $\|\widetilde V_i-V_i\|_2\le\delta$, $i\in[\q]$, so that $\V_\q$ is a $\delta$--perturbation of ${\mathcal V}_\q$.
Moreover, by the construction we have $\d(\widetilde V_i,\spn\{\widetilde V_j:\;j\neq i\})\geq |\kappa|$, $\forall i\in[ \q]$, whence
$$
d_i\geq |\kappa|\ge\delta,\quad i\in[ \q]
$$
almost everywhere on the probability space, with $d_i=d_i(\V_\q)$ defined according to Definition~\ref{d:di}.
Hence, applying
Proposition~\ref{p: perturbed vector existence} with $r=\delta$
we get that everywhere on the event under consideration the set
$\disccollec(\V_\q,2\delta (T+1),2T+2\sqrt{2}\q(T+1))$ is non-empty and
there is a (random) non-increasing sequence of integers ${\bf p}$ satisfying \eqref{p condition}
such that
$\V_\q\in \disclass(\delta,{\bf p})$ and
\begin{equation}\label{eq: aux sdognalafsa}
2^{\sum_{i=1}^{\q }p_i}\leq (4\q)^{\q /2}\,2^{-\sum_{i=1}^{\q }b_i}.
\end{equation}

By Lemma~\ref{l: disclass count}, the total number of possible realizations of  ${\bf p}$ is
bounded above by $\big(2-\log_2\delta\big)^{\q }$.
Hence, to prove the lemma, it is enough to show that
for every choice of a non-random sequence ${\bf p}$ satisfying \eqref{eq: aux sdognalafsa},
the event
\begin{align*}
\Event_{\bf p}:=\big\{\V_\q\in \disclass(\delta,{\bf p})\;\;\mbox{ and }\;\;\disccollec(\V_\q,2\delta (T+1),2T+2\sqrt{2}\q(T+1))\neq\emptyset\big\}
\end{align*}
has probability at most $\q ^{C'\q } 2^{\oo\sum_{i=1}^{\q }p_i}$.
Let us check the conditions of Proposition~\ref{p: decoupling}. To this end set
$$
T':=2T+2\sqrt{2}\q(T+1)\le24\q^2\quad\text{and}\quad\delta':=2\delta(T+1)\le 8\q^{-3/2}
$$
and note first that $|\kappa|T'\leq 1/2$. Also, it follows from the lower bounds for $\delta$ and $\q$ that $\delta'\geq 1/n$.

Take now any vector $Y\in \CorR^m$ such that $\|D^* Y\|_2\geq N^{-3}$, and $\|Y\|_2\leq T'$. Let
$$
Z:={D^* Y}/{\|D^*Y\|_2}\in S^{m-1}(\CorR),\quad m:=n-\q.
$$
Since $D^*= z^{-1}(\widetilde A-z)^{-1}$, using conditions on $z$ we get
$$
\|(\widetilde A-z)Z\|_2=\|(\widetilde A-z)z^{-1}(\widetilde A-z)^{-1}Y\|_2/{\|D^*Y\|_2}=|z^{-1}|\|Y\|_2/{\|D^*Y\|_2}\le NT'.
$$
 Consider two cases: 1. $\CorR=\R$. It follows then from  Theorem~\ref{th: LCD of small} with
$ B+M=\widetilde{A}-z$
that with probability at least $1-2e^{-cn}$ we have
$$
\LCD_{c_{\text{\tiny\ref{th: LCD of small}}} \sqrt{m},c_{\text{\tiny\ref{th: LCD of small}}}}(Z)
\geq {c_{\text{\tiny\ref{th: LCD of small}}}m}/{\|(\widetilde A-z)Z\|_2} \ge{c_{\text{\tiny\ref{th: LCD of small}}}m}/{NT'}.
$$

2.   $\CorR=\C$. Since by the condition on $\q$, $\|(\widetilde A-z)Z\|_2\le NT'\le 6\sqrt{n}$, then in view of Lemma~\ref{l: incomp in ker}  with probability at least $1-2e^{-cn}$ we have  $Y\in\Incomp_m(\zeta,\zeta)$,
where $\zeta>0$ depends only on $K$ and $\mathcal M$.

Now it follows from Proposition~\ref{p: decoupling}, that
$$\Prob(\Event_{\bf p})\le(C_1\q )^{C_1\q }\prod\limits_{i=1}^{\q }\big(\delta'+|\kappa|\, T'+2^{p_i}T'+n^{-1/\oo}\big)^\oo+2e^{-c\q ^2},$$
for some $c,C_1>0$ depending only on $K$ and $\mathcal M$.
In view of Definition \ref{page: disclass} and above remarks,
$$
2^{p_i+1}\ge \min(d_i,\, 1)\ge |\kappa|\ge \delta'/8\q,\quad i\in[\q].
$$
Also $2^{-\oo\sum_{i=1}^{\q }b_i}\geq R^{-UN}\gg e^{-c\q^2}$. Summarising we get that
$\Prob(\Event_{\bf p})\le\q ^{C_2\q }2^{\oo\sum_{i=1}^{\q }p_i}$
for some $C_2>0$ depending only on $K,\mathcal M$, and the result follows.
\end{proof}

In the next proposition, using Lemma \ref{l: two conditions} and previous two lemmas we get a probability bound for the event given by (\ref{EventE}).

\begin{prop}\label{prop: mn}
Let $\mathcal M\geq 1$, $n>2$, and consider an $n\times n$ random matrix $A$
with i.i.d columns satisfying \eqref{Asmp 1}
or \eqref{Asmp 2}
with a parameter $K$.
Let $\log^2 n\leq\q\leq n^{1/4}$, $z\in\CorR$ with $100\q ^2\leq |z|\leq \mathcal M\sqrt{n}$, and let $\beta,\theta>0$
be such that
$$
n^{-1/\oo}\leq{\theta}/{\beta}\leq 1\quad\text{and}\quad \beta\geq 1/n,
$$
where $\oo=1$ for $\CorR=\R$ and $\oo=2$ for $\CorR=\C$.
Consider the event
$$\Event:=\big\{\exists\,\n\in S^{n-1}(\CorR):\;\|(\Az )\n\|_2\leq n^{-2}\mbox{ and }|v_{\ell}|\leq\theta,\;\ell\leq \q-1;
|v_{\q }|\geq\beta\big\}.$$
Then $\Prob(\Event)\leq \big(\q ^{C }{\theta}/{\beta}\big)^{\oo\q}$
for some $C>0$
depending only on $K$ and $\mathcal M$.
\end{prop}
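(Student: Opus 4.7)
The plan is to combine Lemma~\ref{l: two conditions} with Lemmas~\ref{l: aux aponfa;sa32r} and~\ref{l: aux uaf,mlayg}, organized around the partition of ellipsoids into classes $\elclass(R,{\bf b})$. First, on the event $\Event$ the unit vector $\n$ realizes the hypotheses of Lemma~\ref{l: two conditions} with $\tau=n^{-2}$, $I_\theta=[\q-1]$, $J_\beta=\{\q\}$; the auxiliary bound $\|\Proj_F\col_\ell(A)\|_2\le T$ (simultaneously for every $\ell\in[\q]$) with some $T$ of order $\sqrt{\q}$ holds on a high-probability event via a Hanson--Wright-type estimate for the $(n-\q)\times \q$ ``lower-left'' block of $A$. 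This converts $\Event$ (up to an event of exponentially small probability) into the conjunction of the geometric conditions (a) and (b) for the subspace $F:=\spn\{\col_i(\Az):\,i>\q\}^\perp$, with the scaling parameter $\delta:=\theta\sqrt{\q}/\beta$ and the additive Euclidean radius $T':=\tau/\beta+T\q\theta/\beta+T$ of order $\q$.

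Next, I would split the probability according to the class $\elclass(R,{\bf b})$ containing the ellipsoid $\Ellips=\Ellips(\Proj_F(-z{\bf e}_1),\ldots,\Proj_F(-z{\bf e}_{\q}))$, with $R:=\delta^{-1}\q^{-1/2}$. The decoupling step rests on the fact that $F$ and the ellipsoid $\Ellips'=\Ellips(\Proj_F(-z{\bf e}_1),\ldots,\Proj_F(-z{\bf e}_{\q-1}))$ are measurable with respect to the $\sigma$-field generated by $\col_j(A)$, $j\ne \q$, and in particular are independent of $\col_\q(A)$. By the tower property,
$$
\Prob\bigl(\Event\cap\{\Ellips\in\elclass(R,{\bf b})\}\bigr)\le \Exp\Bigl[{\bf 1}_{\{(\text{a})\,\wedge\,\Ellips\in\elclass(R,{\bf b})\}}\cdot \Prob((\text{b})\mid F)\Bigr].
$$
I would then bound the inner conditional probability via Lemma~\ref{l: aux aponfa;sa32r} (with $Z=\col_\q(A)$, $Y=-z{\bf e}_\q$) by
$\q^{C\q}\bigl((T'+\delta)/\delta\bigr)^{\oo\q}\,2^{\oo\sum_i b_i}\,\cf(\Proj_F\col_\q(A),\delta)+2e^{-c\q^2}$,
and the outer probability $\Prob((\text{a})\wedge\{\Ellips\in\elclass(R,{\bf b})\})$ via Lemma~\ref{l: aux uaf,mlayg} by $\q^{C\q}\,2^{-\oo\sum_i b_i}$. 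The factors $2^{\pm\oo\sum_i b_i}$ cancel exactly, which is the entire point of introducing the partition.

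To handle the surviving concentration function, I would invoke Theorem~\ref{l: sb compl incomp} in the complex case and Theorem~\ref{th: 4.2} in the real case, combined with Theorem~\ref{th: 4.3} applied to $F^\perp=\spn\{\col_i(\Az):\,i>\q\}$, to conclude that with probability at least $1-e^{-cn}$ the subspace $F$ has sufficiently large LCD (real) or lies in $\Incomp_m(\zeta,\zeta)$ (complex); this yields $\cf(\Proj_F\col_\q(A),\delta)\le (C'\delta)^{\oo\q}$ on the good event. After multiplying the two bounds and summing over the $(\log_2 R+2)^\q\le (\log n)^\q$ choices of ${\bf b}$ (Lemma~\ref{l: elclass count}), the surviving estimate is $\q^{C''\q}\,\delta^{\oo\q}=(\q^{C}\theta/\beta)^{\oo\q}$, as required.

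The main obstacle I anticipate is the clean bookkeeping of the several auxiliary high-probability events entering the argument---the norm bound on $\Proj_F\col_\ell(A)$, the structural bound on $F$ from Theorem~\ref{th: 4.3}, and the bounded-density hypothesis required by Lemma~\ref{l: aux uaf,mlayg} (which does not literally appear in the hypothesis of Proposition~\ref{prop: mn} and is best handled by a small independent Gaussian smoothing of $A$ followed by a limiting argument). Each of these events contributes an error of size at most $e^{-cn}$, which is negligible against the target $(\q^{C}\theta/\beta)^{\oo\q}$ because $\theta/\beta\ge n^{-1/\oo}$ and $\q\ge \log^2 n$.
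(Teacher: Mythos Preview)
Your proposal is correct and follows essentially the same route as the paper's proof: reduce via Lemma~\ref{l: two conditions}, partition over $\elclass(R,{\bf b})$, decouple using that $\Event'_1({\bf b})$ is $F$-measurable, apply Lemmas~\ref{l: aux uaf,mlayg} and~\ref{l: aux aponfa;sa32r} so that the factors $2^{\pm\oo\sum b_i}$ cancel, and finish with the structural bound on $F$ (Theorem~\ref{th: 4.3} in the real case, Lemma~\ref{l: incomp in ker} in the complex case) to control $\cf(\Proj_F\col_\q(A),\delta)$. One small correction: take $T$ of order $\q$ rather than $\sqrt{\q}$, since the Hanson--Wright error for the event $\{\|\Proj_F\col_\ell(A)\|_2\le T\;\forall\ell\le\q\}$ must be $e^{-c\q^2}$ (not merely $e^{-c\q}$) to be absorbed into the target bound $(\q^C\theta/\beta)^{\oo\q}$ in the full range $\theta/\beta\ge n^{-1/\oo}$.
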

\begin{proof}
Without loss of generality, by adding an arbitrarily small Gaussian perturbation to the matrix entries,
we can assume that the distribution density of the entries is bounded.
We can also assume that $\theta/\beta\leq 1/\q ^4$ (see the proof of Lemma \ref{l: aux uaf,mlayg}).

Let $\U_\q$, $\Ellips=\Ellips(\U_\q)$, $\Ellips'=\Ellips(\U_{\q-1})$, $T$, and $F$ be as in Lemma \ref{l: aux uaf,mlayg}.
Observe that, by the Hanson--Wright inequality \cite{HW, RV HS},
$$
\Prob\big\{\|\Proj_F\col_\ell(A)\|_2\leq T\mbox{ for all }\ell\in[\q ]\big\}\ge 1-2e^{-c_1\q^2}
$$
for some $c_1>0$ depending only on $K$.
Hence, in view of Lemma~\ref{l: two conditions}, it is sufficient to show that the event
$$\Event':=\Big\{
\Proj_F(-z {\bf e}_{\q })\in \delta\Ellips'+ 3\q B_2^F\mbox{ and }\Proj_F\col_{\q }(\Az )\in \delta\Ellips'
+2\q^{3/2} \delta B_2^F
\Big\}$$
with $\delta={\theta\sqrt{\q}}/{\beta}$, has probability at most $\big(\q ^{C' }{\theta}/{\beta}\big)^{\oo\q}$,
for some $C'>0$ depending only on $K,\mathcal M$.

Set $R:={\beta}/({\theta\q })=(\delta\sqrt{\q})^{-1}$.
Let $\mathcal Q$ be a subset of $\q$--dimensional linear subspaces of $\CorR^n$ which will be defined later.
We have
\begin{align*}
\Prob(\Event')\leq \sum_{{\bf b}}\Prob\big(\Event'\cap \{\Ellips\in \elclass(R,{\bf b})\}\;\big\vert\;F\in\mathcal Q\big)
\Prob\big\{F\in \mathcal Q\big\}
+\Prob\big\{F\notin \mathcal Q\big\},
\end{align*}
where the summation is taken over all integer sequences ${\bf b}$ satisfying \eqref{b condition}.
Further, applying Lemma~\ref{l: elclass count}, we obtain
\begin{align*}
\Prob(\Event')\leq \big(2-\log_2(\sqrt{\q} \delta)\big)^{\q }\sup_{{\bf b}}\Prob\big(\Event'_1({\bf b})
\cap \Event'_2({\bf b})\;\big\vert\;F\in\mathcal Q\big)
+\Prob\big\{F\notin \mathcal Q\big\},
\end{align*}
where
\begin{align*}
 & \Event'_1({\bf b}):=\big\{
\Proj_F(-z {\bf e}_{\q })\in \delta\Ellips'+ 3T\q B_2^F\big\}\cap\big\{\Ellips\in \elclass(R,{\bf b})\big\},
 \\
& \Event'_2({\bf b}):=\big\{\Proj_F\col_{\q }(\Az )\in \delta\Ellips'
+2\q^{3/2} \delta B_2^F\big\}\cap\big\{\Ellips\in \elclass(R,{\bf b})\big\}.
\end{align*}
Let us fix for a moment any admissible sequence ${\bf b}$.
Note that 
the event $\Event'_1({\bf b})$ is measurable with respect to the sigma-field generated by $F$. Hence,
\begin{align*}
\Prob\big(\Event'_1({\bf b})\cap \Event'_2({\bf b})\;\big\vert\;F\in\mathcal Q\big)
&=\Exp\big({\bf 1}_{\Event'_1({\bf b})}\,{\bf 1}_{\Event'_2({\bf b})}\;\big\vert\;F\in\mathcal Q\big)\\
&=\Exp\big({\bf 1}_{\Event'_1({\bf b})}\,\Exp({\bf 1}_{\Event'_2({\bf b})}\;\vert\;F)\;\big\vert\;F\in\mathcal Q\big)\\
&\le
\Prob\big(\Event'_1({\bf b})\;\big\vert\;F\in\mathcal Q\big)
\sup\limits_{F_0\in\mathcal Q}\Prob\big(\Event'_2({\bf b})\;\big\vert\;F=F_0\big).
\end{align*}
By Lemma~\ref{l: aux aponfa;sa32r}, there exists some $\widetilde C=\widetilde C(K,\mathcal M)>0$
such that conditioned on any realization $F_0$ of $F$, we have
\begin{align*}
 \Prob\big(\Event'_2({\bf b})\;\big\vert\;F=F_0\big)
\le\q ^{\widetilde C\q } \sup\limits_{Y\in F_0}\Prob\big\{\|\Proj_{F_0}Z-Y\|_2\leq\delta\big\}\, 2^{\oo\sum_{i=1}^{\q }b_i},
\end{align*}
where $Z$ is a random vector equidistributed with columns of $A$.
On the other hand, in view of Lemma~\ref{l: aux uaf,mlayg},
$$
\Prob(\Event'_1({\bf b}))\leq \q^{C'_1\q}2^{-\oo\sum_{i=1}^\q b_i}.
$$
Summarising, we get for some $ C''>0$,
$$
\Prob(\Event')
\leq
\q ^{ C''\q } \sup\limits_{F_0\in\mathcal Q}
\sup\limits_{Y\in F_0}\Prob\big\{\|\Proj_{F_0}Z-Y\|_2\leq\delta\big\}+\Prob\big\{F\notin \mathcal Q\big\},
$$
assuming that, say, $\Prob\{F\in \mathcal Q\}\geq 1/2$.

It remains to define the subset $\mathcal Q$
depending on whether we are in the real or the compex setting.

If $\CorR=\R$ then we set
$$
\mathcal Q:=\big\{F_0\subset \R^n:\;\dim F_0=\q\;\;\mbox{ and
$\LCD_{c_{\text{\tiny\ref{th: 4.3}}} \sqrt{n},c_{\text{\tiny\ref{th: 4.3}}}}(F_0)\geq
\sqrt{n}e^{c_{\text{\tiny\ref{th: 4.3}}} n/\q}$}\big\}.
$$
Note that, by the definition of $F$ and by Theorem~\ref{th: 4.3}, we have
$$\Prob\{F\in \mathcal Q\}\geq 1-e^{-c_{\text{\tiny\ref{th: 4.3}}}n}.$$
On the other hand, for every $F_0\in\mathcal Q$ we get, by Theorem~\ref{th: 4.2}:
$$
\sup\limits_{Y\in F_0}\Prob\big\{\|\Proj_{F_0}Z-Y\|_2\leq\delta\big\}
\leq
\big({C_4 \delta}/{\sqrt{\q}}\big)^{\q}+C_4^{\q} e^{-c_4n},
$$
whence
$$\Prob(\Event')\leq
\q ^{\widetilde C_5\q }\big({\theta}/{\beta}\big)^\q.
$$

In the case $\CorR=\C$,
in view of Lemma~\ref{l: incomp in ker},
with probability at least $1-2e^{-c'n}$
all unit vectors in $F$ are $(\zeta,\zeta)$--incompressible,
for some $c',\zeta>0$ depending only on $\mathcal M,K$.
Define
$$\mathcal Q:=\{F_0:\;F_0\cap S^{n-1}(\C)\subset\Incomp_n(\zeta,\zeta)\}.$$
Applying Theorem~\ref{l: sb compl incomp},
we get that whenever $F_0\in\mathcal Q$,
we have
$$\sup\limits_{Y\in F_0}\Prob\big\{\|\Proj_{F_0}Z-Y\|_2\leq\delta\big\}
\leq C_6^\q(\delta+n^{-1/2})^{2\q}.$$
Hence,
$$
\Prob(\Event')
\leq
\q ^{\widetilde C_1\q }\big({\theta}/{\beta}\big)^{2\q}.
$$
The result follows.
\end{proof}

For  complex matrices, the following lemma supplements Proposition \ref{prop: mn} covering the case of small eigenvalues. We have:
\begin{lemma}\label{l: mn}
Let $\CorR=\C$, and let $n$, $\q$, $\beta$, $\theta$, $\mathcal M$  be as in Proposition \ref{prop: mn}.
Then for $z\in\C$ with $ |z|\leq 100\q^2$, the probability of the event $\Event$
defined in Proposition \ref{prop: mn} is less than $\big(\q ^{C }{\theta}/{\beta}\big)^{2\q}$ for some $C>0$
depending only on $K$ and $\mathcal M$.
\end{lemma}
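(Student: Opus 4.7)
The plan is to reduce the problem to condition (b) of Lemma~\ref{l: two conditions} alone and apply a direct small-ball estimate for the independent column $\col_\q(A)$. In contrast with the regime $|z|\geq 100\q^2$ treated in Proposition~\ref{prop: mn}, the dual-basis machinery of Subsections~\ref{subs: biorth}--\ref{subs: two types} is not needed here: the assumption $|z|\leq 100\q^2$ forces the ellipsoid $\Ellips'=\Ellips(\U_{\q-1})$ generated by $\Proj_F(-z{\bf e}_i)$, $i\leq \q-1$, to have operator norm at most $|z|$, so that $\Ellips'\subset |z|B_2^F\subset 100\q^2 B_2^F$. This crude containment will be the only information about $\Ellips'$ that we use.

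I will first invoke Lemma~\ref{l: two conditions} with $\tau=n^{-2}$ and $T:=3\q$, combined with a Hanson--Wright bound and a union bound giving $\Prob\{\|\Proj_F\col_\ell(A)\|_2\leq T\text{ for all }\ell\in[\q]\}\geq 1-2e^{-c\q^2}$. On the intersection of $\Event$ with this high-probability event, condition (b) of Lemma~\ref{l: two conditions} must hold, and, using $\col_\q(A-z)=\col_\q(A)-z{\bf e}_\q$, it rewrites as
$$
\Proj_F\col_\q(A)\in z\Proj_F({\bf e}_\q)+\frac{\theta\sqrt{\q}}{\beta}\Ellips'+\Big(\frac{n^{-2}}{\beta}+\frac{3\q^2\theta}{\beta}\Big)B_2^F.
$$
Combined with the containment $\Ellips'\subset 100\q^2 B_2^F$ and the hypotheses $\beta\geq n^{-1}$ and $\theta/\beta\geq n^{-1/2}$, the right-hand side is contained in a Euclidean ball of radius $R_0\leq C\q^{5/2}\theta/\beta$ centered at $z\Proj_F({\bf e}_\q)$, for some $C>0$ depending only on $K$ and $\mathcal M$.

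Next I will condition on $F$. Let $\mathcal Q$ denote the collection of $\q$-dimensional subspaces $F_0\subset\C^n$ with $F_0\cap S^{n-1}(\C)\subset\Incomp_n(\zeta,\zeta)$, where $\zeta>0$ is the constant furnished by Lemma~\ref{l: incomp in ker}; exactly as in the proof of Proposition~\ref{prop: mn}, that lemma gives $\Prob\{F\in\mathcal Q\}\geq 1-2e^{-cn}$ (the hypothesis $|z|\leq 100\q^2\leq 100\sqrt{n}$ provides the spectral bound on the non-random shift required by Lemma~\ref{l: incomp in ker}). For any $F_0\in\mathcal Q$ the column $\col_\q(A)$ is independent of $F_0$ and of the shifted ellipsoid above, so Theorem~\ref{l: sb compl incomp} yields
$$
\Prob\big(\text{(b)}\,\big|\,F=F_0\big)\leq\cf\big(\Proj_{F_0}\col_\q(A),R_0\big)\leq\Big(\frac{CR_0}{\sqrt{\q}}+\frac{C}{\sqrt{n}}\Big)^{2\q}\leq\Big(\q^{C'}\frac{\theta}{\beta}\Big)^{2\q},
$$
where the last inequality absorbs the $n^{-1/2}$ term using $\theta/\beta\geq n^{-1/2}$.

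Assembling the pieces,
$$
\Prob(\Event)\leq\sup_{F_0\in\mathcal Q}\Prob\big(\text{(b)}\,\big|\,F=F_0\big)+\Prob\{F\notin\mathcal Q\}+2e^{-c\q^2}\leq\big(\q^{C}\theta/\beta\big)^{2\q},
$$
with the subexponential terms $2e^{-cn}+2e^{-c\q^2}$ absorbed into the main one using $\q\geq\log^2 n$ and $\theta/\beta\geq n^{-1/2}$. The argument has no serious obstacle: the only real step is the opening observation that for small $|z|$ the entire delicate dual-basis analysis collapses into a one-shot anti-concentration estimate for a single projected isotropic complex vector.
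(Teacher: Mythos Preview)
Your proof is correct and follows essentially the same approach as the paper's: both reduce the small-$|z|$ case to a single anti-concentration estimate for $\Proj_F\col_\q(A)$ via Theorem~\ref{l: sb compl incomp}, after bounding the ``spread'' of the remaining $\q-1$ projected columns by a quantity polynomial in $\q$. The only cosmetic difference is that the paper invokes Lemma~\ref{l:determ} directly (obtaining $\beta\|\Proj_F\col_\q(A-z)\|_2\le\theta\sqrt{\q-1}\,s_{\max}(M)+n^{-2}$ and bounding $s_{\max}(M)$ via Hanson--Wright), whereas you pass through Lemma~\ref{l: two conditions}(b) and use the clean observation $\Ellips'\subset|z|B_2^F\subset 100\q^2 B_2^F$; your route makes explicit why the dual-basis machinery is unnecessary here, while the paper's is marginally more direct.
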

\begin{proof}
Everywhere on $\Event$, by applying Lemma \ref{l:determ} with
$k=\q-1$, $r=1$,
$B=\Az $, $\tau=n^{-2}$, $I_\theta=[\q-1]$, $J_\beta=\{\q\}$,
$F=\spn\{\col_i(B):\,i>\q\}^\perp $,
$M:=\big(\Proj_F\col_{i}(B)\big)_{i\in [\q-1]}$,  $M':=(\Proj_F\col_{\q}(B))$, we get
$$
\beta\|\Proj_F\col_{\q}(B)\|_2=\beta s_{\min}(M')\leq \theta\sqrt{\q-1}\,s_{\max}(M)+n^{-2}.
$$
Further, the Hanson--Wright inequality (see \eqref{HR} with $h=\sqrt{\q}$) implies that
$$
\Prob\big\{s_{\max}(M)\geq C'\q\big\}\leq e^{-\q^2}
$$
for some $C'$ depending only on $K,\mathcal M$.
Hence,
\begin{align*}
\Prob(\Event)
\le\Prob\big\{ \|\Proj_F\col_{\q}(B)\|_2\leq C'\theta \q^{3/2}/\beta+n^{-1}\big\}+ e^{-\q^2}.
\end{align*}
Further,
Lemma~\ref{l: incomp in ker} implies that with probability at least $1-e^{-cn}$ we have
$F\cap S^{n-1}(\C)\subset \Incomp_n(\zeta,\zeta)$ for some $\zeta,c>0$
depending only on $K,\mathcal M$.
On the other hand, for any $\q$--dimensional subspace $F_0$
such that $F_0\cap S^{n-1}(\C)\subset \Incomp_n(\zeta,\zeta)$, we have, by Theorem~\ref{l: sb compl incomp}, that
$$
\Prob\big\{ \|\Proj_{F_0}\col_{\q}(B)\|_2\leq 2 C'\theta \q^{3/2}/\beta\big\}
\le \widetilde C^\q \big(2 C'\theta \q^{3/2}/\beta+n^{-1/2}\big)^{2\q},
$$
and the result follows.
\end{proof}

\subsection{Proof of Theorems~B and~C}\label{s: proofs}

\begin{proof}
Here we prove Theorem~C, the proof of Theorem~B follows the same lines with appropriate modifications
(see Remark~\ref{rem: Th B} below).
Without loss of generality, we can assume that $n$ is large.
For a random $n\times n$ matrix $A$ with i.i.d columns satisfying \eqref{Asmp 2},
there is $\M$ depending only on $K$ such that $\|A\|\leq \M\sqrt{n}$ with probability at least $2^{-n}$
(see, for example, $\varepsilon$-net argument in \cite{LPRT}).
Thus, it is sufficient to consider only eigenvectors corresponding to eigenvalues $z$
with $|z|\le \M\sqrt{n}$.
It is enough to prove the following statement:
\medskip

{\it
There exists constant $C>0$ depending only on $K$ and $\M$ such that for any $k\in [\log^{2C}{n}, n/2]$ the event
\begin{align*}
 \Event_k:=\Big\{\exists z\in\C\mbox{ with }|z|\le \M\sqrt{n}\mbox{ and }\n\in S^{n-1}(\C)\mbox{ s.t. }&A\n=z\n
\quad\text{and}\quad
v_{n-k+1}^*< \frac{\sqrt{k}}{n\log^C n}\Big\},
\end{align*}
has probability at most $n^{-1}e^{-\log^{2} n}$.
 }
\medskip

A standard $\varepsilon$-net argument (a discretization of the disc $\{w:\,|w|\leq \M\sqrt{n}\}$
in the complex plane) implies that
$\Prob(\Event_k)\leq 8\M^2 n^{5}\max_{|z|\le \M\sqrt{n}}\Prob(\Event_k(z))$, where 
$$
\Event_k(z):=\Big\{\exists\n\in S^{n-1}(\C):\,\|(\Az )\n\|_2\leq n^{-2}\mbox{ and }
v_{n-k+1}^*<\frac{\sqrt{k}}{n\log^C n}\Big\}.
$$
In what follows, we fix any $z$ with $|z|\le \M\sqrt{n}$.
Let $\n=(v_1,\dots,v_n)\in S^{n-1}(\C)$ be any random vector.
We will estimate probability of the event
$\big\{|I_\theta|\geq k\mbox{ and }\|(\Az )\n\|_2\leq n^{-2}\big\}$,
where, similarly to the argument in the proof of Theorem~\ref{th: normal lower},
we define
$$
\theta:=\frac{\sqrt{k}}{n\log^C n} \quad\text{and}\quad I_\theta:=\{i\leq n:\;|v_i|\leq \theta\}.
$$
It follows from Lemma~\ref{l: incomp in ker} that there is $\zeta>0$ depending only on $K$ and $\mathcal M$ such
that, setting
$$
\beta:=\zeta/\sqrt{n}\quad\text{and}\quad J_\beta:=\big\{i\leq n:\;|v_i|\geq\beta\big\},
$$
we get $\Prob\{|J_\beta|<\zeta n\}<e^{- c_{\text{\tiny\ref{l: incomp in ker}} } n}$. Fix $\q=\lceil\log^2 n\rceil$.
Let $\Event':=\{\|(\Az )\n\|_2\leq n^{-2}\}$.
Using Markov's inequality we obtain
\begin{align}
 \Prob\big\{|I_\theta|\geq k\mbox{ and }&\|(\Az )\n\|_2\leq n^{-2}\big\}=\Exp\big({\bf 1}_{\{|I_\theta|\geq k\}}{\bf 1}_{\Event'}\big)\nonumber
 \\
&\leq k^{-\q}\Exp\big(|I_\theta|^{\q}{\bf 1}_{\{|I_\theta|\geq k\}}{\bf 1}_{\Event'}
{\bf 1}_{\{|J_\beta|\geq \zeta n\}}\big)+(n/k)^{\q}e^{-c_{\text{\tiny\ref{l: incomp in ker}} }n}\nonumber\\
&\le (\zeta nk^{\q})^{-1}\Exp\big(|I_\theta|^{\q}|J_\beta|{\bf 1}_{\{|I_\theta|\geq k\}}{\bf 1}_{\Event'}
{\bf 1}_{\{|J_\beta|\geq \zeta n\}}\big)+(n/k)^{\q}e^{-c_{\text{\tiny\ref{l: incomp in ker}} }n}, \label{Markov1}
\end{align}
so we need to estimate $\Exp\big(|I_\theta|^{\q}|J_\beta|{\bf 1}_{\{|I_\theta|\geq k\}}{\bf 1}_{\Event'}
{\bf 1}_{\{|J_\beta|\geq \zeta n\}}\big)$.

Given a set of distinct indices $\{i_1,\dots,i_{\q}\}$, let
$\chi_{i_1,\dots,i_{\q}}$  be the indicator of the event
$$
|v_{i_1}|\le \theta,\,\dots,|v_{i_{\q-1}}|\le \theta\quad\text{and}\quad |v_{i_{\q}}|\geq \beta.
$$
Since for any distinct $i_1,\dots,i_{\q-1}\in I_\theta$ and $i_\q\in J_\beta$
we have $\chi_{i_1,\dots,i_{\q}}=1$, we get a {\it deterministic} relation valid on the entire probability space:
\begin{align*}
&\sum
\chi_{i_1,\dots,i_{\q}}\ge |I_\theta|(|I_\theta|-1)\dots(|I_\theta|-\q+1)|J_\beta|{\bf 1}_{\{|I_\theta|\ge \q\}}
\ge 3^{-\q}|I_\theta|^{\q}|J_\beta|{\bf 1}_{\{|I_\theta|\ge k\}},
\end{align*}
where the sum is taken over all ordered $\q$--tuples $(i_1,\dots,i_{\q})$ with distinct components. Hence,
$$
\Exp\big(|I_\theta|^{\q}|J_\beta|{\bf 1}_{\{|I_\theta|\geq k\}}{\bf 1}_{\Event'}
{\bf 1}_{\{|J_\beta|\geq \zeta n\}}\big)\leq  3^\q\sum
\Exp\big(\chi_{i_1,\dots,i_{\q}}{\bf 1}_{\Event'}\big)
$$
and, by \eqref{Markov1},
\begin{align*}
 \Prob&\big\{|I_\theta|\geq k\mbox{ and }\|(\Az )\n\|_2\leq n^{-2}\big\}\le (\zeta n k^\q)^{-1}3^\q\sum
\Exp\big(\chi_{i_1,\dots,i_{\q}}{\bf 1}_{\Event'}\big)
+(n/k)^{\q}e^{-c_{\text{\tiny\ref{l: incomp in ker}} }n}.
\end{align*}
Consider two cases. First, assume that $100\q^2\le |z|\le \M\sqrt{n}$.
If $k\ge \zeta^2 \log^{2C}n$ then $\theta/\beta\ge n^{-1/2}$, and
Proposition \ref{prop: mn} implies that for any sequence $(i_1,\dots,i_\q)$ of distinct indices
\begin{align*}
\Exp\big(\chi_{i_1,\dots,i_{\q}}{\bf 1}_{\Event'}\big)
\leq (N^{C'}\theta/\beta)^{2\q}=\Big(\frac{N^{2C'}k}{\zeta^2n\log^{2C}n}\Big)^\q,
\end{align*}
where $C'$ depends only on $K$ and $\M$.
Hence,
\begin{align*}
\Prob\big\{|I_\theta|\geq k\mbox{ and }\|(\Az )\n\|_2\leq n^{-2}\big\}
\leq (\zeta n)^{-1}(3/\zeta^2)^{\q}\q^{2C'\q}(\log n)^{-2C\q}+(n/k)^{\q}e^{-c_{\text{\tiny\ref{l: incomp in ker}} }n}.
\end{align*}
Taking $C>2C'$ big enough we get that the right hand side is less than $e^{-2\log^{2} n}$.
Since $\n$ was arbitrary, this implies $\Prob(\Event_k(z))\leq e^{-2\log^{2} n}$, and the result follows.

Now, assume that $|z|\le 100\q^2$. Then we repeat the above argument, replacing Proposition \ref{prop: mn}
with Lemma \ref{l: mn}. This finishes the proof of Theorem C.

To strengthen  the result and get  probability estimate $1-e^{-\log^{C'}n}$ for any constant $C'>2$, one can repeat the argument
above taking $\q:=[\log^{C''}n]$ with sufficiently large $C''$.
\end{proof}

\medskip

\begin{rem}\label{rem: Th B}
The proof of Theorem B in the case of real eigenvalues $\lambda$ such that
$100\log^4n\le|\lambda|\le \M \sqrt{n}$ follows the same scheme and is based on
Proposition \ref{prop: mn} with $U=1$. For $|\lambda|\le 100\log^4n$ the statement follows from Corollary \ref{cor: eigenv}.
\end{rem}

\section{Acknowledgments}

The research is partially supported by the Simons foundation. A.L. was supported by grant nr 2018/31/B/ST1/03937 National Science Centre, Poland.
The authors would like to thank the Referees of the paper for valuable suggestions.

\end{document}